\documentclass{amsart}

\usepackage[utf8]{inputenc}

\usepackage{soul}

\usepackage{graphicx}
\usepackage{comment}

\usepackage{todonotes}

\usepackage{amssymb,amsmath,amsthm,stmaryrd,mathrsfs,wasysym,bbm}
\usepackage{enumitem,mathtools,xspace, extarrows}
\usepackage{xstring} 
\usepackage{aliascnt}
\definecolor{greencolor}{rgb}{0,0.45,0}
\definecolor{ucphcolor}{rgb}{0.517,0.016,0.016}
\definecolor{OliveGreen}{rgb}{0,0.6,0}
\usepackage[colorlinks,
            citecolor=greencolor,
            linkcolor=ucphcolor,
            urlcolor=greencolor, unicode]{hyperref}
\usepackage[nameinlink,capitalise,noabbrev]{cleveref}

\usepackage[backend=biber, style=alphabetic, maxnames=10, maxalphanames=3, minalphanames=3]{biblatex}
\usepackage{tikz}
\usetikzlibrary{decorations.pathmorphing,arrows}

\usepackage{lineno}

\let\autoref\cref

\newcommand*\cocolon{%
        \nobreak
        \mskip6mu plus1mu
        \mathpunct{}%
        \nonscript
        \mkern-\thinmuskip
        {:}%
        \mskip2mu
        \relax
}

\usepackage{adjustbox}

\usepackage{contour}
\usepackage{ulem}

\contourlength{0.8pt}

\newcommand{\myuline}[1]{%
  \uline{\phantom{#1}}%
  \llap{\contour{white}{#1}}%
}

\makeatletter
\newcommand*{\saved@myuline}{}
\let\saved@myuline\myuline

\newcommand*{\mathuline}{%
  \mathpalette{\math@myuline\saved@myuline}%
}
\newcommand*{\math@myuline}[3]{%
  \mbox{#1{$#2#3\m@th$}}%
}

\renewcommand*{\myuline}{%
  \relax  
  \ifmmode
    \expandafter\mathuline
  \else
    \expandafter\saved@myuline
  \fi
}
\makeatother

\usepackage{tikz}
\usepackage{tikz-cd}
\usetikzlibrary{cd}
\usetikzlibrary{arrows, arrows.meta, positioning, calc}
\let \amsamp = &

\usepackage[backend=biber, style=alphabetic, maxnames=10, maxalphanames=3, minalphanames=3]{biblatex}
\Crefname{prop}{Proposition}{Propositions}
\Crefname{lem}{Lemma}{Lemmas}
\Crefname{cor}{Corollary}{Corollaries}
\Crefname{thm}{Theorem}{Theorems}
\Crefname{mainthm}{Theorem}{Main theorem}
\Crefname{alphThm}{Theorem}{Theorems}
\Crefname{alphProp}{Proposition}{Propositions}
\Crefname{mainthm}{Theorem}{Main theorem}

\Crefname{defn}{Definition}{Definitions}
\Crefname{notation}{Notation}{Notations}
\Crefname{cons}{Construction}{Constructions}
\Crefname{rmk}{Remark}{Remarks}
\Crefname{obs}{Observation}{Observations}
\Crefname{trick}{Trick}{Tricks}
\Crefname{warning}{Warning}{Warnings}
\Crefname{conj}{Conjecture}{Conjectures}
\Crefname{assump}{Assumption}{Assumptions}
\Crefname{recollect}{Recollection}{Recollections}
\Crefname{terminology}{Terminology}{Terminologies}
\Crefname{condition}{Condition}{Conditions}
\Crefname{setting}{Setting}{Settings}
\Crefname{hypothesis}{Hypothesis}{Hypotheses}

\Crefname{question}{Question}{Questions}
\Crefname{example}{Example}{Examples}

\Crefname{figure}{Figure}{Figures}

\crefformat{equation}{(#2#1#3)}
\crefformat{section}{\S#2#1#3}
\crefmultiformat{section}{\S\S#2#1#3}{and~#2#1#3}{, #2#1#3}{, and~#2#1#3}

\newtheorem{thm}[subsubsection]{Theorem}
\newtheorem*{mainthm}{Main Theorem}
\newtheorem{prop}[subsubsection]{Proposition}
\newtheorem{lem}[subsubsection]{Lemma}
\newtheorem{cor}[subsubsection]{Corollary}
\newtheorem{alphThm}{Theorem}
\renewcommand{\thealphThm}{\Alph{alphThm}}
\newcommand{\neutralize}[1]{\expandafter\let\csname c@#1\endcsname\count@}
\makeatother

\theoremstyle{definition}
\newtheorem{defn}[subsubsection]{Definition}
\newtheorem{cons}[subsubsection]{Construction}
\newtheorem{nota}[subsubsection]{Notation}

\newtheorem{recollect}[subsubsection]{Recollections}

\newtheorem{warning}[subsubsection]{Warning}
\newtheorem{hypothesis}[subsubsection]{Hypothesis}
\newtheorem{rmk}[subsubsection]{Remark}
\newtheorem{obs}[subsubsection]{Observation}
\newtheorem{example}[subsubsection]{Example}

\newcommand{\yoneda}{\text{\usefont{U}{min}{m}{n}\symbol{'107}}}
\DeclareFontFamily{U}{min}{}
\DeclareFontShape{U}{min}{m}{n}{<-> dmjhira}{}

\newcommand{\operadsCat}{\mathrm{Opd}}
\newcommand{\dayConvolution}{\mathrm{Day}}
\newcommand{\wcat}{\mathrm{WCat}}
\newcommand{\presentableStable}[1]{\mathrm{Pr}^L_{{#1}-\mathrm{st}}}

\newcommand{\unitMapFromSpectra}[1]{\mathrm{U}_{#1}}

\newcommand{\largeCat}{\widehat{\mathrm{Cat}}}
\newcommand{\udl}[1]{\underline{#1}}

\newcommand{\proper}{\mathcal{P}}

\newcommand{\inflation}{\mathrm{infl}}
\newcommand{\epiCategory}{\mathrm{Epi}}
\newcommand{\completion}[1]{^{\wedge}_{#1}}

\newcommand{\elbowCat}[1]{\mathrm{Exc}^{\lrcorner}_{{#1}}}
\newcommand{\elbowCatUdl}[1]{\underline{\mathrm{Exc}}^{\lrcorner}_{{#1}}}
\newcommand{\excisiveApproximation}{P}

\newcommand{\baseCat}{\mathscr{T}}

\newcommand{\reduce}{^{\mathrm{red}}}
\newcommand{\crossEffect}{\mathrm{cr}}
\newcommand{\multiexcisive}{\mathrm{MultiExc}}
\newcommand{\excisive}{\mathrm{Exc}}

\newcommand{\terminalTCat}{\underline{\ast}}

\newcommand{\coind}{\mathrm{Coind}}

\newcommand{\surjectiveSet}{{\mathrm{Epi}}}

\newcommand{\inclusion}{\mathrm{incl}}

\newcommand{\susps}{\Sigma^{\infty}}

\newcommand{\hhom}{\mathrm{Hom}}
\newcommand{\calg}{\mathrm{CAlg}}
\newcommand{\spc}{\mathcal{S}}
\newcommand{\vop}{^{\myuline{\mathrm{op}}}}

\newcommand{\ind}{\mathrm{Ind}}

\DeclareMathOperator{\cmonoid}{\mathrm{CMon}}

\newcommand{\forget}{\mathrm{fgt}}

\newcommand{\loops}{\Omega^{\infty}}

\newcommand{\eval}{\mathrm{ev}}

\newcommand{\family}{\mathcal{F}}

\newcommand{\constant}{\operatorname{const}}
\newcommand{\cat}{\mathrm{Cat}}

\newcommand{\presentable}{\mathrm{Pr}}

\newcommand{\A}{\mathcal{A}}

\newcommand{\sC}{{\mathcal C}}

\newcommand{\D}{{\mathcal D}}

\newcommand{\op}{^{\mathrm{op}}}

\DeclareMathOperator{\mapsp}{\mathrm{map}}
\newcommand{\sphere}{\mathbb{S}}

\DeclareMathOperator{\res}{\mathrm{Res}}
\DeclareMathOperator{\mackey}{\mathrm{Mack}}
\DeclareMathOperator{\presheaf}{\mathrm{PSh}}
\newcommand{\orbit}{\mathcal{O}}

\newcommand{\exact}{^{\mathrm{ex}}}
\newcommand{\spectra}{\mathrm{Sp}}
\newcommand{\finite}{\mathrm{Fin}}

\DeclareMathOperator{\spanCat}{\mathrm{Span}}
\DeclareMathOperator{\im}{\mathrm{Im}}
\newcommand{\id}{\mathrm{id}}
\DeclareMathOperator{\map}{\mathrm{Map}}
\DeclareMathOperator{\fib}{\operatorname{fib}}
\DeclareMathOperator{\cofib}{\operatorname{cofib}}
\DeclareMathOperator{\func}{\mathrm{Fun}}

\newcommand{\module}{\mathrm{Mod}}
\newcommand{\unit}{\mathbbm{1}}

\def\colim{\qopname\relax m{colim}}

\newcommand{\arrdisp}{0.33ex}
\newcommand{\arrdisplacementsp}{0.72ex}

\newcommand{\ardis}{\ar@<\arrdisp>}
\newcommand{\ardissp}{\ar@<\arrdisplacementsp>}



\newcommand*{\Scale}[2][4]{\scalebox{#1}{$#2$}}

\title{On the universality of multiexcisive functors}
\author{{Tobias Barthel} \and {Kaif Hilman} \and {Nikolay Konovalov}}

\date{\today}

\makeatletter
\patchcmd{\@setaddresses}{\indent}{\noindent}{}{}
\patchcmd{\@setaddresses}{\indent}{\noindent}{}{}
\patchcmd{\@setaddresses}{\indent}{\noindent}{}{}
\patchcmd{\@setaddresses}{\indent}{\noindent}{}{}
\makeatother

\address{Tobias Barthel, Max Planck Institute for Mathematics, Vivatsgasse 7, 53111 Bonn, Germany}
\email{tbarthel@mpim-bonn.mpg.de}
\urladdr{https://sites.google.com/view/tobiasbarthel/}

\address{Kaif Hilman, Mathematik Zentrum der Universität Bonn, Endenicher Allee 60, 53115 Bonn, Germany}
\email{kaif@math.uni-bonn.de}
\urladdr{https://sites.google.com/view/kaif-hilman/}

\address{Nikolay Konovalov, University of Chicago, Eckhart Hall, 5734 S University Ave, Chicago IL 60637, USA}
\email{nkonovalov@uchicago.edu}
\urladdr{https://nkonovalov.gitlab.io/webpage/}

\begin{document}

\begin{abstract}
    We provide a multiplicative classification of polynomial endofunctors on spectra in terms of their Mackey functors of cross--effects. More precisely, we prove that various categories of multivariable excisive functors from spectra to spectra are symmetric monoidally equivalent to the corresponding variants  of spectral Mackey functors. The symmetric monoidal structures appearing here are the Day convolutions on both sides, and the Mackey functors we consider involve variations on the category of finite sets and surjections.  
    
    The method is first to  introduce certain multivariable functors we call subdiagonal functors. By considering them all at once using parametrised category theory, we prove inductively that they all admit  Mackey functor descriptions as symmetric monoidal categories, endowing them with a  universal property along the way.
    In particular, specialising this to univariate functors  gives a new proof and strengthening of Glasman's result about $d$--excisive endofunctors on spectra. As  application of our perspective, we prove a ``Segal conjecture'' in the context of  Goodwillie calculus when $d$ is a prime number.
\end{abstract}

\subjclass[2020]{
18F50, 
55P65,
55U35}  

\keywords{Goodwillie calculus, multiexcisive functors, Mackey functors, parametrised homotopy theory, Segal conjecture}

\maketitle

\vspace{0.5cm}
\tableofcontents

\vspace{-1cm}
\section{Introduction}

The study of ``polynomial'' functors has a long and established history in algebra and topology. 
In the derived and homotopical setting, polynomiality is often phrased in terms of Goodwillie's far-reaching and fundamental theory of  calculus via his notion of \textit{excisive functors}, see \cite{GoodwillieCalculus1,GoodwillieCalculus2,GoodwillieCalculus3}. As such, the category $\excisive_d$ of reduced $d$--excisive endofunctors on spectra is of central importance in Goodwillie calculus. In a series of deep work \cite{aroneChingChainRule,aroneChingClassification,aroneChingCrossEffects}, Arone--Ching developed a presentation of this category using Goodwillie's derivatives.

In a different direction, it has been observed that functor calculus on spectra exhibits many features reminiscent of equivariant stable homotopy theory. For example, the excisive approximation fracture squares of Kuhn looks like the standard Tate fracture squares in the equivariant realm. Prompted by this, Arone--Ching and McCarthy have shown that there is in fact an equivalence between the category of reduced $(d=2)$--excisive endofunctors on spectra with the category of genuine $C_2$--spectra, induced by taking \textit{cross--effects} as opposed to derivatives. 

This equivalence does not generalise to higher degrees $d>2$, due to a bifurcation: While $G$--spectra can be described as Mackey functors on the $G$--orbit category (\cite{guillouMay,barwick1,nardinExposeIV,CMMN2}), $\excisive_d$ is controlled by the category $\epiCategory_d$ of finite sets up to size $d$ and surjections. For $d=2$, we see that $\epiCategory_d$ is isomorphic to the orbit category for $C_2$, explaining the aforementioned result; in contrast, $\epiCategory_d$ is not realised as the orbit category of a finite group for $d>2$.

The main purpose of this paper is to systematically and completely develop a multiplicative Mackey  classification  of excisive functors in terms of cross--effects:

\begin{mainthm}
    The category $\excisive_d$ can be canonically enhanced to an $\epiCategory_{d}$--parametrised symmetric monoidal stable category $\elbowCatUdl{d}$ of multiexcisive functors and it is the initial such category. In particular, taking cross--effects induces a canonical symmetric monoidal equivalence 
        \[
            \excisive_d\simeq \mackey(\epiCategory_{d};\spectra)
        \]
    between the category of reduced $d$--excisive functors and the category of spectral $\epiCategory_d$--Mackey functors  equipped with the Day convolution structures.
\end{mainthm}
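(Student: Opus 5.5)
The plan is to realise $\excisive_d$ as the value at the terminal object of an $\epiCategory_d$--parametrised category $\elbowCatUdl{d}$, whose value at a finite set $S$ with $|S|\le d$ is the category of \emph{subdiagonal} functors $\spectra^{S}\to\spectra$. These are reduced multivariable functors that are $1$--excisive in each variable after restricting along any diagonal $\spectra^{T}\to\spectra^{S}$ induced by a surjection $S\twoheadrightarrow T$, with total degree at most $d$. Surjections $S\twoheadrightarrow T$ act by restriction along diagonals, which gives the parametrised structure. Left Kan extension along the diagonal (composition with the sum functor) gives the left adjoints, i.e.\ the parametrised coproducts. Pointwise tensor product of functors gives the symmetric monoidal structure.

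First I would verify that $\elbowCatUdl{d}$ is $\epiCategory_d$--semiadditive and stable, so that it is ambidextrous: restriction along a diagonal has a left and a right adjoint which agree. Concretely, for a surjection $f\colon S\twoheadrightarrow T$ the left Kan extension should agree with right Kan extension on subdiagonal functors. This holds because multilinear-type functors of bounded degree are both finitely continuous and cocontinuous in the relevant sense; it is the analogue of the Wirthmüller isomorphism, and the norm map is identified with the usual cross-effect splitting. I would also check that the parametrised tensor product distributes over these parametrised (co)products, since restriction is symmetric monoidal and the projection formula holds. This makes $\elbowCatUdl{d}$ an object of the category of $\epiCategory_d$--parametrised presentably symmetric monoidal stable semiadditive categories.

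The main step, and the main obstacle, is initiality. By the general theory of parametrised Mackey functors (Nardin, Cnossen--Lenz--Linskens), the initial object there is $\underline{\mackey}(\epiCategory_d;\spectra)$ with Day convolution, so it suffices to show that the unique symmetric monoidal parametrised functor $\underline{\mackey}\to\elbowCatUdl{d}$ is an equivalence. I would argue by induction on $d$ via the recollement of $\excisive_d$ into $\excisive_{d-1}$ and the homogeneous layer $\func(B\Sigma_d,\spectra)$. This matches the isotropy recollement of Mackey functors on $\epiCategory_d$ along the sieve/cosieve splitting off the terminal object $\{1,\dots,d\}\to\ast$ level. The crucial check is that the gluing functors agree, i.e.\ the Tate-like construction $\Phi$ on both sides. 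On the functor side this is Kuhn's Tate square. On the Mackey side it is the geometric fixed points for the top stratum. Both are determined by the ambidexterity norms, so the comparison map respects them.

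To run the induction uniformly, all $S$ must be handled simultaneously, so the induction is over the parametrised object. The unit map $\unit\mapsto$ (the functor $X\mapsto X$, resp.\ diagonal smash) corresponds to the Burnside unit. Generation then follows because representables $\spectra^{S}\to\spectra$, $(X_s)\mapsto\bigwedge_s X_s$ and their inductions generate $\elbowCatUdl{d}$, and these are exactly the images of representable Mackey functors. Fully faithfulness reduces to computing mapping spectra between these generators. On both sides the result is the spectral Burnside category of spans in $\epiCategory_d$, using cross-effects $\mapsp(\bigwedge^{S},F)\simeq \crossEffect_S F(\sphere,\dots,\sphere)$. Finally, evaluating at the terminal object yields $\excisive_d\simeq\mackey(\epiCategory_d;\spectra)$ symmetric monoidally via cross-effects, with Day convolution on both sides.
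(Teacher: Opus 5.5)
Your overall architecture matches the paper's: build $\elbowCatUdl{d}$ as an $\epiCategory_d$--presentably symmetric monoidal stable category and compare it with $\udl{\spectra}$ via the latter's universal property. There are, however, genuine gaps: the basic objects are misidentified, and the decisive step is not proved.

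\textbf{The basic objects.} The symmetric monoidal structure must be Day convolution, not the pointwise smash product. A pointwise smash of $d$--excisive functors is only $2d$--excisive, and its unit, the constant functor $\sphere$, is not reduced.

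The unit of $\excisive_d$ is $\unit_d=P_d\susps\loops$, and at level $[k]$ it is $\unit_{d,k}=\crossEffect_k\unit_d$. It is $\unit_{d,k}$ that corepresents $F\mapsto\crossEffect_kF(\sphere,\ldots,\sphere)$. The identity functor and the multilinear smash $(X_1,\ldots,X_k)\mapsto X_1\otimes\cdots\otimes X_k$ are $P_{\udl{1}}$ of these units. They play the role of $\widetilde{E\proper}$, not of the sphere or of orbits.

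Consequently your formula $\mapsp(\bigwedge^S,F)\simeq\crossEffect_SF(\sphere,\ldots,\sphere)$ fails whenever $|S|<d$. For $d=2$, $\mapsp(\mathrm{id},(-)^{\otimes 2}_{h\Sigma_2})$ is the fibre $\Sigma^{-1}\sphere^{t\Sigma_2}$ of the norm map $\sphere_{h\Sigma_2}\to\sphere^{h\Sigma_2}$, not $\sphere_{h\Sigma_2}$. For the same reason, inductions of multilinear smashes are not the images of representable Mackey functors. Those images are the $\Delta_f^*\unit_{d,k}$, and these generate because cross--effects evaluated at spheres are jointly conservative (\cref{lem:generators_of_elbow_cats}).

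\textbf{The levels.} Your subdiagonality condition, as stated, forces every object at a level $S$ with $|S|\geq 2$ to vanish. Taking $T=S$ makes the functor multilinear, and restricting a nonzero multilinear functor to the full diagonal gives a nonzero functor homogeneous of degree $|S|$, which is not $1$--excisive.

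You also take for granted that the levels are preserved by the structure maps. For the naive categories of functors $(d-r+1)$--excisive in each variable this is false. What makes $\elbowCatUdl{d}$ an $\epiCategory_d$--category with indexed coproducts at all is the pigeonhole theorem (\cref{thm:key_pigeonhole_principle}) for the correct condition — the cube $\{P_{\udl{k}}F\}$ is right Kan extended from $\sum k_i\leq d$ — together with the truncated double coset formula. Note also that your variance is reversed: restriction along $f$ is $\crossEffect_f$, and induction is $\Delta_f^*$.

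\textbf{The equivalence itself.} In your recollement induction along $\excisive_{d-1}\to\excisive_d\to\spectra^{B\Sigma_d}$, everything hinges on the comparison functor intertwining the gluing functor. That gluing functor is a Tate--type construction built from the right adjoint $j_*$, which is a limit over $E\family$ involving homotopy fixed points. A parametrised left adjoint has no a priori reason to preserve it. Saying that ``both are determined by the ambidexterity norms'' is not an argument, and this is exactly where Segal--conjecture--type phenomena live.

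Your fallback is to match mapping spectra between generators, coherently with composition, against the spectral Burnside category of $\epiCategory_d$. That is essentially a restatement of Glasman's theorem, not a proof of it.

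The paper needs neither route. Its recognition principle (\cref{thm:abstractSpectraRecognition}) requires only two inputs:
\begin{enumerate}
\item the inductions of units compactly generate each level;
\item $\mapsp(\widetilde{E\proper},\widetilde{E\proper})\simeq\sphere$ at each level.
\end{enumerate}
The second input comes from identifying $\widetilde{E\proper}\otimes-$ at $[r]$ with $P_{\udl{1}}\colon\elbowCat{d,r}\to\excisive_{\udl{1}}\simeq\spectra$, via the kernel computation of \cref{prop:kernel_of_P_1}. This is where your multilinear smash functors genuinely enter. Induction on the objects of the base, the Balmer--Dell'Ambrogio--Sanders projection formula and monadicity then reduce everything to the unit map $\unit\to R(\unit)$. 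That map is checked on geometric fixed points, so no gluing data ever has to be compared.
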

 
This result is couched in the language of parametrised category theory. We will review this and explain our construction of $\elbowCatUdl{d}$ below. Importantly, however, let us point out already at this juncture that the extra layer of structure exhibited here is not for abstract generality's sake, but rather is fundamental even for our approach to classifying the unstructured equivalence $\excisive_d\simeq \mackey(\epiCategory_{d};\spectra)$.

In order to unpack some of the information that is contained in our main theorem, we spell out some of its consequences more explicitly:
    \begin{itemize}
        \item (Universality) We exhibit a universal property of excisive functors on spectra when viewed as a parametrised category, in parallel to the known universal property of equivariant spectra developed in \cite{nardinExposeIV,CLLSpans}.
        \item (Intermediate categories) The parametrised point of view provides a canonical solution to the previously open problem of constructing the correct targets of the analogues of restriction functors from equivariant homotopy theory. For this purpose, we introduce the  categories $\excisive_{d,r}^{\lrcorner}$ of \textit{subdiagonal functors} which might be of independent interest. 
        \item (Glasman's theorem) As a special case, we obtain a symmetric monoidal enhancement of unpublished work of Glasman \cite{glasmanGoodwillie}, who established an equivalence  $\excisive_d \simeq \mackey(\epiCategory_d;\spectra)$, understanding which was the initial inspiration and impetus for the present work. Since his equivalence was obtained by chaining together six equivalences, it is not clear which step is compatible with the multiplicative structures. The essential point here is that neither $\excisive_d$ nor $\mackey(\epiCategory_d;\spectra)$ satisfies any obvious multiplicative universal property which lends itself to a transparent construction of a symmetric monoidal functor one way or another.
    \end{itemize}
Our work may also viewed as a spectral analogue and extension of the classification of single variable functors for abelian groups and abelian categories by \cite{bauesDreckmannFranjouPirashvili}.

Throughout, we carefully consider the multiplicative features of $\excisive_d$ under the Day convolution structure, which recently have gained some attention. For example, building upon the insights of \cite{ChingDayConvolution}, Arone--Barthel--Heard--Sanders \cite{ABHS24} computed  the Balmer spectrum of $\excisive_d$; on the other hand, in work in progress, Marius Nielsen investigates the Picard group of said category and also studies a different approach to a symmetric monoidal version of Glasman's theorem. Finally, as a byproduct of phrasing everything in this more structured fashion, we are able to make a mathematically precise dictionary between Goodwillie calculus and equivariant stable homotopy theory, which we collect in \cref{table:dictionary} after proving the main theorem. This extends the partial dictionary  as codified in \cite{ABHS24}. \\

\noindent \textbf{Outline of approach.} Before giving a more in--depth overview, we summarise the general strategy  in this paper. In general, it is difficult to build functors into or out of Mackey functors due to the subtle coherences in span categories. In contrast to that, it is very easy to build symmetric monoidal colimit--preserving functors out of the category $\spectra$ of spectra since it is the initial presentably symmetric monoidal stable category. Nevertheless, various spectral Mackey functors associated to the category $\epiCategory_d$ do assemble to an object $\myuline{\spectra}$ in $\cat_{\epiCategory_d}\coloneqq\func(\epiCategory_d\op,\cat)$ with a similar universal property by  \cite{nardinExposeIV,CLLSpans}. Our strategy is to exploit this universal property by locating the category $\excisive_d$ as part of a system of categories comparable to $\myuline{\spectra}$.  We achieve this by introducing the main definition of the article, namely that of \textit{subdiagonality} for multivariable functors (see \cref{cons:intermediate_categories}). Our approach also shows  that  $\excisive_d$ \textit{does} acquire a universal property akin to that of ordinary spectra not by itself, but when taken  as an assemblage of categories together with the multivariable subdiagonal functors (see \cref{thm:excisive_functors_as_spectral_mackey}). \\

\noindent \textbf{Subdiagonal functors.} To set the context, let $\sC$ be a small stable category, $ \D$ be a presentable stable category,  $1\leq r\leq s\leq  d$, and $f\colon [s]\twoheadrightarrow[r]$ a surjection from a finite set of size $s$ to one of size $r$ whose preimage of $i\in[r]$ has size $k_i$. First of all, recall that Goodwillie's cross--effect functors improve excisiveness in that, if $F\colon \sC^{\times r}\rightarrow \D$ is  reduced $d_i$--excisive in the $i$--th variable, then its $f$--cross--effect $\crossEffect_fF\colon \sC^{\times s}\rightarrow \D$ is reduced $(d_i-k_i+1)$--excisive in each of the  variables coming from the $i$--th variable of $F$. Conversely, excisiveness is additive in that, if $G\colon \sC^{\times s}\rightarrow\D$ were a functor which is reduced $e_j$--excisive in the $j$--th variable, then restricting along the diagonal $\Delta_f\colon \sC^{\times r}\rightarrow\sC^{\times s}$ gives a functor $\Delta_f^*G\colon \sC^{\times r}\rightarrow\D$ which is reduced $\sum_{j\in f^{-1}(i)}e_j$--excisive in the $i$--th variable.

Now, we write $\excisive_{d,r}(\sC,\D)$ for the full subcategory of $\func(\sC^{\times r},\D)$ of functors which are reduced and $(d-r+1)$--excisive in each variable. We write $\excisiveApproximation_{k_1,\ldots,k_r}$ for the endofunctor on $\func(\sC^{\times r},\D)$ given by taking the $k_i$--th excisive approximation in the $i$--th variable. By taking the Goodwillie tower in each of the $r$ variables, every object $F$ in $\excisive_{d,r}(\sC,\D)$ fits into a canonical subdivided cube whose terms are of the form $\excisiveApproximation_{k_1,\ldots,k_r}F$. We define the full subcategory of {subdiagonal functors} $\elbowCat{d,r}(\sC,\D)\subseteq \excisive_{d,r}(\sC,\D)$ to be those functors $F$ whose canonical subdivided cube from above is right Kan extended from the subdiagram consisting of those terms $P_{k_1,\ldots,k_r}F$  for which $k_1+\cdots+k_r\leq d$. Note that $\elbowCat{d,1}(\sC,\D)=\excisive_d(\sC,\D)$.

To explain the raison d'etre for these subdiagonal functors, note in general that the functor $\crossEffect_f\colon \excisive_{d,r}(\sC,\D)\rightarrow\func(\sC^{\times s},\D)$ (resp. $\Delta_f^*\colon \excisive_{d,s}(\sC,\D)\rightarrow\func(\sC^{\times r},\D)$) does not land in the full subcategory $\excisive_{d,s}(\sC,\D)$ (resp. $\excisive_{d,r}(\sC,\D)$). The reason here is that, for $F\in\excisive_{d,r}(\sC,\D)$ and $G\in\excisive_{d,s}(\sC,\D)$, the degrees of excisiveness in the variables of $\crossEffect_fF$ and $\Delta_f^*G$ depend on the sizes of the preimages of $f\colon[s]\twoheadrightarrow[r]$, thus precluding the possibility of having a uniform degree bound across the variables. The key observation now is the following:

\begin{alphThm}[{\cref{thm:key_pigeonhole_principle}}]\label{alphThm:key_pigeonhole_principle}
    We have the dashed factorisations
    \[
        \begin{tikzcd}
                \excisive_{d,r}^{\lrcorner}(\sC,\D)\dar[hook]\rar["{\crossEffect_{f}}",dashed]& \excisive_{d,s}^{\lrcorner}(\sC,\D)\dar[hook]\\
                \excisive_{d,r}(\sC,\D)\rar["{\crossEffect_{f}}"']& \func(\sC^{\times s},\D)
        \end{tikzcd}
        \hspace{1cm}
        \begin{tikzcd}
                \excisive_{d,r}^{\lrcorner}(\sC,\D)\dar[hook]& \excisive_{d,s}^{\lrcorner}(\sC,\D)\dar[hook]\lar["\Delta_f^*"',dashed]\\
                \func(\sC^{\times r},\D)& \excisive_{d,s}(\sC,\D). \lar["\Delta_f^*"]
        \end{tikzcd}
    \]
\end{alphThm}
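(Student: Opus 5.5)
The plan is to replace the Kan extension condition in the definition of $\elbowCat{d,r}(\sC,\D)$ with a condition on \emph{multihomogeneous layers}, and then check that $\crossEffect_f$ and $\Delta_f^*$ act on those layers by redistributing multidegrees without changing the total degree.

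\textbf{Step 1: reformulate subdiagonality.} For $F\in\excisive_{d,r}(\sC,\D)$ and a multi-index $(k_1,\ldots,k_r)$ with $1\leq k_i\leq d-r+1$, write
\[
D_{k_1,\ldots,k_r}F \;=\; \text{total fibre of the $r$-cube } \epsilon\mapsto \excisiveApproximation_{k_1-\epsilon_1,\ldots,k_r-\epsilon_r}F .
\]
This is the multihomogeneous layer, homogeneous of degree $k_i$ in the $i$-th variable. I would prove that $F$ is subdiagonal if and only if $D_{k_1,\ldots,k_r}F\simeq 0$ whenever $\sum_i k_i>d$.

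The argument is the standard inductive criterion for right Kan extension along an inclusion of a downward-closed subposet of a grid. One adds the missing vertices one at a time in increasing order. The value at a new vertex is the limit over the already-present vertices below it exactly when the corresponding total fibre vanishes.

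A consequence: $F$ is subdiagonal if and only if it is a finite iterated extension of multihomogeneous functors of multidegree $(k_i)$ with every $k_i\geq 1$ and $\sum_i k_i\leq d$. The multidegree conditions are stable under extensions, because all the $\excisiveApproximation$'s are exact.

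\textbf{Step 2: reduce to multihomogeneous pieces.} Both $\crossEffect_f$ and $\Delta_f^*$ are exact functors. By Step 1, both the source and target conditions are closed under finite extensions. So it suffices to treat a single multihomogeneous $F$ of multidegree $(k_1,\ldots,k_r)$ with all $k_i\geq1$ and $\sum_i k_i\leq d$, together with the analogous $G$ on $\sC^{\times s}$.

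\textbf{Step 3: cross-effects.} Use the classification of homogeneous functors in each variable separately: $F$ is the restriction along the diagonals of a functor $\widetilde F\colon\sC^{\times \sum k_i}\to\D$ that is reduced and exact in each variable, symmetric within blocks, with homotopy orbits taken (here $\D$ is stable).

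Take the cross-effect in the variables $f^{-1}(i)$, which has size $\ell_i$. Multilinearity decomposes it as a sum, indexed by surjections from the $k_i$ slots onto $f^{-1}(i)$ modulo symmetry, of functors that are multihomogeneous of multidegree $(m_j)_{j\in f^{-1}(i)}$, where every $m_j\geq 1$ and $\sum_{j\in f^{-1}(i)} m_j=k_i$. Hence $\crossEffect_fF$ is an extension of multihomogeneous pieces with $m_j\geq1$ and total degree $\sum_j m_j=\sum_i k_i\leq d$.

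This is the pigeonhole: since all $s$ degrees are at least $1$ and they sum to at most $d$, each satisfies $m_j\leq d-s+1$. So $\crossEffect_fF$ lies in $\excisive_{d,s}(\sC,\D)$, and by Step 1 it lies in $\elbowCat{d,s}(\sC,\D)$.

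\textbf{Step 4: diagonals.} Symmetrically, let $G$ be multihomogeneous of multidegree $(e_j)$ with $\sum_j e_j\leq d$. Then $\Delta_f^*G$ is multihomogeneous of multidegree $\bigl(\sum_{j\in f^{-1}(i)}e_j\bigr)_i$. This follows by restricting the multilinear model along diagonals, or from the additivity of excisiveness recalled above combined with homogeneity, which is preserved because the layers of a diagonal restriction are again diagonal restrictions. The total degree is unchanged and each entry is at least $1$, so pigeonhole gives each entry at most $d-r+1$, and Step 1 applies again.

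\textbf{Main obstacle.} I expect Step 1 to be the delicate point, specifically checking that the inductive Kan extension criterion matches the grid shape of the subdivided cube. The second delicate point is the precise decomposition of the cross-effect of a homogeneous functor into pieces of positive degree in every variable. That decomposition is where reducedness enters: it rules out degree-$0$ pieces. I would establish it first for exact-in-each-variable functors before passing to homotopy orbits, which commute with everything because $\D$ is stable.
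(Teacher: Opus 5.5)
Your proposal is correct, and it reaches the theorem by a genuinely different route from the paper.

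\textbf{What the paper does.} The paper first factors $f$ into elementary surjections and only treats $[r+1]\twoheadrightarrow[r]$. It decomposes $F$ not into multihomogeneous layers but into the multi-excisive pieces $\excisiveApproximation_{\udl{\ell}}F$ with $\udl{\ell}\in Q^{\lrcorner}_{d,r}$, of which $F$ is a finite limit. It then uses only degree \emph{upper bounds} from Lurie: diagonal restriction adds excisive degrees, cross--effects lower them, and $\crossEffect_{\udl{k}}\excisiveApproximation_{\udl{k}}\simeq\excisiveApproximation_{\udl{1}}\crossEffect_{\udl{k}}$. Upper bounds alone do not conserve total degree, so square (1) needs two separate checks:
\begin{itemize}
\item a case analysis showing $\crossEffect_{2,\udl{1}}F$ is uniformly $(m-1)$--excisive;
\item the Kan extension condition, verified via \cref{lem:pointwise_cubical_induction} by showing the layers $\crossEffect_{\udl{k}}\excisiveApproximation_{\udl{k}}\crossEffect_{2,\udl{1}}F\simeq\excisiveApproximation_{\udl{1}}\crossEffect_{k_1+k_2,k_3,\ldots}F$ vanish when $\sum\udl{k}>d$.
\end{itemize}
Square (2) is closed off with the one-directional sufficient criterion \cref{lem:generators_of_truncated_subdivided_cubes}.

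\textbf{What your route buys.} You pay for a heavier input, namely the classification of (multi)homogeneous functors by symmetric multilinear ones. In exchange you get exact multidegrees: $\crossEffect_f$ and $\Delta_f^*$ only redistribute multidegree, with all entries positive and total degree conserved. A single pigeonhole then gives both the uniform excisiveness bound and subdiagonality at once, for arbitrary $f$ and without factorising. Your if-and-only-if characterisation of subdiagonality by vanishing layers is also sharper than what the paper records. The paper's vanishing computation in (1)(b) is in effect the cross--effect detection shadow of your explicit direct-sum decomposition.

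\textbf{Two small points.}
\begin{itemize}
\item In Step 1, when some $k_i=1$ the cube at $\udl{k}$ leaves the grid $Q_{d,r}$. You should adopt the convention $\excisiveApproximation_0=0$, which is harmless by reducedness; with it, the boundary Kan extension condition is again the vanishing of $D_{\udl{k}}F$.
\item In Step 4 you do not need homogeneity of $\Delta_f^*G$, and your second justification for it is vague. Additivity of excisiveness already shows that $\Delta_f^*G$ is $\udl{n}$--excisive with $n_i=\sum_{j\in f^{-1}(i)}e_j$ and $\sum_i n_i\leq d$. Then $D_{\udl{k}}\Delta_f^*G\simeq 0$ whenever $\sum\udl{k}>d$, since some $k_i>n_i$. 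This is exactly how the paper argues square (2).
\end{itemize}
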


\noindent For instance, in the special case of $r=1$, the result says that $\crossEffect_fF=\crossEffect_sF\colon\sC^{\times s}\rightarrow \D$ is not just $(d-s+1)$--excisive in each variable, but even satisfies the subdiagonality descent condition. Even more explicitly, it is worth pausing to contemplate the often enlightening case of $d=3$, $r=1$, and $s=2$. The result then says that while $\crossEffect_2F\colon\sC^{\times 2}\rightarrow\D$ is neither $(2,1)$--excisive nor $(1,2)$--excisive,  it is nonetheless built  by gluing these pieces in a canonical manner. 

We prove this theorem by appealing to the pigeonhole principle several times. To our eyes, this highlights that the subdiagonality condition occupies a certain numerical ``Goldilocks zone'' which could be of independent interest. With these said, we next explain how to exploit the peculiarities of these subdiagonal functors to obtain a Mackey description of multiexcisive functors using the theory of parametrised higher categories as introduced in \cite{parametrisedIntroduction,expose1Elements} and more specifically, the aspects thereof as developed  in \cite{nardinExposeIV,CLLSpans}.\\

\noindent \textbf{Parametrised categories and the main theorem.} 
In \cite{nardinThesis}, Nardin showed that, for so--called \textit{atomic orbital} categories $\baseCat$ (see \cref{defn:types of orbits}), the Mackey functor categories $\{\mackey(\baseCat_{/v};\spectra)\}_{v\in\baseCat}$ assemble to an object $\myuline{\spectra}$ in $\cat_{\baseCat}\coloneqq \func(\baseCat\op,\cat)$, usually called the \textit{$\baseCat$--category of genuine $\baseCat$--spectra}. Importantly, by introducing the concept of \textit{$\baseCat$--stability} and \textit{$\baseCat$--presentability} as $\baseCat$--parametrised versions of stability and presentability, he showed that $\myuline{\spectra}$ satisfies a universal property in $\cat_{\baseCat}$ analogous to the universal  property of $\spectra$ in $\cat$.

It turns out that for each $1\leq r\leq d$, the categories $\epiCategory_{d,r}\coloneqq (\epiCategory_d)_{/[r]}$ are all atomic orbital. Leveraging on \cref{alphThm:key_pigeonhole_principle}, we show that, for a fixed $d$, the categories $\{\elbowCat{d,r}(\sC,\D)\}_{1\leq r\leq d}$ assemble to an object $\elbowCatUdl{d}(\sC,\D)\in\cat_{\epiCategory_d}$.  But more is true, and this is distilled in the next theorem.

\begin{alphThm}[{\cref{thm:basic_properties_of_parametrised_excisive_category,thm:parametrised_excisive_category_is_symmetric_monoidal}}]\label{alphThm:parametrised_excisive_category_is_symmetric_monoidal}
Let $\sC\in \cat\exact$ and $\D\in \presentable^L_{\mathrm{st}}$. The object $\elbowCatUdl{d}(\sC,\D)\in\cat_{\epiCategory_d}$ is $\epiCategory_{d}$--presentable--stable. Moreover, in the presence of symmetric monoidal structures, i.e., if  $\sC\in\calg(\cat\exact)$ and $\D\in \calg(\presentable^L_{\mathrm{st}})$, then  $\elbowCatUdl{d}(\sC,\D)$ admits an $\epiCategory_{d}$--presentably symmetric monoidal structure coming from the Day convolutions.  
\end{alphThm}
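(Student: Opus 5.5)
We reduce $\epiCategory_d$--presentable--stability of $\elbowCatUdl{d}(\sC,\D)$ to Nardin's criterion for atomic orbital bases. Concretely, it suffices to check three things:
\begin{enumerate}
\item each fibre $\elbowCat{d,r}(\sC,\D)$ is presentable and stable;
\item each restriction functor is colimit--preserving and exact;
\item the restrictions admit left and right adjoints satisfying the Beck--Chevalley condition, with the ambidexterity (norm) map invertible.
\end{enumerate}

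Under the identification $\epiCategory_{d,r}=(\epiCategory_d)_{/[r]}$, the restriction along a surjection $f\colon[s]\twoheadrightarrow[r]$ is the diagonal restriction $\Delta_f^*\colon\elbowCat{d,s}\to\elbowCat{d,r}$ of \cref{alphThm:key_pigeonhole_principle}. The candidate for its (co)induction is the cross--effect $\crossEffect_f$.

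\textbf{Step 1: fibrewise structure.} I would first show that $\elbowCat{d,r}(\sC,\D)\subseteq\func(\sC^{\times r},\D)$ is closed under all limits and colimits and is accessible. The multivariable excisive approximations $\excisiveApproximation_{k_1,\ldots,k_r}$ commute with colimits, since they are built from finite limits and filtered colimits in a stable $\D$. The subdiagonal condition asks that a finite diagram of such approximations be a right Kan extension, which is a finite limit condition. Both are therefore preserved by colimits and limits in stable $\D$. This gives presentability and stability, and shows the inclusion is both a left and a right adjoint.

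\textbf{Step 2: adjunctions and Beck--Chevalley.} The restriction $\Delta_f^*$ visibly preserves colimits. Next I would show that $\crossEffect_f$ is simultaneously left and right adjoint to $\Delta_f^*$ on subdiagonal functors. Classically, for reduced multi-excisive functors into a stable target, the cross--effect equals both the total fibre and the total cofibre of the corresponding cube. It is therefore both the left and right Kan extension along $\Delta_f$ followed by projection onto the multi-reduced part. \cref{alphThm:key_pigeonhole_principle} guarantees that these Kan extensions land back in the subdiagonal subcategories, so the adjunctions restrict. The Beck--Chevalley condition for pullback squares in $\epiCategory_d$ should be a combinatorial identity for cross--effects of diagonals, namely a decomposition of $\Delta_g^*\crossEffect_f$ indexed by the orbit decomposition of the pullback. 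The coincidence of the left and right adjoints is exactly ambidexterity, and I would check that the identification agrees with Nardin's canonical norm map. I expect this compatibility check to be the main obstacle: the equivalence of total fibre and total cofibre must be matched with the parametrised norm, and the counting must work in the subdiagonal range. I would attack it by reducing to representables $\sigma\mapsto\D$-tensors of $\map(c,-)$ via density, where the relevant cubes are explicit.

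\textbf{Step 3: symmetric monoidality.} For Day convolution, I would equip each fibre with the localisation of Day convolution on $\func(\sC^{\times r},\D)$, using the product symmetric monoidal structure on $\sC^{\times r}$. The key point is that $\excisiveApproximation_{k_1,\ldots,k_r}$ and the subdiagonal localisation are compatible with Day convolution. I would show that the local equivalences form an ideal, using that convolving with representables shifts excisive degree additively and that the subdiagonal cutoff $k_1+\cdots+k_r\le d$ is preserved under this addition. The restrictions $\Delta_f^*$ are then symmetric monoidal, since $\Delta_f$ is. The parametrised projection formula for $\crossEffect_f\dashv\Delta_f^*$ follows from the same representable computation. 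This assembles into an $\epiCategory_d$--presentably symmetric monoidal structure, for instance via a functor $\epiCategory_d\op\to\calg(\presentable^L)$ that is fibrewise closed and satisfies the projection formula.
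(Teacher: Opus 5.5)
Your skeleton matches the paper's: fibrewise presentable stability, (co)induction via the biadjunction of \cref{cor:omnibus_multivariable_cross-effects} plus a double coset formula, semiadditivity from biadjointness, then localised Day convolution plus a projection formula. However, you have the variance backwards, and this makes Step 3 false as written. An object of $\cat_{\epiCategory_d}=\func(\epiCategory_d\op,\cat)$ must send $f\colon[s]\twoheadrightarrow[r]$ to a functor $\elbowCat{d,r}(\sC,\D)\to\elbowCat{d,s}(\sC,\D)$. So the restrictions are the cross--effects $\crossEffect_f$, and the (co)inductions are $\Delta_f^*$. The paper builds the covariant diagram of $\Delta^*$'s via \cref{thm:key_pigeonhole_principle}(2) and then passes to biadjoints.

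For stability this is only a relabelling. But your claim that $\Delta_f^*$ is symmetric monoidal ``since $\Delta_f$ is'' is wrong. Precomposition along a symmetric monoidal functor is only lax for Day convolution. For example, for $\sC=\spectra^{\omega}$ it sends the unit $\susps\loops(-)\otimes\susps\loops(-)$ of $\func_{\udl{\ast}}(\sC^{\times 2},\spectra)$ to $(\susps\loops(-))^{\otimes 2}$. Here $\Delta_f^*$ is the analogue of induction, not restriction. What you actually need is twofold:
\begin{itemize}
\item that $\crossEffect_f\simeq(\Delta_{f!}-)\reduce$ is symmetric monoidal, which is where symmetric monoidality of $\Delta_f$ enters;
\item that $\crossEffect_f$ still preserves the unit after localising, i.e.\ that $\unit_{d,s}\to\crossEffect_f\unit_{d,r}$ is an equivalence.
\end{itemize}
The second point is genuinely nontrivial. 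The paper proves it in \cref{corollary: cross-effect is symmetric monoidal} and \cref{prop:symmetric_monoidal_refinement_of_elbow_cross_effects}. It shows that $\crossEffect_f$ followed by the localisation onto $\elbowCat{d,s}$ kills the fibre of $\unit\to\unit_{d,r}$, using the left adjoints in \cref{thm:key_pigeonhole_principle}(2). Likewise, the projection formula to check is $\Delta_f^*(F\otimes\crossEffect_fG)\simeq\Delta_f^*F\otimes G$. With the roles swapped, taking $F$ to be the unit would force $\crossEffect_f\Delta_f^*G\simeq G$, which the double coset formula contradicts.

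The supporting steps also have gaps.
\begin{itemize}
\item In Step 1, presentability via accessibility and colimit-closure is fine, but closure under limits does not follow from your argument. $\excisiveApproximation_{\udl{k}}$ involves sequential colimits and does not commute with infinite limits; this is exactly the caveat in \cref{obs:elbow_categories_are_stable_subcats}. You still need reflectivity of $\elbowCat{d,r}\subseteq\excisive_{d,r}$ to localise Day convolution. The paper gets it from the explicit left adjoint $\lim_{Q^{\lrcorner}_{d,r}}\excisiveApproximation_{\udl{\bullet}}$ in \cref{prop:reflection_from_all_excisive_into_elbow_excisive}, which rests on \cref{lem:generators_of_truncated_subdivided_cubes}.
\item Your tensor-ideal argument misdescribes Day convolution. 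It does not add excisive degrees; the pointwise product does. Instead, $F\otimes G$ is $d$--excisive as soon as $G$ is (\cref{lemma: day with excisive is excisive}). Hence each $\excisiveApproximation_{\udl{k}}$ is smashing, and the subdiagonal localisation, being a finite limit of these, is smashing too.
\item Beck--Chevalley over $\epiCategory_d$ is not merely a combinatorial identity. The pullback in $\finite_{\epiCategory_d}$ only contains the good $U\subseteq A\times_EB$ with $|U|\leq d$. The decomposition of $\crossEffect_g\Delta_f^*$ on all multireduced functors (\cref{proposition: double-coset formula}) runs over all good $U$. So you must show $\crossEffect_{g_U}F\simeq 0$ for $|U|>d$. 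This is a second pigeonhole argument: reduce to $F$ that are $(\ell_1,\ldots,\ell_{|A|})$--excisive with $\sum_i\ell_i\leq d$ (\cref{prop:doubcle_coset_truncated_to_d}). It is where subdiagonality, not just a uniform excisiveness bound, is essential.
\end{itemize}
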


In the special case of $\sC=\spectra^{\omega}$ and $\D=\spectra$, we write $\elbowCat{d,r}$ and $\elbowCatUdl{d}$ for $\elbowCat{d,r}(\sC,\D)$ and $\elbowCatUdl{d}(\sC,\D)$, respectively. By virtue of \cref{alphThm:parametrised_excisive_category_is_symmetric_monoidal} and Nardin's universal property for $\myuline{\spectra}$, we obtain a map $\myuline{\spectra}\rightarrow \elbowCatUdl{d}$ in $\calg(\cat_{\epiCategory_d})$. As part of the theory of subdiagonal functors, we are able to enumerate the generators in the categories $\elbowCat{d,r}$. Using this as an input for a ``geometric fixed points'' stratification induction, which we codify axiomatically as \cref{thm:abstractSpectraRecognition},  we obtain the  main theorem of the article.

\begin{alphThm}[{\cref{thm:excisive_functors_as_spectral_mackey}}]\label{alphThm:excisive_functors_as_spectral_mackey}
    Let $d$ be a positive integer and $r\leq d$. Then the  symmetric monoidal functor $\myuline{\spectra}\rightarrow \elbowCatUdl{d}$ is an equivalence. Evaluating at level $r$, this yields a symmetric monoidal equivalence $\elbowCat{d,r}\simeq \mackey(\epiCategory_{d,r};\spectra)$ with the Day convolution symmetric monoidal structures on both sides. 
\end{alphThm}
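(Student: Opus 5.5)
The plan is to apply the abstract recognition principle \cref{thm:abstractSpectraRecognition} to the symmetric monoidal $\epiCategory_d$--functor $\Phi\colon\myuline{\spectra}\rightarrow\elbowCatUdl{d}$. This functor is obtained from Nardin's universal property of $\myuline{\spectra}$ as the initial $\epiCategory_d$--presentably symmetric monoidal $\epiCategory_d$--stable category, using \cref{alphThm:parametrised_excisive_category_is_symmetric_monoidal}. Since $\Phi$ is already symmetric monoidal, it suffices to prove that it is an equivalence of underlying $\epiCategory_d$--categories, i.e.\ levelwise at each $[r]\in\epiCategory_d$. The monoidal statement at level $r$ then follows, with the Day convolution on $\mackey(\epiCategory_{d,r};\spectra)\simeq\myuline{\spectra}([r])$ on one side and the Day convolution on $\elbowCat{d,r}$ on the other.

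The equivalence itself I would prove by induction on $d-r$, i.e.\ downward induction on $r$, via a geometric fixed points stratification. The base case $r=d$ is the maximal object. There $\epiCategory_{d,d}$ is trivial, since the only surjections onto $[d]$ from sets of size at most $d$ are bijections, and we may treat it as the identity-only category. The category $\elbowCat{d,d}$ consists of reduced $1$--excisive functors in each of $d$ variables with the subdiagonal condition vacuous, i.e.\ multilinear functors $(\spectra^\omega)^{\times d}\rightarrow\spectra$. These are equivalent to $\spectra$ via evaluation at $(\sphere,\ldots,\sphere)$, and $\Phi$ sends $\sphere$ to the unit (the smash product functor), so $\Phi$ is an equivalence here.

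For the inductive step at level $r$, both sides carry a recollement: a ``proper'' part generated by the images of the induction (left adjoint to restriction $\Delta_f^*$, or dually cross--effect) from levels $s>r$, and a ``geometric fixed points'' quotient. On the Mackey side this is the standard isotropy separation, and the quotient is $\spectra$ again, since $\Phi^{[r]}$ is the geometric fixed points for the orbit $[r]$. Using the enumeration of generators of $\elbowCat{d,r}$, the plan is to show three things. First, $\Phi$ sends compact generators of $\myuline{\spectra}([r])$, namely the induced sphere objects $\mathrm{Ind}_f\sphere$ for $f\colon[s]\twoheadrightarrow[r]$, to a set of compact generators of $\elbowCat{d,r}$; these are the functors $\Delta_{f,!}$ of multilinear smash products, made available by \cref{alphThm:key_pigeonhole_principle}. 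Second, $\Phi$ is fully faithful on these generators. Full faithfulness reduces by the parametrised adjunctions to mapping spectra at higher levels, which are handled by the induction hypothesis, plus a computation of the geometric fixed point part. Third, the quotient of $\elbowCat{d,r}$ by the proper part is $\spectra$: its objects are subdiagonal functors whose cross--effects all vanish, which should be exactly the multilinear-on-the-diagonal pieces $P_{1,\ldots,1}$ relative to total degree. \Cref{thm:abstractSpectraRecognition} then packages these inputs into the equivalence.

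I expect the main obstacle to be the identification of the quotient, i.e.\ the geometric fixed points of $\elbowCat{d,r}$ at $[r]$, together with checking that the generating set is exactly the induced objects. The difficulty is that the subdiagonality condition, rather than mere multi-excisiveness, must be used to kill the extra terms $P_{k_1,\ldots,k_r}$ with $\sum k_i>d$. To control this I would first analyse the case $r=1$ by hand, where it amounts to the classical fact that the $d$--th layer of $\excisive_d$ modulo lower cross--effects is governed by the $d$--th derivative. After that, I would push the general case through the Kan extension description of the subdivided cube.
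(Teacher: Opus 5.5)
Your overall route is the paper's. Its proof of \cref{thm:excisive_functors_as_spectral_mackey} consists exactly of three steps:
\begin{enumerate}
\item checking \cref{hypothesis:sphereInvertibleCategories}~(1) via \cref{lem:generators_of_elbow_cats};
\item checking \cref{hypothesis:sphereInvertibleCategories}~(2) via \cref{lem:geometric_fixed_points_of_elbow_cats_is_spectra}, i.e.\ the identification of the geometric fixed points at $[r]$ with $\excisiveApproximation_{\udl 1}\colon\elbowCat{d,r}\to\excisive_{\udl 1}\simeq\spectra$;
\item invoking \cref{thm:abstractSpectraRecognition}.
\end{enumerate}
Step 2 rests on \cref{prop:kernel_of_P_1} and on the cube/peeling argument of \cref{lem:conserv_crosseffects}, which is the argument you anticipate for your ``main obstacle''. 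Your downward induction on $r$ and your ``fully faithful on generators'' step just re-run the internals of \cref{thm:abstractSpectraRecognition}. The paper does that part differently, via \cite{bds16}, monadicity and the unit map $\unit\to R\unit$. Your version is harmless, but it is redundant once you cite the theorem.

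One step as written would fail, though the repair is immediate. It concerns what your functor does to the induced spheres.
\begin{itemize}
\item In $\elbowCatUdl{d}$, induction is $\Delta_f^*$ and restriction is $\crossEffect_f$; the latter is $\Delta_{f!}$ followed by multireduction.
\item So your functor sends $\mathrm{Ind}_f\sphere$, for $f\colon[s]\twoheadrightarrow[r]$, to $\Delta_f^*\unit_{d,s}$. Here $\unit_{d,s}\simeq\crossEffect_{t_s}(P_d\susps\loops)$ and $t_s\colon[s]\twoheadrightarrow[1]$ is the unique surjection.
\item This is an induced multilinear smash product only when $s=d$. Already at level $[1]$, $\sphere$ goes to $P_d\susps\loops$, not to the identity functor.
\end{itemize}
The induced multilinear smash products $\Delta_f^*\excisiveApproximation_{\udl 1}\unit_{d,s}$ are instead the images of the geometric idempotents $\widetilde{E\proper}$. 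They do generate, but they are not compact: for $d=2$, the identity functor corresponds to $\widetilde{EC_2}\in\spectra_{C_2}$. Therefore \cref{hypothesis:sphereInvertibleCategories}~(1), and any generators-plus-full-faithfulness argument, must be run with $\Delta_f^*\unit_{d,s}$, as in \cref{lem:generators_of_elbow_cats}.
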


\noindent This strengthens Glasman's result in at least two ways: firstly, it gives a multiplicative enhancement of the Mackey description; secondly, along the way in the inductive procedure to recover his equivalence in the case of $r=1$, we also  obtain similar equivalences for multivariable functors. In fact, because we obtain an equivalence of $\epiCategory_d$--parametrised categories, $\elbowCatUdl{d}$ thus enjoys the  universal properties of $\myuline{\spectra}$, and  all these equivalences are compatible across different values of $d$ and $r$. We explain one consequence of this compatibility next. 

\vspace{1mm}

For $r\geq 1$, we will write $\epiCategory_{\infty,r}$ for the category $\epiCategory_{/[r]}$, so that the objects are full labelled partitions $(k_1,\ldots,k_r)$ of a set into $r$ nonempty bins.  To a stable presentable category $\D$, we may now define its associated category $\mackey(\epiCategory_{\infty,r};\D)$ of $\epiCategory_{\infty,r}$--Mackey functors and  we  define  $\mackey_{\mathrm{fs}}(\epiCategory_{\infty,r};\D)$ of $\epiCategory_{\infty,r}$ to be the full subcategory of \textit{finitely supported} Mackey functors, i.e., those that vanish away from finitely many objects of $\epiCategory_{\infty,r}$.  For a small stable category $\sC$, we will denote by $\multiexcisive(\sC^{\times r},\D)$ the full subcategory of $\func(\sC^{\times r},\D)$ consisting of those functors which are multireduced and multiexcisive, i.e., those which are reduced and $d_i$--excisive for some $d_i\geq 0$ in the $i$--th variable. We then have:

\begin{alphThm}[{\cref{cor:finitely_supported_equivalence}}]\label{alphThm:finitely_supported_equivalence}
    Let $\D$ be a presentable stable category and $r\geq 1$. The functor  which assigns to a multireduced and multiexcisive functor $F\colon (\spectra^{\omega})^{\times r}\rightarrow\D$  the Mackey functor which takes $(k_1,\ldots,k_r)$ to $(\crossEffect_{k_1,\ldots,k_r}F)(\sphere,\ldots,\sphere)$ participates in an equivalence
    \[\multiexcisive((\spectra^{\omega})^{\times r},\D)\simeq \mackey_{\mathrm{fs}}(\epiCategory_{\infty,r};\D).\]
\end{alphThm}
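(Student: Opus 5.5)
Since \cref{alphThm:finitely_supported_equivalence} is a statement about $\D$-valued functors, the plan is to first extend \cref{alphThm:excisive_functors_as_spectral_mackey} from $\D=\spectra$ to arbitrary presentable stable $\D$, and then take a colimit over $d$.

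\textbf{Step 1: change of coefficients.} By \cref{alphThm:parametrised_excisive_category_is_symmetric_monoidal}, each $\elbowCat{d,r}(\spectra^\omega,\D)$ is presentable stable. I expect it to be identified with $\elbowCat{d,r}\otimes\D$ in $\presentable^L_{\mathrm{st}}$, because both the excisivity conditions and the right Kan extension condition defining subdiagonality are limit conditions. More precisely, the natural map $\func((\spectra^\omega)^{\times r},\spectra)\otimes\D\simeq\func((\spectra^\omega)^{\times r},\D)$ is compatible with the localisations $\excisiveApproximation_{k_1,\ldots,k_r}$, which are computed by finite limits and filtered colimits. Similarly, $\mackey(\epiCategory_{d,r};\D)\simeq\mackey(\epiCategory_{d,r};\spectra)\otimes\D$. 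Tensoring the equivalence of \cref{alphThm:excisive_functors_as_spectral_mackey} with $\D$ then gives
\[\elbowCat{d,r}(\spectra^\omega,\D)\simeq\mackey(\epiCategory_{d,r};\D).\]
I would also check that this equivalence is given by $F\mapsto\big((k_1,\ldots,k_r)\mapsto\crossEffect_{k_1,\ldots,k_r}F(\sphere,\ldots,\sphere)\big)$. For this it suffices to verify the formula on generators, using the enumeration of generators of $\elbowCat{d,r}$.

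\textbf{Step 2: compatibility in $d$.} For $d\le d'$ there is an inclusion $\elbowCat{d,r}\subseteq\elbowCat{d',r}$. Indeed, a functor that is $(d-r+1)$-excisive in each variable and subdiagonal for $d$ has $\excisiveApproximation_{k}F\simeq F$ already below the $d$-diagonal, so its subdivided cube for $d'$ is also right Kan extended. On the Mackey side, $\epiCategory_{d,r}\subseteq\epiCategory_{d',r}$ is a sieve (a downward-closed full subcategory under surjections). The corresponding inclusion of Mackey functors is extension by zero, which for spans is left Kan extension and agrees with right Kan extension on the sieve. Because the cross-effect formula is identical at every level, the equivalences of Step 1 commute with these inclusions. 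This compatibility is also what the parametrised and $d$-compatible formulation of the main theorem is meant to supply.

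\textbf{Step 3: pass to the union.} First show that every multiexcisive $F$ lies in some $\elbowCat{d,r}$. If $F$ is $d_i$-excisive in variable $i$, take $d\ge\sum_i d_i$, and also $d-r+1\ge\max_i d_i$. Then $\excisiveApproximation_{k}F\simeq F$ as soon as every $k_i\ge d_i$, so the whole cube is determined by terms with $\sum_i k_i\le d$. I expect the right Kan extension condition here to need a short cofinality argument, but it should be routine. Hence
\[\multiexcisive=\bigcup_d\elbowCat{d,r}(\spectra^\omega,\D).\]
On the Mackey side, $\mackey_{\mathrm{fs}}(\epiCategory_{\infty,r};\D)=\bigcup_d\mackey(\epiCategory_{d,r};\D)$, since finitely supported functors are exactly those supported on some sieve $\epiCategory_{d,r}$. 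Both sides are thus filtered unions of full subcategories, and the compatible levelwise equivalences glue to the claimed equivalence.

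\textbf{Main obstacle.} I expect the hardest part to be Step 1, specifically verifying that the subdiagonal condition commutes with $-\otimes\D$ and that the resulting equivalence is given by the cross-effect formula for general $\D$. If the tensoring argument is awkward, the fallback is to rerun the abstract recognition theorem \cref{thm:abstractSpectraRecognition} directly with $\D$-coefficients, using $\D$-valued generators.
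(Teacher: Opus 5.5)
Your plan is essentially the paper's proof: tensor the levelwise equivalences $\elbowCat{d,r}\simeq\mackey(\epiCategory_{d,r};\spectra)$ of \cref{thm:excisive_functors_as_spectral_mackey} with $\D$, check that they intertwine the inclusions $\elbowCat{d,r}\subseteq\elbowCat{d+1,r}$ with the extension-by-zero inclusions $p^*$, and take the union over $d$; your Steps~1 and~3 spell out what the paper uses with little comment.

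The step to tighten is the compatibility in Step~2. Objectwise agreement of the cross--effect values does not control the transfers. As you anticipate, the compatibility should come from the parametrised equivalence together with \cref{lem:identification_of_geometric_fixed_points}. That lemma identifies $\excisiveApproximation^{\lrcorner}_{b,r}$ with the unit of the smashing localisation $\Phi^{d,b,r}$, whose counterpart on $\myuline{\spectra}_{d,r}$ is $p_!$, and any equivalence in $\presentable^L_{\epiCategory_{d,r}\mathrm{-st}}$ commutes with this localisation. Relatedly, $\epiCategory_{b,r}\subseteq\epiCategory_{d,r}$ is the complement of the sieve $\proper_{d,b,r}$, not a sieve, and $p^*$ is precomposition with $\spanCat(p)$ rather than a Kan extension.
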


\noindent Observe that for $r\geq 2$, this is a priori \textit{not} the description  from applying the identification for $r=1$. We explain this point in \cref{rmk:multivariable_classification_not_currying}. As far as we are aware, this is the first such classification for multivariable functors.\\

\noindent\textbf{An application.} To close off this introduction, let us illustrate a potential use of these Mackey descriptions in Goodwillie calculus which highlights the technical as well as suggestive powers of the equivariant--to--calculus dictionary in \cref{table:dictionary}.

One of the deepest results in equivariant homotopy theory is the Segal conjecture, which is now a theorem. It says that there is an equivalence $\big((\sphere_G)^G\big)\completion{I(G)}\simeq \sphere^{BG} $ of spectra
where $G$ is a finite group, $(\sphere_G)^G$ is the genuine $G$--fixed points of the genuine $G$--spectrum, and $I(G)$ is the augmentation ideal of the Burnside ring $A(G)$. This was first proved by Lin \cite{lin} for $G=C_2$  and  by Adams--Gunawardena--Miller \cite{adamsGunawardenaMiller} for all elementary abelian $p$--groups by means of brilliant homological algebra computations, and finally by Carlsson \cite{carlssonSegalConjecture} for all finite groups in a celebrated tour de force of equivariant stable homotopy theory.

Under the dictionary in \cref{table:dictionary}, one may fairly ask if there might be an analogue in calculus. After a suitable translation, this asks whether or not the canonical map
\begin{equation}
    \label{eqn:naive_segal_conjecture}
    P_d\susps\loops(-)\completion{I(d)}\longrightarrow \big((-)^{\otimes d}\big)^{h\Sigma_d}
\end{equation}
of $d$--excisive functors $\spectra^{\omega}\rightarrow\spectra$ is an equivalence. Here, $I(d)$ is the augmentation ideal (cf.~\cref{cons:completion_at_ideals}) of the Goodwillie--Burnside ring $A(d)$ introduced in \cite{ABHS24}, and $(-)\completion{I(d)}$ is the corresponding $I(d)$--adic completion. From the perspective of calculus, the divided power functor $\big((-)^{\otimes d}\big)^{h\Sigma_d}$ is a complicated object, and \cref{eqn:naive_segal_conjecture} being equivalence would imply that we have algebraically decompleted it into the simpler object $P_d\susps\loops(-)$.

In the case $d=2$, since we have $\excisive_2\simeq \mackey(\epiCategory_2;\spectra)\simeq\spectra_{C_2}$, this map is indeed an equivalence by Lin's theorem. Disappointingly, the map \cref{eqn:naive_segal_conjecture} already fails to be an equivalence when $d=3$. However, we are still able to prove the following:

\begin{alphThm}[{\cref{thm:segal_conjecture_calculus}}]\label{alphThm:segal_conjecture}
    Let $d=p$ be a prime number. Then the map \cref{eqn:naive_segal_conjecture} is a $p$--adic equivalence.
\end{alphThm}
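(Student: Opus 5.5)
We plan to transport the problem through the Mackey description of \cref{alphThm:excisive_functors_as_spectral_mackey}, so that \cref{eqn:naive_segal_conjecture} becomes a statement about a map of spectral $\epiCategory_p$--Mackey functors. Under $\excisive_p\simeq\mackey(\epiCategory_p;\spectra)$, the unit $P_p\susps\loops$ corresponds to the Burnside (unit) Mackey functor $\unit$, and $\big((-)^{\otimes p}\big)^{h\Sigma_p}$ corresponds to the right adjoint/coinduction from the top stratum $[p]$ of the trivial object, i.e.\ a ``Borel'' object whose value at $[k]$ is built from homotopy fixed points of the stabilisers. The map \cref{eqn:naive_segal_conjecture} is then the unit map $\unit\to \mathrm{Borel}(\unit)$, and we must check that after $I(p)$--completion it is an equivalence after $p$--completion. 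Since all functors are $p$--excisive and hence determined by finitely many cross--effect values at $\sphere$, it suffices to check this on each Mackey value, i.e.\ on each cross--effect evaluated at $\sphere$.

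Next we use the isotropy separation/stratification of $\epiCategory_p$ along the ``geometric fixed points'' used in \cref{thm:abstractSpectraRecognition}: both sides are filtered by the strata indexed by $[k]$, $1\le k\le p$, with associated graded pieces given by geometric fixed point functors $\Phi^{[k]}$ and automorphism groups $\Sigma_{(k_1,\dots,k_r)}$--type products of symmetric groups (Young subgroups acting on partitions). The key structural input for $d=p$ prime is that the only stratum whose automorphism group has order divisible by $p$ in an essential way is the top one $[1]\leftarrow[p]$ with $\Sigma_p$, while the intermediate strata $[k]$ with $1<k<p$ involve automorphism groups of partitions of $p$ into $k$ blocks, and the Tate constructions for these groups, relevant to the comparison, are controlled by products of $\Sigma_{j}$ with $j<p$, which have order prime to $p$ once we account for the combinatorics; hence after $p$--completion their Tate contributions vanish. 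So we reduce, $p$--adically, to the comparison at the top stratum, which is a genuinely $\Sigma_p$--equivariant (in fact $C_p$ after $p$--localisation, as $C_p\subset\Sigma_p$ has index prime to $p$) question.

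For this top comparison, we identify the relevant piece of $\unit\completion{I(p)}$ with the completion of the Burnside-type ring at the augmentation ideal and invoke the classical Segal conjecture for $C_p$ (Lin for $p=2$, Gunawardena for odd $p$), transferred to $\Sigma_p$ via the standard transfer argument using that $[\Sigma_p:N(C_p)]$ and $[N(C_p):C_p]$ are prime to $p$. Concretely, we compare the two-stratum recollement (the top stratum and the rest) with that of genuine $C_p$--spectra, showing the relevant gluing Tate square $\unit\to \unit^{h}\leftarrow\Phi\to\Phi^{t}$ matches the $C_p$ Tate square $p$--adically; then $I(p)$--completion matches $I(C_p)$--completion, and Segal for $C_p$ gives the equivalence.

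The main obstacle is the bookkeeping of the middle strata: showing that $I(p)$--adic completion of the unit interacts correctly with them, i.e.\ that $p$--adically the ideal $I(p)\subset A(p)$ generates the same topology as the pullback of $I(C_p)$, and that the non-top geometric fixed point pieces become either zero or identified after completion. We would first compute $A(p)\otimes\mathbb{Z}_p$ via marks (ghost coordinates indexed by $[k]$) and check that idempotents separating the strata with invertible-order automorphisms exist $p$--locally, splitting off those pieces on which both sides of \cref{eqn:naive_segal_conjecture} agree.
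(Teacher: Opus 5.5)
You have the paper's overall architecture. The equivalence is detected on the geometric fixed points $\Phi^{[k]}$, $1\le k\le p$. The $[p]$-component is an equivalence by construction of $\beta_*\beta^*$. The strata $1<k<p$ contribute nothing $p$-adically. The $[1]$-component reduces to $\sphere^{\wedge}_{p}\to(\sphere^{t\family_{\mathrm{nt}}(\Sigma_p)})^{\wedge}_{p}\simeq(\sphere^{tC_p})^{\wedge}_{p}$, which follows from the transfer argument and the Segal conjecture for $C_p$.

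Your idempotent treatment of the middle strata is a legitimate variant of \cref{lem:description_of_geom_fix_points_ideal}. The marks $\Phi^{[m]}([\lambda_i])=|\surjectiveSet(m,i)|$ form a triangular matrix with diagonal entries $m!$, and $p$ divides $|\surjectiveSet(p,i)|$ for $i\ge 2$. Hence for $1<k<p$ there are idempotents $e_k\in A(p)_{(p)}$ with $\Phi^{[m]}(e_k)=\delta_{mk}$. Since $\Phi^{[p]}(e_k)=0$, each $e_k$ lies in $I(p)_{(p)}$. So the $e_k$-summands of both sides die after completion; for the target this uses that it is $I(p)$-complete. This is the easy part of the proof, though, not its main obstacle.

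The genuine gap is at the stratum $[1]$. On $A(p)$ we have $\Phi^{[1]}\equiv\Phi^{[p]}$ mod $p$, so $[1]$ and $[p]$ lie in the same $p$-local block and no idempotent separates them. You therefore have to actually determine $\Phi^{[1]}$ of $(\unit_d)^{\wedge}_{I(d),p}$ for $d=p$. This is not formal: $\Phi^{[1]}=P_1=\colim_n\Omega^n(-)\Sigma^n$ is a sequential colimit, whereas $I(d)$-completion is an inverse limit $\lim_n(-)/x^n$.

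Your sentence ``then $I(p)$-completion matches $I(C_p)$-completion'' asserts the outcome without a mechanism, and it cannot be obtained by transport. You construct no comparison functor between $\excisive_p$ and $\spectra_{C_p}$. The recollements also differ: for odd $p$ the Borel strata are $\spectra^{B\Sigma_p}$ versus $\spectra^{BC_p}$. So only the bottom rows $\Phi\to\Phi^t$ of the two Tate squares agree $p$-adically.

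The paper names this step its key new input. \cref{prop:geometric_fixed_points_and_completions} shows that $P_1$ commutes with $I$-adic completion of bounded below objects. The proof is the connectivity argument of \cref{lem:connectivity_lemma}, which combines three ingredients:
\begin{itemize}
\item the $t$-structure transported from $\mackey(\epiCategory_p;\spectra)$, as in \cref{lem:geom_fix_pt_detect_connectivity};
\item the cellular model $P_1\simeq E\otimes-$ of \cref{cons:cellular_formula_P_1};
\item the connectivity estimate \cref{prop:connectivity_of_suspended_unit}.
\end{itemize}
Together with $(p,\Phi^{[1]}I(p))=p\mathbb{Z}$, this identifies $\Phi^{[1]}$ of the source with $\sphere^{\wedge}_{p}$.

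Within your framework you could instead repair the step by completing Kuhn's pullback square
\[
\unit_d\simeq\beta_*\beta^*\unit_d\times_{P_{d-1}\beta_*\beta^*\unit_d}P_{d-1}\unit_d
\]
corner by corner. This works because:
\begin{itemize}
\item completion is exact;
\item the Borel corner is already $I(d)$-complete;
\item $\excisive_{d-1}\subseteq\excisive_d$ is a smashing localisation, so the completion of objects in $\excisive_{d-1}$ is computed inside $\excisive_{d-1}$, where on the $[1]$-block it is just $p$-completion.
\end{itemize}
Some argument of this kind is required. As written, your proposal skips the one step that carries the content of the theorem.
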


It is worth mentioning that, unlike the key case of $p$--groups in the original equivariant setting where the augmentation completion may be ignored after $p$--completing both sides, here it plays a crucial role in conspiring for the correct combinatorics to happen, and the result is false without it. For this reason, we think that the role of the augmentation ideal in calculus deserves further investigation.

We prove this by reducing it to the equivariant Segal conjecture using a stratification procedure via Goodwillie's derivatives. The key  here is  showing that the derivatives ``commute'' with the $I(d)$--completion under connectivity assumptions. This uses a connectivity argument by transferring the standard $t$--structure on $ \mackey(\epiCategory_d;\spectra)$ to a ``sensible'' one on $\excisive_d$ afforded to us by \cref{alphThm:excisive_functors_as_spectral_mackey}, which is furthermore compatible with the various stratifications, as well as a ``cellular model'' for the excisive approximations. It would be interesting to investigate   other values of $d$ for which a semblance of the theorem above can be arranged to hold.

\addtocontents{toc}{\protect\setcounter{tocdepth}{1}} 

\subsection{Organisation}\label{subsection:organisation}
This article is divided into two parts. We first develop all the Goodwillie calculus that we need in \cref{part:calculus} which we then package using the language of parametrised categories in \cref{part:parametrised}. We work out various properties of the cross--effects in \cref{section:multivariable_calculus}: we collect known facts about the cross--effects in \cref{subsection: cross-effect yoga}. We then introduce the subdiagonal functors and prove the key \cref{alphThm:key_pigeonhole_principle} in \cref{subsection:subdiagonal_functors}; we then work out the interaction between the cross--effects and restrictions along the diagonal on the subdiagonal functors in \cref{subsection:beckChevalley_squares}. Next, we develop the categorical structures and properties of these subdiagonal functors in \cref{section:categorical_properties_multiplicative}. We  show that subdiagonality is a smashing local subcategory in \cref{subsection:multiplicative_structures}, thus endowing multiplicative structures on these objects. In \cref{subsec:generators_and_localisations}, we enumerate the generators of the subdiagonal categories and describe the kernels of excisive approximations in terms of these generators.

We switch gears by working out various aspects of parametrised category theory in \cref{section:parametrised_categories} over arbitrary base categories $\baseCat$, which might be of independent interest. After recalling the setup in \cref{subsection:recollection_parametrised}, we introduce the universal spaces and their associated categorified ``geometric fixed point functors'' in \cref{subsection:universal_spaces_and_stratifications}. We then  axiomatise the relevant stratification arguments in a recognition principle for  spectra in \cref{subsection:recognition_principle}. 

Finally, we combine all the foregoing elements in \cref{section:parametrised_Goodwillie}: we assemble the subdiagonal functor categories into a parametrised category and prove \cref{alphThm:parametrised_excisive_category_is_symmetric_monoidal} in \cref{subsection:subdiagonal_parametrised}. Combining this with our knowledge of the generators and the recognition principle, we prove the main \cref{alphThm:excisive_functors_as_spectral_mackey} in \cref{subsection:main_theorem}. We also summarise here the comparisons with equivariant homotopy theory in a dictionary \cref{table:dictionary}.  We then deduce \cref{alphThm:finitely_supported_equivalence} in \cref{subsection:mackeYy_model_multiexcisive} and prove \cref{alphThm:segal_conjecture} in \cref{subsection:segal}.

\subsection{Acknowledgements} We thank Greg Arone, Thomas Blom, Drew Heard, Sil Linskens, Connor Malin,   Marius Nielsen, and Vignesh Subramanian for helpful conversations surrounding this work. TB and KH were supported by the European Research Council (ERC) under Horizon Europe (GeoCats, grant No.~101042990). KH is also supported by the European Research Council (ERC) under Horizon Europe (BorSym, ID: 101163408). All authors thank the Max Planck Institute for Mathematics (MPIM) in Bonn and the University of Bonn for their hospitality and conducive working environments.

\addtocontents{toc}{\protect\setcounter{tocdepth}{2}} 

\newpage
\part{Calculus}\label{part:calculus}

\section{Multivariable calculus}\label{section:multivariable_calculus}
\subsection{Recollections}\label{subsection: cross-effect yoga}

We recall  Goodwillie's cross--effects as presented in \cite[\textsection 6.1.3]{lurieHA} and record some of its properties that will be relevant for us. We then record some basic interactions of Day convolutions with the cross--effects and excisive approximations. We do not claim any originality in these recollections and they are recorded merely for the convenience of the reader.

Let $\sC,\D$ be stable categories and $\func_*(\sC,\D)$ denote the category of reduced functors and $\func_{\underline{\ast}}(\sC^{\times r},\D)$ the category of $r$-variable functors which are reduced in each variable. For $f\colon [s]\twoheadrightarrow [r]$ a surjection from a finite set of size~$s$ to one of size~$r$, we denote by $\Delta_f\colon \sC^{\times r}\rightarrow \sC^{\times s}$ the associated diagonal functor. By for instance \cite[Lem. B.1]{heutsCategorifiedGoodwillie},  there is a biadjunction
    \begin{equation}\label{recollect:crossEffectBiadjunction}
        \begin{tikzcd}
            \func_{\udl{*}}(\sC^{\times r},\D)  \rar[bend left = 30, "\crossEffect_f" description] \rar[bend right = 30, "\crossEffect_f" description]& \func_{\underline{\ast}}(\sC^{\times s},\D) \lar["\Delta^*_f"' description]
        \end{tikzcd}
    \end{equation}
    where the biadjoint of $\Delta_f^*$, denoted as $\crossEffect_f$, is called the \textit{cross--effect}.

Suppose we have a pair of composable surjections $f\colon [s]\twoheadrightarrow [r]$ and $g\colon [r]\twoheadrightarrow [t]$. Since $\Delta_f\circ \Delta_g\simeq \Delta_{gf}$, by the property of the cross--effect as being an adjoint of restriction along the diagonal, we obtain that the cross--effects are functorial in surjections, i.e., 
    \begin{equation}\label{lem:iterabilityCrossEffects}
            \crossEffect_{gf} \simeq \crossEffect_{g}\crossEffect_{f}.
    \end{equation}


Next, we recall how excisive approximations and cross--effects interact. 

\begin{obs}\label{obs:calculus_of_cross-effects_and_excisive_approximations}
    Let $\sC$ be a category with finite colimits and a final object, and $\D$ a pointed differentiable category. Let $n\in\mathbb{N}$,  $\underline{k} =(k_1,\ldots,k_n)$ and $k\coloneqq \sum\underline{k}$. Then,  for $F\in\func(\sC^{\times n},\D)$, by applying \cite[Rmk. 6.1.3.23]{lurieHA} in each of the $n$ variable, we have  an equivalence
    \[\crossEffect_{\underline{k}}P_{\underline{k}}F\simeq P_{{\underline{1}}}\crossEffect_{\underline{k}}F\in \func(\sC^{\times k},\D).\]
\end{obs}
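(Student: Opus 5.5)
The plan is to reduce to the single-variable statement \cite[Rmk.~6.1.3.23]{lurieHA}, which for a functor $G\colon\sC\to\D$ and $m\geq 1$ gives $\crossEffect_m P_m G\simeq P_{1,\ldots,1}\crossEffect_m G$, i.e.\ the $m$-th cross--effect of the $m$-excisive approximation is the multilinearisation of the $m$-th cross--effect. We then apply it one variable at a time. Here $\crossEffect_{\underline{k}}$ is the cross--effect along the surjection $[k]\twoheadrightarrow[n]$ whose fibres have sizes $k_1,\ldots,k_n$. This surjection factors as a composite of surjections, each of which only splits the $i$-th point into $k_i$ points and is the identity elsewhere. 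By the functoriality \cref{lem:iterabilityCrossEffects}, $\crossEffect_{\underline{k}}\simeq \crossEffect^{(n)}_{k_n}\circ\cdots\circ\crossEffect^{(1)}_{k_1}$, where $\crossEffect^{(i)}_{k_i}$ takes the $k_i$-th cross--effect in the $i$-th variable and holds the others fixed. Likewise $P_{\underline{k}}=P^{(1)}_{k_1}\circ\cdots\circ P^{(n)}_{k_n}$, and $P_{\underline{1}}$ is the composite of the $P_1$'s in all $k$ variables.

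Next, I would check that operations in different variables commute. A cross--effect in one variable is a finite limit (a total fibre) of evaluations in that variable, and $P_{k_j}$ in another variable is a sequential colimit of finite limits. In a differentiable pointed, in fact stable, $\D$, sequential colimits commute with finite limits, so $\crossEffect^{(i)}_{k_i}$ commutes with $P^{(j)}_{k_j}$ for $i\neq j$. By the same argument, the $P_1$'s in different variables commute with one another. Also, $P^{(j)}_{k_j}$ computed on a functor of several variables is the single-variable construction applied pointwise in the remaining parameters. Hence Lurie's remark applies in the $i$-th variable for each fixed value of the other variables, and this is functorial in those values.

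Finally, I would commute everything into place by induction on $n$. Write $\crossEffect_{\underline{k}}P_{\underline{k}}F$ as $\crossEffect^{(n)}_{k_n}\cdots\crossEffect^{(1)}_{k_1}P^{(1)}_{k_1}\cdots P^{(n)}_{k_n}F$. Move $\crossEffect^{(1)}_{k_1}$ past nothing and apply the remark to replace $\crossEffect^{(1)}_{k_1}P^{(1)}_{k_1}$ by $P_{1,\ldots,1}\crossEffect^{(1)}_{k_1}$, the $P_1$'s being taken in the $k_1$ new variables. Then commute those $P_1$'s and $\crossEffect^{(1)}_{k_1}$ past the remaining $P^{(j)}$ and $\crossEffect^{(j)}$, and repeat. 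The main obstacle I expect is this bookkeeping of interchanges. Specifically, one must verify that Lurie's pointwise equivalence is natural enough in the parameter variables to be applied to a multivariable functor, and that the hypotheses suffice for the interchanges. Differentiability only guarantees that sequential colimits commute with finite limits. That is exactly what is needed here, since $P_m$ is a sequential colimit of $T_m$'s, which are finite limits, and the cross--effects are finite limits.
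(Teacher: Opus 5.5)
Your proposal is correct and follows the paper's route. The paper justifies the observation in a single line by applying Lurie's Remark 6.1.3.23 one variable at a time, and your decomposition of $\crossEffect_{\underline{k}}$ and $P_{\underline{k}}$ into one-variable operations, together with commuting operations in distinct variables past one another, makes that line explicit. Two small points: the interchanges use only differentiability of $\D$, not stability; and the naturality in the parameter variables is obtained most cleanly by currying, i.e. applying the remark to functors $\sC\to\func(\sC^{\times(n-1)},\D)$, whose target is again pointed differentiable.
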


\begin{warning}
    Be warned that \cref{obs:calculus_of_cross-effects_and_excisive_approximations} is a sharp result in that the equivalence $\crossEffect_kP_rF\simeq P_{\underline{r-k+1}}\crossEffect_kF$ only works  when $r=k$ (and of course also in the uninteresting case of $r<k$ when both sides are zero for the case of reduced functors $F$). Indeed, let $k=2$, $r=3$, and let $F\colon \spectra \to \spectra$ be a homogeneous functor of degree~$4$ given by $F(X)=X^{\otimes 4}$. Then $P_3 F \simeq 0$. However, $\crossEffect_2F$ has a direct summand $(X,Y) \mapsto X^{\otimes 2}\otimes Y^{\otimes 2}$. Therefore, $P_{2,2}\crossEffect_2F$ is non-trivial.
\end{warning}

Next, we record the multiplicativity enjoyed by the cross--effects. Fix a small stable biexact symmetric monoidal category $\sC=(\sC, \otimes_{\sC})$ and a presentably symmetric monoidal stable category $\D=(\D,\otimes_{\D})$, i.e., a presentable stable category such that the tensor product $-\otimes_{\D}-$ commutes with colimits in each variable. We will often omit the subscript in the tensor products $\otimes_{\sC}, \otimes_{\D}$ when there is no danger of confusion. 

\begin{cons}\label{construction: day convolution}
We endow the functor category $\func(\sC,\D)$ with a symmetric monoidal structure 
$-\otimes-\colon \func(\sC,\D) \times \func(\sC,\D) \to \func(\sC,\D)$ given by the \textit{Day convolution}, \cite[Rmk.~4.8.1.13]{lurieHA}. Informally, we have
$$(F\otimes G)(X) \simeq \colim_{X_1\otimes_{\sC} X_2 \to X} F(X_1)\otimes_{\D} G(X_2)$$
for functors $F,G\in \func(\sC,\D)$ and $X\in\sC$. 

Note in particular that when $X=0$, the colimit is indexed simply by the diagram $\sC\times \sC$. Thus since the inclusion of the zero object is cofinal, the colimit in the formula above simplifies to give $(F\otimes G)(0)\simeq F(0)\otimes G(0)$.
\end{cons}

\begin{recollect}\label{remark: propetries of day convolution}
We remind a few properties of the Day convolution symmetric monoidal product. 
\begin{enumerate}
    \item \textit{(Functoriality)} Let $f\colon \sC_0\to \sC_1$ be a  symmetric monoidal functor between small  symmetric monoidal categories. Then the left Kan extension
    $f_!\colon \func(\sC_0,\D) \to \func(\sC_1,\D)$
    refines canonically to  a { symmetric monoidal} functor with respect to the Day convolution products.
    \item \textit{(Universal case)} We have an equivalence of presentably monoidal categories
    $\func(\sC,\D) \simeq \func(\sC,\spectra)\otimes \D \in \calg(\presentable^L)$.
    \item \textit{(co-Yoneda map)} The co-Yoneda map
    $$\yoneda\colon \sC\op \to \func(\sC,\spectra), \;\; c \mapsto \susps_+\map_{\sC}(c,-)$$
    refines canonically to a { symmetric monoidal} functor. Here, $\map_{\sC}(-,-)$ is the unpointed mapping space. In particular, the unit of the symmetric monoidal structure on $\func(\sC,\spectra)$ defined by the Day convolution is the representable functor $\susps_+\map_{\sC}(\unit_\sC,-)$, where $\unit_\sC$ is the unit in the symmetric monoidal category $\sC$.
\end{enumerate}
\end{recollect}

Recall that the inclusion $\func_*(\sC,\D)\subseteq \func(\sC,\D)$ of reduced functors admits a left adjoint $(-)\reduce$ called the reduction functor.

\begin{lem}\label{lemma: reduction is a smashing localization}
The reduction functor $(-)\reduce$ is a smashing localisation on $\func(\sC,\D)$ compatible with the Day convolution. 
\end{lem}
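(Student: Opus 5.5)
Identify the reduction functor explicitly and check the defining property of a smashing localisation: the localisation should be given by tensoring with the localisation of the unit. Since $\D$ is stable, the value $F(0)$ splits off canonically. Write $c\colon\func(\sC,\D)\to\func(\sC,\D)$ for the functor $F\mapsto \underline{F(0)}$, the constant functor with value $F(0)$; it is induced by restriction and left Kan extension along $0\colon\ast\to\sC$, since $0$ is initial. The counit-type map $\underline{F(0)}\to F$, coming from the maps $0\to X$, then yields
\[F\reduce\simeq\cofib\big(\underline{F(0)}\to F\big).\]
This is reduced because the map is an equivalence at $0$. It is also the left adjoint to the inclusion: for reduced $G$, the mapping space $\map(\underline{F(0)},G)\simeq\map_{\D}(F(0),G(0))$ is contractible. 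The corresponding triangle $\underline{F(0)}\to F\to F\reduce$ is thus functorial in $F$.

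By a standard criterion, it suffices to show that the class of $(-)\reduce$-equivalences is a tensor ideal, i.e.\ that $F\reduce\simeq0$ implies $(F\otimes G)\reduce\simeq 0$. Together with exactness of $\otimes$ in each variable, this gives compatibility of the localisation with Day convolution (\cite[Prop.~2.2.1.9]{lurieHA}). The smashing property then follows by applying $-\otimes F$ to the unit triangle:
\[\unit\otimes F\to \unit\reduce\otimes F,\]
where the source $\unit\otimes F$ is $F$. This map is a local equivalence, and its target is local. For locality, note that the reduced objects are exactly the kernel of $c$-type evaluation and are closed under tensoring with anything. Indeed, \cref{construction: day convolution} gives $(F\otimes G)(0)\simeq F(0)\otimes G(0)$, which vanishes as soon as $F(0)=0$. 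So the reduced subcategory is itself an ideal, and the unit triangle tensored with $F$ yields $F\reduce\simeq \unit\reduce\otimes F$.

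It remains to check that local equivalences form an ideal. The functors with $F\reduce\simeq0$ are exactly the constant functors $\underline{E}$. I would show that $\underline{E}\otimes G$ is again constant by using the Day convolution formula together with the functoriality in \cref{remark: propetries of day convolution}(1). Constant functors are left Kan extended along $0\colon\ast\to\sC$, which is symmetric monoidal only if the unit of $\sC$ is $0$, so I would instead compute directly with the colimit formula. We have $(\underline{E}\otimes G)(X)\simeq\colim_{X_1\otimes X_2\to X}E\otimes G(X_2)$. Because $0$ is initial in $\sC$, the pair $(0,X_2)$ should be cofinal among the $X_1$-variables, with $0\otimes X_2\simeq 0$ by biexactness. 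Hence the colimit reduces to $E\otimes\colim_{X_2}G(X_2)$ over all $X_2$, i.e.\ $E\otimes G(0)$ after reindexing, which is independent of $X$ and so constant. The main obstacle is this cofinality argument in the comma category. I expect to handle it by first computing $X_1$-wise, as a left Kan extension of a constant functor along $X_1\mapsto X_1\otimes X_2$, which is constant because its domain has an initial object. If that cofinality check gets messy, the fallback is to reduce via \cref{remark: propetries of day convolution}(2) to $\D=\spectra$ and test on the generating representables $\yoneda(c)$, using $\yoneda(c)\otimes\yoneda(c')\simeq\yoneda(c\otimes c')$.
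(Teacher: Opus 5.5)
Your argument is correct and has the same overall shape as the paper's. Compatibility comes from the kernel of $(-)\reduce$ (the constant functors) being a $\otimes$-ideal. Smashing then comes from reduced functors themselves forming a $\otimes$-ideal via $(F\otimes G)(0)\simeq F(0)\otimes G(0)$. Your ``local equivalence into a local object'' step is the paper's observation that $F\otimes\yoneda(\unit_{\sC})\reduce$ is already reduced, so $\eta\simeq\eta\reduce$.

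The difference is in the ideal step. The paper reduces to $\D=\spectra$ by \cref{remark: propetries of day convolution}(2), writes a constant functor as $Y\otimes\yoneda(0)$, and uses $\yoneda(0)\otimes\yoneda(c)\simeq\yoneda(0\otimes c)\simeq\yoneda(0)$ together with generation by representables. That is exactly your fallback: it is short but leans on the co-Yoneda machinery.

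Your primary route instead yields the explicit formula $\underline{E}\otimes G\simeq\underline{E\otimes G(0)}$ for arbitrary $\D$, but two points need fixing:
\begin{itemize}
\item As phrased, your cofinality claim conflates ``initial'' with ``cofinal''.
\item The ``$X_1$-wise'' computation is not automatic, since projecting the comma category to $X_2$ gives a cartesian rather than a cocartesian fibration.
\end{itemize}
The correct statement is that $(X_1,X_2,\alpha\colon X_1\otimes X_2\to X)\mapsto X_2$ is cofinal by Quillen's Theorem~A. Each slice $\{(X_1,X_2,\alpha,Z\to X_2)\}$ has the initial object $(0,Z,0\otimes Z\simeq 0\to X,\id_Z)$ and so is weakly contractible. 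It is the naturality of this identification in $X$, not merely agreement of values, that makes $\underline{E}\otimes G$ constant.

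A cleaner variant avoids comma categories altogether. Since $\underline{E}=0_!E$, one has $\underline{E}\boxtimes G\simeq(0\times\id)_!\big(E\otimes G(-)\big)$, hence $\underline{E}\otimes G\simeq(0\otimes-)_!\big(E\otimes G(-)\big)$. As $0\otimes-$ factors through $0\colon\ast\to\sC$, this is $0_!\big(\colim_{X\in\sC}E\otimes G(X)\big)\simeq\underline{E\otimes G(0)}$. No symmetric monoidality of $0\colon\ast\to\sC$ is needed, which removes the obstruction you mentioned.
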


\begin{proof}

We show first that the kernel of the functor $(-)\reduce$ is a tensor ideal in $\func(\sC,\D)$, i.e., for any $F, G\in \func(\sC,\D)$ such that $F\reduce\simeq 0$, we have $(F\otimes G)\reduce\simeq 0$, cf.~\cite[Lemma~5.3.4]{nineAuthorsI}.
By \cref{remark: propetries of day convolution}, we can assume that $\D=\spectra$ is the category of spectra. Then any functor $G\in \func(\sC,\spectra)$ is a colimit of representable ones and $F\reduce \simeq 0$ if and only if the natural morphism $F(0)\to F(X)$ is an equivalence for any $X\in \sC$, i.e., $F$ is a constant functor. If $F$ is a constant functor, then $F$ is equivalent to the tensor product $Y\otimes \yoneda(0)$ for some $Y\in \spectra$. Therefore, it is enough to show that $\yoneda(0)\otimes \yoneda(c)\simeq \yoneda(0)$ for any $c\in \sC$. The latter is clear since the co-Yoneda embedding is symmetric monoidal, see \cref{remark: propetries of day convolution}. 

We showed that the localisation $(-)\reduce$ is compatible with the Day convolution. Next, we will show that the  transformation (from the lax symmetric monoidality of the inclusion $\func_*(\sC,\D)\subseteq \func(\sC,\D)$)
$$\eta\colon F\otimes \yoneda(\unit_\sC)\reduce \to F\reduce $$
is an equivalence for any $F\in \func(\sC,\spectra)$, i.e., $(-)\reduce$ is a smashing localisation. Note that the induced transformation $\eta\reduce$ between reduced functors is always an equivalence because $(-)\reduce$ is a symmetric monoidal functor. But by \cref{construction: day convolution}, reduced functors form a tensor ideal, and so in fact $F\otimes\yoneda(\unit_{\sC})\reduce$ is already reduced. Hence, $\eta\simeq \eta\reduce$ is an equivalence.
\end{proof}

By currying the variables, this shows that the multi-reduction functor also refines to a smashing localisation.

\begin{cor}\label{corollary: multireduction is a smashing localization}
The multi-reduction functor $$(-)\reduce\colon \func(\sC^{\times r},\D) \to \func_{\udl{*}}(\sC^{\times r},\D)$$ is a smashing localisation on $\func(\sC^{\times r},\D)$ compatible with the Day convolution. \qed
\end{cor}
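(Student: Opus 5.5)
The plan is to reduce to the single-variable statement, \cref{lemma: reduction is a smashing localization}, by currying the variables one at a time. For $1\leq i\leq r$, let $(-)^{\mathrm{red}_i}$ be the localisation of $\func(\sC^{\times r},\D)$ onto the functors reduced in the $i$-th variable. Under the identification
\[
\func(\sC^{\times r},\D)\simeq \func\big(\sC,\func(\sC^{\times (r-1)},\D)\big),
\]
where we curry out the $i$-th variable, $(-)^{\mathrm{red}_i}$ becomes ordinary reduction with target $\D'\coloneqq\func(\sC^{\times(r-1)},\D)$. Concretely, it is $F\mapsto \cofib(F|_{X_i=0}\to F)$, computed pointwise.

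The first step is to check that this identification is symmetric monoidal. We give $\sC^{\times r}$ the product symmetric monoidal structure, so that $\func(\sC^{\times r},\D)$ carries the Day convolution. We need
\[
\func(\sC^{\times r},\D)\simeq \func(\sC,\D')
\]
with $\D'$ carrying its own Day convolution. This follows from \cref{remark: propetries of day convolution}(2) together with
\[
\func(\sC\times\sC',\spectra)\simeq\func(\sC,\spectra)\otimes\func(\sC',\spectra)
\]
in $\calg(\presentable^L)$, which is the standard behaviour of Day convolution on products. The lemma was stated for a presentably symmetric monoidal stable target $\D$, and $\D'$ is such a category, so \cref{lemma: reduction is a smashing localization} applies. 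It shows that each $(-)^{\mathrm{red}_i}$ is a smashing localisation compatible with Day convolution. Its idempotent algebra is $\unit^{\mathrm{red}_i}$, where $\unit=\yoneda(\unit_{\sC^{\times r}})$.

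The second step is to compose these localisations. The multi-reduction is
\[
(-)^{\mathrm{red}}\simeq(-)^{\mathrm{red}_1}\circ\cdots\circ(-)^{\mathrm{red}_r}.
\]
Each $(-)^{\mathrm{red}_i}$ is a cofibre of a restriction in variable $i$ computed pointwise, so it preserves reducedness in the other variables. The local objects of the composite are therefore exactly the functors reduced in every variable. Since $(-)^{\mathrm{red}_i}$ is smashing, we have $F^{\mathrm{red}_i}\simeq F\otimes \unit^{\mathrm{red}_i}$. Hence
\[
F^{\mathrm{red}}\simeq F\otimes \unit^{\mathrm{red}_1}\otimes\cdots\otimes\unit^{\mathrm{red}_r},
\]
so the composite is again smashing, with idempotent algebra the tensor product of the $\unit^{\mathrm{red}_i}$. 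Compatibility with Day convolution follows in the same way: a composite of localisations whose kernels are tensor ideals again has kernel a tensor ideal.

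The main obstacle is the first step: making sure that currying intertwines the Day convolution on $\sC^{\times r}$ with the iterated Day convolution, and that $(-)^{\mathrm{red}_i}$ matches ordinary reduction under this identification. For the latter, it suffices to compare left adjoints of the same full inclusion, since the reduced-in-variable-$i$ functors correspond exactly to reduced functors $\sC\to\D'$. The rest is formal.
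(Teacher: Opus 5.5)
Your proposal is correct and is essentially the paper's argument: the paper deduces this corollary from \cref{lemma: reduction is a smashing localization} in a single line, ``by currying the variables''. Your write-up is a faithful expansion of that line. You curry out one variable at a time, so that the lemma applies with the presentably symmetric monoidal stable target $\func(\sC^{\times(r-1)},\D)$, and you then compose the resulting commuting smashing localisations to get $F\reduce\simeq F\otimes\unit^{\mathrm{red}_1}\otimes\cdots\otimes\unit^{\mathrm{red}_r}$.
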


\begin{cor}\label{corollary: cross-effect is symmetric monoidal}
Let $f\colon [k] \twoheadrightarrow [m]$ be a surjective map of finite sets. Then the (co)cross--effect functor $$\crossEffect_{f}\colon \func_{\udl{*}}(\sC^{\times m},\D) \to \func_{\udl{*}}(\sC^{\times k},\D)$$ refines canonically to a  symmetric monoidal functor.
\end{cor}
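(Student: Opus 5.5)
We will realise $\crossEffect_f$ as the composite of a left Kan extension along a symmetric monoidal functor and the multi-reduction, so that both pieces are symmetric monoidal by results already recorded. The key point is that $\crossEffect_f$ is left adjoint to $\Delta_f^*$ on multi-reduced functors. Consider the symmetric monoidal functor $\Delta_f\colon \sC^{\times m}\to\sC^{\times k}$, where the products carry the componentwise tensor product. Restriction $\Delta_f^*\colon\func(\sC^{\times k},\D)\to\func(\sC^{\times m},\D)$ has left adjoint $(\Delta_f)_!$. By \cref{remark: propetries of day convolution}(1), $(\Delta_f)_!$ refines to a symmetric monoidal functor for the Day convolutions.

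Next, let $\iota\colon\func_{\udl{*}}\subseteq\func$ denote the inclusions. Restriction preserves multi-reducedness, since $\Delta_f$ sends a tuple with some coordinate $0$ to a tuple with some coordinate $0$. Hence on multi-reduced functors $\Delta_f^*\iota\simeq\iota\Delta_f^*$. Passing to left adjoints, the left adjoint of $\Delta_f^*\colon\func_{\udl{*}}(\sC^{\times k},\D)\to\func_{\udl{*}}(\sC^{\times m},\D)$ is
\[(-)\reduce\circ(\Delta_f)_!\circ\iota.\]
By the biadjunction \cref{recollect:crossEffectBiadjunction}, this composite is $\crossEffect_f$.

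It remains to check that each factor is symmetric monoidal. By \cref{corollary: multireduction is a smashing localization}, $(-)\reduce$ is a smashing localisation compatible with Day convolution. Hence $\func_{\udl{*}}$ inherits a localised symmetric monoidal structure, with tensor $(F\otimes G)\reduce$ and unit $\yoneda(\unit)\reduce$. With this structure $(-)\reduce$ is symmetric monoidal and $\iota$ is lax symmetric monoidal. Being smashing, the localisation makes $\func_{\udl{*}}$ a tensor ideal, so $F\otimes G$ is already reduced for reduced $F,G$, and $\iota$ is strong on tensors. However, $\iota$ is not unital, since $\iota(\yoneda(\unit)\reduce)\neq\yoneda(\unit)$. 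Consequently the composite is a priori only lax, and the main obstacle is to show it is strong.

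To handle this, use that $(-)\reduce\circ(\Delta_f)_!$ is symmetric monoidal and that $(-)\reduce\circ\iota\simeq\id$. The comparison maps of the composite are obtained by applying $(-)\reduce(\Delta_f)_!$ to $\iota F\otimes\iota G\to\iota(F\otimes G)$, which is an equivalence. For the unit, one needs $(\Delta_f)_!(\yoneda(\unit)\reduce)\reduce\simeq(\Delta_f)_!(\yoneda(\unit))\reduce$. This holds because the cofibre of $\yoneda(\unit)\to\yoneda(\unit)\reduce$ lies in the kernel of $(-)\reduce$, and $(\Delta_f)_!$ preserves that kernel. Indeed, the kernel consists of functors whose multi-reduction vanishes, equivalently those left orthogonal to $\func_{\udl{*}}$. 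Left adjoints preserve left orthogonality because the right adjoint $\Delta_f^*$ preserves $\func_{\udl{*}}$. So the composite is strong symmetric monoidal, and this gives the canonical refinement of $\crossEffect_f$.

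Alternatively, one could use functoriality \cref{lem:iterabilityCrossEffects} to reduce to the case where $f$ collapses two points and the rest of the map is a bijection, but the argument above already handles general $f$ directly.
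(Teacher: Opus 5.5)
Your proof is correct and follows the paper's argument essentially step for step. Both write $\crossEffect_f$ as $(-)\reduce\circ\Delta_{f!}\circ\iota$, note that this composite is lax symmetric monoidal and strong on tensors, reduce to the unit map, and kill its (co)fibre $(\Delta_{f!}F)\reduce$, where $F\reduce\simeq 0$, by using that $\Delta_f^*$ preserves multireduced functors.
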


\begin{proof}
By the definition, the (co)cross--effect $\crossEffect_f$ is the composite of three functors
$$\crossEffect_{f} \colon \func_{\udl{*}}(\sC^{\times m},\D) \hookrightarrow \func(\sC^{\times m},\D)\xrightarrow{\Delta_{f!}} \func(\sC^{\times k},\D) \xrightarrow{(-)\reduce}\func_{\udl{*}}(\sC^{\times k},\D),$$
where $\Delta_{f!}$ is the left Kan extension along the functor $\Delta_f\colon \sC^{\times m} \to \sC^{\times k}$ and $(-)\reduce$ is the multi-reduction functor. By \cref{remark: propetries of day convolution} and \cref{corollary: multireduction is a smashing localization}, the functor $\crossEffect_{f}$ is canonically a lax symmetric monoidal which preserves the tensor product. However, it does not necessarily preserve the monoidal unit, since the inclusion 
$\func_{\udl{*}}(\sC^{\times m},\D) \hookrightarrow \func(\sC^{\times m},\D)$
does not. Therefore, it suffices to show that the natural map
\begin{equation}\label{equation: monoidal unit multireduction}
    \unit_{\func_{\udl{*}}(\sC^{\times k},\D)} \to \crossEffect_f(\unit_{{\func_{\udl{*}}(\sC^{\times m},\D)}})
\end{equation}
is an equivalence. 


Consider the fibre sequence
$$F \to \unit_{\func(\sC^{\times m},\D)} \to (\unit_{\func(\sC^{\times m},\D)})\reduce\simeq \unit_{\func_{\udl{\ast}}(\sC^{\times m},\D)}$$
in the category $\func(\sC^{\times m},\D)$. In particular, $F\reduce \simeq 0$. Note that the map ~\eqref{equation: monoidal unit multireduction} is obtained by applying  $(\Delta_{f!}-)\reduce\simeq \crossEffect_f(-)$ to the map $\unit_{\func(\sC^{\times m},\D)}\rightarrow \unit_{\func_{\udl{\ast}}(\sC^{\times m},\D)}$ because the functors $\Delta_{f!}$ and $(-)\reduce$ are symmetric monoidal. Consequently, the fibre of \cref{equation: monoidal unit multireduction} is $(\Delta_{f!}F)\reduce$. Since $f\colon [k] \twoheadrightarrow [m]$ is surjective, the functor 
$$\Delta_f^*\colon \func(\sC^{\times k},\D)\to \func(\sC^{\times m},\D)$$
preserves the full subcategory spanned by reduced functors. Therefore, for any reduced functor $G\in \func_{\udl{\ast}}(\sC^{\times},\D)$, we have
$$\map((\Delta_{f!}F)\reduce,G)\simeq \map(\Delta_{f!}F,G) \simeq \map(F,\Delta_f^*G)\simeq \map(F\reduce,G)\simeq 0.$$
By the Yoneda lemma, $(\Delta_{f!}F)\reduce$ is trivial.  
\end{proof}

Finally, let us record the interaction of the Day convolution with excisive approximations.

\begin{lem}\label{lemma: day with excisive is excisive}
    Let $\sC\in\calg(\cat\exact)$ and $\D\in\calg(\presentable^L_{\mathrm{st}})$. Then the Day convolution $F\otimes G$ is $d$-excisive if $F,G \in \func(\sC,\D)$ and $G$ is $d$-excisive.
\end{lem}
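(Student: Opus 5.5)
We will show that $d$--excisive functors form a tensor ideal for the Day convolution in $\func(\sC,\D)$. The plan is to reduce to corepresentable $F$, and then observe that the Day convolution with a corepresentable is a left Kan extension along a tensoring functor, which preserves excisiveness.

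\emph{Step 1: reduction to corepresentables.} Since $\D$ is presentably symmetric monoidal, the Day convolution $-\otimes G$ preserves colimits in the first variable. Every $F\in\func(\sC,\D)$ is a colimit of functors of the form $\yoneda(c)\otimes Y\simeq \susps_+\map_{\sC}(c,-)\otimes Y$ with $c\in\sC$ and $Y\in\D$; here we use \cref{remark: propetries of day convolution}(2) to pass between $\D$ and $\spectra$. Moreover, $d$--excisive functors $\sC\to\D$ are closed under all colimits in $\func(\sC,\D)$. This holds because $\D$ is stable, so $d$--excisiveness says a certain finite limit of values vanishes, and finite limits commute with colimits in a stable category. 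Hence it suffices to treat $F=\yoneda(c)\otimes Y$.

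\emph{Step 2: the corepresentable case.} We compute
\[
(\yoneda(c)\otimes Y\otimes G)(X)\simeq Y\otimes\colim_{c\otimes X_2\to X}G(X_2)\simeq Y\otimes (c\otimes-)_!G(X).
\]
The first equivalence is the Day convolution formula combined with the co-Yoneda lemma: the colimit over $X_1\otimes X_2\to X$ weighted by $\map(c,X_1)$ collapses to the colimit over $c\otimes X_2\to X$. So the functor in question is $Y\otimes L_c G$, where $L_c=(c\otimes -)_!$ is left Kan extension along the exact functor $c\otimes-\colon\sC\to\sC$. Tensoring with $Y$ is exact, so it preserves $d$--excisiveness, and it remains to show that $L_cG$ is $d$--excisive.

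\emph{Step 3: left Kan extension preserves excisiveness.} The functor $c\otimes-$ is exact because $\sC$ is biexact. It therefore has a right adjoint on $\ind(\sC)$, namely the internal hom $\underline{\hom}(c,-)$, which is again exact. After extending $G$ to a filtered-colimit-preserving functor on $\ind(\sC)$, we can compute $L_cG\simeq G\circ\underline{\hom}(c,-)$ restricted to $\sC$. Precomposition with an exact functor preserves strongly cocartesian cubes, and hence preserves $d$--excisiveness.

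We expect this last identification to be the main obstacle. One has to be careful that $d$--excisiveness of $G$ on $\sC$ implies $d$--excisiveness of its $\ind$--extension, and that restricting back recovers $L_cG$. If $\sC$ is not rigid, this approach needs the $\ind$ detour. As an alternative, one can use the explicit formula $L_cG(X)=\colim_{(X_2,\,c\otimes X_2\to X)}G(X_2)$, which is a filtered-type colimit in the stable setting. One then checks directly that it carries strongly cocartesian $(d+1)$--cubes to cartesian ones by commuting the finite total fibre with this colimit.
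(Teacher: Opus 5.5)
Your proof is correct, but it takes a different route from the paper's. You reduce in the variable $F$. Since $-\otimes G$ preserves colimits and $d$--excisive functors into a stable $\D$ are closed under colimits, it suffices to take $F=\yoneda(c)\otimes Y$, where the Day convolution becomes $Y\otimes(c\otimes-)_!G$. Because $c$ need not be dualisable, this forces the $\ind$ detour.

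The two points you flag there as the main obstacle are in fact unproblematic. First, by adjunction the indexing category $\{(X_2,\,c\otimes X_2\to X)\}$ is $\sC_{/\underline{\hom}(c,X)}$. The colimit of $G$ over it is by definition the $\ind$--extension $\tilde G$ evaluated at $\underline{\hom}(c,X)$. Second, $\tilde G$ is $d$--excisive. A strongly cocartesian cube in $\ind(\sC)$ is left Kan extended from a finite diagram, and finite diagrams in $\ind(\sC)$ are filtered colimits of diagrams in $\sC$, so the cube is a filtered colimit of strongly cocartesian cubes in $\sC$. Since cartesian cubes in a stable $\D$ are closed under colimits, $\tilde G$ sends it to a cartesian cube. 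Your ``alternative'' via the explicit colimit formula is really this same argument, since the indexing categories vary with $X$ and must be organised exactly this way.

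The paper instead puts the tensoring on the test objects rather than on the generators of $F$. Writing $T_dH=\lim_S H(C_S(-))$, it notes that $C_S\simeq C_S(\unit)\otimes-$, with $C_S(\unit)$ a finite colimit of the unit and hence dualisable. So $C_S^*$ is left Kan extension along $C_S(\unit)^\vee\otimes-$, which commutes past the Day convolution to give $C_S^*(F\otimes G)\simeq F\otimes C_S^*G$, and then $T_d(F\otimes G)\simeq F\otimes T_dG$.

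What each route buys: because dualisability of $C_S(\unit)$ is automatic, the paper needs no $\ind$--completion or cube-approximation argument. It also obtains the stronger statement that $T_d$ is linear over Day convolution for arbitrary $F$. Your approach gives exactly the ideal property and no more. On the other hand, it is the more portable template (check a tensor-ideal condition on generators) and does not depend on the specific formula for $T_d$.
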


\begin{proof}
    Consider the endofunctor $T_d\colon \func(\sC,\D) \to \func(\sC,\D)$ given by
    \begin{align*}
    F&\mapsto \lim_{\emptyset\neq S \subset \{0,\ldots, d\}} F(C_S(-))
    \end{align*}
    from~\cite[Cons.~6.1.1.12]{lurieHA}, where $C_S\colon \sC \to \sC$ is the $S$-pointed cone, see~\cite[Cons.~6.1.1.18]{lurieHA}. Since $G$ is $d$-excisive, the natural map $G\to T_dG$ is an equivalence and it suffices to show that
    $$F\otimes G \to T_d(F\otimes G) $$
    is an equivalence as well, see \cite[Cons.~6.1.1.27]{lurieHA} for the construction of the $d$-excisive approximation.

    Since the pointed cone $C_S\colon \sC \to \sC$ is a left Kan extension, there are natural equivalences 
    $X\otimes C_S(Y) \xrightarrow{\simeq} C_S(X\otimes Y) $    for all $X,Y\in\sC$. Therefore, the functor $C_S$ is equivalent to the tensor product with $C_S(\unit_\sC)$ and since $C_S$ is a finite colimit, $C_S(\unit_\sC)$ is a dualisable object. Hence, since the Day convolution is constructed as a left Kan extension, 
    the functor $C_S^*(F\otimes G)$ is a left Kan extension along the composite 
    $$\sC \times \sC \xrightarrow{-\otimes -} \sC \xrightarrow{C_S(\unit_{\sC})^\vee\otimes -} \sC $$
    which is naturally equivalent to
    $$\sC \times \sC \xrightarrow{\id \times (C_S(\unit_{\sC})^\vee\otimes -)} \sC \times \sC \xrightarrow{-\otimes -} \sC.$$
    This implies that the diagram $S\mapsto (F\otimes G)(C_S(-))$ is equivalent to the diagram $S \mapsto F \otimes G(C_S(-))$. Finally, since the Day convolution commutes with finite products, we have
    \begin{align*}
        T_d(F\otimes G) &\simeq \lim_{\emptyset\neq S \subset \{0,\ldots, d\}} (F\otimes G)(C_S(-)) \simeq \lim_{\emptyset\neq S \subset \{0,\ldots, d\}} F \otimes G(C_S(-)) \\
        &\simeq F\otimes \lim_{\emptyset\neq S \subset \{0,\ldots, d\}} G(C_S(-)) \simeq F \otimes T_d(G) \simeq F \otimes G
    \end{align*} as was to be shown.
\end{proof}

Let $\sC\in\calg(\cat\exact)$, $\D\in\calg(\presentable^L_{\mathrm{st}})$, and let $\udl{d}=(d_1,\ldots,d_r)$ be a multi-index.  We write $\excisive_{\udl{d}}(\sC^{\times r},\D)$ for the full subcategory of $\func(\sC^{\times r},\D)$ spanned by the functors which are reduced $d_i$--excisive in the $i$-th variable.

\begin{cor}\label{corollary: excisive approximation is smashing}
    The category $\excisive_{\udl{d}}(\sC^{\times r},\D)$ admits a presentably symmetric monoidal structure rendering a refinement of the left adjoint $\excisiveApproximation_{\udl{d}} \colon \func(\sC^{\times r},\D)\rightarrow \excisive_{\udl{d}}(\sC^{\times r},\D)$ to a smashing localisation.
\end{cor}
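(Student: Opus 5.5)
The plan is to exhibit $\excisiveApproximation_{\udl{d}}$ as a composite of localisations of $\func(\sC^{\times r},\D)$, each compatible with the Day convolution, and then identify the localised unit. First apply the multi-reduction functor, which is smashing by \cref{corollary: multireduction is a smashing localization}. Then, on reduced functors, apply the $d_i$--excisive approximation in the $i$--th variable, one variable at a time. These commute with one another and with reduction, since $P_{d_i}$ in variable $i$ is built from finite limits and filtered colimits in the other variables. So $\excisiveApproximation_{\udl{d}}$ is the localisation onto the intersection of the local subcategories. It therefore suffices to show two things: each of these localisations is compatible with the Day convolution, and the resulting localisation is smashing.

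For compatibility, I would use the criterion that the local objects form an exponential ideal, or dually that the $P$--equivalences are stable under tensoring. By Lurie's criterion it is enough to show that if $G$ is local then so is $F\otimes G$, for every $F$; this is exactly the tensor-ideal property of the local objects with respect to arbitrary $F$. In one variable this is \cref{lemma: day with excisive is excisive}. For the $r$--variable case with $\sC^{\times r}$ carrying the product symmetric monoidal structure, I will rerun the same argument in the $i$--th variable. The pointed cone $C_S$ acting on the $i$--th coordinate is tensoring with the dualisable object $(\unit,\ldots,C_S(\unit),\ldots,\unit)$ of $\sC^{\times r}$. The Day convolution left Kan extension then commutes with $T_{d_i}$ applied in variable $i$, exactly as in that proof, giving $T_{d_i}(F\otimes G)\simeq F\otimes T_{d_i}G$. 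Reducedness in each variable is likewise an ideal property by \cref{corollary: multireduction is a smashing localization}. Combining these, $\excisive_{\udl{d}}(\sC^{\times r},\D)$ is a tensor ideal of $\func(\sC^{\times r},\D)$. Moreover $P_{\udl{d}}$ is symmetric monoidal onto it with the induced product $P_{\udl{d}}(F\otimes G)\simeq F\otimes G$ for $F,G$ local. Presentability holds because this is an accessible localisation of a presentable category, being the essential image of an accessible reflector.

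Smashing then follows formally, just as in the second half of the proof of \cref{lemma: reduction is a smashing localization}. Consider the lax-monoidality map $\eta\colon F\otimes \excisiveApproximation_{\udl{d}}\unit\to \excisiveApproximation_{\udl{d}}F$. Its source already lies in the local subcategory by the ideal property, since $\excisiveApproximation_{\udl{d}}\unit$ is local. Applying $\excisiveApproximation_{\udl{d}}$ to $\eta$ gives an equivalence because $\excisiveApproximation_{\udl{d}}$ is symmetric monoidal. Hence $\eta$ itself is an equivalence.

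The main obstacle is the multivariable version of \cref{lemma: day with excisive is excisive}. Concretely, one must check that the cone construction in a single variable of $\sC^{\times r}$ is tensoring with a dualisable object of the product monoidal structure. One must also check that the Day convolution over $\sC^{\times r}$ can be computed as an iterated, or curried, Day convolution, so that the one-variable argument applies verbatim. I would first try the currying identification $\func(\sC^{\times r},\D)\simeq \func(\sC,\func(\sC^{\times (r-1)},\D))$ as presentably symmetric monoidal categories via \cref{remark: propetries of day convolution}(2), and then reduce directly to the one-variable lemma with the target $\func(\sC^{\times(r-1)},\D)$.
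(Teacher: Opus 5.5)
Your reduction to $r=1$ by currying and your final smashing step are the same as the paper's proof. The compatibility step in between, however, does not work as written. You claim that compatibility of $\excisiveApproximation_{\udl{d}}$ with the Day convolution reduces to the local objects forming a tensor ideal ($G$ local $\Rightarrow$ $F\otimes G$ local). That is not what compatibility requires. Compatibility asks that $\excisiveApproximation_{\udl{d}}$--equivalences be stable under $-\otimes Z$. Via $\map(f\otimes Z,W)\simeq\map(f,[Z,W])$, this is equivalent to the local objects being closed under internal homs $[Z,-]$ (an exponential ideal), or, in the stable case, to the \emph{acyclics} forming a tensor ideal.

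Closure of the local objects under $Z\otimes-$ does not imply this. On $\func(\Delta^1,\spectra)$ with the pointwise tensor product, consider the reflective subcategory of arrows $0\to X$, with reflector $(Y_0\to Y_1)\mapsto(0\to\cofib(Y_0\to Y_1))$. It is closed under tensoring with arbitrary objects, yet it kills the unit without being zero, so it cannot be compatible. Your smashing argument uses that $\excisiveApproximation_{\udl{d}}$ is symmetric monoidal to see that $\excisiveApproximation_{\udl{d}}(\eta)$ is an equivalence, so the gap propagates to the conclusion. The tensor-ideal property from \cref{lemma: day with excisive is excisive} is only the second of the two inputs needed.

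The paper gets compatibility by citation (\cite{nineAuthorsI}, \cite{ABHS24}) and then runs exactly your smashing argument. To close the gap yourself, you have two options.
\begin{enumerate}
\item Verify the exponential-ideal condition directly. One has $[\yoneda(c)\otimes D,W]\simeq[D,W(c\otimes-)]$, using the internal hom of $\D$ on the right. This is again reduced and $d$--excisive because $c\otimes-$ is exact and $[D,-]$ preserves limits. General $Z$ then follow, since every $Z$ is a colimit of such generators and local objects are closed under limits.
\item Observe that the computation in the proof of \cref{lemma: day with excisive is excisive} never uses excisiveness of $G$ until its last line. It therefore gives $T_d(F\otimes G)\simeq F\otimes T_dG$ for \emph{all} $F,G$, naturally and compatibly with $\id\to T_d$. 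Passing to $\colim_nT_d^n$ (and composing with the already smashing reduction) yields $\excisiveApproximation_d(F\otimes G)\simeq F\otimes\excisiveApproximation_dG$, which gives compatibility and smashing at once.
\end{enumerate}
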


\begin{proof}
    Without loss of generality, we can assume that $r=1$. Then $P_d$ is a localisation compatible with the Day convolution by~\cite[Lem. ~5.3.4]{nineAuthorsI}, see also \cite[Prop. ~2.38, Thm. 2.39]{ABHS24}. We only have to show that the transformation
    $$\eta \colon F \otimes P_d(\unit_{\func(\sC^{\times r},\D)}) \to P_d(F)$$
    is an equivalence for any $F\in \func(\sC^{\times r},\D)$, i.e., $P_d$ is a smashing localisation. Note that $P_d(\eta)$ is always an equivalence because $P_d$ is a symmetric monoidal functor. Finally, by \cref{lemma: day with excisive is excisive}, both sides $\eta$ are $d$-excisive, so $\eta\simeq P_d(\eta)$ is an equivalence.
\end{proof}

\subsection{Subdiagonal functors}\label{subsection:subdiagonal_functors}

Fix a small stable category $\sC$, a presentable stable category $\D$, and a number $d$ throughout.

\begin{nota}
    Let $r\leq d$. We will write $\excisive_{d,r}(\sC,\D) $ for the full subcategory of $\func(\sC^{\times r},\D)$ consisting of those functors which are reduced and $(d-r+1)$--excisive in each variable.
    In the special case of $\sC=\spectra^{\omega}, \D=\spectra$, we will  just write $\excisive_{d,r}$.
\end{nota}

The following observation and definition are the key insights which form the starting point for our work.

\begin{nota}\label{obs:canonical_subdivided_cubes}
    Let $r\leq d$ and $m \coloneqq d-r+1$.  We denote by $Q_{d,r}$ for the category $(\Delta^{m-1})^{\times r}$ which we think of as a subdivided cube. For these cubical objects $Q_{d,r}$, it will be convenient to adopt the convention that $\Delta^{m-1}$ be viewed as the linearly ordered category $\{1\rightarrow 2\rightarrow\cdots\rightarrow m\}$. Under this convention, we write $Q^{\lrcorner}_{d,r}\subseteq Q_{d,r}$  for the full subcategory of those tuples $(k_1,\ldots,k_r)$ such that $k_1+\cdots + k_r\leq d$, which we think of as truncated subdivided cubes.
    
    Then, by considering the Goodwillie tower of excisive approximations in each variable, we obtain a functor
    \[P_{\udl{\bullet}}\colon \excisive_{d,r}(\sC,\D)\longrightarrow \func(Q_{d,r},\excisive_{d,r}(\sC,\D))\] where for $F\in\excisive_{d,r}(\sC,\D)$, $P_{\udl{\bullet}}F$ sends $(k_1,\ldots,k_r)\in Q_{d,r}$ to $P_{k_1,\ldots,k_r}F$. 
\end{nota}

\begin{defn}\label{cons:intermediate_categories}
    Let $r\leq d$. We write $\excisive^{\lrcorner}_{d,r}(\sC,\D)\subseteq \excisive_{d,r}(\sC,\D)$ for the full subcategory of the \textit{subdiagonal functors}, i.e., those functors $F$ whose associated $Q_{d,r}$--cube $P_{\udl{\bullet}}F$ is right Kan extended from its restriction to $Q_{d,r}^{\lrcorner}\subseteq Q_{d,r}$.

\end{defn}

\begin{example}
We illustrate  here some examples of  the $Q_{d,r}$--cubes from \cref{obs:canonical_subdivided_cubes} and the condition of subdiagonality. In the following diagrams, the subdiagonal functors are those which are right Kan extended  from the blue portion.
    \begin{enumerate}
        \item For $d=3$, $r=2$, and $m=2$, every object in $\excisive_{3,2}$ fits in the $Q_{3,2}$--cube
        \begin{center}
            \begin{tikzcd}
                P_{2,2} \rar \dar & \mathcolor{blue}{P_{2,1}}\dar[color=blue]\\
                \mathcolor{blue}{P_{1,2}} \rar[color=blue] & \mathcolor{blue}{P_{1,1}}.
            \end{tikzcd}
        \end{center}
        \item For $d=4$, $r=2$, and  $m=3$, every object in $\excisive_{4,2}$ fits in the $Q_{4,2}$--cube
        \begin{center}
            \begin{tikzcd}
                P_{3,3} \rar\dar & P_{3,2} \rar \dar& \mathcolor{blue}{P_{3,1}}\dar[color=blue]\\
                P_{2,3} \rar\dar & \mathcolor{blue}{P_{2,2}} \rar[color=blue] \dar[color=blue]& \mathcolor{blue}{P_{2,1}}\dar[color=blue]\\
                \mathcolor{blue}{P_{1,3}} \rar[color=blue] & \mathcolor{blue}{P_{1,2}} \rar[color=blue] & \mathcolor{blue}{P_{1,1}}.
            \end{tikzcd}
        \end{center}
        \item For $d=4$, $r=3$, and  $m=2$, every object in $\excisive_{4,3}$ fits in the $Q_{4,3}$--cube
        \begin{center}
            \begin{tikzcd}
                P_{2,2,2}\ar[rr]\ar[dd]\ar[dr] && P_{2,2,1}\ar[dd]\ar[dr]\\
                & P_{2,1,2}\ar[rr, crossing over]&& \mathcolor{blue}{P_{2,1,1}}\ar[dd,color=blue]\\
                P_{1,2,2}\ar[rr]\ar[rd] && \mathcolor{blue}{P_{1,2,1}}\ar[rd,color=blue]\\
                & \mathcolor{blue}{P_{1,1,2}} \ar[rr,color=blue] \ar[uu, leftarrow, crossing over] && \mathcolor{blue}{P_{1,1,1}}.
            \end{tikzcd}
        \end{center}
    \end{enumerate}
\end{example}

\begin{obs}\label{obs:elbow_categories_are_stable_subcats}
    Note that $\excisive_{d,r}^{\lrcorner}(\sC,\D)\subseteq \excisive_{d,r}(\sC,\D)\subseteq\func_{\udl{\ast}}(\sC^{\times r},\D)$ are stable subcategories. This is because we are in the stable context, and so $P_{\underline{k}}$ preserves finite (co)limits. In fact, $\elbowCat{d,r}(\sC,\D)$ is  also closed under arbitrary limits in $\excisive_{d,r}(\sC,\D)\subseteq\func_{\udl{\ast}}(\sC^{\times r},\D)$ as we shall see in \cref{prop:reflection_from_all_excisive_into_elbow_excisive}: this is not immediate a priori since subdiagonality is defined in terms of the functors $P_{\udl{k}}$, which do not commute with infinite limits.
\end{obs}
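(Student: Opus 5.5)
The claim in \cref{obs:elbow_categories_are_stable_subcats} has two parts. First, $\excisive_{d,r}(\sC,\D)$ is a stable subcategory of $\func_{\udl{\ast}}(\sC^{\times r},\D)$. Second, $\elbowCat{d,r}(\sC,\D)$ is a stable subcategory of $\excisive_{d,r}(\sC,\D)$. In both cases I will check closure under the zero object, finite limits and finite colimits, and under shifts. In a stable category finite limits and colimits coincide up to shifts, so it suffices to check closure under the zero object, fibres and cofibres (equivalently, under finite limits) together with suspensions.

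For the first inclusion, recall that a functor $F\colon\sC^{\times r}\to\D$ is reduced in each variable exactly when it vanishes whenever any coordinate is zero. It is $(d-r+1)$--excisive in the $i$-th variable exactly when, for each fixed value of the other variables, a certain finite limit diagram (strongly cocartesian cubes sent to cartesian cubes) is satisfied. Limits in $\func(\sC^{\times r},\D)$ are computed pointwise, and finite limits commute with finite limits in $\D$. Hence both conditions are closed under finite limits. Since $\D$ is stable, finite colimits are also finite limits up to shift, and $\Sigma$ commutes with everything in sight. So $\excisive_{d,r}(\sC,\D)$ is closed under finite limits, finite colimits and shifts, and is therefore a stable full subcategory.

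For the subdiagonal condition I use that each $\excisiveApproximation_{\underline{k}}$, being a left adjoint to an inclusion, preserves colimits. In the stable setting it also preserves finite limits, since an exact functor between stable categories is one preserving finite colimits. Concretely, $P_{\underline k}$ is built as a sequential colimit of the finite limits $T_{\underline k}$, and sequential colimits commute with finite limits in a stable presentable category. Consequently the assignment $F\mapsto P_{\udl\bullet}F$ is an exact functor
\[
\excisive_{d,r}(\sC,\D)\to\func(Q_{d,r},\excisive_{d,r}(\sC,\D)).
\]
Let $j\colon Q^{\lrcorner}_{d,r}\subseteq Q_{d,r}$ denote the inclusion. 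The condition on a cube $X$ is that the unit map $X\to j_*j^*X$ is an equivalence. Here $j^*$ is exact, and $j_*$ is computed by finite limits because $Q_{d,r}$ is finite; hence $j_*$ commutes with finite limits and colimits, by stability again. Therefore the full subcategory of cubes that are right Kan extended is a stable subcategory, and its preimage under the exact functor $P_{\udl\bullet}$, namely $\elbowCat{d,r}(\sC,\D)$, is closed under finite limits, finite colimits and shifts. It contains $0$.

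The only point requiring some care is the exactness of $P_{\underline k}$ on finite limits. I would justify it by Lurie's construction of $P_d$ as $\colim(T_d\to T_d^2\to\cdots)$: finite limits in $\D$ commute with filtered colimits because $\D$ is stable. The multivariable version follows by applying this variable by variable. The remark about arbitrary limits is deferred, as the text says, to \cref{prop:reflection_from_all_excisive_into_elbow_excisive}, and I would not attempt it here.
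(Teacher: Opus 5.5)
Your proposal is correct and follows the paper's reasoning. The paper's entire justification is that each $P_{\underline{k}}$ preserves finite (co)limits in the stable setting. You spell this out by showing that $F\mapsto P_{\udl{\bullet}}F$ is exact and that being right Kan extended from the finite subposet $Q^{\lrcorner}_{d,r}$ is a condition stable under finite (co)limits, and you rightly defer closure under arbitrary limits to \cref{prop:reflection_from_all_excisive_into_elbow_excisive}, just as the paper does.
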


We are now ready to state  the fundamental insight about  subdiagonlity, i.e., \cref{alphThm:key_pigeonhole_principle},  which affords our approach to obtaining the  Mackey description in \cref{alphThm:excisive_functors_as_spectral_mackey}.

\begin{thm}[Key pigeonhole principle]\label{thm:key_pigeonhole_principle}
    Let $\sC\in\cat\exact$ and $\D\in\presentable^L_{\mathrm{st}}$. Let $r\leq s\leq d$ and $f\colon [s]\twoheadrightarrow [r]$ be a surjection. We have the dashed factorisations
    \[
        \begin{tikzcd}
                \excisive_{d,r}^{\lrcorner}(\sC,\D)\dar[hook]\rar["{\crossEffect_{f}}",dashed]\ar[dr,phantom,"\mathrm{(1)}"]& \excisive_{d,s}^{\lrcorner}(\sC,\D)\dar[hook]\\
                \excisive_{d,r}(\sC,\D)\rar["{\crossEffect_{f}}"']& \func(\sC^{\times s},\D)
        \end{tikzcd}
        \hspace{1cm}
        \begin{tikzcd}
                \excisive_{d,r}^{\lrcorner}(\sC,\D)\dar[hook]\ar[dr,phantom,"\mathrm{(2)}"]& \excisive_{d,s}^{\lrcorner}(\sC,\D)\dar[hook]\lar["\Delta_f^*"',dashed]\\
                \func(\sC^{\times r},\D)& \excisive_{d,s}(\sC,\D). \lar["\Delta_f^*"]
        \end{tikzcd}
    \]
\end{thm}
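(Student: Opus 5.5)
The plan is to reformulate subdiagonality as a vanishing condition on multihomogeneous layers. After that, both factorisations reduce to degree counting: the total degree of a multihomogeneous functor is preserved by $\crossEffect_f$ and by $\Delta_f^*$, and the pigeonhole principle turns a bound on total degree into a bound on the degree in each variable.

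\emph{Step 1 (reformulation).} For $F\in\excisive_{d,r}(\sC,\D)$ and $\udl{k}\in Q_{d,r}$, let $D_{\udl{k}}F$ be the total fibre of the $r$--cube in $P_{\udl{\bullet}}F$ with initial vertex $P_{\udl{k}}F$ and vertices $P_{\udl{k}-\epsilon}F$ for $\epsilon\in\{0,1\}^r$. Here we use the convention $P_0=0$. Then $D_{\udl{k}}F$ is multihomogeneous of multidegree $\udl{k}$: it is homogeneous of degree $k_i$ in the $i$--th variable.

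I will show the following: $F$ is subdiagonal if and only if $D_{\udl{k}}F\simeq 0$ for every $\udl{k}\in Q_{d,r}\setminus Q^{\lrcorner}_{d,r}$. The subposet $Q^{\lrcorner}_{d,r}$ is closed under passing to smaller indices, which are the targets of the maps in $Q_{d,r}$. So one can add the missing vertices one at a time, in order of increasing $k_1+\cdots+k_r$. At each step, the Kan extension condition at $\udl{k}$ says that $P_{\udl{k}}F$ is the limit of the punctured cube below it, given that all lower vertices are already right Kan extended. By a standard cube argument, this is equivalent to the vanishing of the total fibre $D_{\udl{k}}F$.

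A consequence of the reformulation is the following. Since $\excisive_{d,r}(\sC,\D)$ is stable and the multi-tower is finite, a functor is subdiagonal exactly when it lies in the thick subcategory generated by multihomogeneous functors of multidegree $\udl{k}$ with $k_i\geq 1$ and $\sum k_i\leq d$. Such a multidegree automatically has $k_i\le d-r+1$ for each $i$: the other $r-1$ entries are each at least $1$. This is the first use of the pigeonhole principle. By \cref{obs:elbow_categories_are_stable_subcats}, $\excisive^{\lrcorner}_{d,r}(\sC,\D)$ is closed under finite limits and colimits, so this description makes sense.

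\emph{Step 2 (factorisation (1)).} Both $\crossEffect_f$ and the inclusion are exact. By Step 1 it therefore suffices to treat a multihomogeneous $F$ of multidegree $\udl{k}$ with $|\udl{k}|\leq d$. I write $F$ as the restriction along the diagonal of a multi-symmetric multilinear functor, followed by homotopy orbits. Computing $\crossEffect_f$ then amounts to expanding each block of $k_i$ equal arguments $X_i$ into the sum over $j\in f^{-1}(i)$ of the new variables $X_j$, and discarding terms that are not reduced in every variable. The result is a finite direct sum of multihomogeneous functors of $s$ variables with multidegrees $\udl{j}$. Each such $\udl{j}$ has all $j_t\geq 1$ and $\sum_{t\in f^{-1}(i)} j_t=k_i$, so $|\udl{j}|=|\udl{k}|\leq d$. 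By the pigeonhole remark in Step 1, each summand lies in $\excisive^{\lrcorner}_{d,s}(\sC,\D)$.

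\emph{Step 3 (factorisation (2)).} This is the same argument run in reverse. Filter $G\in\excisive^{\lrcorner}_{d,s}(\sC,\D)$ by its layers $D_{\udl{j}}G$ with $|\udl{j}|\leq d$. Restricting along $\Delta_f$ sends a multihomogeneous functor of multidegree $\udl{j}$ to one of multidegree $\big(\sum_{t\in f^{-1}(i)}j_t\big)_{i\in[r]}$, whose total degree is still $\leq d$. The first claim in Step 1 shows this output lies in $\excisive_{d,r}(\sC,\D)$, so the excisiveness bound comes for free, and it then lies in $\excisive^{\lrcorner}_{d,r}(\sC,\D)$ because $\Delta_f^*$ is exact.

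I expect the main obstacle to be Step 1. The delicate points are the inductive identification of the Kan extension condition with the vanishing of total fibres, and checking that $D_{\udl{k}}F$ really is multihomogeneous of multidegree $\udl{k}$. The latter uses the fact that the excisive approximations $P_{k_i}$ in different variables commute with each other and are exact. A secondary technical point is the cross-effect formula for multihomogeneous functors in Step 2. I would derive it from the classification of homogeneous functors, applied variable by variable, together with the multiplicativity \cref{lem:iterabilityCrossEffects} to reduce to the case where $f$ merges only two points.
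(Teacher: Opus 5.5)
Your proof is correct, but its key step takes a different route from the paper's. Step 1 is essentially shared: the paper's \cref{lem:pointwise_cubical_induction}, together with its remark in the proof of (1)(b) that the total fibre of each such cube is a multihomogeneous layer, is exactly your criterion. Your Step 3 is likewise the paper's argument for (2), just run on finer generators.

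The real difference is in (1). The paper reduces to a surjection collapsing two points. It then works with the coarser generators of \cref{lem:generators_of_truncated_subdivided_cubes}, namely $\udl{\ell}$--excisive functors with $\sum\udl{\ell}\leq d$. It only ever uses one--sided degree bounds: cross--effects detect and lower excisiveness, and $\crossEffect_{\udl{k}}P_{\udl{k}}\simeq P_{\udl{1}}\crossEffect_{\udl{k}}$ (\cref{obs:calculus_of_cross-effects_and_excisive_approximations}). Because of this, it must check the per--variable bound in (1)(a) by a case analysis, and the cube condition in (1)(b) by a separate pigeonhole argument.

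You instead use the classification of homogeneous functors to get an exact formula for $\crossEffect_f$ on multihomogeneous functors. This makes total multidegree a quantity preserved by both $\crossEffect_f$ and $\Delta_f^*$. Both conditions in (1) then follow at once from the single observation that positive entries summing to at most $d$ are each at most $d-r+1$. That is conceptually more transparent.

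The price is heavier input. You need the classification of multihomogeneous functors in several variables, and a coherent, equivariant expansion of $L\big((\bigoplus_{t}Y_t)^{k_i},\ldots\big)_{h\Sigma_{\udl{k}}}$. The paper's argument never identifies homogeneous functors; it only bounds degrees.
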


\begin{rmk}
    As pointed out in the introduction, neither dashed lifts are obvious. Without the subdiagonality condition, it is not true that $\crossEffect_f$ on $\excisive_{d,r}(\sC,\D)$ lands in $\excisive_{d,s}(\sC,\D)$, much less $\elbowCat{d,s}(\sC,\D)$. Similarly in the case of $\Delta_f^*$. The subdiagonality condition is what guarantees that the images of these functors land in multivariable functors that have a uniform excisiveness bounds  across the variables.
\end{rmk}

As an immediate consequence of \cref{thm:key_pigeonhole_principle} and the biadjunction \cref{recollect:crossEffectBiadjunction}, we obtain the following:

\begin{cor}\label{cor:omnibus_multivariable_cross-effects}
        Let $r\leq s\leq d$ and $f\colon [s]\twoheadrightarrow [r]$ be a surjection.
        Then the biadjunction in the top row of 
            \begin{center}
        \begin{tikzcd}
            \func_{\udl{\ast}}(\sC^{\times r},\D)  \ar[rr,bend left = 20, "\crossEffect_{f}" description] \ar[rr,bend right = 20, "\crossEffect_{f}" description]&& \func_{\underline{\ast}}(\sC^{\times s},\D) \ar[ll, "\Delta_f^*"' description]\\
            \\
            \excisive^{\lrcorner}_{d,r}(\sC,\D) \ar[uu,hook] \ar[rr,bend left = 20, "\crossEffect_{f}" description,dashed] \ar[rr,bend right = 20, "\crossEffect_{f}" description,dashed]&& \excisive^{\lrcorner}_{d,s}(\sC,\D) \ar[uu,hook] \ar[ll,"\Delta_f^*"' description,dashed]
        \end{tikzcd}
    \end{center}
    restricts to a dashed biadjunction in the bottom row.
\end{cor}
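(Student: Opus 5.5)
The plan is to deduce this formally from \cref{thm:key_pigeonhole_principle} together with the biadjunction \cref{recollect:crossEffectBiadjunction}, using only that the vertical inclusions are fully faithful. The key observation is that a restriction of an adjunction to full subcategories needs no new analysis. If $L\dashv R$ is an adjunction between $\mathcal{A}$ and $\mathcal{B}$, and $\mathcal{A}_0\subseteq\mathcal{A}$, $\mathcal{B}_0\subseteq\mathcal{B}$ are full subcategories with $L(\mathcal{A}_0)\subseteq\mathcal{B}_0$ and $R(\mathcal{B}_0)\subseteq\mathcal{A}_0$, then the restricted functors are again adjoint. Indeed, the natural equivalence $\map_{\mathcal{B}}(La,b)\simeq\map_{\mathcal{A}}(a,Rb)$ restricts to $a\in\mathcal{A}_0$, $b\in\mathcal{B}_0$, and by full faithfulness these mapping spaces agree with those computed in $\mathcal{B}_0$ and $\mathcal{A}_0$. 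Equivalently, the unit and counit of the original adjunction have components between objects of the subcategories, hence lie in them, and the triangle identities persist.

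First, I check that all three functors in the top row preserve the relevant full subcategories. For $\crossEffect_f$ this is square (1) of \cref{thm:key_pigeonhole_principle}. It is stated there on $\excisive_{d,r}(\sC,\D)\subseteq\func_{\udl{\ast}}(\sC^{\times r},\D)$, and its image lands in $\elbowCat{d,s}(\sC,\D)$, which by \cref{obs:elbow_categories_are_stable_subcats} is a full subcategory of $\func_{\udl{\ast}}(\sC^{\times s},\D)$. For $\Delta_f^*$ it is square (2). There I only need that its target $\func(\sC^{\times r},\D)$ is compatible with the top row: $\Delta_f^*$ preserves multireduced functors because $f$ is surjective, as noted in the proof of \cref{corollary: cross-effect is symmetric monoidal}, and the theorem says the image lies in $\elbowCat{d,r}(\sC,\D)$.

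Second, I apply the general restriction principle twice. The first application uses the adjunction $\crossEffect_f\dashv\Delta_f^*$ and the second uses $\Delta_f^*\dashv\crossEffect_f$; both are available because $\crossEffect_f$ is a biadjoint. Together these give the dashed biadjunction.

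There is no real obstacle, since all the content sits in \cref{thm:key_pigeonhole_principle}. The only point needing care is the bookkeeping in the first step, namely that the functors appearing in the theorem's squares are literally the restrictions of the functors in the biadjunction \cref{recollect:crossEffectBiadjunction}, with consistent ambient categories. This holds because all the categories involved are full subcategories of $\func_{\udl{\ast}}(\sC^{\times -},\D)$.
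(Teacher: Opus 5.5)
Your proposal is correct and follows the paper's argument: the paper deduces this corollary as an immediate consequence of the two factorisations in \cref{thm:key_pigeonhole_principle} and the biadjunction \cref{recollect:crossEffectBiadjunction}. Your remark that an adjunction restricts to full subcategories preserved by both functors is exactly the step the paper leaves implicit.
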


The remainder of this subsection will be occupied with the proof of the theorem. To this end, we will need the following two limit cofinality preliminaries.

\begin{cons}\label{cons:filtered_cube}
    Let $k_1,\ldots,k_r$ be some positive integers, $k\coloneqq \sum_ik_i$, and let $\sum\colon \Delta^{k_1}\times\cdots\times \Delta^{k_r}\rightarrow \Delta^k$ be the functor which assigns to $(a_1,\ldots,a_r)\in\Delta^{k_1}\times\cdots\times \Delta^{k_r}$ the object $\sum_ia_i\in\Delta^k$ (recall also our convention for $\Delta^k$ from \cref{obs:canonical_subdivided_cubes}). Write $q\colon B_n\hookrightarrow \Delta^{k_1}\times\cdots\times \Delta^{k_r}$ for the pullback
    \begin{center}
        \begin{tikzcd}
            B_n \rar[hook, "q"] \dar\ar[dr,phantom,  "\lrcorner"]& \Delta^{k_1}\times\cdots\times \Delta^{k_r}\dar["\sum"]\\
            \langle s \in \Delta^r\: | \: s\geq n\rangle\rar[hook] & \Delta^k.
        \end{tikzcd}
    \end{center}
    That is, $B_n$ is the full subcategory spanned by those tuples $(a_i)_i\in\Delta^{k_1}\times\cdots\times \Delta^{k_r}$ such that $\sum_ia_i\geq n$. 
\end{cons}

\begin{lem}\label{lem:pointwise_cubical_induction}
    Let $\sC$ be a category with finite limits and $B_n$ be as in \cref{cons:filtered_cube}. A functor $F\in\func(\Delta^{k_1}\times\cdots\times \Delta^{k_r},\sC)$ is in the image of the right Kan extension $\func(B_n,\sC)\xhookrightarrow{q_*}\func(\Delta^{k_1}\times\cdots\times \Delta^{k_r},\sC)$ if and only if for all tuples $\udl{a}\in \Delta^{k_1}\times\cdots\times \Delta^{k_r}\backslash B_n$, writing $\udl{\delta}_i = (0,\ldots,0,1,0,\ldots,0)$ for the tuple with $1$ in component $i$ and zero elsewhere, the $r$--cube
    \[\Big\{F(\udl{a}+\udl{\delta}_{i_1}+\cdots+\udl{\delta}_{i_t})\Big\}_{(i_1,\ldots,i_t)\in \mathrm{Subsets \: of\:} [r]}\]
    is a limit diagram.
\end{lem}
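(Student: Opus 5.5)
The plan is to exploit that $B_n$ is upward closed in the poset $P\coloneqq\Delta^{k_1}\times\cdots\times\Delta^{k_r}$, and to filter the inclusion $q$ by the chain of upward closed full subcategories
\[
B_n\subseteq B_{n-1}\subseteq\cdots\subseteq B_{r}=P,
\]
where $r$ is the smallest possible value of $\sum_ia_i$. Right Kan extensions compose, and full inclusions have fully faithful right Kan extensions. Hence $F$ lies in the image of $q_*$ if and only if, for each $m<n$, the restriction $F|_{B_m}$ is right Kan extended from $B_{m+1}$. The last condition is pointwise: for every $\udl a$ with $\sum_ia_i=m$, the map
\[
F(\udl a)\longrightarrow \lim_{\udl b\in (B_{m+1})_{\udl a/}}F(\udl b)
\]
must be an equivalence, where $(B_{m+1})_{\udl a/}$ is the full subposet of $\udl b\in B_{m+1}$ with $\udl b\geq \udl a$. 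Note that $(B_{m+1})_{\udl a/}$ consists precisely of those $\udl b\geq \udl a$ with $\udl b\neq\udl a$.

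The key step is a cofinality statement. Let $J\subseteq (B_{m+1})_{\udl a/}$ be the punctured cube of elements $\udl a+\udl\delta_S\coloneqq\udl a+\sum_{i\in S}\udl\delta_i$, for $\emptyset\neq S\subseteq[r]$. We only allow those $S$ for which $\udl a+\udl\delta_S$ still lies in $P$. If $a_i$ is already maximal, the $i$-th direction of the cube in the statement is read as degenerate, i.e.\ as identities, and such a cube is automatically a limit diagram; so nothing is lost by this convention. I will show that $J\hookrightarrow (B_{m+1})_{\udl a/}$ is initial, so that limits may be computed over $J$. By Quillen's Theorem A it suffices that, for each $\udl b$, the poset $J_{/\udl b}=\{S\neq\emptyset : \udl a+\udl\delta_S\leq \udl b\}$ is weakly contractible. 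Set $T\coloneqq\{i : b_i>a_i\}$. Then $J_{/\udl b}$ is exactly the poset of nonempty subsets of $T$. Moreover $T\neq\emptyset$ because $\udl b\neq\udl a$, and every $i\in T$ is an allowed direction. So this poset has a maximum $T$ and is therefore contractible. Consequently the Kan extension condition at $\udl a$ is equivalent to the $r$--cube $\{F(\udl a+\udl\delta_S)\}_S$ being a limit diagram.

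Combining the two steps, $F\in\im(q_*)$ if and only if the cube condition holds at every $\udl a$ with $\sum_i a_i<n$, i.e.\ for all $\udl a\in P\backslash B_n$, which is the claim. Finite limits suffice throughout because $J$ is finite, so the only hypothesis needed on $\sC$ is the existence of finite limits. The one place needing care is the cofinality (initiality) check. In particular, one must make sure that the transitive step-by-step extension really agrees with the one-shot right Kan extension from $B_n$; this holds because each $B_{m+1}\subseteq B_m$ is a full inclusion and right Kan extensions along composites compose. One must also handle the boundary directions where $a_i$ is maximal consistently, which the convention above does. Everything else is formal.
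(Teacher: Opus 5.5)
Your argument follows the paper's: the same filtration $B_n\subseteq B_{n-1}\subseteq\cdots$, the same pointwise right Kan extension criterion applied one layer at a time, and the same reduction to the punctured unit cube. The paper gets initiality from a right adjoint to $\{1\to 2\}^{\times r}\subseteq\prod_i\Delta^{l_i}$. Your observation that each slice $J_{/\underline b}$ has a maximum $\underline a+\underline\delta_T$ says exactly that $J\hookrightarrow (B_{m+1})_{\underline a/}$ has a right adjoint, so that step is identical.

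The one thing that is wrong is your convention at points $\underline a\notin B_n$ where some $a_i$ is maximal. A cube consisting of identities in one direction is always cartesian, so reading the missing directions as identities makes the condition at such $\underline a$ vacuous. Your own cofinality computation shows the Kan extension condition there is that $F(\underline a)$ be the limit of the punctured cube on the allowed directions, and that is a real constraint.

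For instance, take $\Delta^1\times\Delta^1=\{1,2\}^2$ with $B_4=\{(2,2)\}$. The functor with $F(1,1)=F(1,2)=A$, $F(2,1)=F(2,2)=\ast$ and $A\not\simeq\ast$ satisfies all of your cube conditions: the square at $(1,1)$ is cartesian, and the ones at $(1,2)$ and $(2,1)$ are degenerate. Yet it is not right Kan extended from $B_4$, since that would force $F(1,2)\simeq F(2,2)$. So your sentence beginning ``Consequently'' is false at boundary points, and the `if' direction fails as you have set it up.

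The repair is to fill the out-of-range vertices with the terminal object instead of using identities. This is what happens in the paper's applications, where the out-of-range term is $P_0F\simeq 0$. If the face $\{S\ni i\}$ of an $r$--cube $X$ is constant at $\ast$, then $\lim_{S\neq\emptyset}X(S)\simeq\lim_{\emptyset\neq S\subseteq[r]\setminus\{i\}}X(S)$. Hence the cube is cartesian if and only if the face on the remaining directions is, and iterating recovers exactly your $J$--condition. With that change your proof is complete. The paper's own proof, via $\{1\to2\}\subseteq\Delta^{l_i}$, tacitly assumes every direction is available and does not treat this case either.
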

\begin{proof}
    Note that since we can factorise the inclusion $q\colon B_n\subseteq \Delta^{k_1}\times\cdots\times \Delta^{k_r}$ as the sequence of fully faithful functors
    \[B_n \hookrightarrow B_{n-1} \hookrightarrow \cdots \hookrightarrow B_1 \hookrightarrow B_0 = \Delta^{k_1}\times\cdots\times \Delta^{k_r},\] it suffices by induction to prove the statement for the inclusion $B_n \hookrightarrow B_{n-1}$. By the pointwise right Kan extension formula, a functor $F \colon B_{n-1}\rightarrow\sC$ is right Kan extended along $B_n\subseteq B_{n-1}$ if and only if for all $\udl{a}\in B_{n-1}\backslash B_n$, i.e., $\sum \udl{a}=n-1$, the  canonical map $F(\udl{a}) \rightarrow \lim_{(\udl{a}\downarrow B_n)} F$ is an equivalence.
    But, since everything in sight is a poset,  the comma category $(\udl{a}\downarrow B_n)$ is equivalent to the punctured subdivided cube $(\prod_i\Delta^{l_i})\backslash \udl{1}$ where $l_i\coloneqq k_i-a_i$. 
    
    The main point now is that the inclusion $c\colon (\Delta^1)^{\times r}=\prod_i\Delta^{l_i}_{\leq 2}\subseteq\prod_i\Delta^{l_i}$ admits a right adjoint since for any number $k$, the inclusion $\Delta^1 = \{1\rightarrow 2\} \subseteq \Delta^k$ admits a right adjoint given by sending all numbers in $\Delta^k=\{1\rightarrow 2\rightarrow \cdots \rightarrow k+1\}$ greater than $1$ to $2$. Hence, the inclusion  of the punctured subcube $(\Delta^1)^{\times r}\backslash\udl{1}\subseteq \prod_i\Delta^{l_i}\backslash\udl{1}$ is limit cofinal, and so $\lim_{(\udl{a}\downarrow B_n)} F$ may equivalently be computed as the limit of the punctured $r$--subcube. This completes the proof.
\end{proof}

The following simple observation will play a basic role throughout the article as it allows us to reduce many proofs about the category $\elbowCat{d,r}$ to checking on a smaller class of objects which are amenable to pigeonhole arguments.

\begin{lem}\label{lem:generators_of_truncated_subdivided_cubes}
    Let $F\in\excisive_{d,r}(\sC,\D)$. If it is $(k_1,\ldots,k_r)$--excisive with $k_i\leq d-r+1$ for all $i$ such that $k_1+\cdots+k_r\leq d$, then in fact  $F\in\excisive_{d,r}^{\lrcorner}(\sC,\D)$.
\end{lem}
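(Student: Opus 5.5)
The plan is to translate the hypothesis into a statement about the cube $P_{\udl{\bullet}}F$ and then verify the limit criterion of \cref{lem:pointwise_cubical_induction}. Here $F$ is $(k_1,\ldots,k_r)$--excisive for a fixed tuple $\udl{k}$ with $k_i\leq m\coloneqq d-r+1$ and $\sum_i k_i\leq d$.

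\textbf{Step 1: the cube is constant beyond $\udl{k}$.} For any tuple $\udl{a}\in Q_{d,r}$, write $\udl{a}\wedge\udl{k}\coloneqq(\min(a_i,k_i))_i$. Since $F$ is $k_i$--excisive in the $i$--th variable, taking the $a_i$--th approximation in that variable with $a_i\geq k_i$ does nothing. Hence the canonical map $P_{\udl{a}}F\rightarrow P_{\udl{a}\wedge\udl{k}}F$ is an equivalence. So $P_{\udl{\bullet}}F\simeq (\udl{a}\mapsto\udl{a}\wedge\udl{k})^*P_{\udl{\bullet}}F$.

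\textbf{Step 2: check the punctured cubes.} The subcategory $Q^{\lrcorner}_{d,r}$ plays the role of $B_n$ from \cref{cons:filtered_cube}, with the order reversed: here maps go from larger indices to smaller ones, so "upward closed" becomes "downward closed". We must show that for every $\udl{a}\notin Q^{\lrcorner}_{d,r}$, i.e.\ $\sum a_i>d$, the $r$--cube formed by the values $P_{\udl{a}-\sum_{i\in S}\udl{\delta}_i}F$, for $S\subseteq[r]$, is a limit cube. (If some $a_i=1$, the face $S\ni i$ leaves $Q_{d,r}$; in that case we use only the directions $i$ with $a_i\geq 2$. The adapted form of the lemma handles this just as in its proof, since the comma category is still a punctured subdivided cube.)

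\textbf{Step 3: pigeonhole.} Since $\sum a_i>d\geq\sum k_i$, there is some index $j$ with $a_j>k_j$, i.e.\ $a_j-1\geq k_j$. In particular $a_j\geq 2$, so direction $j$ is genuinely present. By Step 1, the map $P_{\udl{b}}F\rightarrow P_{\udl{b}-\udl{\delta}_j}F$ is an equivalence for every vertex $\udl{b}$ of the cube, because $b_j\wedge k_j=(b_j-1)\wedge k_j=k_j$. A cube in which all edges in one direction are equivalences is a limit cube: it is cartesian because it is a map of $(r-1)$--cubes that is an equivalence. Hence the criterion holds and $P_{\udl{\bullet}}F$ is right Kan extended from $Q^{\lrcorner}_{d,r}$, i.e.\ $F\in\excisive_{d,r}^{\lrcorner}(\sC,\D)$.

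\textbf{Main obstacle.} The substantive step is the pigeonhole in Step 3. The hard part is the bookkeeping in applying \cref{lem:pointwise_cubical_induction}: its indexing runs in the opposite direction, and the boundary faces where $a_i=1$ must be handled. I would deal with this by reindexing via $a_i\mapsto m-a_i$, so that $Q^{\lrcorner}_{d,r}$ becomes an upward-closed region, and by restricting to the faces present.
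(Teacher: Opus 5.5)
Your proposal is correct and is essentially the paper's proof: reduce via \cref{lem:pointwise_cubical_induction} to the $r$--cubes at tuples $\udl{\ell}$ with $\sum\udl{\ell}>d$, use pigeonhole to find a $j$ with $k_j\leq \ell_j-1$, and conclude that the cube is degenerate in direction $j$ and hence cartesian. Your extra bookkeeping about the reversed orientation and the boundary directions with $a_i=1$ is a correct refinement of details the paper leaves implicit. The claim $a_j\geq 2$ uses $k_j\geq 1$, which is harmless since $k_j=0$ forces $F\simeq 0$ by reducedness.
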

\begin{proof}
    By \cref{lem:pointwise_cubical_induction}, it suffices to argue that for every tuple $(\ell_1,\ldots,\ell_r)$ such that $\ell_1+\cdots+\ell_r>d$, writing $\udl{\delta}_i = (0,\ldots,0,1,0,\ldots,0)$ for the tuple with $1$ in component $i$ and zero elsewhere, the $r$--cube
    \[\Big\{P_{\underline{\ell}-\udl{\delta}_{i_1}-\cdots-\udl{\delta}_{i_a}}F\Big\}_{(i_1,\ldots,i_a)\in \mathrm{Subsets \: of\:} [r]}\]
    is a limit diagram. Our goal is to argue that this cube is degenerate in one direction and thus  conclude that it is cartesian. For this, we simply observe that by the pigeonhole principle, there exists some $j$ such that $k_j \leq \ell_j-1$, since otherwise $k_1+\cdots+k_r\geq \ell_1+\cdots+\ell_r > d$ contradicting our hypothesis. Therefore, all the maps in the $j$--th direction in the cube $\Big\{P_{\underline{\ell}-\udl{\delta}_{i_1}-\cdots-\udl{\delta}_{i_a}}F\Big\}_{(i_1,\ldots,i_a)\in \mathrm{Subsets \: of\:} [r]}$ will be equivalences, and so the cube is degenerate as wanted.
\end{proof}

We are now ready to give the proof of the theorem.

\begin{proof}[Proof of \cref{thm:key_pigeonhole_principle}.]
    Any surjection may be factorised into a sequence of surjections which is injective everywhere except at two points. Thus, by the functoriality of cross--effects from \cref{lem:iterabilityCrossEffects}, to prove both of the factorisation squares, it suffices to prove it for $s=r+1$ and the surjection $f\colon [s]\twoheadrightarrow [r]$ which collapses the first two points of $[s]$.  For this case, it will be helpful to adopt the notation $m\coloneqq d-r+1$ as well as $\crossEffect_{2,\udl{1}}\coloneqq\crossEffect_f$ and $\Delta_{2,\udl{1}}\coloneqq\Delta_f$ to specify to which variables we are applying the cross--effects and diagonal restriction.
    
    We prove each part in turn. For the square~(1), let $F\in\excisive_{d,r}^{\lrcorner}(\sC,\D)$.  We need to show two things, namely that: (a) $\crossEffect_{2,\underline{1}}F$ is $(m-1)$--excisive in each variable; (b) $\crossEffect_{2,\underline{1}}F$ is right Kan extended from the truncated subdivided cube in \cref{obs:canonical_subdivided_cubes}.

    For point (a), using that cross--effects detect excisiveness by \cite[Prop. 6.1.4.10]{lurieHA}, we need to show, without loss of generality, that $\crossEffect_{2,m,\underline{1}}F\simeq\crossEffect_{1,1,m,\underline{1}}\crossEffect_{2,\underline{1}}F\simeq 0$ where the first equivalence is by \cref{lem:iterabilityCrossEffects}. This will imply that the $m$--excisiveness in the second variable is actually $(m-1)$--excisive. But since $F\in \excisive^{\lrcorner}_{d,r}(\sC,\D)$, it suffices to show that
    \[\crossEffect_{2,m,\underline{1}}P_{\underline{k}}F\simeq 0\] for $\underline{k}=(k_1,\ldots,k_r) \leq \underline{m}$ satisfying $\sum\underline{k}\leq d$ since $F$ is a finite limit over these $P_{\underline{k}}F$'s. Without loss of generality, we might as well assume that $F$ is $\underline{k}$--excisive. This will involve a case analysis: we know first of all that $k_i\leq m$ for all $i$. Suppose $k_2<m$. Then of course $\crossEffect_{2,m,\underline{1}}F\simeq 0$. Now suppose $k_2=m$. Then
    \[k_1+k_3+\cdots+k_r \leq d-m = r-1.\] If one of the $k_i's$ were $0$, then $F$ was already the zero functor since it is reduced in each variable. If not, then since we must have $r-1\leq k_1+k_3+\cdots+k_r\leq r-1$, we thus have $k_1=1$, in which case we also get $\crossEffect_{2,m,\underline{1}}F\simeq 0$. This completes the proof of (a). 

    For point (b), by \cref{lem:pointwise_cubical_induction}, 
    it suffices to show the following claim: for $\underline{k}=(k_1,\ldots,k_{r+1})$ such that $\sum\underline{k}> d$, and writing $\udl{\delta}_i = (0,\ldots,0,1,0,\ldots,0)$ for the tuple with $1$ in component $i$ and zero elsewhere, the $(r+1)$--cube
    \[\Big\{P_{\underline{k}-\udl{\delta}_{i_1}-\cdots-\udl{\delta}_{i_a}}\crossEffect_{2,\underline{1}}F\Big\}_{(i_1,\ldots,i_a)\in \mathrm{Subsets \: of\:} [r+1]}\]
    is a limit. Furthermore, by virtue  of \cref{obs:elbow_categories_are_stable_subcats} and \cref{lem:generators_of_truncated_subdivided_cubes},  $F\in\excisive_{d,r}^{\lrcorner}(\sC,\D)$ is built as a finite limit out of the $\underline{m}$--excisive functors which are $(\ell_1,\ldots,\ell_r)$--excisive where $\ell_1+\cdots+\ell_r\leq d$. Hence, without loss of generality, we may suppose $F$ was $(\ell_1,\ldots,\ell_r)$--excisive where $\ell_1+\cdots+\ell_r\leq d$. Now to see the claim, it suffices to show that the total fibre of the cube is zero, and for this cube the total fibre is computed as the homogeneous layers in each variable. And so to show it vanishes, it suffices to show that
    \[\crossEffect_{\underline{k}}P_{\underline{k}}\crossEffect_{2,\underline{1}}F\simeq 0.\]
    But using \cref{obs:calculus_of_cross-effects_and_excisive_approximations} and \cref{lem:iterabilityCrossEffects}, we see
    \[\crossEffect_{\underline{k}}P_{\underline{k}}\crossEffect_{2,\underline{1}}F\simeq P_{{\underline{1}}}\crossEffect_{\underline{k}}\crossEffect_{2,\underline{1}}F\simeq P_{{\underline{1}}}\crossEffect_{k_1+k_2, k_3,\ldots,k_{r+1}}F.\]
    Since $F$ was $(\ell_1,\ldots,\ell_r)$--excisive, we see that $\crossEffect_{k_1+k_2, k_3,\ldots,k_{r+1}}F$ is $(\underline{\ell_1-(k_1+k_2)+1}, \underline{\ell_2-k_3+1},\ldots,\underline{\ell_r-k_{r+1}+1})$--excisive by \cite[Prop. 6.1.3.22]{lurieHA}. Since we have both $\ell_1+\cdots+\ell_r\leq d$ and $k_1+\cdots +k_{r+1}>d$, we must have that $\ell_1-(k_1+k_2) +1< 1$ or $\ell_j-k_{j+1} +1< 1$ for some $j\geq 2$, since otherwise we would have $\sum\underline{\ell}\geq \sum\underline{k} > d$ contradicting our hypothesis. Therefore, one of the variables of $\crossEffect_{k_1+k_2, k_3,\ldots,k_{r+1}}F$ is $0$--excisive, which implies that $\crossEffect_{k_1+k_2, k_3,\ldots,k_{r+1}}F\simeq 0 $ since all functors in sight are reduced in each variable. This proves the claim, and so also point (b).

    Now for the square (2), again as in the proof of point (1)(b) above, since $\excisive_{d,r}^{\lrcorner}(\sC,\D)\subseteq \func_{\udl{\ast}}(\sC^{\times r},\D)$ is closed under finite limits by \cref{obs:elbow_categories_are_stable_subcats}, it suffices to show that $\Delta_{2,\udl{1}}^*F\in\excisive_{d,r}^{\lrcorner}(\sC,\D)$ for $F\in\excisive_{d,r+1}^{\lrcorner}(\sC,\D)$ such that $F$ is $(\ell_1,\ldots, \ell_{r+1})$--excisive with $\sum\underline{\ell}\leq d$. By \cite[Cor. 6.1.3.5]{lurieHA}, we know that $\Delta_{2,\udl{1}}^*F$ is $(\ell_1+\ell_2,\ell_3,\ldots, \ell_{r+1})$--excisive. 
    
    We claim that if $F$ were not the zero functor, then $\ell_1+\ell_2 \leq m$. To see this, suppose $\ell_1+\ell_2> m$. Then $d\geq \sum\underline{\ell} = (\ell_1+\ell_2)+ (\ell_3+\cdots+\ell_{r+1}) > m +(\ell_3+\cdots+\ell_{r+1})$. Hence, since by definition $m=d-r+1$, we obtain $r-1 = d-m > (\ell_3+\cdots+\ell_{r+1})$. But then, there are $r-1$ terms in the set $\{\ell_3,\ldots,\ell_{r+1}\}$, hence we must have $\ell_i =0$ for some $i$ by pigeonhole. Since $F$ is reduced in each variable, this means that $F\simeq 0$ as claimed. 
    
    Therefore, by virtue of the claim we know at least that $\Delta_{2,\udl{1}}^*F\in\excisive_{d,r}(\sC,\D)$. On the other hand, since $(\ell_1+\ell_2)+\ell_3+\cdots+\ell_{r+1} \leq d$, we see that $\Delta_{2,\udl{1}}^*F$ is even in $\excisive_{d,r}^{\lrcorner}$ by \cref{lem:generators_of_truncated_subdivided_cubes}. This completes the proof of part (2) and  the theorem.
\end{proof}

\subsection{Beck--Chevalley squares}\label{subsection:beckChevalley_squares}
Our proof of the main theorem via parametrised higher category theory will leverage on an understanding of the interaction between the functors $\crossEffect_f$ and $\Delta^*_f$. This interaction, which is of a ``double--coset'' flavour, will be distilled as \cref{prop:doubcle_coset_truncated_to_d}, and the goal of this subsection is to prove this. To this end, we will need some preliminaries in order to be able even to state the result. Let $\sC$ and $\D$ be stable categories throughout.

The following considerations are dictated by the theory of parametrised categorie (see \cref{corollary: Exc admits indexed colimits} and \cref{thm:basic_properties_of_parametrised_excisive_category}). However, since we defer all explanations of this theory to \cref{subsection:recollection_parametrised} and \cref{subsection:subdiagonal_parametrised}, we shall not dwell on motivating the following construction and shall only mention what is needed to prove the desired result. 

\begin{cons}\label{construction: back-chevalley local systems}
    Consider a commuting square of finite sets
    \begin{equation}\label{equation: back-chevalley finite sets}
    \begin{tikzcd}
        U \rar["f_U"] \dar["g_U"] 
        & B\dar["g"] \\
        A\rar["f"]
        & E.
    \end{tikzcd}
    \end{equation}
    This square induces two more commutative diagrams
    \[
    \begin{tikzcd}
        \sC^E \rar["\Delta_f"] \dar["\Delta_g"']
        & \sC^A\dar["\Delta_{g_u}"] \\
        \sC^B\rar["\Delta_{f_u}"]
        & \sC^U,
    \end{tikzcd}
    \hspace{1cm}
    \begin{tikzcd}
    \func(\sC^{U},\D) \rar[ "\Delta_{ f_u}^*"]\dar[ "\Delta_{ g_u}^*"']& \func(\sC^{B},\D)\dar["\Delta_g^*"]\\
    \func(\sC^{A},\D) \rar["\Delta_f^*"] & \func(\sC^{E},\D).
    \end{tikzcd}
    \]
    By taking the right adjoints $ \Delta_g \dashv \Pi_g$ and $\Delta_{g_u} \dashv \Pi_{g_u}$ of the vertical arrows in the left square above, we obtain via the Beck--Chevalley transformation the following lax squares
    \[
    \begin{tikzcd}
        \sC^E \rar["\Delta_f"]  \ar[dr, Rightarrow]
        & \sC^A \\
        \sC^B\rar["\Delta_{f_u}"] \uar["\Pi_g"]
        & \sC^U \uar["\Pi_{g_u}"'],
    \end{tikzcd}
    \hspace{1cm}
    \begin{tikzcd}
    \func(\sC^{U},\D) \rar[ "\Delta_{ f_u}^*"] 
    & \func(\sC^{B},\D)\\
    \func(\sC^{A},\D) \rar["\Delta_f^*"] \uar[ "\Pi_{g_u}^*"]
    & \func(\sC^{E},\D)\uar["\Pi_g^*"'] \ar[ul, Rightarrow].
    \end{tikzcd}
    \]
    We note that $\Pi^*_g$ and $\Pi^*_{g_u}$ are \textit{left} adjoints to $\Delta^*_{g}$ and $\Delta^*_{g_u}$, respectively, i.e., $\Pi^*_g\simeq\Delta_{g!}$ and $\Pi^{*}_{g_u}\simeq \Delta_{g_u!}$. Moreover, the natural transformations
    \begin{equation*} 
    \Delta_f \circ \Pi_g \to \Pi_{g_u} \circ \Delta_{f_u},
    \end{equation*}
    \begin{equation} \label{equation: beck-chevalley non-reduced functors}
    \eta_{U}\colon \Pi^*_g \circ \Delta^*_f\to \Delta^*_{f_u} \circ \Pi^*_{g_u}
    \end{equation}
    are equivalences if~\eqref{equation: back-chevalley finite sets} is a pullback diagram, see e.g. \cite[Prop.~4.3.3]{hopkinslurie}.
\end{cons}

\begin{rmk}\label{remark: beck-chevalley is natural}
    The natural transformations~\eqref{equation: beck-chevalley non-reduced functors} are also functorial in $U$. Indeed, let $i\colon V\to U$ be a map of finite sets over $A\times_E B$. Then the following diagram
    \[
    \begin{tikzcd}
        \Pi^*_g\circ \Delta^*_f \rar["\eta_U"] \dar["\eta_V"']
        & \Delta_{f_u}^* \circ \Pi^*_{f_u} \dar \\
        \Delta_{f_v}^* \circ \Pi^*_{f_v} \rar["\simeq"]
        & \Delta_{f_u}^* \circ \Delta_i^* \circ \Pi^*_i \circ  \Pi^*_{f_u}.
    \end{tikzcd}
    \]
    commutes, where the right vertical morphism is induced by the unit of the adjoint pair $\Pi^*_i \dashv \Delta^*_i$. In particular, if $U=A\times_E B$, then the natural transformation $\eta_V$ is induced by the unit of the adjoint pair $\Pi^*_i \dashv \Delta^*_i$.
\end{rmk}

\begin{nota}
    We write $\epiCategory$ for the category of nonempty finite sets and surjections and $\epiCategory_d$ for the full subcategory of finite sets of cardinality at most $d$. We also write $\finite_{\epiCategory}$ and $\finite_{\epiCategory_d}$ for their finite coproduct cocompletion, respectively.
\end{nota}

\begin{cons}\label{construction: beck-chevalley for reduced functors}
    It is an elementary check that $\finite_{\epiCategory}$ and $\finite_{\epiCategory_d}$ admit pullbacks, cf.~\cite[Lem. 3.1]{glasmanGoodwillie}. These may be described as follows: let $f\colon A\twoheadrightarrow E$ and $g\colon B\twoheadrightarrow E$ be surjections such that $|A|,|B|\leq d$. Then their pullback in $\finite_{\epiCategory}$ (resp. in $\finite_{\epiCategory_d}$) is computed as the left square in
    \[
    \begin{tikzcd}
    \displaystyle{\coprod_{\substack{U \subseteq {A\times_{E}B}  \\ U \text{ is good}}}}U \rar[ "\sqcup f_U"]\dar[ "\sqcup g_U"']\ar[dr,phantom, "\lrcorner", near start] & B\dar[two heads, "g"]\\
    A \rar[two heads, "f"] & E
    \end{tikzcd}
    \hspace{1cm}
    \begin{tikzcd}
    \displaystyle{\bigoplus_{\substack{U \subseteq {A\times_{E}B}  \\ U \text{ is good}}}}\func_{\udl{\ast}}(\sC^{U},\D) \rar[ "\oplus \Delta_{ f_u}^*"]\dar[ "\oplus\Delta_{ g_u}^*"']& \func_{\udl{\ast}}(\sC^{B},\D)\dar["\Delta_g^*"]\\
    \func_{\udl{\ast}}(\sC^{A},\D) \rar["\Delta_f^*"] & \func_{\udl{\ast}}(\sC^{E},\D)
    \end{tikzcd}
    \]
    where a subset $U\subseteq A\times_{E}B$ is said to be \textit{good} in $\epiCategory$ if it surjects onto $A$ and $B$ under the projections (resp. \textit{good} in $\epiCategory_d$ if additionally, $|U|\leq d$). To distinguish these notions, we will also say $\epiCategory$--goodness and $\epiCategory_d$--goodness, respectively. Applying $\func_{\udl{\ast}}(\sC^{(-)},\D)$ to this square for different $U$ separately and taking their sum results in the  commuting square on the right above.

    By taking the left adjoints of the vertical arrows in the right square above, we obtain via the Beck--Chevalley transformation the lax square
    \begin{equation}\label{eqn:beckChevalleyLaxSquareALL}
    \begin{tikzcd}
    \displaystyle{\bigoplus_{\substack{U \subseteq {A\times_{E}B}  \\ U \text{ is $\epiCategory$-good}}}}\func_{\udl{\ast}}(\sC^{U},\D) \rar[ "\oplus \Delta_{ f_u}^*"]& \func_{\udl{\ast}}(\sC^{B},\D)\\
    \func_{\udl{\ast}}(\sC^{A},\D) \rar["\Delta_f^*"]\uar[ "\oplus\crossEffect_{ g_u}"] & \func_{\udl{\ast}}(\sC^{E},\D). \uar["\crossEffect_g"']\ar[ul, Rightarrow, shorten >=-3.5em]
    \end{tikzcd}
    \end{equation}
    Concretely, this natural transformation is given by 
    \begin{equation}\label{equation: base change}
    \eta=\bigoplus \eta_U\colon \crossEffect_g \Delta^*_f \simeq (\Pi^*_g\Delta^*_f){\reduce}\to \bigoplus_{\text{$U$ is good}} (\Delta^*_{f_u}\Pi^*_{g_u}){\reduce} \to \bigoplus_{\text{$U$ is good}} \Delta_{f_u}^*\crossEffect_{g_u}
    \end{equation}
    where the summands $\eta_U$ are induced by the natural transformations~\eqref{equation: beck-chevalley non-reduced functors}.
\end{cons}

We prove that the lax square \cref{eqn:beckChevalleyLaxSquareALL} commutes in \cref{proposition: double-coset formula}, and the strategy is to analyse the left adjoint $\Pi_f^*\colon \func(\sC^E,\D) \to \func(\sC^A,\D)$.
First, we show that the projection $\Pi_f^* \to \crossEffect_f $ splits as a direct summand and we calculate the fibre.

\begin{lem}\label{lemma: spilliting of the left adjoint}
Let $g\colon B \twoheadrightarrow E$ be a surjective map of finite sets. Then the natural transformation
\[\Pi^*_g F \to  \bigoplus_{\substack{i\colon U \hookrightarrow B  \\ g(U)=E}} \Delta^*_i \crossEffect_i\Pi^*_g F=  \bigoplus_{\substack{i\colon U \hookrightarrow B  \\ g(U)=E}} \Delta^*_i (\Pi^*_i \Pi^*_g F){\reduce}\simeq \bigoplus_{\substack{i\colon U \hookrightarrow B  \\ g(U)=E}} \Delta^*_i \crossEffect_{g\circ i} F\]
is an equivalence for any multireduced functor $F\in \func_{\udl{\ast}}(\sC^E,\D)$.
\end{lem}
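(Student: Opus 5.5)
The plan is to first prove a general splitting statement for arbitrary functors $G\in\func(\sC^B,\D)$, and then specialise it to $G=\Pi_g^*F$. Unwinding the notation, for an injection $i\colon U\hookrightarrow B$ the functor $\Delta_i\colon\sC^B\to\sC^U$ is restriction of coordinates, and its adjoint $\Pi_i\colon \sC^U\to\sC^B$ is extension by zero (products are biproducts since $\sC$ is stable). Hence $\Delta_i^*\crossEffect_i G$ is the functor $x\mapsto (G\circ\Pi_i)\reduce(x|_U)$. The component of the map in the statement at $U$ is the composite
\[
G\xrightarrow{\mathrm{unit}}\Delta_i^*\Pi_i^*G\to\Delta_i^*(\Pi_i^*G)\reduce ,
\]
where the first map is induced by the counit $\Pi_i\Delta_i\to\id$, i.e.\ the projection $x\mapsto \Pi_i(x|_U)$ that sets the coordinates outside $U$ to zero. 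The general claim I will prove is that
\[
G\longrightarrow\bigoplus_{i\colon U\subseteq B}\Delta_i^*(\Pi_i^*G)\reduce
\]
is an equivalence for every $G$. This is the classical cross--effect (Eilenberg--MacLane style) decomposition.

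I prove the general claim by induction on $|B|$, peeling off one coordinate at a time. Fix $b\in B$ and write $B=B'\sqcup\{b\}$. The object $0\in\sC$ is a retract of every $x_b$, via $0\to x_b\to 0$. Consequently the map $G(x)\to G(x_{B'},0)$ is naturally split, and since $\D$ is stable this gives
\[
G\simeq G(-,0)\oplus \fib\bigl(G\to G(-,0)\bigr).
\]
The fibre is exactly the reduction of $G$ in the $b$-th variable, because reduction in one variable of a functor into a stable category is $\fib(G\to G|_{x_b=0})$. Applying the inductive hypothesis in the remaining variables $B'$ to both summands, and using that the reductions in different variables commute, produces the sum over subsets $U\subseteq B$: a subset $U$ corresponds to the variables that are reduced, while the remaining variables are set to zero. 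Finally I must check that the resulting identification agrees with the canonical map built from the units and counits above. Each projection is the restriction-to-$U$ followed by reduction, and the splitting maps are compatible with these projections, so the two maps agree. This naturality bookkeeping is the step I expect to need the most care, although it is routine.

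Now specialise to $G=\Pi_g^*F$ with $F\in\func_{\udl{\ast}}(\sC^E,\D)$. I identify $\Pi_i^*\Pi_g^*\simeq\Pi_{g\circ i}^*$, using $\Pi_g\Pi_i\simeq\Pi_{gi}$, which holds because right adjoints compose. This shows that the $U$-summand is $\Delta_i^*(\Pi_{gi}^*F)\reduce$. It remains to see that this summand vanishes when $g(U)\neq E$. In that case choose $e\in E\setminus g(U)$. Then
\[
\Pi_{gi}(y)_e=\bigoplus_{u\in (gi)^{-1}(e)}y_u=0
\]
for all $y$, so $\Pi_{gi}^*F$ factors through $F$ with its $e$-th input set to $0$. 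Since $F$ is reduced in each variable, this gives $\Pi_{gi}^*F\simeq 0$, so its reduction vanishes and the summand is zero. Discarding these summands leaves exactly the sum over $i\colon U\hookrightarrow B$ with $g(U)=E$, and the last identification in the statement, $(\Pi_i^*\Pi_g^*F)\reduce\simeq\crossEffect_{g\circ i}F$, holds by definition of the cross--effect as in \cref{corollary: cross-effect is symmetric monoidal}.
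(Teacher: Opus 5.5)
Your argument is correct, but it takes a different route from the paper's.

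\textbf{The paper's route.} The paper stays within multireduced functors and surjections throughout. It inducts on $|B|-|E|$. The base case is the two-variable splitting
$F(X_0\oplus X_1,\ldots)\simeq F(X_0,\ldots)\oplus F(X_1,\ldots)\oplus\crossEffect_gF(X_0,X_1,\ldots)$, quoted from Heuts. For the inductive step it factors $g=h\circ q$ through a quotient $q$ collapsing two points, and applies $\Pi_q^*$ to the decomposition for $h$. It then commutes past $\Delta_j^*$ by base change, $\Pi_q^*\Delta_j^*\simeq\Delta_{j_v}^*\Pi_{q_v}^*$, and applies the difference-at-most-one case to each $q_v$. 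Finally it reindexes along the bijection $(j,i_u)\mapsto j_v\circ i_u$.

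\textbf{Your route.} You prove the stronger claim that $G\to\bigoplus_{U\subseteq B}\Delta_i^*(\Pi_i^*G)\reduce$ is an equivalence for every $G\in\func(\sC^B,\D)$. The only input is that $0$ is a retract of each coordinate. Surjectivity of $g$ and reducedness of $F$ then enter only through the one-line vanishing of the summands with $g(U)\neq E$.

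\textbf{What each buys.} Your approach avoids base change and the reindexing bookkeeping. It also does not need Heuts's splitting as input, since that splitting is the special case $B=\{0,1\}$, $G=F(-\oplus-)$ of your general claim. In addition, you check that the inductively built equivalence is the canonical unit-then-reduction map, which the paper's chain of equivalences does not spell out. The paper's route, in exchange, is phrased directly in cross-effects along surjections and previews the double-coset manipulations of the next proposition.

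\textbf{Two small points.}
\begin{itemize}
\item When you apply the inductive hypothesis to the $b$-reduced summand ``in the remaining variables'', you are currying. That is, you use the claim for the stable target $\func(\sC,\D)$, so the general claim should be stated for arbitrary stable targets.
\item The map $G\to\Delta_i^*\Pi_i^*G$ is induced by the unit $\id\to\Pi_i\Delta_i$ of $\Delta_i\dashv\Pi_i$, which is the projection you describe. It is not induced by a counit $\Pi_i\Delta_i\to\id$, which would point the wrong way.
\end{itemize}
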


\begin{proof}
    We will prove the claim by induction on the difference $|B|-|E|$. If $|B|=|E|$, then $g$ is a bijection and there is nothing to prove. If $|B|=|E|+1$, then we can assume that $B=\{0,\ldots,n\}$, $E=\{1,\ldots,n\}$, and $g(0)=g(1)=0$. Let $(X_0,\ldots,X_n) \in \sC^B$. Then,
    $(\Pi^*_g F)(X_0,X_1,\ldots,X_n) \simeq F(X_0\oplus X_1,\ldots, X_n) $
    and the projection
    $ (\Pi^*_g F)(X_0,X_1,\ldots,X_n) \to \crossEffect_g F(X_0,X_1,\ldots, X_n)$
    splits with the fibre 
    $F(X_0,X_2,\ldots, X_n) \oplus F(X_1,X_2,\ldots, X_n)$,     see e.g. \cite[Lemma~B.1]{heutsCategorifiedGoodwillie}. This proves the base case of the induction.

    Suppose that $|B|\geq |E|+1$. Then there are $b,b' \in B$ such that $g(b)=g(b')$ and we let $q\colon B \to B_0=B/(b\sim b')$ denote the quotient map. Hence the map $g\colon B\twoheadrightarrow E$ factors as     $f\colon B\xrightarrow{q} B_0 \xrightarrow{h} E $     and $|B_0|-|E| < |B| -|E|$. Therefore, by the inductive assumption, we have
    $\Pi^*_h F \simeq \bigoplus_{\substack{j\colon V \hookrightarrow B_0  \\ h(V)=E}} \Delta^*_j \crossEffect_{h\circ j} F.$
    By applying the left adjoint $\Pi_q^*$ to both sides, we obtain
    \begin{align*}
        \Pi^*_{g}F \simeq \Pi^*_{q}\Pi^*_h F \simeq \bigoplus_{\substack{j\colon V \hookrightarrow B_0  \\ h(V)=E}} \Pi^*_q \Delta^*_j \crossEffect_{h\circ j} F \simeq \bigoplus_{\substack{j\colon V \hookrightarrow B_0  \\ h(V)=E}} \Delta^*_{j_v} \Pi^*_{q_v}\crossEffect_{h\circ j} F
    \end{align*}
    where $j_v\colon V\times_{B_0} B \hookrightarrow B $ and $q_v\colon V\times_{B_0} B \to V$ are the projections. By the construction, $|V\times_{B_0} B|-|V|\leq 1$ for any subset $V\subset B_0$. Hence, by induction, we have
    \begin{align*}
        \Pi^*_{g}F &\simeq \bigoplus_{\substack{j\colon V \hookrightarrow B_0  \\ h(V)=E}} \Delta^*_{j_v} \Pi^*_{q_v}\crossEffect_{h\circ j} F \\
        &\simeq \bigoplus_{\substack{j\colon V \hookrightarrow B_0  \\ h(V)=E}} \Delta^*_{j_v}\left(\bigoplus_{\substack{i_u\colon U \hookrightarrow V\times_{B_0} B  \\ q_v(U)=V}}\Delta^*_{i_u}\crossEffect_{q_v\circ i_u}\crossEffect_{h\circ j} F\right) \\
        &\simeq \bigoplus_{\substack{j\colon V \hookrightarrow B_0  \\ h(V)=E}} \bigoplus_{\substack{i_u\colon U \hookrightarrow V\times_{B_0} B  \\ q_v(U)=V}}\Delta^*_{j_v \circ i_u}\crossEffect_{h\circ j \circ q_v \circ i_u} F \\
        &\simeq \bigoplus_{\substack{j\colon V \hookrightarrow B_0 \\ i_u\colon U \hookrightarrow V\times_{B_0} B  \\ h(V)=E \\ q_v(U)=V}}\Delta^*_{j_v \circ i_u}\crossEffect_{g \circ j_v \circ i_u} F \simeq \bigoplus_{\substack{i\colon U \hookrightarrow B \\ g(U)=E}}\Delta^*_{i}\crossEffect_{g \circ i} F.
    \end{align*}
    Here we obtained the last equivalence because the map between the indexing sets for the direct sums is given by $(j,i_u) \mapsto j_v\circ i_u$, which is a bijection.
\end{proof}

Next, we will use the splitting of \cref{lemma: spilliting of the left adjoint} to show that the lax square~\eqref{eqn:beckChevalleyLaxSquareALL} is actually strict.

\begin{prop}[Double--coset formula]\label{proposition: double-coset formula}
    Let 
    \begin{center}
    \begin{tikzcd}
        \displaystyle{\coprod_{\substack{U \subseteq {A\times_{E}B}  \\ U \text{ is $\epiCategory$-good}}}}U \rar[ "\sqcup f_U"]\dar[ "\sqcup g_U"']\ar[dr,phantom, "\lrcorner", near start]
        & B \arrow[two heads, "g"]{d} \\
        A \arrow[two heads, "f"]{r} 
        & E
    \end{tikzcd}
    \end{center}
    be a pullback square in the category $\finite_{\epiCategory}$. Then the natural transformation~\eqref{equation: base change}
    $$
    \eta\colon \crossEffect_g \Delta^*_f F\longrightarrow \bigoplus_{\substack{U \subseteq {A\times_{E}B}  \\ U \text{ is $\epiCategory$-good}}} \Delta_{f_u}^*\crossEffect_{g_u} F
    $$
    is an equivalence for any multireduced functor $\func_{\udl{\ast}}(\sC^A,\D).$
\end{prop}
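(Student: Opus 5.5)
The plan is to compute both sides of $\eta$ using the unreduced Beck--Chevalley equivalence of \cref{construction: back-chevalley local systems} together with the splitting of \cref{lemma: spilliting of the left adjoint}, and then to check that $\eta$ is the projection onto matching summands.

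\textbf{Left-hand side.} Since the square of finite sets
\[A\times_E B\to B,\qquad A\times_E B\to A\]
is an honest pullback in finite sets, \cref{construction: back-chevalley local systems} gives
\[\Pi^*_g\Delta^*_f F\simeq \Delta^*_{p_B}\Pi^*_{p_A}F,\]
where $p_A,p_B$ are the two projections from $P\coloneqq A\times_E B$. We then reduce with respect to the $B$ variables, because $\crossEffect_g\Delta_f^*F\simeq(\Pi_g^*\Delta_f^*F)\reduce$. Since $p_A$ is surjective, \cref{lemma: spilliting of the left adjoint} applied to $p_A$ splits
\[\Pi^*_{p_A}F\simeq\bigoplus_{U\subseteq P,\ p_A(U)=A}\Delta^*_{i_U}\crossEffect_{p_A i_U}F.\]
Substituting, we get
\[\Pi_g^*\Delta_f^*F\simeq\bigoplus_{p_A(U)=A}\Delta^*_{p_B i_U}\crossEffect_{p_A i_U}F.\]

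\textbf{Reduction.} Each $\crossEffect_{p_Ai_U}F$ is multireduced in the $U$ variables. Restricting along $\Delta_{p_B i_U}$ gives a functor on $\sC^B$. If $p_B(U)\neq B$, some variable $b\notin p_B(U)$ is not used at all, so that summand is constant in $X_b$ and is killed by the multireduction. If $p_B(U)=B$, the map $p_B i_U\colon U\to B$ is surjective and $\Delta^*_{p_Bi_U}$ preserves multireduced functors, so the summand is already reduced. Since $(-)\reduce$ is additive (it is a left adjoint between stable categories), we conclude
\[\crossEffect_g\Delta_f^*F\simeq\bigoplus_{U\ \epiCategory\text{-good}}\Delta^*_{f_U}\crossEffect_{g_U}F,\]
which is abstractly the target of $\eta$.

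\textbf{Identification of the map.} The remaining, and main, obstacle is to show that this abstract equivalence is the map $\eta=\bigoplus\eta_U$ of \eqref{equation: base change}. By \cref{remark: beck-chevalley is natural}, $\eta_U$ factors as the full Beck--Chevalley equivalence for $P$, followed by $\Delta^*_{p_B}$ applied to the unit $\Pi^*_{p_A}\to\Delta^*_{i_U}\Pi^*_{i_U}\Pi^*_{p_A}$, followed by reduction. The key check is that the splitting map in \cref{lemma: spilliting of the left adjoint} is by construction the sum of exactly these composites of unit and reduction. Then $\eta$ becomes the composite of the Beck--Chevalley equivalence with the splitting equivalence, reduced, and discarding the summands that reduce to zero. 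I would verify this compatibility by unwinding the construction of the splitting through its inductive definition, one collapse of two points at a time, using naturality of units under composition of the injections $i$.

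The base case of that induction is the explicit formula $F(X_0\oplus X_1,\ldots)$ together with the standard splitting from \cite[Lemma~B.1]{heutsCategorifiedGoodwillie}. There, the projections to the summands are visibly the unit maps composed with reduction.
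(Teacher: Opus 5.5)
Your proposal is correct and follows the paper's proof essentially step for step: base change along the honest pullback $A\times_E B$, the splitting of $\Pi^*_{p_A}F$ from \cref{lemma: spilliting of the left adjoint}, discarding the summands with $p_B(U)\neq B$ by multireduction, and identifying the resulting equivalence with $\eta$ via \cref{remark: beck-chevalley is natural}. Re-checking the splitting through its inductive construction is more than you need: \cref{lemma: spilliting of the left adjoint} already asserts that the specific unit-then-reduce map is an equivalence, so each $\eta_U$ is literally the $U$-component of that equivalence, and $\eta$ is the identity on the surviving summands.
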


\begin{proof}
Let $W=A\times_E B$. Then, by the basechange formula, we have
$$\crossEffect_g \Delta^*_f \simeq (\Pi^*_g \Delta_f^*)\reduce \simeq (\Delta^*_{f_w}\Pi^*_{g_w})\reduce. $$
By the splitting of \cref{lemma: spilliting of the left adjoint}, we obtain
\[(\Delta^*_{f_w}\Pi^*_{g_w}F)\reduce \simeq\left(\Delta^*_{f_w}\left(\bigoplus_{\substack{i\colon U \hookrightarrow W  \\ g_w(U)=A}} \Delta^*_i \crossEffect_{g_w\circ i} F\right) \right)\reduce \simeq \bigoplus_{\substack{i\colon U \hookrightarrow W  \\ g_w(U)=A}} (\Delta^*_{f_w\circ i} \crossEffect_{g_w\circ i} F)\reduce. \]

But then, we have the equivalences
$$
(\Delta^*_{f_w\circ i} \crossEffect_{g_w\circ i} F)\reduce \simeq
\begin{cases}
    \Delta^*_{f_w\circ i} \crossEffect_{g_w\circ i} F & \text{ if $f_w\circ i$ is surjective}\\
            0 & \text{ if $f_w \circ i$ is not surjective}.
\end{cases}
$$
where the first equivalence is since $\Delta^*_{f_w\circ i}\crossEffect_{g_w\circ i} F$ is already multireduced, and the second one is since we are implementing the multireduction  $(-)^{\mathrm{red}}$ on a functor which is constant in one of the variables.
Hence, the only possible non-trivial summands in the direct sum above are indexed by $\epiCategory$-good subsets $U$. Therefore,
$$\crossEffect_g \Delta^*_f F \simeq (\Pi^*_g \Delta_f F)\reduce \simeq (\Delta^*_{f_w}\Pi^*_{g_w} F)\reduce \simeq \bigoplus_{\substack{U \subseteq {A\times_{E}B}  \\ U \text{ is $\epiCategory$-good}}} \Delta_{f_u}^*\crossEffect_{g_u} F.$$
By \cref{remark: beck-chevalley is natural}, this equivalence is given by the natural transformation~\eqref{equation: base change}.
\end{proof}

By \cref{thm:key_pigeonhole_principle} and restricting to $\epiCategory_d$--good subsets, we may restrict the lax square~\eqref{eqn:beckChevalleyLaxSquareALL} to obtain the lax square
    \begin{equation}\label{eqn:beckChevalleyLaxSquareTRUNCATED}
    \begin{tikzcd}
    \displaystyle{\bigoplus_{\substack{U \subseteq {A\times_{E}B}  \\ U \text{ is $\epiCategory_d$-good}}}}\elbowCat{d,|U|}(\sC,\D) \rar[ "\oplus \Delta_{ f_u}^*"]& \elbowCat{d,|B|}(\sC,\D)\\
    \elbowCat{d,|A|}(\sC,\D) \rar["\Delta_f^*"]\uar[ "\oplus\crossEffect_{ g_u}"] & \elbowCat{d,|E|}(\sC,\D). \uar["\crossEffect_g"']\ar[ul, Rightarrow, shorten >=-3.5em]
    \end{tikzcd}
    \end{equation}

Our goal now is to prove the following:

\begin{prop}\label{prop:doubcle_coset_truncated_to_d}
The lax square \cref{eqn:beckChevalleyLaxSquareTRUNCATED} commutes.
\end{prop}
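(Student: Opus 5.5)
The plan is to deduce the truncated statement from the untruncated double--coset formula, \cref{proposition: double-coset formula}, by showing that the extra summands are zero. Take $F\in\elbowCat{d,|A|}(\sC,\D)$ and regard it as a multireduced functor in $\func_{\udl{\ast}}(\sC^{A},\D)$. The transformation in \cref{eqn:beckChevalleyLaxSquareTRUNCATED} is the component of the transformation $\eta=\bigoplus_U\eta_U$ of \cref{equation: base change} that projects onto the $\epiCategory_d$--good summands. This uses \cref{thm:key_pigeonhole_principle}: the relevant functors land in the subdiagonal categories, and these are full subcategories. By \cref{proposition: double-coset formula}, the full map $\eta$ into the sum over all $\epiCategory$--good $U$ is an equivalence. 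It therefore suffices to prove that for every $\epiCategory$--good $U\subseteq A\times_E B$ with $|U|>d$ we have $\Delta_{f_u}^*\crossEffect_{g_u}F\simeq 0$. In fact I will show the stronger statement $\crossEffect_{g_u}F\simeq 0$.

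The key step is a vanishing lemma. If $F\in\elbowCat{d,r}(\sC,\D)$ and $h\colon[s]\twoheadrightarrow[r]$ is a surjection with $s>d$, then $\crossEffect_hF\simeq 0$. To prove it, I use that $\crossEffect_h$ is exact and that $F$ is a finite limit of the functors $P_{\udl{k}}F$ with $\sum\udl{k}\le d$. This is the same reduction used in the proof of \cref{thm:key_pigeonhole_principle}, via \cref{obs:elbow_categories_are_stable_subcats} and \cref{lem:generators_of_truncated_subdivided_cubes}. So we may assume $F$ is $(\ell_1,\dots,\ell_r)$--excisive with $\sum\ell_i\le d$. Write $k_i=|h^{-1}(i)|$, so $\sum k_i=s>d\ge\sum\ell_i$. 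By pigeonhole there is some $j$ with $k_j>\ell_j$. Then the cross--effect of $F$ in the $j$-th variable along $k_j$ points vanishes by \cite[Prop. 6.1.3.22]{lurieHA}; equivalently, that variable becomes $0$--excisive and reduced, hence zero. Since cross--effects compose by \cref{lem:iterabilityCrossEffects}, it follows that $\crossEffect_hF\simeq 0$.

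Applying the lemma with $h=g_u\colon U\twoheadrightarrow A$ kills every summand with $|U|>d$. The projection from the full sum onto the $\epiCategory_d$--good summands is then an equivalence, and so \cref{eqn:beckChevalleyLaxSquareTRUNCATED} commutes.

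The main point needing care is bookkeeping rather than a hard estimate. One must check that the truncated lax square, built from the Beck--Chevalley mate in the restricted biadjunction of \cref{cor:omnibus_multivariable_cross-effects}, really agrees with the restriction of \cref{equation: base change} to good summands. This holds because the inclusions of subdiagonal categories are fully faithful and the cross--effects and counits are computed in $\func_{\udl{\ast}}$. With that identification in place, the pigeonhole vanishing above finishes the argument.
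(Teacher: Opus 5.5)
Your proposal is correct and follows essentially the same route as the paper. Both deduce the result from the untruncated double--coset formula and kill the summands with $|U|>d$ by showing $\crossEffect_{g_u}F\simeq 0$: reduce to $(\ell_1,\ldots,\ell_a)$--excisive pieces with $\sum\ell_i\le d$, then apply pigeonhole. Your extra check, that the truncated Beck--Chevalley map is the projection of the full one onto the $\epiCategory_d$--good summands, is a point the paper leaves implicit in how it builds the truncated square.
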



\begin{proof}
    Since \cref{eqn:beckChevalleyLaxSquareALL} commutes, for all $F\in\elbowCat{d,|A|}$, the map
    \[\crossEffect_g\Delta_f^*F\longrightarrow \bigoplus_{U\subseteq A\times_EB\:\: \epiCategory\text{-good }}\Delta_{f_u}^*\crossEffect_{g_u}F\] is an equivalence. We claim that for any $U\subseteq A\times_EB$ which is not $\epiCategory_d$--good, i.e., $|U|>d$, we have $\crossEffect_{g_u}F\simeq 0$. Given this, we will thus see that the Beck--Chevalley map in \cref{eqn:beckChevalleyLaxSquareTRUNCATED} is an equivalence as required.

    To see the claim, write $a\coloneqq |A|$. Since $F\in\elbowCat{d,a}$, it is a finite limit of functors $G$ which are $(\ell_1,\ldots,\ell_a)$--excisive such that $\ell_1+\cdots+\ell_a\leq d$. Thus, it suffices to deal with such $G\in\elbowCat{d,a}$. We write $K_i$ for the fibres of $g_u\colon U\twoheadrightarrow A$ at $i\in A$ and $k_i\coloneqq |K_i|$. By pigeonhole, we must have that $k_i > \ell_i$ for some $i\in A$ because we have assumed that $|U|>d$. Thus $\crossEffect_{g_u}G\simeq 0$, as claimed.
\end{proof}

\section{Categorical properties and multiplicative structures}\label{section:categorical_properties_multiplicative}

\subsection{Multiplicative structures}\label{subsection:multiplicative_structures}
Let $d$ be a positive integer, $r\leq d$, and $m\coloneqq d-r+1$. Our goal now is to construct a presentably symmetric monoidal structure on $\elbowCat{d,r}$ and enhance the cross--effect functors with symmetric monoidal structures. For this, we will need the following alternative characterisation of the category $\elbowCat{d,r}$ as a full subcategory of $\excisive_{d,r}$. Recall first the notation $Q_{d,r}^{\lrcorner}$ for the truncated subdivided cubes  from \cref{obs:canonical_subdivided_cubes}.

\begin{prop}\label{prop:reflection_from_all_excisive_into_elbow_excisive}
    Let $\sC\in\cat\exact$ and $\D\in\presentable^L_{\mathrm{st}}$.  The  inclusion $\elbowCat{d,r}(\sC,\D)\subseteq\excisive_{d,r}(\sC,\D)$ preserves  colimits and admits a left adjoint $\excisiveApproximation_{d,r}^{\lrcorner}\colon \excisive_{d,r}(\sC,\D)\rightarrow \elbowCat{d,r}(\sC,\D)$ given by $\lim_{Q_{d,r}^{\lrcorner}}\excisiveApproximation_{\udl{\bullet}}(-)$. Hence, $\elbowCat{d,r}(\sC,\D)$ is a presentable category and the inclusion $\elbowCat{d,r}(\sC,\D)\subseteq \excisive_{d,r}(\sC,\D)$ preserves limits. 
\end{prop}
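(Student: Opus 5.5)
Define $L\coloneqq \lim_{Q_{d,r}^{\lrcorner}}\excisiveApproximation_{\udl{\bullet}}(-)$ on $\excisive_{d,r}(\sC,\D)$, using the natural maps $F\to \excisiveApproximation_{\udl{k}}F$ to obtain a unit $\eta\colon F\to LF$. The plan is to show that $L$ lands in $\elbowCat{d,r}(\sC,\D)$, that $\eta$ is an equivalence on subdiagonal $F$, and that $L\eta$ and $\eta_L$ are equivalences. These facts exhibit $L$ as a localisation whose local objects are exactly the subdiagonal functors.

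\emph{$L$ lands in subdiagonal functors.} Since $Q_{d,r}^{\lrcorner}$ is a finite poset, $LF$ is a finite limit of the functors $\excisiveApproximation_{\udl{k}}F$ with $\sum\udl{k}\leq d$ and $k_i\leq m$. Each such functor is $\udl{k}$--excisive, so it lies in $\elbowCat{d,r}(\sC,\D)$ by \cref{lem:generators_of_truncated_subdivided_cubes}. By \cref{obs:elbow_categories_are_stable_subcats}, $\elbowCat{d,r}(\sC,\D)$ is closed under finite limits, hence $LF\in\elbowCat{d,r}(\sC,\D)$.

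\emph{$\eta$ is an equivalence on local objects.} If $F$ is subdiagonal, the cube $P_{\udl{\bullet}}F$ is right Kan extended from $Q_{d,r}^{\lrcorner}$. Since $Q_{d,r}$ has initial object $\udl{m}$ with $P_{\udl{m}}F\simeq F$, this gives $F\simeq \lim_{Q_{d,r}^{\lrcorner}}P_{\udl{\bullet}}F = LF$.

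\emph{Idempotence.} Here we use that $\excisiveApproximation_{\udl{k}}$ is exact, so it commutes with finite limits, together with $\excisiveApproximation_{\udl{k}}\excisiveApproximation_{\udl{k}'}\simeq \excisiveApproximation_{\min(\udl{k},\udl{k}')}$. This gives
\[\excisiveApproximation_{\udl{k}}LF\simeq \lim_{\udl{k}'\in Q^{\lrcorner}}\excisiveApproximation_{\min(\udl{k},\udl{k}')}F.\]
For $\udl{k}\in Q^{\lrcorner}$, the element $\udl{k}$ itself is initial among the $\udl{k}'\in Q^{\lrcorner}$ lying under it, and the functor $\udl{k}'\mapsto \min(\udl{k},\udl{k}')$ restricted to $Q^{\lrcorner}$ lands in $Q^{\lrcorner}_{\leq \udl{k}}$. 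I expect a cofinality check (via the retraction $\min(\udl{k},-)$) to give $\excisiveApproximation_{\udl{k}}LF\simeq \excisiveApproximation_{\udl{k}}F$. Consequently $L\eta$ and $\eta_L$ are both equivalences; alternatively, one gets this directly from the previous two steps. Either way $L$ is a reflection, and the universal property follows from the standard argument: for subdiagonal $G$,
\[\map(LF,G)\simeq \map(F,G)\]
via $\eta$, since every map $F\to G$ factors through $LF\to LG\simeq G$ by naturality.

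\emph{Colimits and presentability.} Colimits in $\excisive_{d,r}(\sC,\D)$ are computed pointwise, because excisiveness is preserved by colimits in the stable setting. Each $\excisiveApproximation_{\udl{k}}$ preserves colimits, and finite limits commute with colimits in stable categories. Hence, for subdiagonal $F_\alpha$, the cube $P_{\udl{\bullet}}\colim F_\alpha\simeq \colim P_{\udl{\bullet}}F_\alpha$ is still right Kan extended from $Q^{\lrcorner}$, the latter condition being expressed by finite limits. This shows the inclusion preserves colimits. Since $\excisive_{d,r}(\sC,\D)$ is presentable and $\elbowCat{d,r}(\sC,\D)$ is a colimit-closed reflective subcategory (an accessible localisation, because $L$ is built from accessible functors), it is presentable. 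The inclusion is a right adjoint, so it preserves limits. I expect the main obstacle to be the idempotence/cofinality bookkeeping on the poset $Q_{d,r}^{\lrcorner}$; everything else is formal.
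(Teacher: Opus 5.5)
Your proposal is correct and takes essentially the paper's route. The paper likewise rests on the identity $\excisiveApproximation_{\udl{k}}\excisiveApproximation^{\lrcorner}_{d,r}F\simeq\excisiveApproximation_{\udl{k}}F$ for $\udl{k}\in Q^{\lrcorner}_{d,r}$, gets colimit-preservation from exactness of the $\excisiveApproximation_{\udl{k}}$ and finiteness of $Q^{\lrcorner}_{d,r}$, and then checks the adjunction by a direct mapping-space computation rather than the idempotent-monad criterion. You can also skip your cofinality check, as the paper does: $\excisiveApproximation_{\udl{k}}LF\simeq L\excisiveApproximation_{\udl{k}}F\simeq\excisiveApproximation_{\udl{k}}F$, where the last equivalence is your second step applied to $\excisiveApproximation_{\udl{k}}F$, which is already subdiagonal by \cref{lem:generators_of_truncated_subdivided_cubes}.
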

\begin{proof}
    The inclusion preserves colimits since the excisive approximations $\excisiveApproximation_{\udl{\bullet}}$ are smashing localisations and the subdiagonality limits $\lim_{Q_{d,r}^{\lrcorner}}$ are finite by \cref{cons:intermediate_categories}. Now, for $A\in\excisive_{d,r}(\sC,\D)$, the key observation is that $\excisiveApproximation_{\udl{\bullet}}\excisiveApproximation_{d,r}^{\lrcorner}A\simeq \excisiveApproximation_{d,r}^{\lrcorner}\excisiveApproximation_{\udl{\bullet}}A\simeq \excisiveApproximation_{\udl{\bullet}}A$ by \cref{lem:generators_of_truncated_subdivided_cubes}, where here $\udl{\bullet}$ is given by the tuples appearing in the truncated subdivided cube $Q_{d,r}^{\lrcorner}$. Let $F\in\elbowCat{d,r}(\sC,\D)$ so that $F\rightarrow \lim_{Q_{d,r}^{\lrcorner}}\excisiveApproximation_{\udl{\bullet}}F$ is an equivalence. Then consider the computation:
    \begin{equation*}
        \begin{split}
            \map(A,F) & \simeq \lim_{Q_{d,r}^{\lrcorner}}\map(A,\excisiveApproximation_{\udl{\bullet}}F)\\
            &\simeq \lim_{Q_{d,r}^{\lrcorner}}\map(\excisiveApproximation_{\udl{\bullet}}A,\excisiveApproximation_{\udl{\bullet}}F)\\
            &\simeq \lim_{Q_{d,r}^{\lrcorner}}\map(\excisiveApproximation_{\udl{\bullet}}\excisiveApproximation_{d,r}^{\lrcorner}A,\excisiveApproximation_{\udl{\bullet}}F)\\
            &\simeq \lim_{Q_{d,r}^{\lrcorner}}\map(\excisiveApproximation_{d,r}^{\lrcorner}A,\excisiveApproximation_{\udl{\bullet}}F)\\
            &\simeq \map(\excisiveApproximation_{d,r}^{\lrcorner}A,F).
        \end{split}
    \end{equation*}
    whence the statement about the left adjoint.  The statements about presentability and limit--preservation are now immediate.
\end{proof}



\begin{cor}\label{cor:symmetric_monoidal_structure_on_elbow_cats}
    Let $\sC\in\calg(\cat\exact)$ and $\D\in\calg(\presentable^L_{\mathrm{st}})$. The category $\elbowCat{d,r}(\sC,\D)$ admits a presentably symmetric monoidal structure rendering a refinement of the left adjoint $\excisiveApproximation_{d,r}^{\lrcorner}\colon \excisive_{d,r}(\sC,\D)\rightarrow \elbowCat{d,r}(\sC,\D)$   to a symmetric monoidal functor which is even a smashing localisation.
\end{cor}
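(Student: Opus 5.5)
The plan is to realise $\elbowCat{d,r}(\sC,\D)$ as a smashing localisation of $\excisive_{d,r}(\sC,\D)$, which by \cref{corollary: excisive approximation is smashing} (applied with $\udl{d}=\underline{m}$) already carries a presentably symmetric monoidal Day convolution structure. By \cref{prop:reflection_from_all_excisive_into_elbow_excisive}, $\excisiveApproximation_{d,r}^{\lrcorner}$ is a left adjoint to a fully faithful inclusion and $\elbowCat{d,r}(\sC,\D)$ is presentable. It therefore suffices, as in the proofs of \cref{lemma: reduction is a smashing localization} and \cref{corollary: excisive approximation is smashing}, to show that $\elbowCat{d,r}(\sC,\D)$ is a tensor ideal in $\excisive_{d,r}(\sC,\D)$. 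Concretely, I will show that $F\otimes G\in\elbowCat{d,r}$ whenever $F\in\excisive_{d,r}$ and $G\in\elbowCat{d,r}$. This simultaneously gives both compatibility of the localisation with the tensor product and smashing-ness.

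To prove the ideal property, I use the explicit formula $\excisiveApproximation_{d,r}^{\lrcorner}\simeq \lim_{Q_{d,r}^{\lrcorner}}\excisiveApproximation_{\udl{\bullet}}$ together with the fact that each $\excisiveApproximation_{\udl{k}}$ is a smashing localisation on $\func(\sC^{\times r},\D)$. This gives $\excisiveApproximation_{\udl{k}}(F\otimes G)\simeq F\otimes \excisiveApproximation_{\udl{k}}G$. Since the Day convolution is exact in each variable and $Q_{d,r}^{\lrcorner}$ is finite, I then compute
\[\excisiveApproximation_{d,r}^{\lrcorner}(F\otimes G)\simeq \lim_{Q_{d,r}^{\lrcorner}} F\otimes \excisiveApproximation_{\udl{\bullet}}G\simeq F\otimes \lim_{Q_{d,r}^{\lrcorner}}\excisiveApproximation_{\udl{\bullet}}G\simeq F\otimes G.\]
The last step uses that $G$ is subdiagonal. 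The one point to check is that the Day convolution on $\excisive_{d,r}$ is computed as $\excisiveApproximation_{\underline m}$ of the Day convolution in $\func(\sC^{\times r},\D)$. By \cref{corollary: excisive approximation is smashing} and \cref{lemma: day with excisive is excisive} applied variablewise, the product of a functor with an $\underline{m}$--excisive one is already $\underline{m}$--excisive. Hence no extra approximation is needed, and the displayed chain takes place inside $\excisive_{d,r}$.

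With the tensor-ideal property in hand, the standard argument of \cite[Lem.~5.3.4]{nineAuthorsI} gives the rest. The localisation $\excisiveApproximation_{d,r}^{\lrcorner}$ is compatible with the tensor product, since its kernel is then an ideal: if $\excisiveApproximation_{d,r}^{\lrcorner}F\simeq 0$, then $\excisiveApproximation_{d,r}^{\lrcorner}(F\otimes G)\simeq F\otimes\excisiveApproximation_{d,r}^{\lrcorner}G$, which vanishes by the same formula. So $\elbowCat{d,r}(\sC,\D)$ inherits a presentably symmetric monoidal structure, with tensor product $\excisiveApproximation_{d,r}^{\lrcorner}(-\otimes-)$ and unit $\excisiveApproximation_{d,r}^{\lrcorner}\unit$. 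Moreover, $\excisiveApproximation_{d,r}^{\lrcorner}$ refines to a symmetric monoidal functor. For smashing-ness, consider the map $F\otimes\excisiveApproximation_{d,r}^{\lrcorner}\unit\to\excisiveApproximation_{d,r}^{\lrcorner}F$. Its source is subdiagonal by the ideal property and its target is subdiagonal by definition, and the map becomes an equivalence after applying $\excisiveApproximation_{d,r}^{\lrcorner}$. It is therefore already an equivalence. Presentability of the monoidal structure follows because the tensor product on the localisation is the localisation of a colimit-preserving bifunctor.

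The main obstacle is the commutation of $\lim_{Q_{d,r}^{\lrcorner}}$ with $F\otimes-$. This relies on the limit being finite, which holds since $Q_{d,r}^{\lrcorner}$ is a finite poset, and on stability, so that finite limits are finite colimits preserved by Day convolution. The smashing identification $\excisiveApproximation_{\udl{k}}(F\otimes G)\simeq F\otimes\excisiveApproximation_{\udl{k}}G$ must also be natural in $\udl{k}$ so that the cubes match. Naturality follows because these identifications come from the unit maps $\unit\to\excisiveApproximation_{\udl{k}}\unit$, which are compatible along the tower.
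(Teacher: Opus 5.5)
Your argument is correct and rests on the same computation as the paper's proof: you commute the finite limit over $Q_{d,r}^{\lrcorner}$ past $F\otimes-$ and use that each $\excisiveApproximation_{\udl{\bullet}}$ is smashing. The paper simply specialises your formula $\excisiveApproximation_{d,r}^{\lrcorner}(F\otimes G)\simeq F\otimes\excisiveApproximation_{d,r}^{\lrcorner}G$ to $G=\unit$, which gives $\excisiveApproximation_{d,r}^{\lrcorner}F\simeq F\otimes\excisiveApproximation_{d,r}^{\lrcorner}\unit$ directly, so your detour through the tensor-ideal property and the local-to-local comparison is unnecessary, though harmless.
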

\begin{proof}

    We only have to show that there is an equivalence $-\otimes \excisiveApproximation_{d,r}^{\lrcorner}(\unit)\simeq \excisiveApproximation_{d,r}^{\lrcorner}(-)$ of endofunctors on $\excisive_{d,r}(\sC,\D)$. To this end, simply consider the  equivalences
    $$F\otimes \excisiveApproximation_{d,r}^{\lrcorner}(\unit) \simeq F \otimes \lim_{Q_{d,r}^{\lrcorner}}\excisiveApproximation_{\udl{\bullet}}(\unit) \simeq \lim_{Q_{d,r}^{\lrcorner}}F\otimes \excisiveApproximation_{\udl{\bullet}}(\unit) \simeq \lim_{Q_{d,r}^{\lrcorner}}\excisiveApproximation_{\udl{\bullet}}(F) \simeq \excisiveApproximation_{d,r}^{\lrcorner}(F)$$ which are natural in $F\in\excisive_{d,r}(\sC,\D)$, where the second equivalence uses that $Q^{\lrcorner}_{d,r}$ is a finite diagram and that the tensor product commutes with finite limits in both variables, and the third equivalence is since $P_{\udl{\bullet}}$ is a smashing localisation by \cref{corollary: excisive approximation is smashing}.
\end{proof}

\begin{nota}\label{nota:tensor_unit_for_elbow_cats}
    We write $\unit_{d,r}$ for the tensor unit of $\elbowCat{d,r}(\sC,\D)$ under the symmetric monoidal structure constructed in \cref{cor:symmetric_monoidal_structure_on_elbow_cats}. In other words, writing $\unit_r$ for the tensor unit of $\func_{\udl{\ast}}(\sC^{\times r},\D)$, we have $\unit_{d,r}\simeq \excisiveApproximation^{\lrcorner}_{d,r}P_{\udl{m}}\unit_r$ where $m=d-r+1$. 
\end{nota}

Having a presentably symmetric monoidal structure on $\elbowCat{d,r}(\sC,\D)$ in hand, our goal next is to formulate the symmetric monoidal refinement of the cross--effects.

\begin{prop}\label{prop:symmetric_monoidal_refinement_of_elbow_cross_effects}
    Let $d$ be a positive integer, $k\leq r\leq d$, $f\colon [r]\twoheadrightarrow[k]$ a surjective function. Then the symmetric monoidal right vertical map in the commuting diagram from \cref{thm:key_pigeonhole_principle}
    \begin{center}
        \begin{tikzcd}
            \elbowCat{d,k}(\sC,\D) \dar["\crossEffect_f"']\ar[r,hook]& \func_{\udl{\ast}}(\sC^{\times k},\D)\dar["\crossEffect_f"]\\
            \elbowCat{d,r}(\sC,\D) \ar[r,hook]& \func_{\udl{\ast}}(\sC^{\times r},\D).
        \end{tikzcd}
    \end{center}
    restricts to a symmetric monoidal structure on the left map $$\crossEffect_f\colon \elbowCat{d,k}(\sC,\D) \rightarrow \elbowCat{d,r}(\sC,\D).$$
\end{prop}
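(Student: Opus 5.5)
We factor the left map through the symmetric monoidal localisations constructed above. The diagram of \cref{thm:key_pigeonhole_principle} shows that the restricted functor $\crossEffect_f\colon \elbowCat{d,k}(\sC,\D)\to\elbowCat{d,r}(\sC,\D)$ is identified with the composite
\[
\elbowCat{d,k}(\sC,\D)\hookrightarrow \func_{\udl{\ast}}(\sC^{\times k},\D)\xrightarrow{\crossEffect_f}\func_{\udl{\ast}}(\sC^{\times r},\D)\xrightarrow{L_r}\elbowCat{d,r}(\sC,\D).
\]
Here $L_r\coloneqq \excisiveApproximation^{\lrcorner}_{d,r}\excisiveApproximation_{\udl{m'}}$ with $m'=d-r+1$. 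The last step changes nothing, because $\crossEffect_f$ already lands in $\elbowCat{d,r}$ and $L_r$ restricts to the identity there. By \cref{corollary: excisive approximation is smashing} and \cref{cor:symmetric_monoidal_structure_on_elbow_cats}, $L_r$ is a symmetric monoidal smashing localisation. By \cref{corollary: cross-effect is symmetric monoidal}, the middle map is symmetric monoidal. The only non-monoidal piece is the first inclusion, which is merely lax symmetric monoidal, being right adjoint to the symmetric monoidal $L_k$. The composite is therefore canonically lax symmetric monoidal, and it remains to show that the lax structure maps are equivalences.

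For the binary structure maps, take $F,G\in\elbowCat{d,k}$. Because $L_k$ is smashing, the tensor product in $\elbowCat{d,k}$ is $F\otimes G$ formed in $\func_{\udl\ast}$: a tensor product of local objects in a smashing localisation is already local. Hence the structure map is $L_r(\crossEffect_f F\otimes \crossEffect_f G)\to L_r\crossEffect_f(F\otimes G)$, which is $L_r$ applied to an equivalence, since $\crossEffect_f$ preserves tensor products. Both $\crossEffect_fF$ and $\crossEffect_fG$ are $L_r$-local, so their product is local too, again because $L_r$ is smashing. This settles the binary maps.

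The main obstacle is the unit. We must show that the map $\unit_{d,r}\to \crossEffect_f(\unit_{d,k})$ is an equivalence, where $\unit_{d,k}=L_k\unit_k$ as in \cref{nota:tensor_unit_for_elbow_cats}. The plan is to prove the stronger statement $\crossEffect_f\circ L_k\simeq L_r\circ\crossEffect_f$ on $\func_{\udl\ast}(\sC^{\times k},\D)$, and then use $\crossEffect_f\unit_k\simeq\unit_r$. Since $\crossEffect_fL_k X$ is $L_r$-local by \cref{thm:key_pigeonhole_principle}, it suffices to show that $\crossEffect_f$ carries $L_k$-equivalences to $L_r$-equivalences. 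Equivalently, since $\crossEffect_f$ is biadjoint to $\Delta_f^*$, it suffices that $\crossEffect_f$ as a left adjoint has a right adjoint $\Delta_f^*$ preserving local objects. That is precisely square (2) of \cref{thm:key_pigeonhole_principle}: $\Delta_f^*$ sends $\elbowCat{d,r}$ into $\elbowCat{d,k}$. Concretely, for $Y$ in $\elbowCat{d,r}$ we get $\map(\crossEffect_fL_kX,Y)\simeq\map(L_kX,\Delta_f^*Y)\simeq\map(X,\Delta_f^*Y)\simeq\map(\crossEffect_fX,Y)$. This yields $L_r\crossEffect_f\simeq\crossEffect_fL_k$, hence $\crossEffect_f\unit_{d,k}\simeq L_r\crossEffect_f\unit_k\simeq L_r\unit_r=\unit_{d,r}$. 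One still has to check that this equivalence agrees with the lax unit map, which follows by unwinding that both are induced by the unit of $L_r$.
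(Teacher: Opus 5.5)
Your proposal is correct and follows essentially the paper's route: both reduce, via the smashing localisations, to checking that the unit map $\unit_{d,r}\to\crossEffect_f\unit_{d,k}$ is an equivalence, and both settle this by passing to left adjoints in square (2) of \cref{thm:key_pigeonhole_principle}, which gives $L_r\crossEffect_f\simeq\crossEffect_f L_k$. The paper phrases this last step as the vanishing of $\excisiveApproximation^{\lrcorner}_{d,r}\excisiveApproximation_{\udl{m}}(\crossEffect_f F)$ for the fibre $F$ of $\unit\to\unit_{d,k}$, which is the same fact as your observation that $\crossEffect_f$ carries $L_k$-equivalences to $L_r$-equivalences.
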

\begin{proof}
    Let $m\coloneqq d-r+1$, and $n\coloneqq d-k+1$. Consider the solid commuting diagram obtained from \cref{cor:omnibus_multivariable_cross-effects}
    \begin{center}
        \begin{tikzcd}
            \elbowCat{d,k}(\sC,\D) \dar["\crossEffect_f"']\ar[r,hook,shift right = 1.5]&\excisive_{d,k}(\sC,\D)\ar[r,hook,shift right = 1.5]\lar[shift right = 1.5, dashed, "\excisiveApproximation_{{d,k}}^{\lrcorner}"']& \func_{\udl{\ast}}(\sC^{\times k},\D)\dar["\crossEffect_f"]\lar[shift right = 1.5, dashed, "\excisiveApproximation_{\udl{n}}"']\\
            \elbowCat{d,r}(\sC,\D) \ar[r,hook,shift right = 1.5]&\excisive_{d,r}(\sC,\D)\ar[r,hook,shift right = 1.5]\lar[shift right = 1.5, dashed, "\excisiveApproximation_{{d,r}}^{\lrcorner}"']& \func_{\udl{\ast}}(\sC^{\times r},\D).\lar[shift right = 1.5, dashed, "\excisiveApproximation_{\udl{m}}"']
        \end{tikzcd}
    \end{center}
    Here, all the dashed arrows are smashing localisations, where the left horizontal dashed left adjoints were obtained from \cref{cor:symmetric_monoidal_structure_on_elbow_cats}. In particular, the solid horizontal inclusions all obtain an induced lax symmetric monoidal structure which preserve tensor products. By \cref{corollary: cross-effect is symmetric monoidal}, the right vertical $\crossEffect_f$ also has a symmetric monoidal structure. 
    
    Our task is to argue that the left vertical $\crossEffect_f$ has a symmetric monoidal refinement. By the previous paragraph, we already know that the left vertical $\crossEffect_f$ preserves tensor products, and so it only remains to show that it preserves the tensor unit. That is, we have to show that the natural 
    \begin{equation}\label{equation: unit map elbow localisation}
    \unit_{d,r}\rightarrow \crossEffect_f\unit_{d,k}
    \end{equation}
    is an equivalence. Consider the fibre sequence
    $$F \to \unit_{\func_{\udl{\ast}}(\sC^{\times k},\D)} \to \excisiveApproximation_{{d,k}}^{\lrcorner}\excisiveApproximation_{\udl{n}}(\unit_{\func_{\ast}(\sC^{\times k},\D)})\simeq \unit_{d,k}$$
    in the category $\func_{\udl{\ast}}(\sC^{\times k},\D)$. In particular, $\excisiveApproximation_{{d,k}}^{\lrcorner}\excisiveApproximation_{\udl{n}}F \simeq 0$. Note that the  map~\eqref{equation: unit map elbow localisation} is obtained by applying  $\excisiveApproximation_{{d,r}}^{\lrcorner}\excisiveApproximation_{\udl{m}}(\crossEffect_{f}-)$ to the map $\unit_{\func_{\udl{\ast}}(\sC^{\times k},\D)} \to  \unit_{d,k}$ because the functors $\crossEffect_{f}$ and $\excisiveApproximation_{{d,k}}^{\lrcorner}\excisiveApproximation_{\udl{n}}$ are symmetric monoidal. Hence, the fibre of \cref{equation: unit map elbow localisation} is $\excisiveApproximation_{{d,r}}^{\lrcorner}\excisiveApproximation_{\udl{m}}(\crossEffect_{f}F)$. Finally, by taking the left adjoints in the second commutative square in \cref{thm:key_pigeonhole_principle}, we obtain that $\excisiveApproximation_{{d,r}}^{\lrcorner}\excisiveApproximation_{\udl{m}}(\crossEffect_{f}F) \simeq 0$. 
\end{proof}

With symmetric monoidal structures in place, we now prove that the cross--effects satisfy the projection formula. Let $f\colon A\to E$ be a map of finite sets. As in \cref{construction: back-chevalley local systems}, we have an adjoint pair
\[
    \begin{tikzcd}
        \sC^E  \ar[rrr,shift left = 2, "\Delta_f"] &&& \sC^A  \ar[lll,shift left = 1, "\Pi_f"]
    \end{tikzcd}
\]
such that $\Delta_f$ is symmetric monoidal and $\Pi_f$ is $\sC^E$-linear, i.e., the canonical morphism
\begin{equation}\label{equation: projection local systems}
\Pi_f(c) \otimes d \to \Pi_f(c \otimes \Delta_f(d))
\end{equation}
is an equivalence for all $c \in \sC^A$ and $d \in \sC^E$. The adjoint pair $\Delta_f \dashv \Pi_f$ induces the adjoint pair 
\[
    \begin{tikzcd}
        \func(\sC^A,\D)  \ar[rrr,shift left = 2, "\Pi_{f!}"] &&& \func(\sC^E,\D)  \ar[lll,shift left = 1, "\Delta_{f!}"]
    \end{tikzcd}
\]
such that $\Delta_{f!}$ is a symmetric monoidal functor compatible with the Day convolution, see \cref{remark: propetries of day convolution}. In particular, since the category $\sC$ is stable, $\Pi_f$ is also a left adjoint to $\Delta_f$ and $\Pi_{f!}\simeq \Delta_f^*$.

\begin{prop}\label{proposition: projection formula day}
    The canonical morphism 
    \begin{equation}\label{equation: projection formula day}
    \Pi_{f!}(F\otimes\Delta_{f!}G)\to \Pi_{f!}(F)\otimes G
    \end{equation}
    is an equivalence for all $F \in \func(\sC^A,\D)$ and $G \in \func(\sC^E,\D)$.
\end{prop}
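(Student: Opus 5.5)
The plan is to reduce to representable functors and then use the projection formula \eqref{equation: projection local systems} for $\Delta_f \dashv \Pi_f$ on the level of $\sC$.

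\textbf{Reduction to representables.} Both sides of \eqref{equation: projection formula day} preserve colimits separately in $F$ and in $G$. Indeed, $\Pi_{f!}\simeq\Delta_f^*$ and $\Delta_{f!}$ are left adjoints, and the Day convolution preserves colimits in each variable. By \cref{remark: propetries of day convolution}(2), $\func(\sC^A,\D)\simeq\func(\sC^A,\spectra)\otimes\D$, and the map is compatible with this tensoring. So we may assume $\D=\spectra$. Every functor is then a colimit of corepresentables $\yoneda(c)=\susps_+\map(c,-)$, so it suffices to treat $F=\yoneda(c)$ with $c\in\sC^A$ and $G=\yoneda(e)$ with $e\in\sC^E$. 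Naturality of the canonical map means it is enough to check that it is an equivalence on these generators.

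\textbf{Computation on generators.} Left Kan extension carries corepresentables to corepresentables: $\Delta_{f!}\yoneda(e)\simeq\yoneda(\Delta_f e)$. Because $\yoneda$ is symmetric monoidal, the left side becomes $\Pi_{f!}\yoneda(c\otimes\Delta_f e)$. The key step is identifying $\Pi_{f!}$ on corepresentables. Since $\Pi_f$ is left adjoint to $\Delta_f$ (stability), $\Pi_{f!}\yoneda(x)=\Delta_f^*\susps_+\map(x,-)\simeq\susps_+\map(x,\Delta_f -)\simeq\susps_+\map(\Pi_f x,-)=\yoneda(\Pi_f x)$. Hence:
\begin{itemize}
\item the left side is $\yoneda(\Pi_f(c\otimes\Delta_f e))$;
\item the right side is $\yoneda(\Pi_f c)\otimes\yoneda(e)\simeq\yoneda(\Pi_f c\otimes e)$.
\end{itemize}
The map between them is $\yoneda$ applied to the projection morphism \eqref{equation: projection local systems} (note $\yoneda$ is contravariant, so the direction matches), which is an equivalence.

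\textbf{Main obstacle.} The real difficulty is the coherence claim: under these identifications, the abstract canonical map (built from the oplax/lax structure of $\Pi_{f!}$ adjoint to the monoidal $\Delta_{f!}$) must agree with $\yoneda$ of the projection map for $\Delta_f\dashv\Pi_f$. I would handle this by unwinding both as mates of the symmetric monoidal structure on $\Delta_f$, using that $\yoneda$ is symmetric monoidal and intertwines $\Delta_f$ with $\Delta_{f!}$.

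Alternatively, one can avoid the coherence check by computing directly. Since $f$ is a map of finite sets, $\Pi_f$ is a sum over fibres, $\sC^A\simeq\prod_{e\in E}\sC^{f^{-1}(e)}$, and $\otimes$ is biexact, so \eqref{equation: projection local systems} is the distributivity of $\otimes$ over finite sums. A similar fibrewise computation of both sides of \eqref{equation: projection formula day} would then show the map is an equivalence by inspection.
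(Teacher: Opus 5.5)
Your proposal is correct and follows essentially the same route as the paper. You reduce to $\D=\spectra$ via \cref{remark: propetries of day convolution}, check the map on corepresentables $\yoneda(c)$ and $\yoneda(e)$, rewrite the two sides as $\yoneda(\Pi_f(c\otimes\Delta_f e))$ and $\yoneda(\Pi_f c\otimes e)$, and identify the map with $\yoneda$ of the projection equivalence \eqref{equation: projection local systems}. The paper simply asserts that last compatibility, and you are right to flag it. One small imprecision: $\func(\sC^A,\spectra)$ is generated under colimits by the corepresentables together with their desuspensions, as the paper says, not by the corepresentables alone. This changes nothing, since both sides are exact in each variable.
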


\begin{proof}
    By \cref{remark: propetries of day convolution}, we can assume that $\D=\spectra$ is the category of spectra. Then the functors $F,G$ are colimits of $\susps_+$ of representable ones and the desuspensions thereof. Hence, we can assume that $F\simeq \yoneda(c)$ and $G\simeq \yoneda(d)$ where $c\in \sC^A$ and $d\in \sC^E$. Again, by \cref{remark: propetries of day convolution}, we have equivalences
    \begin{align*}
        \Pi_{f!}(F\otimes\Delta_{f!}G) &\simeq \Pi_{f!}(\yoneda(c)\otimes\Delta_{f!}\yoneda(d)) \simeq \Pi_{f!}(\yoneda(c)\otimes\yoneda(\Delta_f(d))) \\
        &\simeq \Pi_{f!}(\yoneda(c \otimes \Delta_f(d))) \simeq \yoneda(\Pi_f(c \otimes \Delta_f(d)))
    \end{align*}
    and
    $$\Pi_{f!}(F)\otimes G \simeq \Pi_{f!}(\yoneda(c))\otimes \yoneda(d) \simeq \yoneda(\Pi_f(c))\otimes \yoneda(d) \simeq \yoneda(\Pi_f(c) \otimes d).$$
    Under these equivalences, the canonical morphism~\eqref{equation: projection formula day} is induced by the canonical equivalence~\eqref{equation: projection local systems}.
\end{proof}

Recall that the adjoint pair $\Pi_{f!}\dashv \Delta_{f!}$ induces the following adjoint pair 
\[
    \begin{tikzcd}
        \func_{\udl{\ast}}(\sC^A,\D)  \ar[rrr,shift left = 2, "\Pi_{f!}\simeq \Delta^*_f"] &&& \func_{\udl{\ast}}(\sC^E,\D)  \ar[lll,shift left = 1, "\crossEffect_f"]
    \end{tikzcd}
\]
between the categories of reduced functors. By \cref{corollary: cross-effect is symmetric monoidal}, $\crossEffect_f$ is symmetric monoidal.

\begin{cor}\label{corollary: projection reduced}
    The canonical morphism $$\Delta_f^*(F\otimes\crossEffect_{f}G)\to \Delta^*_{f}(F)\otimes G $$
    is an equivalence for all $F \in \func_{\udl{\ast}}(\sC^A,\D)$ and $G \in \func_{\udl{\ast}}(\sC^E,\D)$.
\end{cor}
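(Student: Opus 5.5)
The plan is to deduce the corollary from \cref{proposition: projection formula day} by applying multireduction and using that it is a smashing localisation. On $\func(\sC^A,\D)$ we have the adjunction $\Pi_{f!}\simeq\Delta_f^*\dashv\Delta_{f!}$. For multireduced $G$ we have $\crossEffect_f G\simeq(\Delta_{f!}G)\reduce$. Since $f$ is surjective, $\Delta_f^*$ preserves multireduced functors.

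First take $F\in\func_{\udl{\ast}}(\sC^A,\D)$ and $G\in\func_{\udl{\ast}}(\sC^E,\D)$, and regard both as objects of the unreduced functor categories. \Cref{proposition: projection formula day} then gives an equivalence $\Delta_f^*(F\otimes_{\mathrm{Day}}\Delta_{f!}G)\simeq\Delta_f^*(F)\otimes_{\mathrm{Day}}G$ in $\func(\sC^E,\D)$.

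Next, compare each side with the tensor product of the reduced category, which is $(-\otimes_{\mathrm{Day}}-)\reduce$. By \cref{corollary: multireduction is a smashing localization}, multireduced functors form a tensor ideal. Since $F$ is reduced, $F\otimes_{\mathrm{Day}}\Delta_{f!}G$ is already multireduced and hence equals $F\otimes(\Delta_{f!}G)\reduce=F\otimes\crossEffect_fG$. This uses that $(-)\reduce$ is symmetric monoidal and that $F\reduce\simeq F$. Similarly, $\Delta_f^*F\otimes_{\mathrm{Day}}G$ is already reduced, so it is the reduced tensor product $\Delta_f^*F\otimes G$. The two sides of the Day projection formula therefore match the two sides of the claimed map, and the equivalence transfers.

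The main obstacle is checking that the canonical morphism of the corollary, built from the lax linearity of $\Delta_f^*$ relative to the symmetric monoidal $\crossEffect_f$, agrees with $(-)\reduce$ applied to \eqref{equation: projection formula day}. I would handle this by noting that the composite $\crossEffect_f=(-)\reduce\circ\Delta_{f!}$ is an identification of symmetric monoidal functors, as in the proof of \cref{corollary: cross-effect is symmetric monoidal}. The projection map for the right adjoint $\Delta_f^*$ is determined by the adjunction data and the monoidal structure of the left adjoint. Since the reduced adjunction is obtained by restricting the unreduced one and composing with the localisation, the mate construction is compatible. Concretely, the map factors as
\[\Delta_f^*(F\otimes\Delta_{f!}G)\to\Delta_f^*\bigl((F\otimes\Delta_{f!}G)\reduce\bigr)\to\Delta_f^*F\otimes G,\]
where the first arrow is an equivalence by the tensor ideal property.
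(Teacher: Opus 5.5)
Your argument is correct and is essentially the paper's proof: it uses that $(-)\reduce$ is a smashing localisation, so reduced functors form a tensor ideal and $F\otimes\crossEffect_f G\simeq F\otimes\Delta_{f!}G$, and then applies \cref{proposition: projection formula day}. Your extra check that the canonical maps agree goes beyond what the paper writes. One correction there: the map in the stated direction is the oplax projection map of $\Delta_f^*$ as the \emph{left} adjoint of the symmetric monoidal $\crossEffect_f$, built using the counit $\Delta_f^*\crossEffect_f\to\id$; the lax projection map of $\Delta_f^*$ viewed as a right adjoint would point the other way.
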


\begin{proof}
    By construction, $\crossEffect_f G \simeq (\Delta_{f!}G)\reduce$. By \cref{lemma: reduction is a smashing localization}, the functor $(-)\reduce$ is a smashing localisation. Therefore, $F\otimes\crossEffect_{f}G \simeq F\otimes\Delta_{f!}G$. We finish the proof by applying \cref{proposition: projection formula day}.
\end{proof}

By \cref{thm:key_pigeonhole_principle} and \cref{prop:symmetric_monoidal_refinement_of_elbow_cross_effects}, the adjoint pair $\Delta_f^* \dashv \crossEffect_f$ restricts to the adjoint pair
\[
    \begin{tikzcd}
        \elbowCat{d,|A|}(\sC,\D)  \ar[rrr,shift left = 2, "\Delta^*_f"] &&& \elbowCat{d,|E|}(\sC,\D)  \ar[lll,shift left = 1, "\crossEffect_f"]
    \end{tikzcd}
\]
such that $\crossEffect_f$ is a symmetric monoidal functor with respect to the symmetric monoidal structure of \cref{cor:symmetric_monoidal_structure_on_elbow_cats}. The next corollary follows immediately from \cref{corollary: projection reduced} and \cref{cor:symmetric_monoidal_structure_on_elbow_cats}.

\begin{cor}\label{corollary: projection_elbow_cats}
    The canonical morphism $$\Delta_f^*(F\otimes\crossEffect_{f}G)\to \Delta^*_{f}(F)\otimes G $$
    is an equivalence for all $F \in \elbowCat{d,|A|}(\sC,\D)$ and $G \in \elbowCat{d,|E|}(\sC,\D)$. \qed
\end{cor}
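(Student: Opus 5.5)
The plan is to deduce the statement from \cref{corollary: projection reduced} by exploiting how the subdiagonal categories sit inside the multireduced functor categories. By \cref{cor:symmetric_monoidal_structure_on_elbow_cats} (composed with \cref{corollary: excisive approximation is smashing} and \cref{corollary: multireduction is a smashing localization}), the inclusion $\elbowCat{d,r}(\sC,\D)\subseteq\func_{\udl{\ast}}(\sC^{\times r},\D)$ is the right adjoint of a smashing localisation $L_r\coloneqq\excisiveApproximation^{\lrcorner}_{d,r}\excisiveApproximation_{\udl{m}}$. In particular, the tensor product of $\elbowCat{d,r}(\sC,\D)$ is computed as $L_r$ applied to the Day convolution in $\func_{\udl{\ast}}$. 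More precisely, because the localisation is smashing, for objects $X,Y$ already in $\elbowCat{d,r}$ the Day convolution $X\otimes Y$ formed in $\func_{\udl{\ast}}$ is already local. Thus the inclusion preserves binary tensor products, as noted in the proof of \cref{prop:symmetric_monoidal_refinement_of_elbow_cross_effects}.

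With that in hand, take $F\in\elbowCat{d,|A|}$ and $G\in\elbowCat{d,|E|}$. The argument proceeds in three steps.

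\emph{Step 1.} By \cref{thm:key_pigeonhole_principle}, $\crossEffect_fG$ lies in $\elbowCat{d,|A|}$. Its restriction agrees with the cross-effect computed in $\func_{\udl{\ast}}$.

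\emph{Step 2.} Since both $F$ and $\crossEffect_fG$ are subdiagonal, the tensor product $F\otimes\crossEffect_fG$ in $\elbowCat{d,|A|}$ coincides with the Day convolution in $\func_{\udl{\ast}}(\sC^{A},\D)$. The same holds on the target: $\Delta_f^*F\in\elbowCat{d,|E|}$ by the pigeonhole theorem, so $\Delta_f^*F\otimes G$ is the reduced Day convolution.

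\emph{Step 3.} Since $\Delta_f^*$ on subdiagonal categories is the restriction of $\Delta_f^*$ on reduced functors, the canonical map in the statement is identified with the canonical map of \cref{corollary: projection reduced}. That map is an equivalence.

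The only point requiring care is checking that the two ``canonical morphisms'' agree. In both settings the map arises from the lax/oplax structure associated with the adjunction $\Delta_f^*\dashv\crossEffect_f$ together with the symmetric monoidality of $\crossEffect_f$. In the subdiagonal setting, that adjunction and the monoidal structure are obtained by restriction from the reduced setting (\cref{cor:omnibus_multivariable_cross-effects}, \cref{prop:symmetric_monoidal_refinement_of_elbow_cross_effects}). The inclusions are fully faithful and compatible with these structures, so the counits and the monoidal comparison maps coincide. I expect this compatibility bookkeeping to be the only real obstacle, and it is formal.
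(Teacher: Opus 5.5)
Your proposal is correct and follows essentially the paper's own route. The paper deduces the statement directly from \cref{corollary: projection reduced}, using that the subdiagonal monoidal structure comes from a smashing localisation (\cref{cor:symmetric_monoidal_structure_on_elbow_cats}), so the inclusion into reduced functors preserves tensor products; the pigeonhole theorem guarantees that $\Delta_f^*$ and $\crossEffect_f$ restrict to the subdiagonal categories, just as in your Steps 1 to 3.
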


\begin{cor}\label{cor:generators_are_dualisable}
    For every surjection $f\colon [r]\twoheadrightarrow [k]$, the objects $\Delta^*_f\unit_{d,r}\in\elbowCat{d,k}(\sC,\D)$ are dualisable, and in fact, self--dual objects.
\end{cor}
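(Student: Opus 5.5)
We show that $\Delta_f^*\unit_{d,r}$ is self-dual using the standard principle that a left adjoint to a symmetric monoidal functor satisfying the projection formula carries dualisable objects to dualisable objects. Throughout, the adjunction is $\Delta_f^*\dashv\crossEffect_f$ between $\elbowCat{d,r}(\sC,\D)$ and $\elbowCat{d,k}(\sC,\D)$. Here $\crossEffect_f$ is symmetric monoidal by \cref{prop:symmetric_monoidal_refinement_of_elbow_cross_effects}, and $\Delta_f^*$ is both a left and a right adjoint of $\crossEffect_f$ by \cref{cor:omnibus_multivariable_cross-effects}.

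The key step is to compute internal mapping objects out of $\Delta_f^*\unit_{d,r}$. Write $\underline{\hom}$ for the internal hom in the presentably symmetric monoidal categories of \cref{cor:symmetric_monoidal_structure_on_elbow_cats}. For $G,H\in\elbowCat{d,k}(\sC,\D)$ we have the chain
\[
\map(G,\underline{\hom}(\Delta_f^*\unit_{d,r},H))\simeq\map(G\otimes\Delta_f^*\unit_{d,r},H)\simeq\map(\Delta_f^*(\crossEffect_fG\otimes\unit_{d,r}),H)\simeq\map(\crossEffect_fG,\crossEffect_fH).
\]
The second equivalence is \cref{corollary: projection_elbow_cats}, and the third uses $\Delta_f^*\dashv\crossEffect_f$. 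Using instead the adjunction $\crossEffect_f\dashv\Delta_f^*$, the last term becomes $\map(G,\Delta_f^*\crossEffect_fH)$. By \cref{corollary: projection_elbow_cats} once more, $\Delta_f^*\crossEffect_fH\simeq\Delta_f^*(\unit_{d,r}\otimes\crossEffect_fH)\simeq\Delta_f^*\unit_{d,r}\otimes H$. By Yoneda this gives $\underline{\hom}(\Delta_f^*\unit_{d,r},H)\simeq\Delta_f^*\unit_{d,r}\otimes H$, naturally in $H$.

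Taking $H=\unit_{d,k}$ identifies the candidate dual $\underline{\hom}(\Delta_f^*\unit_{d,r},\unit_{d,k})$ with $\Delta_f^*\unit_{d,r}$ itself. An object $X$ is dualisable exactly when the canonical map $\underline{\hom}(X,\unit)\otimes H\to\underline{\hom}(X,H)$ is an equivalence for all $H$. So it remains to check that the equivalence produced above is this canonical map, or at least is compatible with it.

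The main obstacle is this naturality bookkeeping. The first adjunction chain used the left adjunction and the second used the right adjunction, so it is not automatic that the composite is the canonical comparison map. I will handle this with the standard argument that, for symmetric monoidal $\crossEffect_f$ with left adjoint $\Delta_f^*$ satisfying the projection formula, $\underline{\hom}(\Delta_f^*A,H)\simeq\Delta_f^*\underline{\hom}(A,\crossEffect_fH)$ when $\Delta_f^*$ is also a right adjoint. Taking $A=\unit_{d,r}$, which is dualisable, the canonical map becomes $\Delta_f^*$ applied to $\unit\otimes\crossEffect_fH\to\underline{\hom}(\unit,\crossEffect_fH)$ followed by the projection formula, and both of these are equivalences.

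Alternatively, and more concretely, I can write down evaluation and coevaluation maps built from the two units and counits of the biadjunction together with the projection formula, and verify the triangle identities. I expect this route to reduce to the coherence of the ambidextrous adjunction.
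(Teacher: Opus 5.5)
Your argument is the paper's: the projection formula \cref{corollary: projection_elbow_cats} and the biadjunction $\Delta_f^*\dashv\crossEffect_f\dashv\Delta_f^*$ give $[\Delta_f^*\unit_{d,r},H]\simeq\Delta_f^*\crossEffect_fH$. A second use of the projection formula rewrites this as $\Delta_f^*\unit_{d,r}\otimes H$, and taking $H=\unit_{d,k}$ gives self-duality.

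The paper simply asserts that this equivalence is the natural comparison map $[\Delta_f^*\unit_{d,r},\unit_{d,k}]\otimes H\to[\Delta_f^*\unit_{d,r},H]$, so your extra bookkeeping is more careful than the source. One caveat: under $[\Delta_f^*A,H]\simeq\Delta_f^*[A,\crossEffect_fH]$, that map involves the projection map $\Delta_f^*B\otimes H\to\Delta_f^*(B\otimes\crossEffect_fH)$ of $\Delta_f^*$ viewed as the \emph{right} adjoint of $\crossEffect_f$. This map runs opposite to \cref{corollary: projection_elbow_cats} and is not literally that statement. It is nonetheless an equivalence, by the same Yoneda reduction as in \cref{proposition: projection formula day} followed by passage to the smashing localisations.
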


\begin{proof}
    By the projection formula of \cref{corollary: projection_elbow_cats} and the biadjunction $\Delta^*_f \dashv \crossEffect_f \dashv \Delta^*_f$, the natural morphism
    \begin{equation}\label{equation:dual_proj_formula}
    [\Delta^*_f \unit_{d,r}, F] \to \Delta^*_f \crossEffect_f (F)
    \end{equation}
    is an equivalence for any $F\in \elbowCat{d,k}(\sC,\D)$. By applying \cref{corollary: projection_elbow_cats} once again, we obtain that the natural map
    $$[\Delta^*_f \unit_{d,r}, \unit_{d,k}] \otimes F \to [\Delta^*_f \unit_{d,r}, F] $$
    is an equivalence for any $F \in\elbowCat{d,k}(\sC,\D)$. This implies that $\Delta^*_f \unit_{d,r}$ is dualisable. By \eqref{equation:dual_proj_formula}, we obtained that the dual $[\Delta^*_f \unit_{d,r}, \unit_{d,k}]$ is equivalent to $\Delta^*_f \crossEffect_f(\unit_{d,k}) \simeq \Delta^*_f \unit_{d,r}$, i.e., the object $\Delta^*_f \unit_{d,r}$ is self--dual.
\end{proof}


\subsection{Generators and localisations}
\label{subsec:generators_and_localisations}

Having established the internal logic of subdiagonal functors for a fixed degree $d$, we would like now to relate it to those associated to different degrees. The following result gives the first indication for this possibility, and for it, recall the Bousfield localisation $\excisiveApproximation^{\lrcorner}_{d,r}$ from \cref{prop:reflection_from_all_excisive_into_elbow_excisive}.

\begin{lem}\label{lem:inclusion_of_elbow_cats_of_smaller_degrees}
    Let $r\leq b\leq d$. Then the inclusion $\excisive_{b,r}(\sC,\D)\subseteq \excisive_{d,r}(\sC,\D)$ restrict to an inclusion $\elbowCat{b,r}(\sC,\D)\subseteq \elbowCat{d,r}(\sC,\D)$. Consequently, the Bousfield localisation $\excisiveApproximation^{\lrcorner}_{b,r}\colon \excisive_{d,r}(\sC,\D)\rightleftharpoons \elbowCat{b,r}(\sC,\D) \cocolon \inclusion$ restricts to a Bousfield localisation $\excisiveApproximation^{\lrcorner}_{b,r}\colon \elbowCat{d,r}(\sC,\D)\rightleftharpoons \elbowCat{b,r}(\sC,\D) \cocolon \inclusion$.
\end{lem}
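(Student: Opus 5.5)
Write $m\coloneqq d-r+1$ and $m'\coloneqq b-r+1\leq m$. Since $(d-r+1)$--excisiveness is implied by $(b-r+1)$--excisiveness, we do have $\excisive_{b,r}(\sC,\D)\subseteq\excisive_{d,r}(\sC,\D)$. The plan for the first claim is to show directly that every $F\in\elbowCat{b,r}(\sC,\D)$ satisfies the subdiagonality condition for degree $d$. I will use the generator reduction already used in the proof of \cref{thm:key_pigeonhole_principle}. By \cref{lem:generators_of_truncated_subdivided_cubes}, together with the fact that a subdiagonal functor is the finite limit $\lim_{Q^{\lrcorner}_{b,r}}P_{\udl{\bullet}}F$, every $F\in\elbowCat{b,r}(\sC,\D)$ is a finite limit of functors $P_{\udl{\ell}}F$. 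Each of these is $(\ell_1,\ldots,\ell_r)$--excisive with $\ell_i\leq m'$ and $\sum\udl{\ell}\leq b$.

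For such a $\udl{\ell}$ we have $\ell_i\leq m'\leq m$ and $\sum\udl{\ell}\leq b\leq d$. Hence \cref{lem:generators_of_truncated_subdivided_cubes}, now applied at degree $d$, shows that $P_{\udl{\ell}}F\in\elbowCat{d,r}(\sC,\D)$. By \cref{obs:elbow_categories_are_stable_subcats}, $\elbowCat{d,r}(\sC,\D)$ is closed under finite limits in $\excisive_{d,r}(\sC,\D)$, so $F$ itself lies in $\elbowCat{d,r}(\sC,\D)$. The one subtlety to check is that the limit $\lim_{Q^{\lrcorner}_{b,r}}P_{\udl{\bullet}}F$ is formed in $\func(\sC^{\times r},\D)$, hence also in $\excisive_{d,r}(\sC,\D)$. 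This holds because finite limits in all these full subcategories are computed pointwise.

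For the second claim, the localisation $P^{\lrcorner}_{b,r}$ is a priori defined on $\excisive_{b,r}(\sC,\D)$ by \cref{prop:reflection_from_all_excisive_into_elbow_excisive}. I would precompose it with $P_{\udl{m'}}\colon\excisive_{d,r}(\sC,\D)\to\excisive_{b,r}(\sC,\D)$, which is left adjoint to the inclusion. This gives a left adjoint to the composite inclusion $\elbowCat{b,r}(\sC,\D)\subseteq\excisive_{b,r}(\sC,\D)\subseteq\excisive_{d,r}(\sC,\D)$. That composite inclusion factors through the full subcategory $\elbowCat{d,r}(\sC,\D)$ by the first part. Restricting a reflection along an intermediate full subcategory remains a reflection: for $G\in\elbowCat{d,r}$ and $F\in\elbowCat{b,r}$, the adjunction equivalence $\map(P^{\lrcorner}_{b,r}G,F)\simeq\map(G,F)$ is simply inherited from the ambient category. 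So we obtain a Bousfield localisation $\elbowCat{d,r}(\sC,\D)\rightleftharpoons\elbowCat{b,r}(\sC,\D)$, in which the functor written $P^{\lrcorner}_{b,r}$ means $P^{\lrcorner}_{b,r}P_{\udl{m'}}$.

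The only step needing care is the generator reduction, namely that subdiagonal functors are finite limits of their multi-excisive pieces with indices in the truncated cube. This is exactly the right Kan extension condition in \cref{cons:intermediate_categories} combined with the fact that $Q^{\lrcorner}_{b,r}$ is finite. I do not expect a genuine obstacle here.
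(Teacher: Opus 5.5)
Your proof is correct and is essentially the paper's argument. You write $F\in\elbowCat{b,r}(\sC,\D)$ as a finite limit of the pieces $\excisiveApproximation_{\udl{\ell}}F$ with $\ell_1+\cdots+\ell_r\leq b$, note via \cref{lem:generators_of_truncated_subdivided_cubes} that these lie in $\elbowCat{d,r}(\sC,\D)$ since $b\leq d$, and conclude by closure under finite limits; the second claim then follows formally because the inclusion factors through $\elbowCat{d,r}(\sC,\D)$, and your identification of the localisation as $\excisiveApproximation^{\lrcorner}_{b,r}\excisiveApproximation_{\udl{m'}}$ just makes explicit what the paper leaves implicit.
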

\begin{proof}
    Since the inclusion $\excisive_{b,r}(\sC,\D)\subseteq \excisive_{d,r}(\sC,\D)$ is exact and everything in $\elbowCat{b,r}(\sC,\D)$ may be built as a finite limit of functors $F$ such that $F\simeq \excisiveApproximation_{\udl{\ell}}F$ where $\ell$ is an $r$--tuple with $\ell_1+\cdots+\ell_r\leq b$, we only have to show that such $F$ are also in $\elbowCat{d,r}(\sC,\D)$. But this is clear since $b\leq d$. The final statement is an immediate consequence of the fact that $\elbowCat{b,r}(\sC,\D)\subseteq \excisive_{d,r}(\sC,\D)$ factors through $\elbowCat{d,r}(\sC,\D)$, as supplied by the first part.
\end{proof}

We would like to enumerate the generators for the kernel of the Bousfield localisation above in  the case of $\sC=\spectra^{\omega}$ and $\D=\spectra$. To this end, we  first construct a family of jointly conservative functors from $\elbowCat{d,r}(\sC,\D)$.

\begin{lem}\label{lem:conserv_crosseffects}
    Let $d$ be a positive integer and $r\leq d$. Suppose that $F\in \elbowCat{d,r}(\sC,\D)$ is such that $\crossEffect_f F\simeq 0$ for all surjective maps $f\colon [d]\twoheadrightarrow [r]$. Then $F\in \elbowCat{d-1,r}(\sC,\D)$.
\end{lem}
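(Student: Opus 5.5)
The plan is to show that the canonical cube $P_{\udl{\bullet}}F$ on $Q_{d,r}$ is already right Kan extended from the smaller truncation $Q^{\lrcorner}_{d-1,r}$, i.e.\ from tuples $\udl{k}$ with $\sum\udl{k}\leq d-1$. Since $F\in\elbowCat{d,r}(\sC,\D)$, we know $P_{\udl{\bullet}}F$ is right Kan extended from $Q^{\lrcorner}_{d,r}$. By transitivity of right Kan extensions, it therefore suffices to show that $P_{\udl{\bullet}}F|_{Q^{\lrcorner}_{d,r}}$ is right Kan extended from $Q^{\lrcorner}_{d-1,r}$.

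By the pointwise argument of \cref{lem:pointwise_cubical_induction}, applied to a single layer of the filtration, this reduces to a local condition. For every tuple $\udl{k}$ with $\sum\udl{k}=d$, the $r$--cube $\{P_{\udl{k}-\udl{\delta}_{i_1}-\cdots-\udl{\delta}_{i_a}}F\}$ must be cartesian. The comma categories there are again punctured subdivided cubes, and the cofinality of the punctured unit cube applies verbatim.

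Since we are in the stable setting, cartesianness of this cube is equivalent to the vanishing of its total fibre. The total fibre is the multi--homogeneous layer $D_{\udl{k}}F$, obtained by taking the $k_i$--th homogeneous layer in the $i$--th variable. A functor that is homogeneous of degree $k_i$ in each variable vanishes iff its $\udl{k}$--cross--effect vanishes, because cross--effects detect excisiveness (\cite[Prop. 6.1.4.10]{lurieHA}) applied variable by variable. Moreover, $\crossEffect_{\udl{k}}$ kills $P_{\udl{k}-\udl{\delta}_i}F$ for each $i$, so
\[
\crossEffect_{\udl{k}}D_{\udl{k}}F\simeq\crossEffect_{\udl{k}}P_{\udl{k}}F\simeq P_{\udl{1}}\crossEffect_{\udl{k}}F
\]
by \cref{obs:calculus_of_cross-effects_and_excisive_approximations}. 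Finally, $\crossEffect_{\udl{k}}F=\crossEffect_fF$ for the surjection $f\colon[d]\twoheadrightarrow[r]$ whose fibres have sizes $k_i$, and $\crossEffect_fF$ vanishes by hypothesis. Hence each such cube is cartesian. This detection step is the one I expect to need the most care, namely identifying the total fibre with the multi--homogeneous layer and confirming that cross--effects detect its vanishing.

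It remains to check the degree bound and the subdiagonality condition for $d-1$. We now know $F\simeq\lim_{Q^{\lrcorner}_{d-1,r}}P_{\udl{\bullet}}F$. For $\udl{k}$ with all $k_i\geq1$ and $\sum\udl{k}\leq d-1$, each $k_i\leq d-r$. So $F$ is a finite limit of functors that are $(d-r)$--excisive in each variable, and hence $F\in\excisive_{d-1,r}(\sC,\D)$.

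For subdiagonality at level $d-1$, observe that $Q_{d-1,r}\subseteq Q_{d,r}$ is the full subcategory of tuples with entries at most $d-r$. This subcategory contains $Q^{\lrcorner}_{d-1,r}$, and the cube $P_{\udl{\bullet}}F$ for $F$ viewed in $\excisive_{d-1,r}$ is the restriction of the cube for $d$. Since $P_{\udl{\bullet}}F$ on all of $Q_{d,r}$ is right Kan extended from $Q^{\lrcorner}_{d-1,r}$, its restriction to $Q_{d-1,r}$ is right Kan extended from $Q^{\lrcorner}_{d-1,r}$ as well. This follows because the pointwise criterion of \cref{lem:pointwise_cubical_induction} only involves the neighbouring cubes, which stay inside $Q_{d-1,r}$. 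Thus $F\in\elbowCat{d-1,r}(\sC,\D)$.
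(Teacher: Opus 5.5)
Your proof is correct and follows essentially the same route as the paper. In both, the heart of the argument is showing that the cubes $\{P_{\udl{k}-\udl{\delta}_{i_1}-\cdots-\udl{\delta}_{i_a}}F\}$ at tuples with $\sum\udl{k}=d$ are cartesian: you identify each total fibre as a multi-homogeneous layer whose vanishing is detected by $\crossEffect_f F\simeq 0$. The only difference is organisational. The paper first proves $(d-r)$--excisiveness separately, using surjections with a single fibre of size $d-r+1$, whereas you read it off from $F\simeq\lim_{Q^{\lrcorner}_{d-1,r}}P_{\udl{\bullet}}F$, which subsumes that step.
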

\begin{proof}
    First, we will show that $F$ is $(d-r)$--excisive in the $i$--th variable. Indeed, let $f\colon [d] \twoheadrightarrow [r]$ be a surjection such that $|f^{-1}(i)|=d-r+1$ and $|f^{-1}(j)|=1$ for $j\neq i$. Then $\crossEffect_fF$ computes the $(d-r+1)$--th cross--effect with respect to the $i$--th variable. By the assumption, $F$ is $(d-r+1)$--excisive in the $i$--th variable and $\crossEffect_f F \simeq 0$, therefore $F$ is also $(d-r)$--excisive.

    Now, we will show that $F\in \elbowCat{d-1,r}(\sC,\D)$. By \cref{lem:pointwise_cubical_induction}, it suffices to argue that for every tuple $(\ell_1,\ldots,\ell_r)$ such that $\ell_1+\cdots+\ell_r\geq d$, writing $\udl{\delta}_i = (0,\ldots,0,1,0,\ldots,0)$ for the tuple with $1$ in component $i$ and zero elsewhere, the $r$--cube
    \[\Big\{P_{\underline{\ell}-\udl{\delta}_{i_1}-\cdots-\udl{\delta}_{i_a}}F\Big\}_{(i_1,\ldots,i_a)\in \mathrm{Subsets \: of\:} [r]}\]
    is a limit diagram. Since $F\in \elbowCat{d,r}(\sC,\D)$, this is true if $\ell_1+\cdots+\ell_r > d$. Therefore, we can assume $\ell_1+\cdots+\ell_r = d$. Then, we note that the total fibre $D$ of this cube is $(\ell_1,\ldots,\ell_r)$--reduced, i.e., $D$ is $(\ell_1,\ldots,\ell_r)$--homogeneous. Let $f\colon [d] \twoheadrightarrow [r]$ be a surjection such that $|f^{-1}(1)|=\ell_1,\ldots, |f^{-1}(r)|=\ell_r$. Since $D$ is homogeneous, $D$ is trivial if and only if $\crossEffect_fD$ is trivial. However, $\crossEffect_f D \simeq \crossEffect_f F \simeq 0$ by the assumption.
\end{proof}

By induction, this lemma and \cref{thm:key_pigeonhole_principle} imply immediately the following:

\begin{cor}\label{cor:jointly conservative}
    Let $d$ be a positive integer and $r\leq d$. Suppose that $F\in \elbowCat{d,r}(\spectra^\omega,\D)$ is such that $\crossEffect_f F(\sphere,\ldots,\sphere)\simeq 0$ for all surjective maps $f\colon [k]\twoheadrightarrow [r]$ and all $k\leq d$. Then $F\simeq 0$. \qed
\end{cor}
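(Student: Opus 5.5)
We argue by induction on $d$, with $r$ fixed. Throughout, the key fact about $\sC=\spectra^\omega$ is that a functor which is reduced and $1$--excisive in every variable is determined by its value on $(\sphere,\ldots,\sphere)$. Indeed, such a functor is exact in each variable. Since $\spectra^\omega$ is generated under finite colimits and retracts by $\sphere$, a multiexact functor $(\spectra^\omega)^{\times r}\to\D$ vanishes as soon as it vanishes at $(\sphere,\ldots,\sphere)$.

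For the base case, take the smallest value $d=r$. Then $m=d-r+1=1$, so $\elbowCat{r,r}(\spectra^\omega,\D)=\excisive_{\udl{1}}$ consists of multiexact functors. Take $f=\id$ (here $k=r\leq d$). The hypothesis gives $F(\sphere,\ldots,\sphere)\simeq 0$, and the key fact above gives $F\simeq 0$.

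For the inductive step, let $d>r$ and suppose $F\in\elbowCat{d,r}$ satisfies the hypothesis. We first show that $\crossEffect_fF\simeq 0$ for every surjection $f\colon[d]\twoheadrightarrow[r]$.
\begin{itemize}
\item By \cref{thm:key_pigeonhole_principle}, $\crossEffect_fF$ lies in $\elbowCat{d,d}(\spectra^\omega,\D)$.
\item There $m=1$, so $\crossEffect_fF$ is multiexact.
\item Hence it vanishes once $(\crossEffect_fF)(\sphere,\ldots,\sphere)\simeq 0$, which is exactly the hypothesis for $k=d$.
\end{itemize}

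Now \cref{lem:conserv_crosseffects} applies and gives $F\in\elbowCat{d-1,r}(\spectra^\omega,\D)$. The hypothesis for all $k\leq d-1$ still holds. The cross--effects of $F$ computed inside $\elbowCat{d-1,r}$ are the same functors, because by \cref{lem:inclusion_of_elbow_cats_of_smaller_degrees} the inclusion is just a full subcategory of $\func_{\udl{\ast}}$. So the inductive hypothesis for $d-1\geq r$ yields $F\simeq 0$.

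The only point needing care is the multiexactness claim at level $d=s$: that $\elbowCat{d,d}$ consists of functors that are $1$--excisive in each variable. This is immediate from the definition $m=d-d+1=1$.
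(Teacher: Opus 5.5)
Your proof is correct and is the same induction the paper intends. For each $f\colon[d]\twoheadrightarrow[r]$, \cref{thm:key_pigeonhole_principle} places $\crossEffect_fF$ in $\elbowCat{d,d}$, whose objects are exact in each variable, so the hypothesis at $(\sphere,\ldots,\sphere)$ kills it; \cref{lem:conserv_crosseffects} then drops $F$ into $\elbowCat{d-1,r}$, and you correctly handle the base case $d=r$ separately, since that lemma needs $r\leq d-1$.
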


Now we are ready to compute the generators for $\elbowCat{d,r}$.

\begin{lem}\label{lem:generators_of_elbow_cats}
    Let $d$ be a positive integer and $r\leq d$. Then the presentable stable category $\elbowCat{d,r}$ is generated by the finite set $\{\Delta^*_f\unit_{d,k}\}_{r\leq k, f\in\surjectiveSet([k],[r])}$ of objects.
\end{lem}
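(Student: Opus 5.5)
The plan is to show that the localising subcategory $\mathcal{L}\subseteq\elbowCat{d,r}$ generated by the objects $\Delta^*_f\unit_{d,k}$ is everything. Equivalently, since $\elbowCat{d,r}$ is presentable stable, I will show that the right orthogonal of this set vanishes: if $F\in\elbowCat{d,r}$ satisfies $\map(\Delta^*_f\unit_{d,k},\Sigma^nF)\simeq 0$ for all $n\in\mathbb{Z}$, all $k$ with $r\leq k\leq d$, and all surjections $f\colon[k]\twoheadrightarrow[r]$, then $F\simeq 0$. The argument has three steps.

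First, I compute these mapping spectra by adjunction. By \cref{cor:omnibus_multivariable_cross-effects}, $\Delta^*_f\dashv\crossEffect_f$ between $\elbowCat{d,k}$ and $\elbowCat{d,r}$, so
\[\mapsp(\Delta^*_f\unit_{d,k},F)\simeq \mapsp_{\elbowCat{d,k}}(\unit_{d,k},\crossEffect_fF).\]
The problem therefore reduces to understanding maps out of the unit $\unit_{d,k}$ in $\elbowCat{d,k}$.

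Second, I identify maps out of the unit with evaluation at the sphere. The claim is that for $G\in\elbowCat{d,k}$ we have $\mapsp(\unit_{d,k},G)\simeq G(\sphere,\ldots,\sphere)$. Since $\unit_{d,k}\simeq \excisiveApproximation^{\lrcorner}_{d,k}\excisiveApproximation_{\udl{m}}\unit_k$ (\cref{nota:tensor_unit_for_elbow_cats}) and both localisations are left adjoint to inclusions, this mapping spectrum is $\mapsp_{\func_{\udl\ast}}(\unit_k,G)$. Here $\unit_k=(\susps_+\map(\sphere^{\times k},-))\reduce$ is the multireduction of the corepresentable functor at $\unit_{\sC^{\times k}}=(\sphere,\ldots,\sphere)$, using the co-Yoneda description in \cref{remark: propetries of day convolution} and \cref{corollary: multireduction is a smashing localization}. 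Because $G$ is multireduced, Yoneda gives $\mapsp(\unit_k,G)\simeq G(\sphere,\ldots,\sphere)$. Hence $\mapsp(\Delta^*_f\unit_{d,k},F)\simeq(\crossEffect_fF)(\sphere,\ldots,\sphere)$.

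Third, the vanishing hypothesis now says exactly that $\crossEffect_fF(\sphere,\ldots,\sphere)\simeq 0$ for all surjections $f\colon[k]\twoheadrightarrow[r]$ with $k\leq d$. By \cref{cor:jointly conservative}, $F\simeq 0$. Finiteness of the generating set is clear, since there are finitely many $k\leq d$ and finitely many surjections.

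The main obstacle is the second step: being careful that the tensor unit of the multireduced functor category, with its spectral enrichment, corepresents evaluation at $(\sphere,\ldots,\sphere)$ on multireduced functors. This needs the reduction to be a left adjoint, so that $\map((\yoneda c)\reduce,G)\simeq\map(\yoneda c,G)$, together with the spectral form of the co-Yoneda lemma for $\susps_+\map(c,-)$. If the spectral enrichment causes trouble, I would instead argue with homotopy groups: $\pi_n$ of the mapping space computes $\pi_n$ of the evaluation, and shifts are available since we range over all $\Sigma^nF$.
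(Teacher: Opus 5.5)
Your proposal is correct and follows essentially the paper's argument: use the adjunction $\Delta^*_f\dashv\crossEffect_f$ to identify the mapping spectrum out of $\Delta^*_f\unit_{d,k}$ with $(\crossEffect_fF)(\sphere,\ldots,\sphere)$, then conclude with the joint conservativity of \cref{cor:jointly conservative}. The only difference is in how you get the evaluation formula. The paper writes $\unit_{d,k}\simeq\crossEffect_{t_k}\unit_{d,1}$ for $t_k\colon[k]\twoheadrightarrow[1]$ and adjoins once more down to $\excisive_d$, whereas you obtain it directly from the co-Yoneda description of the multireduced unit at level $k$; both routes are valid.
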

\begin{proof}
    We need to show that this is a jointly conservative set of objects. Firstly, note that by \cref{prop:symmetric_monoidal_refinement_of_elbow_cross_effects}, for $r\leq k$, we have $\unit_{d,k}\simeq \crossEffect_{t_k}\unit_{d,1}$ for $t_k\colon [k]\twoheadrightarrow[1]$ the unique map and $\unit_{d,1}\in\elbowCat{d,1}=\excisive_d$ is the tensor unit. Let $F\in\elbowCat{d,r}$, and suppose $\map_{\elbowCat{d,r}}(\Delta^*_f\unit_{d,k},F)\simeq 0$ for all $r\leq k$ and $f\in\surjectiveSet([k],[r])$. Hence, we see that
    \begin{equation*}
        \begin{split}
            0\simeq \map_{\elbowCat{d,r}}(\Delta^*_f\unit_{d,k},F)&\simeq \map_{\elbowCat{d,r}}(\Delta^*_f\crossEffect_{t_k}\unit_{d,1},F)\\
            &\simeq \map_{\excisive_d}(\unit_{d,1},\Delta^*_{t_k}\crossEffect_fF)\simeq \loops(\crossEffect_fF)(\sphere,\ldots,\sphere)
        \end{split}
    \end{equation*}
    where the third equivalence uses the biadjunctions from \cref{cor:omnibus_multivariable_cross-effects}. But then for any $F\in\elbowCat{d,r}\subseteq \excisive_{d,r}$, we know by \cref{cor:jointly conservative} that the cross--effects evaluated at tuples of spheres are jointly conservative. 
\end{proof}

We may now compute the kernel of $\excisiveApproximation^{\lrcorner}_{b,r}$.

\begin{prop}\label{prop:kernel_of_P_1}
    Let  $r\leq b\leq d$. The  kernel for the  localisation $\excisiveApproximation_{b,r}^{\lrcorner}\colon \elbowCat{d,r} \rightarrow \elbowCat{b,r}$ is generated by the finite set of objects $\{\Delta^*_f\unit_{d,k}\}_{b+1\leq k, f\in\surjectiveSet([k],[r])}$.
\end{prop}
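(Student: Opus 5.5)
We prove two things: each listed generator lies in the kernel, and together they detect that kernel. Write $\mathcal{K}$ for the kernel of $\excisiveApproximation^{\lrcorner}_{b,r}\colon \elbowCat{d,r}\rightarrow\elbowCat{b,r}$. By \cref{lem:inclusion_of_elbow_cats_of_smaller_degrees} this is a Bousfield localisation with fully faithful right adjoint the inclusion. Hence $\mathcal{K}$ is the left orthogonal of $\elbowCat{b,r}$, and it is a localising subcategory.

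\textbf{Membership.} Let $k\geq b+1$ and $f\colon[k]\twoheadrightarrow[r]$. We show $\Delta_f^*\unit_{d,k}\in\mathcal{K}$, i.e.\ that $\map(\Delta^*_f\unit_{d,k},G)\simeq 0$ for all $G\in\elbowCat{b,r}$. As in the proof of \cref{lem:generators_of_elbow_cats}, write $\unit_{d,k}\simeq\crossEffect_{t_k}\unit_{d,1}$ and use the biadjunctions of \cref{cor:omnibus_multivariable_cross-effects}. This gives
\[\map(\Delta^*_f\unit_{d,k},G)\simeq\loops(\crossEffect_fG)(\sphere,\ldots,\sphere).\]
It therefore suffices that $\crossEffect_fG\simeq 0$ whenever $G\in\elbowCat{b,r}$ and $|[k]|=k>b$. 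This is the pigeonhole argument of \cref{prop:doubcle_coset_truncated_to_d}. Since $\elbowCat{b,r}$ is built by finite limits from $(\ell_1,\ldots,\ell_r)$--excisive functors with $\sum\ell_i\leq b$ (\cref{lem:generators_of_truncated_subdivided_cubes} and the cubical description), we may take $G$ to be such a functor. Some fibre of $f$ then has size exceeding $\ell_i$, which kills $\crossEffect_fG$.

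\textbf{Generation.} Let $\mathcal{L}\subseteq\mathcal{K}$ be the localising subcategory generated by the listed objects. Take $F\in\mathcal{K}$ and suppose $\map(\Delta^*_f\unit_{d,k},F)\simeq 0$ for all $k\geq b+1$. Then $F\in\mathcal{L}^{\perp}\cap\mathcal{K}$, and it suffices to show $F\simeq 0$. By the same adjunction computation, $\crossEffect_fF(\sphere,\ldots,\sphere)\simeq 0$ for all surjections $f\colon[k]\twoheadrightarrow[r]$ with $b<k\leq d$. Since the cross--effects $\crossEffect_fF$ are multiexcisive, and a functor out of $(\spectra^\omega)^{\times k}$ that is reduced and excisive in each variable, with vanishing relevant multilinear data at spheres, vanishes (as used implicitly in \cref{cor:jointly conservative}), we get $\crossEffect_fF\simeq 0$ for the top surjections $k=d$. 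Now apply \cref{lem:conserv_crosseffects} repeatedly, descending $d\to d-1\to\cdots\to b$. At each stage, cross--effects with $k$ equal to the current degree vanish; the lemma moves $F$ into the next smaller $\elbowCat{\,\cdot\,,r}$, and the hypothesis persists. This yields $F\in\elbowCat{b,r}$. Since $F$ also lies in $\mathcal{K}$, $F$ is orthogonal to itself, so $F\simeq 0$.

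\textbf{Main obstacle.} The delicate step is the passage from vanishing at spheres to vanishing of $\crossEffect_fF$ at the top degree, because \cref{lem:conserv_crosseffects} needs honest vanishing. We handle it as follows. For $k$ equal to the current top degree $e$, the functor $\crossEffect_fF$ lies in $\elbowCat{e,e}$ by \cref{thm:key_pigeonhole_principle}, so it is $1$--excisive in each of its $e$ variables, i.e.\ multilinear. A multilinear functor from $(\spectra^\omega)^{\times e}$ to $\spectra$ is determined by its value at $(\sphere,\ldots,\sphere)$. Hence vanishing there gives $\crossEffect_fF\simeq 0$, which feeds the induction.
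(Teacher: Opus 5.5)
Your proof is correct. The membership half matches the paper's: adjunction to $\loops(\crossEffect_fG)(\sphere,\ldots,\sphere)$, then pigeonhole against $\underline{\ell}$--excisive pieces with $\sum\ell_i\leq b$.

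The generation half takes a different route. The paper appeals to the generating set of \cref{lem:generators_of_elbow_cats} and reduces to an ``if and only if'': $\excisiveApproximation^{\lrcorner}_{b,r}\Delta^*_f\unit_{d,k}\simeq 0$ exactly when $k\geq b+1$. It proves the converse direction by testing against $\bigotimes_i(-)^{\otimes k_i}_{h\Sigma_{k_i}}\in\elbowCat{b,r}$. You instead show directly that $\mathcal{L}^{\perp}\subseteq\elbowCat{b,r}$, running the induction behind \cref{cor:jointly conservative} only down to degree $b+1$. For each $e>b$, \cref{thm:key_pigeonhole_principle} puts $\crossEffect_fF$ (with $f\colon[e]\twoheadrightarrow[r]$) in $\elbowCat{e,e}$. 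So it is multilinear, vanishing at spheres is honest vanishing, and \cref{lem:conserv_crosseffects} moves $F$ into $\elbowCat{e-1,r}$.

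This orthogonality statement is what ``generated by'' actually requires. Knowing which members of a generating set a localisation kills does not by itself show that the killed ones generate the kernel: inverting $p$ on $\mathrm{D}(\mathbb{Z})$ kills no member of $\{\mathbb{Z}\}$, yet has nonzero kernel. So your argument makes explicit a step that the paper's reduction leaves implicit. In exchange, the paper's converse gives the sharper fact that the listed objects are exactly the members of the generating set of \cref{lem:generators_of_elbow_cats} lying in the kernel. You neither need nor prove this.

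Two small points of hygiene. First, use that $\mathcal{L}$ is closed under shifts, so you get vanishing of the spectrum $\crossEffect_fF(\sphere,\ldots,\sphere)$ rather than only of its $\loops$. Second, your general sentence about multiexcisive functors ``with vanishing relevant multilinear data at spheres'' should be replaced by the precise multilinear argument of your last paragraph, which is the one doing the work.
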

\begin{proof}
    By the description of the generators from \cref{lem:generators_of_elbow_cats}, we just have to show that for $r\leq k$ and $f\colon [k]\twoheadrightarrow [r]$  a surjection, $P^{\lrcorner}_{b,r}\Delta^*_f\unit_{d,k}\simeq 0$ if and only if $b+1\leq k$. For this, we first make the following computation: let $F\in\elbowCat{b,r}\subseteq\elbowCat{d,r}$. Then 
    \[\map_{\elbowCat{b,r}}(\excisiveApproximation_{b,r}^{\lrcorner}\Delta^*_f\unit_{d,k},F)\simeq \map_{\elbowCat{d,r}}(\Delta^*_f\unit_{d,k},F)\simeq \map_{\elbowCat{d,k}}(\unit_{d,k},\crossEffect_fF).\]

    Now, suppose $b+1\leq k$, and suppose without loss of generality that $F\simeq \excisiveApproximation_{\udl{\ell}}F$ where $\ell$ is an 
    $r$--tuple such that $\ell_1+\cdots+\ell_r\leq b$. We write $k=k_1+\cdots +k_r$ where $k_i$ is the size of the preimage of $i\in[r]$ under the surjection $f\colon [k]\twoheadrightarrow [r]$. By pigeonhole,  there exists an $i$ such that $k_i\geq \ell_i+1$. Hence, $\crossEffect_fF\simeq 0$ and so $\map_{\elbowCat{b,r}}(\excisiveApproximation_{b,r}^{\lrcorner}\Delta^*_f\unit_{d,k},F)\simeq 0$ for all $F\in\elbowCat{b,r}$, whence $\excisiveApproximation_{b,r}^{\lrcorner}\Delta^*_f\unit_{d,k}\simeq 0$.

    For the converse, suppose $k\leq b$ and as before write $k=k_1+\cdots+k_r$. We want to show that $\excisiveApproximation_{b,r}^{\lrcorner}\Delta^*_f\unit_{d,k}\not\simeq 0$. For this, just consider for example the functor $F\coloneqq (-)^{\otimes k_1}_{h\Sigma_{k_1}}\otimes (-)^{\otimes k_2}_{h\Sigma_{k_2}}\otimes\cdots \otimes(-)^{\otimes k_r}_{h\Sigma_{k_r}}$ which is $k_i$--homogeneous in the $i$--th variable. This is clearly subdiagonal in $\excisive_{b,r}$ since $k\leq b$, and hence $F\in\elbowCat{b,r}$. In this case, we get $\map_{\elbowCat{d,k}}(\unit_{d,k},\crossEffect_fF)\simeq \loops\sphere$, and so $\map_{\elbowCat{b,r}}(\excisiveApproximation_{b,r}^{\lrcorner}\Delta^*_f\unit_{d,k},F)\not\simeq0$, as required.
\end{proof}

\newpage
\part{Calculus, parametrised} \label{part:parametrised}

\section{Parametrised category theory}\label{section:parametrised_categories}

\subsection{Recollections}\label{subsection:recollection_parametrised}
We will heavily employ the theory of parametrised higher categories as introduced and developed in \cite{expose1Elements,shahThesis,nardinExposeIV,kaifPresentable,kaifNoncommMotives}.  We briefly recall here the basic aspects of this theory sufficient for the purpose of reading this article, and we refer the reader to these sources for more details on the theory. The basic objects are the following:

\begin{recollect}[Basics of parametrised categories]
    Let $\baseCat$ be a small category. A \textit{$\baseCat$--category} is an object $\udl{\sC}$ in $\cat_{\baseCat}\coloneqq \func(\baseCat\op,\cat)$, and a \textit{$\baseCat$--functor} is a morphism in $\cat_{\baseCat}$. When the context is clear, we will often say functor to mean a $\baseCat$--functor. A \textit{$\baseCat$--space} is an object $\udl{X}$ in $\spc_{\baseCat}\coloneqq \func(\baseCat\op,\spc)$. In particular, since we have a fully faithful inclusion $\spc\subseteq \cat$ by viewing a space as an $\infty$--groupoid, we also have the corresponding embedding $\spc_{\baseCat}\subseteq \cat_{\baseCat}$. For an object $v\in\baseCat$, we will write the evaluation of a $\baseCat$--category $\udl{\sC}$ at $v$ as $\udl{\sC}(v)\in\cat$. Using the automorphism $(-)\op\colon \cat\xrightarrow{\simeq}\cat$, we obtain the parametrised version of the opposite category functor $(-)\vop\colon \cat_{\baseCat}\xrightarrow{\simeq}\cat_{\baseCat}$.

    By an \textit{object} in a $\baseCat$--category $\udl{\sC}$, we will mean a $\baseCat$--functor $x\colon \terminalTCat\rightarrow \udl{\sC}$. We have a \textit{global sections functor} given by  $\Gamma_{\baseCat}\coloneqq \lim_{\baseCat\op}\colon \cat_{\baseCat}=\func(\baseCat\op,\cat)\rightarrow \cat$.
    
    For $v\in\baseCat$, writing (by a mild abuse of notation) $v\colon \baseCat_{/v}\rightarrow \baseCat$ for the forgetful functor, we may obtain a $\baseCat_{/v}$-- from a $\baseCat$--category by considering its image under the basechange functor $v^* \colon \cat_{\baseCat}=\func(\baseCat\op,\cat)\rightarrow \func(\baseCat_{/v}\op,\cat)=\cat_{\baseCat_{/v}}$. 
    
    There are natural refinements of $\cat_{\baseCat}$ and $\spc_{\baseCat}$ to become themselves $\baseCat$--categories (in a larger universe), which we write as $\udl{\cat}$ and $\udl{\spc}$ respectively. Concretely, for example, the value of $\udl{\cat}$ at $v\in \baseCat$ is given by $\cat_{\baseCat_{/v}}\in\cat$. There is a natural \textit{mapping $\baseCat$--space functor} associated to any $\udl{\sC}\in\cat_{\baseCat}$ denoted as 
    $\myuline{\map}_{\udl{\sC}}(-,-)\colon \udl{\sC}\vop\times \udl{\sC} \longrightarrow \udl{\spc}$.
    
    The category $\cat_{\baseCat}$ is cartesian closed. In particular, for any two $\baseCat$--categories $\udl{\sC}$ and $\udl{\D}$, we may obtain the \textit{$\baseCat$--category of $\baseCat$--functors} $\udl{\func}(\udl{\sC},\udl{\D})\in\cat_{\baseCat}$ given by taking the internal hom object. Applying global sections  to $\udl{\func}(\udl{\sC},\udl{\D})$ yields the \textit{category $\func_{\baseCat}(\udl{\sC},\udl{\D})\coloneqq \Gamma_{\baseCat}\udl{\func}(\udl{\sC},\udl{\D})$ of $\baseCat$--functors from $\udl{\sC}$ to $\udl{\D}$}.

    There is a good theory of parametrised adjunctions, and one way to obtain this is just to view $\cat_{\baseCat}$ with its natural $(\infty,2)$--categorical refinement. For example, a \textit{$\baseCat$--adjunction $L\colon \udl{\sC}\rightleftharpoons \udl{\D} : R$ } is the datum of functors $L\colon \udl{\sC}\rightarrow \udl{\D}$ and $R\colon \udl{\D}\rightarrow \udl{\sC}$ together with an equivalence $\myuline{\map}_{\udl{\sC}}(-,R-)\simeq \myuline{\map}_{\udl{\D}}(L-,-)\colon \udl{\sC}\vop\times \udl{\D}\longrightarrow \udl{\spc}$.

    The category of  \textit{symmetric monoidal $\baseCat$--categories} is defined as $\cmonoid(\cat_{\baseCat})\simeq\func(\baseCat\op,\cmonoid(\cat))$. Thus,  the datum of a symmetric monoidal $\baseCat$--category is, roughly speaking, given by the datum of a symmetric monoidal category $\udl{\sC}(v)$ for every $v\in\baseCat$, and for every morphism $f\colon v\rightarrow w$ in $\baseCat$, the contravariant functoriality map $f^*\colon \udl{\sC}(w)\rightarrow \udl{\sC}(v)$ is equipped with a symmetric monoidal structure.
\end{recollect}

\begin{rmk}
    Here, we have only mentioned symmetric monoidal $\baseCat$--categories, i.e., objects in $\cmonoid(\cat_{\baseCat})$. Using the adjectives of atomic orbitality below, Nardin and Nardin--Shah \cite{nardinThesis,nardinShah} have studied the more sophisticated notion of $\baseCat$--symmetric monoidality which also captures the so--called Hill--Hopkins--Ravenel norms in equivariant homotopy theory. This will not play a  role in this article.
\end{rmk}

In order to begin considering more algebraic aspects in the parametrised theory, we will need more conditions on the base category $\baseCat$. For this, we will need the following piece of notation:

\begin{nota}
    Let $\baseCat$ be a small category. Let $\finite_{\baseCat}\subseteq \presheaf(\baseCat)=\spc_{\baseCat}$ be the smallest subcategory of $\presheaf(\baseCat)$ containing $\baseCat$ and closed under finite coproducts. This has the universal property of the  finite--coproduct--cocompletion of $\baseCat$.
\end{nota}

\begin{defn}\label{defn:types of orbits}
    Let $\baseCat$ be a small category. We say that it is \textit{orbital} if $\finite_{\baseCat}$ admits pullbacks; \textit{atomic} if for all $f\colon v  \rightarrow w$ and $r\colon w\rightarrow v$ such that $rf$ is an equivalence, then $f$ and $r$ were already equivalences; \textit{inductive orbital} (cf.~\cite[Def. 1.10]{wilsonSlices}) if it is an orbital $1$--category with finitely many isomorphism classes of objects and morphisms such that every endomorphism in $\baseCat$ is an equivalence.
\end{defn}

For every $v\in \baseCat$, if $\baseCat$ is atomic orbital (resp. inductive orbital), then so is $\baseCat_{/v}$. Also, it is immediate to see that inductive orbitality implies atomic orbitality.

\begin{example}\label{example:atomic_orbital}
    The key example of an inductive orbital category for us is that of $\epiCategory_{d}$, the 1--category of finite sets up to size $d$ and surjections. We refer the reader to \cite[Ex. 4.2]{nardinExposeIV}  for more examples.
\end{example}

\begin{recollect}[Parametrised (co)limits]\label{recollect:parametrised_colimits}
    Let $\udl{\sC},\udl{I}\in\cat_{\baseCat}$, and $I\colon\udl{I}\rightarrow\terminalTCat$ be the unique functor. If they exist, we call the left adjoint (resp. right adjoint) to the restriction functor $I^*\colon \udl{\sC}\rightarrow \udl{\func}(\udl{I},\udl{\sC})$ the \textit{$\udl{I}$--shaped parametrised colimit (resp.  limit) in $\udl{\sC}$.} One special class of parametrised (co)limits are those indexed by the \textit{constant $\baseCat$--category}, i.e., those $\udl{I}\in\cat_{\baseCat}$ of the form a constant $\baseCat$--indexed diagram  $\udl{\constant}_{\baseCat}(J)$ with value $J$ for some $J\in\cat$. Such (co)limits are called \textit{fibrewise (co)limits}. Shah provided in \cite[Cor. 5.9]{shahThesis} a criterion for when fibrewise (co)limits exist, namely: $\udl{\sC}$ admits $\udl{\constant}_{\baseCat}(J)$--(co)limits if and only if for each $v\in\baseCat$, the category $\udl{\sC}(v)$ admits $J$--shaped (co)limits, and for every map $f\colon v\rightarrow w$ in $\baseCat$, the functor $f^*\colon \udl{\sC}(w)\rightarrow\udl{\sC}(v)$ preserves $J$--shaped (co)limits. When we say that a $\baseCat$--category admits initial objects/final objects/coproducts/products/pushouts/pullbacks/filtered colimits/geometric realisations etc., we will mean that the $\baseCat$--category admits the fibrewise (co)limits associated to these diagrams.
    
    Another important special class of parametrised (co)limits are the ones coming from objects in $\baseCat$ as follows: let $v\in\baseCat$. We may view it as an object in $\cat_{\baseCat}$ via the inclusion $\baseCat\subseteq \presheaf(\baseCat)=\spc_{\baseCat}\subseteq \cat_{\baseCat}$, which we denote as $\udl{v}$. Writing $v\colon \udl{v}\rightarrow\terminalTCat$ for the unique map, we may speak of $v$--(co)limits. Such (co)limits are called indexed (co)products. It was in fact shown in \cite[\textsection 12]{shahThesis} that all parametrised colimits may be built from fibrewise colimits and finite indexed coproducts.  When $\baseCat$ is orbital, a $\baseCat$--category admits all indexed coproducts if and only if (1) for all morphisms $f\colon v\rightarrow w$ in $\baseCat$, the functor $f^*\colon \udl{\sC}(w)\rightarrow\udl{\sC}(v)$ admits a left adjoint $f_!$ and (2) for all pullbacks in $\finite_{\baseCat}$ as in the left square in 
        \begin{equation}\label{eqn:double_coset_decomposition_parametrised}
            \begin{tikzcd}
                \coprod_iZ_i\rar["\sqcup_i\overline{f}_i"]\dar["\sqcup_i \overline{g}_i"']\ar[dr, phantom,very near start, "\lrcorner"] & U\dar["g"] && \prod_i\udl{\sC}(Z_i) \dar["\prod_i\overline{g}_{i!}"']\ar[dr,phantom, "\Rightarrow"]& \udl{\sC}(U)\dar["g_!"']\lar["\prod_i\overline{f}_i^*"']\\
                V\rar["f"] & W && \udl{\sC}(v) & \udl{\sC}(w)\lar["f^*"] 
            \end{tikzcd}
        \end{equation}
        the adjointed square of categories given on the right commutes. This gives a ``double coset decomposition'' in the setting of orbital base categories. Importantly, when $\baseCat$ is also atomic,  the double coset decomposition for $f^*f_!$ always contains a copy of $\id_V$ which allows one to define a semiadditivity norm map, cf. \cref{recollect:parametrised_stability}. By passing to the opposite category,  analogous statements also hold for parametrised limits. 

    Furthermore, using these fibrewise criteria for the existence of fibrewise (co)limits and indexed (co)products, we may also characterise when a functor $F\colon \udl{\sC}\rightarrow\udl{\D}$ preserves parametrised (co)limits, namely for example, $F\colon \udl{\sC}(v)\rightarrow \udl{\D}(v)$ preserves colimits in the ordinary sense, and for every $f\colon v\rightarrow w$, the  adjointed square 
    \begin{center}
        \begin{tikzcd}
            \udl{\sC}(v) \rar["F"]\ar[dr, phantom, "\Leftarrow"]& \udl{\D}(v)\\
            \udl{\sC}(w) \rar["F"]\uar["f_!"]& \udl{\D}(w) \uar["f_!"']
        \end{tikzcd}
    \end{center}
    coming from the commutation datum of $F$ with $f^*$ commutes.
\end{recollect}

\begin{recollect}[Parametrised presentability]
    Write $\largeCat_{\baseCat}\coloneqq \func(\baseCat\op,\largeCat)$ for the category of large $\baseCat$--categories. Within this, we may find the nonfull subcategory $\presentable^L_{\baseCat}$ of \textit{presentable $\baseCat$--categories} and left adjoint functors. The theory of parametrised presentable categories is much the same as in the ordinary world, for instance, there is also the adjoint functor theorem in this context. Moreover, presentable $\baseCat$--categories may be characterised as a $\baseCat$--category which admits all parametrised colimits and which are fibrewise presentable (cf.~\cite[Thm. 6.1.2 (7)]{kaifPresentable}). There is a symmetric monoidal structure on $\presentable^L_{\baseCat}$ refining the Lurie tensor product, whose tensor unit is the $\baseCat$--category $\udl{\spc}$ of $\baseCat$--spaces. Objects in $\calg(\presentable^L_{\baseCat})\subset \cmonoid(\cat_{\baseCat})$ are called \textit{presentably symmetric monoidal $\baseCat$--categories}, and they may be viewed as the symmetric monoidal $\baseCat$--categories whose underlying $\baseCat$--category is presentable and whose tensor product commutes with parametrised colimits in each variable.
\end{recollect}

Aside from making transparent in what way the category $\epiCategory_{d}$ organises many of the structures in Goodwillie calculus, one of the key reasons for us to take the parametrised point of view is that, while spectral Mackey functors do not admit a nice universal property in categories akin to the one enjoyed by the category of spectra, it \textit{does so} when viewed as a parametrised category. The following set of recollections will explain this.

\begin{recollect}[Parametrised semiadditivity and stability]\label{recollect:parametrised_stability}
    Suppose $\baseCat$ is atomic orbital. Let $\udl{\sC}$ be a pointed $\baseCat$--category (i.e., the fibrewise initial and final objects coincide). Using the hypothesis of atomic orbitality, Nardin \cite{nardinExposeIV} constructed for every morphism $f\colon v\rightarrow w$ in $\baseCat$ a canonical comparison transformation $f_! \longrightarrow f_*$
    of functors $\udl{\sC}(v)\rightarrow \udl{\sC}(w)$. We then say that the pointed $\baseCat$--category $\udl{\sC}$ is \textit{$\baseCat$--semiadditive} if it is fibrewise semiadditive and all these comparison transformations are equivalences. Additionally, we say that $\udl{\sC}$ is \textit{$\baseCat$--stable} if it is $\baseCat$--semiadditive and fibrewise stable. The universal example of a $\baseCat$--stable category is given by $\myuline{\spectra}$, the $\baseCat$--category of \textit{$\baseCat$--spectra} which at $v\in\baseCat$ is given by spectral $\baseCat_{/v}$--Mackey functors $\myuline{\spectra}(v)= \mackey({\baseCat_{/v}},\spectra)\coloneqq \func^{\times}(\spanCat(\finite_{\baseCat_{/v}}),\spectra)$. It turns out that $\myuline{\spectra}$ is an idempotent algebra in $\presentable^L_{\baseCat}$ by \cite[Cor. 3.28]{nardinThesis} (cf. also \cite[Thm. 7.41]{CLLSpans} for a more general and fully worked-out treatment), and the full subcategory $\presentable^L_{\baseCat\mathrm{-st}}\subseteq \presentable^L_{\baseCat}$ of $\baseCat$--stable presentable categories is equivalent to $\myuline{\spectra}$--modules in $\presentable^L_{\baseCat}$ by \cite[Prop. 2.2.19]{kaifNoncommMotives}. In particular, $\myuline{\spectra}$ is the tensor unit in $\presentable^L_{\baseCat\mathrm{-st}}$, and so it is the initial  presentably symmetric monoidal $\baseCat$--stable category.
\end{recollect}

\subsection{Universal spaces and stratifications}\label{subsection:universal_spaces_and_stratifications}

In this subsection, we introduce the universal space associated to ``families'', extending the notion from equivairant homotopy theory to all atomic orbital categories $\baseCat$.  These will be the gadgets we use to organise our stratification arguments later.  Using these universal spaces, we record a categorification of geometric fixed points following \cite{parametrised_Poincare_duality}. Throughout, we will assume that $\baseCat$ is an atomic orbital category, unless otherwise specified.

\begin{cons}\label{cons:universalSpaces}
    Let $f\colon \baseCat_{\family}\subseteq \baseCat$ be a sieve, i.e., if there is a map $y\rightarrow x$ where $x\in\baseCat_{\family}$ and $y\in \baseCat$, then $y\in\baseCat_{\family} $ as well. We then define  $E\family \in \spc_{\baseCat}$ as $f_!\ast$ where $\ast\in\func(\baseCat_{\family}\op,\spc)$ is the final object. We claim now that 
    \begin{equation*}
        E\family(v) \simeq \begin{cases}
            \emptyset & \text{ if } v \notin \baseCat_{\family}\\
            \ast & \text{ if } v\in \baseCat_{\family}.
        \end{cases}
    \end{equation*}
    To see this, since $f$ is fully faithful, the value of $E\family$ at $v\in\baseCat_{\family}$ is clear; if $v\notin\baseCat_{\family}$, then by the pointwise Kan extension formula, we just need to compute $\colim_{(\baseCat_{\family}\op)_{/v}}f$. But since $v\notin\baseCat_{\family}$ and $\baseCat_{\family}$ is a sieve, we get that $(\baseCat_{\family}\op)_{/v}=\emptyset$, and so the colimit is $\emptyset$ as well, as wanted.

    By adding a basepoint to the unique map $E\family\rightarrow\ast$, we obtain a map $E\family_+\rightarrow S^0$ in $\spc_{\baseCat,*}$. We then define $\widetilde{E\family}\in\spc_{\baseCat,*}$ as the cofibre
    \begin{equation}\label{eqn:isotropy_cofibre_sequence}
        E\family_+\longrightarrow S^0\longrightarrow \widetilde{E\family}.
    \end{equation}
    Since evaluations preserve (co)limits, we obtain the expected description
    \begin{equation*}
        \widetilde{E\family}(v) \coloneqq \begin{cases}
            S^0 & \text{ if } v \notin \baseCat_{\family}\\
            \ast & \text{ if } v\in \baseCat_{\family}.
        \end{cases}
    \end{equation*}
\end{cons}

\begin{rmk}
    The object $\widetilde{E\family}\in\spc_{\baseCat,*}$  is easily seen to be an idempotent algebra under the pointwise smash product symmetric monoidal structure on $\spc_{\baseCat,*}$. Hence, it defines a nonunital symmetric monoidal idempotent endofunctor $\widetilde{E\family}\wedge-\colon \spc_{\baseCat,*}\rightarrow\spc_{\baseCat,*}$. Writing $s\colon \baseCat_{\widetilde{\family}} \hookrightarrow \baseCat$ for the inclusion of the cosieve complement to $\baseCat_{{\family}}$, it is easy to see that $\widetilde{E\family}\wedge-$ yields the symmetric monoidal Bousfield localisation
    \begin{center}
        \begin{tikzcd}
            \spc_{\baseCat,*} \rar[shift left = 1, "s^*"] & \func(\baseCat_{\widetilde{\family}}\op,\spc_*)\simeq \module_{\widetilde{E\family}}(\spc_{\baseCat,*}).\lar[shift left = 1, "s_*", hook]
        \end{tikzcd}
    \end{center}
    In particular, $\widetilde{E\family}$ enjoys the following universal property: for  $A, X\in \spc_{\baseCat,*}$ with $X(v)\simeq \ast$ for all $v\in\baseCat_{{\family}}$, the map $A\rightarrow \widetilde{E\family}\wedge A$ induces an equivalence \[\map_{\spc_{\baseCat,*}}(\widetilde{E\family}\wedge A,X)\xrightarrow{\simeq }\map_{\spc_{\baseCat,*}}(A,X).\]
\end{rmk}

\begin{nota}\label{nota:EP_twiddle}
    Suppose that $\baseCat$ is an inductive orbital category with a final object $t\in\baseCat$.    We write $\baseCat_{{\proper}}\subseteq \baseCat$ for the full subcategory away from the final object $t\in\baseCat$. This is a sieve because if $x\rightarrow y$ is a map in $\baseCat$ where $y\neq t$, then $x\neq t$ also because all endomorphisms are equivalences by inductive orbitality. This thus yields a universal space $\widetilde{E\proper}$.
\end{nota}

In equivariant homotopy theory, the cofibre sequence \cref{eqn:isotropy_cofibre_sequence} underlies one of the most important inductive techniques in the subject, namely that of ``isotropy separation''. Such methods have been considered by Glasman  \cite{glasmanStratified} and Wilson \cite{wilsonSlices} in the more general parametrised setting. We will now record some points from this philosophy as a basis for our stratification arguments later, following closely and slightly expanding the ``categorified'' treatment given in \cite[\textsection 2.2]{parametrised_Poincare_duality}. Unless otherwise specified, $\baseCat$ will be an inductive orbital category for the rest of the subsection.

In the first two constructions, we will set the stage by declaring the various categories and functors for presentable categories that we need.

\begin{cons}\label{cons:sym_mon_structure_on_presentable_stable}
    Let $\baseCat$ be an atomic orbital category. Then by \cite[Prop. 2.2.19]{kaifNoncommMotives}, the object $\myuline{\spectra}_{\baseCat}\in\presentable^L_{\baseCat}$ is an idempotent algebra for the $\baseCat$--stable presentable categories. And so we may endow $\presentable^L_{\baseCat-\mathrm{st}}\simeq \module_{\myuline{\spectra}_{\baseCat}}(\presentable^L_{\baseCat})$ with a symmetric monoidal structure making the Bousfield localisation $\myuline{\spectra}_{\baseCat}\otimes-\colon \presentable^L_{\baseCat}\rightarrow \presentable^L_{\baseCat-\mathrm{st}}$ into a smashing localisation.
\end{cons}

\begin{cons}\label{cons:inclusion_of_stables_family}
    Let $\baseCat$ be an inductive orbital category, $ \baseCat_{\family}\subseteq \baseCat$ a sieve, and write $\iota\colon\baseCat_{\widetilde{\family}}\subseteq \baseCat$ be the inclusion of the complement of the sieve. By \cite[Lem. 1.14]{wilsonSlices}, $\baseCat_{\widetilde{\family}}$ is yet again an inductive orbital category. In particular, it makes sense to speak of $\baseCat_{\widetilde{\family}}$--stability. The inclusion $\iota$  induces by the right Kan extension the inclusion $\iota_*\colon \widehat{\cat}_{\baseCat_{\widetilde{\family}}}\hookrightarrow \widehat{\cat}_{\baseCat}$ which is a symmetric monoidal functor with respect to the cartesian symmetric monoidal structures. This identifies the source of $\iota_*$ as the full subcategory of the target consisting of those $\baseCat$--categories $\udl{\sC}$ with the property that for every $v\in \baseCat_{\family}$, $v^*\udl{\sC}\simeq \udl{\ast}\in\widehat{\cat}_{\baseCat_{/v}}$. 

    Note that the inclusion $\iota_*$ induces the fully faithful functor $\iota_*\colon \presentable^L_{\baseCat_{\widetilde{\family}}-\mathrm{st}}\hookrightarrow\presentable^L_{\baseCat-\mathrm{st}}$ because parametrised stability (cf.~\cref{recollect:parametrised_stability}) is  the requirement of being fibrewise stable and for every morphism $f\colon u\rightarrow v$ in the base category, the canonical transformation of functors $f_!\Rightarrow f_*$ of functors $ \udl{\sC}(u)\rightarrow \udl{\sC}(v)$ is an equivalence. 
\end{cons}

Next, we give the standard  formulas for the (co)tensors over pointed spaces.

\begin{cons}\label{cons:tensoring_over_anima}
    For $\udl{\sC}\in\presentable^L_{\baseCat\mathrm{-st}}$ and $X\in\spc_{\baseCat,*}$ (and writing $\udl{X}\in\cat_{\baseCat,*}$ for the corresponding pointed $\baseCat$--category), we define
    \[X\wedge \udl{\sC}\coloneqq \cofib(\udl{\sC}\simeq \colim_{\ast}\udl{\sC}\rightarrow \colim_{X}\udl{\sC})\] \[\udl{\func}_*(\udl{X},\udl{\sC})\coloneqq \fib_0(\udl{\func}(\udl{X},\udl{\sC})\rightarrow\udl{\func}(\ast,\udl{\sC})\simeq \udl{\sC}).\] By \cite[Cons. 2.2.23]{parametrised_Poincare_duality}, $X\wedge \udl{\sC}$ satisfies the universal property of being the tensoring of $\presentableStable{\baseCat}$ over $\spc_{\baseCat,*}$. Importantly, via the equivalence $\presentable^L_{\baseCat}\simeq (\presentable^R_{\baseCat})\op$ from for example \cite[Prop. 2.2.15]{kaifNoncommMotives}, we may compute $X\wedge \udl{\sC}$ as $\udl{\func}_*(\udl{X},\udl{\sC})$, using that $\lim_X\udl{\sC}\simeq \udl{\func}(\udl{X},\udl{\sC})$ since $\udl{\presentable}^R\subseteq \udl{\widehat{\cat}}$ is closed under arbitrary parametrised limits by \cite[Lem. 2.5.5]{kaifNoncommMotives}. In particular, $Y_+\wedge\udl{\sC}\simeq \udl{\func}(\udl{Y},\udl{\sC})$ for any $Y\in\spc_{\baseCat}$.
\end{cons}

\begin{rmk}
    The map $\udl{\func}_*(\udl{X},\udl{\sC})\rightarrow\udl{\func}(\udl{X},\udl{\sC})$ is fully faithful because it is pulled back from the fully faithful inclusion $0\hookrightarrow \udl{\sC}$. As such, pointed functors from groupoids to pointed categories may really be viewed as a property of a functor.
\end{rmk}

With the preliminaries out of the way, we may now obtain the following important categorification of geometric fixed points for arbitrary inductive orbital base categories $\baseCat$.

\begin{prop}\label{prop:smashing_left_adjoint}
    Let $\baseCat$ be an inductive orbital category and $\iota\colon\baseCat_{\family}\subseteq \baseCat$ a sieve. Then the inclusion $\iota_*\colon \presentable^L_{\baseCat_{\widetilde{\family}}-\mathrm{st}}\hookrightarrow\presentable^L_{\baseCat-\mathrm{st}}$  from \cref{cons:inclusion_of_stables_family} participates in  a smashing localisation
    \[
    \begin{tikzcd}
        \presentable^L_{\baseCat-\mathrm{st}}  \ar[rrr,shift left = 2, "\widetilde{\iota}^*=\widetilde{E\family}\wedge-"] &&& \presentable^L_{\baseCat_{\widetilde{\family}}-\mathrm{st}}  \ar[lll,shift left = 1, hook, "\iota_*"]
    \end{tikzcd}
    \]
    using the symmetric monoidal structures from \cref{cons:sym_mon_structure_on_presentable_stable}.
\end{prop}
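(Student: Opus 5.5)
The strategy is to identify $\widetilde{E\family}\wedge\udl{\sC}$ with the pointed functor category $\udl{\func}_*(\udl{\widetilde{E\family}},\udl{\sC})$ from \cref{cons:tensoring_over_anima}, and then show three things. First, this object lies in the image of $\iota_*$. Second, the map $\udl{\sC}\simeq S^0\wedge\udl{\sC}\to\widetilde{E\family}\wedge\udl{\sC}$ is initial among maps into objects of that image. Third, the resulting localisation is smashing, i.e.\ given by tensoring with the idempotent algebra $\widetilde{E\family}\wedge\myuline{\spectra}_{\baseCat}$.

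\textbf{Step 1: idempotence.} Since $\widetilde{E\family}$ is an idempotent algebra in $\spc_{\baseCat,*}$, and $\presentable^L_{\baseCat\mathrm{-st}}$ is tensored over $\spc_{\baseCat,*}$ compatibly with its symmetric monoidal structure from \cref{cons:sym_mon_structure_on_presentable_stable}, the object
\[
\widetilde{E\family}\wedge\udl{\sC}\simeq(\widetilde{E\family}\wedge\myuline{\spectra}_{\baseCat})\otimes\udl{\sC}
\]
is a module over an idempotent algebra. So $\widetilde{E\family}\wedge-$ is automatically a smashing localisation onto its essential image. What remains is to identify that image with $\iota_*\presentable^L_{\baseCat_{\widetilde{\family}}\mathrm{-st}}$.

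\textbf{Step 2: the image lies in $\iota_*$.} Let $v\in\baseCat_{\family}$. By \cref{cons:universalSpaces}, the restriction $v^*\widetilde{E\family}$ is the point, because $\baseCat_{\family}$ is a sieve and so every object over $v$ lies in $\baseCat_{\family}$. Since basechange commutes with the formula for $\udl{\func}_*$, we get $v^*(\widetilde{E\family}\wedge\udl{\sC})\simeq\udl{\func}_*(\ast,v^*\udl{\sC})\simeq 0$. By the description in \cref{cons:inclusion_of_stables_family}, this places the object in the image of $\iota_*$.

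\textbf{Step 3: the map is an equivalence on the image.} Now let $\udl{\D}=\iota_*\udl{\D}'$. We must show $\udl{\D}\to\widetilde{E\family}\wedge\udl{\D}$ is an equivalence. Equivalently, from the cofibre sequence \cref{eqn:isotropy_cofibre_sequence}, we must show $E\family_+\wedge\udl{\D}\simeq\udl{\func}(\udl{E\family},\udl{\D})$ vanishes.

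Evaluate at $w\in\baseCat$. Because $E\family$ is left Kan extended from the sieve, it is a colimit of representables $\udl{u}$ with $u\in\baseCat_{\family}$. Hence $\udl{\func}(\udl{E\family},\udl{\D})(w)$ is a limit of categories of the form $\udl{\D}(u\times w)$, taken in $\finite_{\baseCat}$. Each orbit of $u\times w$ maps to $u$, so it lies in $\baseCat_{\family}$ by the sieve property, and $\udl{\D}$ vanishes there. Thus the whole limit is zero.

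Combining Steps 2 and 3, the two functors form a Bousfield localisation with left adjoint $\widetilde{E\family}\wedge-$, and by Step 1 it is smashing.

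\textbf{Expected main obstacle.} The delicate point is Step 1. One has to make precise that the $\spc_{\baseCat,*}$-tensoring is symmetric monoidally compatible, namely
\[
X\wedge(\udl{\sC}\otimes\udl{\D})\simeq(X\wedge\udl{\sC})\otimes\udl{\D}.
\]
One also has to check that $\iota_*$ is symmetric monoidal, or at least that its image is a $\otimes$-ideal. I would handle both by noting that the tensoring is via the symmetric monoidal, colimit-preserving functor
\[
\spc_{\baseCat,*}\to\presentable^L_{\baseCat\mathrm{-st}},\qquad X\mapsto X\wedge\myuline{\spectra}_{\baseCat},
\]
which is the universal such functor from the unit of $\presentable^L_{\baseCat}$.
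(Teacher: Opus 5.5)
Your route differs from the paper's, and its core is sound. The paper never builds the stable localisation from scratch. It imports the \emph{unstable} smashing localisation $\widetilde{E\family}\wedge-\colon\presentable^L_{\baseCat}\rightleftharpoons\presentable^L_{\baseCat_{\widetilde{\family}}}\cocolon\iota_*$ from the parametrised Poincar\'e duality paper. That result already identifies the target in two ways: with the $\baseCat$-presentable categories vanishing on $\baseCat_{\family}$, and, symmetric monoidally, with modules over $\widetilde{E\family}\wedge\udl{\spc}_{\baseCat}$. The paper then restricts to stable objects, using that $\widetilde{E\family}\wedge\udl{\sC}\simeq\udl{\func}_*(\udl{\widetilde{E\family}},\udl{\sC})$ is again $\baseCat$-stable (closure under cotensors) and vanishes on $\baseCat_{\family}$; this is essentially your Step 2. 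Finally it obtains the monoidal statements by comparing with the smashing localisations $\myuline{\spectra}_{\baseCat}\otimes-$ and $\myuline{\spectra}_{\baseCat_{\widetilde{\family}}}\otimes-$.

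You instead stay in $\presentable^L_{\baseCat\mathrm{-st}}$ throughout. Idempotence comes from pushing $\widetilde{E\family}$ along the symmetric monoidal functor $X\mapsto X\wedge\myuline{\spectra}_{\baseCat}$, which is a reasonable way to settle your ``main obstacle''. Stability of $\widetilde{E\family}\wedge\udl{\sC}$ then becomes automatic. Step 3 verifies directly, via $E\family_+\wedge\udl{\D}\simeq\udl{\func}(\udl{E\family},\udl{\D})$ and the sieve property, that objects in the image of $\iota_*$ are local; the paper takes this from the citation. So Steps 1--3 correctly show, more self-containedly, that $\widetilde{E\family}\wedge-$ is a smashing localisation of $\presentable^L_{\baseCat\mathrm{-st}}$ whose local objects are exactly the image of $\iota_*$.

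What is not yet proved is compatibility with the structure of \cref{cons:sym_mon_structure_on_presentable_stable} on the \emph{target}. That structure is the intrinsic Lurie tensor product on $\presentable^L_{\baseCat_{\widetilde{\family}}\mathrm{-st}}$, with unit $\myuline{\spectra}_{\baseCat_{\widetilde{\family}}}$. Your localisation only equips the image of $\iota_*$ with the restricted tensor product and the unit $\widetilde{E\family}\wedge\myuline{\spectra}_{\baseCat}$. You still need $\iota_*$ to match the two structures. In particular you need $\widetilde{E\family}\wedge\myuline{\spectra}_{\baseCat}\simeq\iota_*\myuline{\spectra}_{\baseCat_{\widetilde{\family}}}$ as idempotent algebras, which the paper's proof records explicitly. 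Your suggested fix does not reach this. Universality of $X\mapsto X\wedge\myuline{\spectra}_{\baseCat}$ only concerns the tensoring on the source, and the image being a $\otimes$-ideal is already automatic from Step 1.

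The gap is not hard to close.
\begin{itemize}
\item For the unit, locality and the universal property of $\myuline{\spectra}_{\baseCat}$ give $\map(\widetilde{E\family}\wedge\myuline{\spectra}_{\baseCat},\iota_*\udl{\D}')\simeq\Gamma_{\baseCat}(\iota_*\udl{\D}')^{\simeq}\simeq\Gamma_{\baseCat_{\widetilde{\family}}}(\udl{\D}')^{\simeq}$. This is exactly what $\myuline{\spectra}_{\baseCat_{\widetilde{\family}}}$ corepresents.
\item For the tensor product, consider functors between objects in the image of $\iota_*$. The $\baseCat$-functors preserving parametrised colimits in each variable coincide with the $\baseCat_{\widetilde{\family}}$-ones, since the extra conditions only involve the zero categories over $\baseCat_{\family}$. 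By the ideal property, $\iota_*\udl{\sC}'\otimes\iota_*\udl{\D}'$ and $\iota_*(\udl{\sC}'\otimes\udl{\D}')$ therefore corepresent the same bilinear maps.
\item Alternatively, import this compatibility from the unstable statement, as the paper does.
\end{itemize}
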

\begin{proof}
    By \cite[Lem. 2.1.32] {parametrised_Poincare_duality} and the same arguments as in \cite[Thm. 2.2.26]{parametrised_Poincare_duality}, we have the smashing localisation $\widetilde{\iota}^*=\widetilde{E\family}\wedge-\colon \presentable^L_{\baseCat}\rightleftharpoons \presentable^L_{\baseCat_{\widetilde{\family}}}\cocolon \iota_*$ which simultaneously identifies $\presentable^L_{\baseCat_{\widetilde{\family}}} $ as both the full subcategory of $\presentable^L_{\baseCat} \cap \widehat{\cat}_{\baseCat_{\widetilde{\family}}}$ and as $\module_{\widetilde{E\family}\wedge \udl{\spc}_{\baseCat}}(\presentable^L_{\baseCat})$.    On the other hand, for any $\udl{\sC}\in\presentable^L_{\baseCat-\mathrm{st}}$, we also have that $\widetilde{E\family}\wedge\udl{\sC}\simeq \udl{\func}_*(\udl{\widetilde{E\family}},\udl{\sC})$ (cf. \cref{cons:tensoring_over_anima}), and so it is in $\presentable^L_{\baseCat-\mathrm{st}} \cap \widehat{\cat}_{\baseCat_{\widetilde{\family}}}=\presentable^L_{\baseCat_{\widetilde{\family}}-\mathrm{st}}$ since parametrised stability is closed under cotensoring.  Thus, the top Bousfield localisation restricts to the bottom one in 

    \[
    \begin{tikzcd}
        \presentable^L_{\baseCat} \ar[rrr,shift left = 1, "\widetilde{\iota}^*=\widetilde{E\family}\wedge-"] \dar["\myuline{\spectra}_{\baseCat}\otimes-"', shift left =-2,dashed] &&& \presentable^L_{\baseCat_{\widetilde{\family}}} \ar[lll,shift left = 2, hook, "\iota_*"] \dar["\myuline{\spectra}_{\baseCat_{\widetilde{\family}}}\otimes-"', shift left =-2,dashed] \\
        \presentable^L_{\baseCat-\mathrm{st}} \uar[shift right= 2, hook] \ar[rrr,shift left = 2, "\widetilde{\iota}^*=\widetilde{E\family}\wedge-"] &&& \presentable^L_{\baseCat_{\widetilde{\family}}-\mathrm{st}}. \uar[shift right = 2, hook] \ar[lll,shift left = 1, hook, "\iota_*"]
    \end{tikzcd}
    \] In particular, this makes the solid squares commute and so the bottom map $\widetilde{\iota}^*$ inherits a lax symmetric monoidal structure. 
    
    Now, by \cref{cons:sym_mon_structure_on_presentable_stable}, the vertical inclusions admit the dashed left adjoints which are smashing localisations, exhibiting bottom terms as $\module_{\myuline{\spectra}_{\baseCat}}(\presentable^L_{\baseCat})$ and $\module_{\myuline{\spectra}_{\baseCat_{\widetilde{\family}}}}(\presentable^L_{\baseCat_{\widetilde{\family}}})$ respectively. Hence, all inclusions in sight preserve the tensor products and so the bottom $\widetilde{\iota}^*$ map even preserves tensor products. Furthermore, since the square of right adjoints commute and the top map $\widetilde{\iota}^*$ is symmetric monoidal, we also get an equivalence of idempotent algebras $\widetilde{E\family}\wedge \myuline{\spectra}_{\baseCat}\simeq  \myuline{\spectra}_{\baseCat_{\widetilde{\family}}}\in\presentable^L_{\baseCat_{\widetilde{\family}}}$. Since the tensor unit in $\presentable^L_{\baseCat-\mathrm{st}}$ and $\presentable^L_{\baseCat_{\widetilde{\family}}-\mathrm{st}}$ are $\myuline{\spectra}_{\baseCat}$and $\myuline{\spectra}_{\baseCat_{\widetilde{\family}}}$ respectively, we see, all in all, that the bottom map $\widetilde{\iota}^*$ is in fact symmetric monoidal that exhibits $\presentable^L_{\baseCat_{\widetilde{\family}}-\mathrm{st}}$ as $\module_{\widetilde{E\family}\wedge \myuline{\spectra}_{\baseCat}}(\presentable^L_{\baseCat})$, as required.
\end{proof}

The tensoring construction allows us to obtain a stable recollement dictated by~$\family$. Since we will need some standard facts about parametrised stable recollements in a form which is not recorded in the literature, we first justify with the following lemma why we may quote the results from \cite{nineAuthorsII}.

\begin{lem}\label{lem:colimits_reflection_from_nonparametrised}
    The evaluation functor $\prod_{v\in\baseCat}\eval_v\colon\presentableStable{\baseCat}\rightarrow \prod_{v\in\baseCat}\presentable^L_{\mathrm{st}}$ preserves and reflects fibrewise (co)limits.
\end{lem}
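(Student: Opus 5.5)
The plan is to factor the evaluation functor through the corresponding statement for all large $\baseCat$--categories, and then check that fibrewise (co)limits computed there stay inside $\presentableStable{\baseCat}$.

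\textbf{Step 1.} First, $\cat_{\baseCat}=\func(\baseCat\op,\cat)$ has (co)limits computed pointwise. Hence $\prod_v\eval_v\colon\widehat{\cat}_{\baseCat}\to\prod_v\widehat{\cat}$ preserves them, and it is conservative. Together these imply that it reflects (co)limits. The remaining task is to compare (co)limits in $\presentableStable{\baseCat}$ with those in $\widehat{\cat}_{\baseCat}$, and likewise for $\presentable^L_{\mathrm{st}}$ versus $\widehat{\cat}$.

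\textbf{Step 2: limits.} Limits in $\presentable^L_{\mathrm{st}}$ are computed in $\widehat{\cat}$. For the parametrised version I use that $\presentable^L_{\baseCat}$ is closed under limits in $\widehat{\cat}_{\baseCat}$, by the parametrised analogue of Lurie's result (cf.~\cite{kaifPresentable}). I then check that a levelwise limit of $\baseCat$--stable presentable categories along parametrised left adjoints is again $\baseCat$--stable. Fibrewise stability is clear. The norm condition $f_!\simeq f_*$ also persists, for two reasons. First, the transition functors commute with the $f^*$ and their left adjoints $f_!$, since the functors in the diagram are parametrised colimit preserving. Second, the limit's $f_*$ is computed componentwise, because the diagram consists of right-adjointable squares once $f_!\simeq f_*$ holds in each term. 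The limit in $\presentableStable{\baseCat}$ is therefore computed pointwise. Evaluation then preserves it, and reflection follows by conservativity.

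\textbf{Step 3: colimits.} I pass to right adjoints using $\presentable^L_{\baseCat}\simeq(\presentable^R_{\baseCat})\op$ \cite[Prop. 2.2.15]{kaifNoncommMotives}. A colimit in $\presentableStable{\baseCat}$ is then the limit in $\widehat{\cat}_{\baseCat}$ of the diagram of right adjoints, and by \cite[Lem. 2.5.5]{kaifNoncommMotives} this limit is computed pointwise. At each $v$, the right adjoints of the $\baseCat$--left adjoints are the ordinary right adjoints on fibres, since a parametrised adjunction restricts to fibrewise adjunctions. The pointwise limit of right adjoints is therefore the colimit in $\presentable^L_{\mathrm{st}}$ of the fibres. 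So evaluation preserves colimits, and reflection again follows from conservativity.

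\textbf{Main obstacle.} The delicate point is the norm/stability bookkeeping in Step 2. One must verify that the pointwise (co)limit is again $\baseCat$--semiadditive: the Nardin norm maps $f_!\to f_*$ are natural in $\baseCat$--colimit-preserving functors, so they assemble compatibly and remain equivalences. I would check this by showing that $\presentableStable{\baseCat}=\module_{\myuline{\spectra}}(\presentable^L_{\baseCat})$ is closed under both limits and colimits in $\presentable^L_{\baseCat}$. That closure holds because $\myuline{\spectra}\otimes-$ is a smashing localisation (\cref{cons:sym_mon_structure_on_presentable_stable}), so its image is closed under colimits, and $\baseCat$--stability is a limit-closed condition. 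This reduces the whole statement to the unstable $\presentable^L_{\baseCat}$ case.
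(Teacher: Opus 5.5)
Your proposal is correct and follows essentially the same route as the paper. Limits are computed pointwise in $\widehat{\cat}_{\baseCat}$ once the limit is checked to be $\baseCat$--stable. Colimits are handled by passing to right adjoints via $\presentable^L_{\baseCat}\simeq(\presentable^R_{\baseCat})\op$ and the pointwise computation of limits of right adjoints. Reflection then follows from conservativity of $\prod_v\eval_v$.

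The one variation is how you get $\baseCat$--stability of the (co)limits. You propose to obtain it for free from the smashing localisation $\myuline{\spectra}\otimes-$ on $\presentable^L_{\baseCat}$, which reduces everything to the unstable case. The paper instead adapts \cite{kaifNoncommMotives} directly: it checks that the pointwise limit is $\baseCat$--stable and that the maps induced from cones preserve $\baseCat$--coproducts.
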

\begin{proof}
    The proof proceeds exactly as in \cite{kaifNoncommMotives} where only the case  of $\baseCat$--perfect--stable categories was written, and so we only briefly indicate the steps. One first argues that $\presentable^R_{\baseCat-\mathrm{st}}\subset\widehat{\cat}_{\baseCat}$ is closed under fibrewise limits exactly as in \cite[Lem. 2.5.5]{kaifNoncommMotives}, but now without having to check $\kappa$--accessibility of functors. Next, one shows that $\presentableStable{\baseCat}\subset\widehat{\cat}_{\baseCat}$ preserves fibrewise limits as in \cite[Prop. 2.5.6]{kaifNoncommMotives}: since one already knows that $\presentable^L_{\mathrm{st}}\subset\widehat{\cat}$ preserves limits, the key points now which are handled \textit{mutatis mutandis} as in \textit{loc. cit.} is to show that $\lim_{i\in I}\udl{\sC}_i$ is $\baseCat$--stable and that for any map $\constant_I\udl{\D}\rightarrow \{\udl{\sC}_i\}_{i\in I}$ in $\func(I,\presentable^L_{\baseCat-\mathrm{st}})$, the induced map $\udl{\D}\rightarrow \lim_{i\in I}\udl{\sC}_i$ preserves $\baseCat$--coproducts. All in all, as in \cite[Lem. 2.5.10]{kaifNoncommMotives}, we get that for all $v\in\baseCat$, $\eval_v\colon \presentableStable{\baseCat}\rightarrow\presentable^L_{\mathrm{st}}$ preserves (co)limits by using that $\presentable^L_{\mathrm{st}},\presentable^R_{\mathrm{st}}\subset \widehat{\cat}$ are closed under limits and the identification $\presentableStable{\baseCat}\simeq (\presentable^R_{\baseCat-\mathrm{st}})\op$ from \cite[Prop. 2.2.15]{kaifNoncommMotives}. Given these ingredients, we may now immediately conclude as in \cite[Thm. 2.5.11]{kaifNoncommMotives}, using conservativity of $\prod_{v\in\baseCat}\eval_v$.
\end{proof}

\begin{prop}\label{prop:family_recollement}
    Let $\udl{\sC}\in\presentableStable{\baseCat}$. Then we have a stable recollement
    \begin{center}
        \begin{tikzcd}
             \udl{\func}_*(\udl{\widetilde{E\family}},\udl{\sC})\ar[rr, "p^*", hook] && \udl{\sC} \ar[rr, "j^*", two heads]\ar[ll, "p_*"', two heads, bend left = 40]\ar[ll, "p_!"', two heads, bend right = 40]&& \udl{\func}(\udl{E\family},\udl{\sC})  \ar[ll, "j_!"', hook, bend right = 40]\ar[ll, "j_*"', hook, bend left = 40]
        \end{tikzcd}
    \end{center}
    where the top sequence is identified with ${E\family}_+\wedge\udl{\sC}\rightarrow S^0\wedge\udl{\sC}\rightarrow \widetilde{E\family}\wedge \udl{\sC}$. Furthermore, the endofunctor $p^*p_!$ is given by tensoring with $\widetilde{E\family}\in\spc_{\baseCat,*}$. We thus also write $p_!$ as $\widetilde{E\family}\otimes-$.
\end{prop}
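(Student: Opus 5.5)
The plan is to build the recollement by applying the functor $-\wedge\udl{\sC}$ to the cofibre sequence \cref{eqn:isotropy_cofibre_sequence}, and then to verify the recollement axioms fibrewise. The fibrewise check is allowed by \cref{lem:colimits_reflection_from_nonparametrised}: it lets us reduce to the non-parametrised stable recollement statements of \cite{nineAuthorsII}.

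\textbf{Step 1: the sequence.} The tensoring $-\wedge\udl{\sC}\colon\spc_{\baseCat,*}\rightarrow\presentableStable{\baseCat}$ preserves colimits, because it is a tensoring (\cref{cons:tensoring_over_anima}). So \cref{eqn:isotropy_cofibre_sequence} gives a cofibre sequence $E\family_+\wedge\udl{\sC}\rightarrow\udl{\sC}\rightarrow\widetilde{E\family}\wedge\udl{\sC}$ in $\presentableStable{\baseCat}$. By \cref{cons:tensoring_over_anima} we identify its terms:
\[E\family_+\wedge\udl{\sC}\simeq\udl{\func}(\udl{E\family},\udl{\sC}),\qquad \widetilde{E\family}\wedge\udl{\sC}\simeq\udl{\func}_*(\udl{\widetilde{E\family}},\udl{\sC}).\]
In this description the left map of the sequence is the left adjoint $j_!$, namely left Kan extension along $E\family\rightarrow\ast$. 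Its right adjoint is $j^*$, restriction along that same map. The right map is $p_!$, whose right adjoint is the inclusion $p^*$ of pointed functors; this inclusion is fully faithful by the remark following \cref{cons:tensoring_over_anima}.

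\textbf{Step 2: the axioms, checked fibrewise.} Evaluating at $v\in\baseCat$ and using the pointwise description of $E\family$ and $\widetilde{E\family}$ from \cref{cons:universalSpaces}, the sequence becomes:
\begin{itemize}
\item if $v\in\baseCat_{\family}$: $\udl{\sC}(v)\xrightarrow{\id}\udl{\sC}(v)\rightarrow 0$;
\item if $v\notin\baseCat_{\family}$: a diagram computing pointwise Kan extensions along $\baseCat_{\family,/v}\subseteq\baseCat_{/v}$.
\end{itemize}
We then check the following.
\begin{enumerate}
\item $j_!$ and $j_*$ are fully faithful. This holds because $E\family\times E\family\simeq E\family$, i.e.\ $E\family$ is idempotent, so the unit $\id\rightarrow j^*j_!$ and the counit $j^*j_*\rightarrow\id$ are equivalences.
\item $p^*$ is fully faithful, and its essential image is the kernel of $j^*$. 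A functor out of $\udl{\widetilde{E\family}}$ that is pointed vanishes over $E\family$, which gives this identification.
\item $p_!$ is the cofibre of the counit $j_!j^*\rightarrow\id$. This is Step 1 read through these adjoints.
\end{enumerate}
Once these hold, the general recollement formalism of \cite{nineAuthorsII} supplies the remaining adjoints $p_*$ and $j_*$. Since all the data are fibrewise, and fibrewise (co)limits are detected by \cref{lem:colimits_reflection_from_nonparametrised}, the parametrised statement reduces to the pointwise one.

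\textbf{Step 3: the tensoring formula.} Since $p^*$ is fully faithful, $p^*p_!$ is the localisation with kernel the essential image of $j_!$. By Step 1, this localisation is the cofibre of $E\family_+\wedge-\rightarrow S^0\wedge-$, which is $\widetilde{E\family}\wedge-$. This justifies writing $p_!$ as $\widetilde{E\family}\otimes-$.

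\textbf{Main obstacle.} The hard step will be showing that the adjoints exist at the parametrised level rather than only fibrewise. This means checking that $j^*$ and $p^*$ commute with the indexed (co)products $f_!,f_*$, as in \cref{recollect:parametrised_colimits}, so that they really are $\baseCat$-functors with $\baseCat$-adjoints. I would try to argue that these functors are restrictions/left Kan extensions along maps of $\baseCat$-spaces, and that such functors automatically satisfy the Beck--Chevalley condition by the double coset decomposition \cref{eqn:double_coset_decomposition_parametrised}.
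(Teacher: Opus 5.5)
Your proposal is correct and follows essentially the paper's route: the tensoring cofibre sequence, its identification with the functor categories via $\presentable^L\simeq(\presentable^R)\op$, full faithfulness of $j_!$ and $j_*$, a fibrewise reduction to \cite{nineAuthorsII} through \cref{lem:colimits_reflection_from_nonparametrised}, and $p^*p_!\simeq\cofib(j_!j^*\rightarrow\id)$ for the tensoring formula. The only difference is that you check the recollement axioms by hand from $j_!,j_*$ being fully faithful and $\im(p^*)=\ker(j^*)$, where the paper packages this through \cite[Cor. A.1.10 (ii), Lem. A.2.8]{nineAuthorsII}; also, the parametrised-adjoint obstacle you flag is milder than you expect, since $j_!,p_!$ are already morphisms in $\presentable^L_{\baseCat}$ (so $j^*,p^*$ are $\baseCat$-right adjoints), $j_*$ is the parametrised right Kan extension, and $p^*p_*\simeq\fib(\id\rightarrow j_*j^*)$.
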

\begin{proof}
    By \cref{cons:tensoring_over_anima}, we already know that ${E\family}_+\wedge\udl{\sC}\rightarrow S^0\wedge\udl{\sC}\rightarrow \widetilde{E\family}\wedge \udl{\sC}$ is a cofibre sequence in $\presentableStable{\baseCat}$ by the universal property of tensorings. On the other hand, it was also explained that these may be identified with the functor categories upon passing to $\presentable^R_{\baseCat-\mathrm{st}}$, which gives the middle sequence in the statement where $j^*$ also admits a fully faithful right adjoint $j_*$; passing to left adjoints gives the top cofibre sequence in $\presentableStable{\baseCat}$ where $j_!$ is also fully faithful. Thus, by  \cite[Cor. A.1.10 (ii)]{nineAuthorsII}, we know that the top sequence is a fibre sequence as well.  Thus, now by \cite[Lem. A.2.8]{nineAuthorsII}, we also get the bottom two fibre and cofibre sequence. Note that we are able to quote these results from \cite{nineAuthorsII} because fully faithfulness and (co)fibre sequences in $\presentable^L_{\baseCat-\mathrm{st}}$ are checked fibrewise by virtue of \cref{lem:colimits_reflection_from_nonparametrised}. Finally, the statement describing $p^*p_!$ is because we have a cofibre sequence $j_!j^*\rightarrow\id_{\udl{\sC}}\rightarrow p^*p_!$, and $E\family_+\otimes-\colon \udl{\sC}\rightarrow\udl{\sC}$ is by definition given by $j_!j^*$.
\end{proof}

To end this subsection, we show that the abstract category $\udl{\func}(\udl{E\family},\udl{\sC})$ admits a concrete description as a subcategory of $\udl{\sC}$.

\begin{prop}\label{prop:descibing_kernel_of_geometric_fixed_points}\label{rmk:smashing_localisation_as_verdier_quotients}
    Let $\baseCat$ be inductive orbital, $\baseCat_{\family}\subseteq \baseCat$ be a sieve, and $\udl{\sC}\in\presentable^L_{\baseCat-\mathrm{st}}$. Let $\langle \family\rangle\subseteq \udl{\sC}$ be the smallest $\baseCat$--stable--presentable full subcategory containing all of $\udl{\sC}(v)$ for all $v\in\family$. 
    
    Then $\im(j_!\colon \udl{\func}(\udl{E\family},\udl{\sC})\rightarrow\udl{\sC})=\langle \family \rangle$. In particular, the map $\udl{\sC}\rightarrow \widetilde{E\family}\wedge \udl{\sC}$ may be identified as the Verdier quotient $\udl{\sC}\rightarrow \udl{\sC}/\langle \family\rangle$ which may be computed fibrewise.
\end{prop}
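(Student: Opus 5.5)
We compute the image of $j_!$ fibrewise and show it is generated, as a $\baseCat$--presentable--stable subcategory, by the values $\udl{\sC}(v)$ for $v\in\family$; the Verdier quotient statement then follows formally from the recollement of \cref{prop:family_recollement}.

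First we show $\im(j_!)\subseteq\langle\family\rangle$. Write $E\family$ as the colimit in $\spc_{\baseCat}$ of its cells. Since $E\family=f_!\ast$ is left Kan extended from $\baseCat_{\family}$, the co-Yoneda lemma gives $E\family\simeq\colim_{u\in\baseCat_{\family}}\udl{u}$. Hence
\[\udl{\func}(\udl{E\family},\udl{\sC})\simeq\lim_{u\in\baseCat_{\family}\op}\udl{\func}(\udl{u},\udl{\sC}).\]
Passing to left adjoints, as in \cref{cons:tensoring_over_anima}, $j_!j^*=E\family_+\wedge-$ is a colimit of the functors $u_!u^*\colon\udl{\sC}\to\udl{\sC}$ for $u\in\baseCat_{\family}$. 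Each $u_!u^*$ is the indexed coproduct, which by \cref{recollect:parametrised_colimits} is computed levelwise via the double coset formula \cref{eqn:double_coset_decomposition_parametrised}. Its value at $w$ is a finite sum of terms $\overline g_{i!}$ applied to objects of $\udl{\sC}(Z_i)$. Here each $Z_i$ maps to $u$, so $Z_i\in\baseCat_{\family}$ because $\baseCat_{\family}$ is a sieve. Therefore every such term lies in $\langle\family\rangle$, which by definition is closed under indexed coproducts and colimits. Since $j_!$ is fully faithful and $\id\simeq$ the colimit over these objects on $\im(j_!)$, we get $\im(j_!)\subseteq\langle\family\rangle$.

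Next we show $\langle\family\rangle\subseteq\im(j_!)$. The image $\im(j_!)$ is exactly the kernel of $p_!=\widetilde{E\family}\otimes-$. This kernel is a $\baseCat$--presentable--stable full subcategory, because $p_!$ is a parametrised left adjoint between $\baseCat$--stable categories, so its kernel is closed under parametrised colimits and finite limits. It therefore suffices to show that $\widetilde{E\family}\otimes x\simeq 0$ for $x\in\udl{\sC}(v)$ with $v\in\family$. Evaluating at $v$, the restriction $v^*\widetilde{E\family}$ is contractible: every object of $\baseCat_{/v}$ maps to $v$, so it lies in $\family$ by the sieve property, and $\widetilde{E\family}$ is a point there by \cref{cons:universalSpaces}. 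Since tensoring commutes with basechange $v^*$, the term $v^*(\widetilde{E\family}\wedge\udl{\sC})\simeq\udl{\func}_*(v^*\udl{\widetilde{E\family}},v^*\udl{\sC})$ is zero, which kills $x$.

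Finally, the recollement exhibits $p_!$ as a localisation with kernel $\im(j_!)=\langle\family\rangle$, which is precisely the Verdier quotient. The fibrewise computability follows from \cref{lem:colimits_reflection_from_nonparametrised}: cofibre sequences in $\presentableStable{\baseCat}$ are detected levelwise.

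The main obstacle is the first inclusion. The first thing to try is to make the decomposition of $j_!j^*$ into $u_!u^*$'s rigorous, namely interchanging the colimit presentation of $E\family$ with the tensoring. This should follow from the tensoring being colimit-preserving in the space variable, by \cite[Cons. 2.2.23]{parametrised_Poincare_duality}.
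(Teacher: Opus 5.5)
Your proof is correct, but it takes a different route from the paper. The paper never compares the two subcategories directly. It asserts, by adapting the proof of the last part of Thm.~2.2.26 of Hilman--Kirstein--Kremer to inductive orbital bases, that $\udl{\sC}\to\widetilde{E\family}\wedge\udl{\sC}$ corepresents the colimit-preserving functors out of $\udl{\sC}$ that vanish on $\langle\family\rangle$. Hence $\widetilde{E\family}\wedge\udl{\sC}$ is the cofibre of $\langle\family\rangle\hookrightarrow\udl{\sC}$ in $\presentableStable{\baseCat}$. Then \cref{lem:colimits_reflection_from_nonparametrised}, together with the nonparametrised fact that the fibre of a Verdier projection is the localising subcategory, identifies $\udl{\func}(\udl{E\family},\udl{\sC})$ with $\langle\family\rangle$.

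You instead work inside the recollement of \cref{prop:family_recollement} and prove $\im(j_!)=\langle\family\rangle$ by two inclusions. The inclusion $\supseteq$ comes from $\im(j_!)=\ker(p_!)$ and $v^*\widetilde{E\family}\simeq\ast$ for $v\in\family$. The inclusion $\subseteq$ comes from $E\family\simeq\colim_{u\in\baseCat_{\family}}\udl{u}$ together with the double coset formula.

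The paper's argument is shorter, but its substance lives in the cited universal property, whose proof is only indicated. Yours is self-contained given results already in the paper. It also shows exactly where the sieve hypothesis enters: the orbits $Z_i$ mapping to $u$, and every object of $\baseCat_{/v}$ lying over $\baseCat_{\family}$. It even recovers the universal property as a consequence, since a Bousfield localisation is initial among colimit-preserving functors killing its kernel.

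One step to tighten. Colimit-preservation of $-\wedge\udl{\sC}$ in the space variable only tells you that the \emph{category} $E\family_+\wedge\udl{\sC}$ is $\colim_{u}\udl{\func}(\udl{u},\udl{\sC})$ in $\presentableStable{\baseCat}$. To get the endofunctor formula $j_!j^*\simeq\colim_u u_!u^*$, you also need the following fact about a colimit in $\presentable^L_{\mathrm{st}}$ (computed levelwise by \cref{lem:colimits_reflection_from_nonparametrised}): the right adjoints of the structure maps are jointly conservative. Equivalently, each object is the colimit of the images of its projections. This is standard, and it is the same formula the paper uses without comment in \cref{prop:jointConservativityGeomFixPoints}.
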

\begin{proof}
    For $\udl{\D}\in\presentableStable{\baseCat}$, we denote by $\myuline{\map}^{L,\family=0}(\udl{\sC},\udl{\D})$ for the full subspace of $\myuline{\map}^L(\udl{\sC},\udl{\D})$ consisting of those functors $F\colon \udl{\sC}\rightarrow \udl{\D}$ whose restriction $F(v)\colon \langle \family\rangle(v)\rightarrow\udl{\D}(v)$ is the zero morphism in $\presentable^L_{\mathrm{st}}$ for all $v\in\baseCat_{\family}$, i.e., it is defined to fit in the fibre sequence in $\spc_{\baseCat,*}$
    \[\myuline{\map}^{L,\family=0}(\udl{\sC},\udl{\D})\longrightarrow \myuline{\map}^{L}(\udl{\sC},\udl{\D}) \longrightarrow \myuline{\map}^{L}(\langle \family\rangle,\udl{\D}).\] Now, by repeating the proof for the last statement of \cite[Thm. 2.2.26]{parametrised_Poincare_duality} in the generality of inductive orbital categories, we get that $\udl{\sC}\rightarrow \widetilde{E\family}\wedge \udl{\sC}$ induces an equivalence $\myuline{\map}^{L}(\widetilde{E\family}\wedge \udl{\sC},\udl{\D})\xrightarrow{\simeq}\myuline{\map}^{L,\family=0}(\udl{\sC},\udl{\D})$. Therefore, $\widetilde{E\family}\wedge \udl{\sC}$ computes the cofibre in $\presentable^L_{\baseCat-\mathrm{st}}$ of the inclusion $\langle \family\rangle \hookrightarrow \udl{\sC}$. Since we know that, in the nonparametrised setting, the fibres of the  cofibres of such inclusions is the original localising subcategory itself,  we deduce from \cref{lem:colimits_reflection_from_nonparametrised} that $\udl{\func}(\udl{E\family},\udl{\sC})\simeq \fib(\udl{\sC}\rightarrow \widetilde{E\family}\wedge \udl{\sC}) \simeq \langle\family\rangle$, as required.
\end{proof}

We illustrate this principle with an example that we will  use  later in \cref{subsection:mackeYy_model_multiexcisive}.

\begin{example}\label{example:stable_recollement_mackey_functors}
    By \cite{glasmanStratified,wilsonSlices}, we know that the inclusion $\iota\colon \finite_{\baseCat_{\widetilde{\family}}}\subseteq \finite_{\baseCat}$  admits a right adjoint $p$ (note that this right adjoint does not exist before taking the finite coproduct cocompletion) which sends any coproduct factor in $\baseCat_{\family}=\baseCat\backslash\baseCat_{\widetilde{\family}}$ to $\emptyset$. On the other hand, we have the inclusion $j\colon \baseCat_{\family}\subseteq \baseCat$. Then  by \cite[Thm. 2.32]{glasmanStratified} and \cite[Nota. 1.33]{wilsonSlices}, we have the stable recollement
    \begin{equation}\label{eqn:recollment_mackey}
        \begin{tikzcd}
             \mackey(\baseCat_{\widetilde{\family}};\spectra)\ar[rr, "p^*", hook] && \mackey(\baseCat;\spectra)\ar[rr, "j^*", two heads]\ar[ll, "p_*"', two heads, bend left = 40]\ar[ll, "p_!"', two heads, bend right = 40]&& \mackey(\baseCat_{{\family}};\spectra). \ar[ll, "j_!"', hook, bend right = 40]\ar[ll, "j_*"', hook, bend left = 40]
        \end{tikzcd}
    \end{equation}
    Here, the essential image of the inclusion $p^*$ is given precisely by those $\baseCat$--Mackey functors which vanish on $\baseCat_{\family}\subseteq \baseCat$.
    
    Now, recall that $\mackey(\baseCat;\spectra)$ is a compactly rigidly generated category with the set of self--dual compact generators given by $\{\susps\yoneda(v) \: | \: v\in\baseCat\}$ where $\susps\yoneda\colon \spanCat(\baseCat)\rightarrow \func^{\times}(\spanCat(\baseCat),\spectra) = \mackey(\baseCat;\spectra)$ given by the Yoneda map. By the commutative square
    \[
    \begin{tikzcd}
        \spanCat(\baseCat_{\family}) \rar["\susps\yoneda"]\dar["j",hook] & \func^{\times}(\spanCat(\baseCat_{\family}),\spectra) = \mackey(\baseCat_{\family};\spectra) \dar["j_!",hook]\\
        \spanCat(\baseCat) \rar["\susps\yoneda"] & \func^{\times}(\spanCat(\baseCat),\spectra) = \mackey(\baseCat;\spectra)
    \end{tikzcd}
    \]
    coming from the naturality of the Yoneda embedding along left Kan extensions, we thus see that the inclusion $j_!\colon \mackey(\baseCat_{\family};\spectra)\hookrightarrow \mackey(\baseCat;\spectra)$ identifies with the inclusion of the localising subcategory generated by $\{\susps\yoneda(v)\: | \: v\in\baseCat_{\family}\}$, which is also the set coming from the images of the tensor unit under the  induction $v_! \colon \mackey(\baseCat_{/v};\spectra)\rightarrow \mackey(\baseCat;\spectra)$ for $v\in\baseCat_{\family}$.

    Therefore, all in all, if $\baseCat$ has a final object $t$, then $\mackey(\baseCat_{\family};\spectra)=\langle\family\rangle(t)$ for $\langle \family\rangle\subseteq \myuline{\spectra}$ as in \cref{prop:descibing_kernel_of_geometric_fixed_points}. By the same proposition, the stable recollement \cref{prop:family_recollement} evaluates to the stable recollement \cref{eqn:recollment_mackey} at $t\in\baseCat$.
\end{example}

\subsection{A recognition principle for spectra}\label{subsection:recognition_principle}
Let $\baseCat$ be an inductive orbital category. We  give a list of checkable axioms that guarantees that a $\baseCat$--category is symmetric monoidally equivalent to $\myuline{\spectra}$, essentially axiomatising the arguments in \cite{MNN17} and \cite[App. A]{CMMN2}. Throughout this subsection, let $\udl{\A}\in\calg(\presentableStable{\baseCat})$ be a $\baseCat$--stable presentably symmetric monoidal category. 

\begin{nota}\label{rmk:universal_map_from_spectra}
    Since $\myuline{\spectra}$ is the tensor unit in $\presentableStable{\baseCat}$, we obtain a unique symmetric monoidal $\baseCat$--colimit preserving map $\unitMapFromSpectra{\A}\colon\myuline{\spectra}_{\baseCat}\longrightarrow \underline{\A}$. By construction, the symmetric monoidal structure on $\myuline{\spectra}$ is fibrewise given by the Day convolution on Mackey functors.
\end{nota}

Recall from \cref{nota:EP_twiddle}  that when $\baseCat$ has a final object, we have the universal space $\widetilde{E\proper}$ by virtue of inductive orbitality of $\baseCat$.  Furthermore, recall also from \cref{prop:smashing_left_adjoint} that we have a Bousfield localisation $\widetilde{E\proper}\otimes-\colon \udl{\A}\rightarrow \widetilde{E\proper}\wedge\udl{\A}$ associated to the endofunctor $\widetilde{E\proper}\otimes-$ on $\udl{\A}$. In fact, this upgrades even to a smashing localisation since $\widetilde{E\proper}\in\udl{\spc}_*$ is an idempotent algebra.  By \cref{rmk:smashing_localisation_as_verdier_quotients}, this Bousfield localisation may also be identified with the Verdier quotient of $\udl{\A}$ by everything coming from $\udl{\A}(v)$ for $v\neq t$.

We write $\widetilde{E\proper}\in\udl{\A}$ for the object $\widetilde{E\proper}\otimes\unit_{\udl{\A}}\in\udl{\A}$. Moreover, we will also write $\widetilde{E\proper}_v$ for the analogous universal space for $\baseCat_{/v}$. Since $\baseCat_{/v}$ has a final object, $\widetilde{E\proper}_v$ will always make sense even if $\baseCat $ does not have a final object.
\vspace{1mm}

With these preliminaries, we now state the criteria to recognise when $\udl{\A}\simeq \myuline{\spectra}$.

\begin{hypothesis}\label{hypothesis:sphereInvertibleCategories}
    Suppose $\udl{\A}$  satisfies that:
    \begin{enumerate}[label=(\arabic*)]
        \item for every $v\in \baseCat$, the representables $\{f_!\unit_{\udl{\A}(u)}\}_{(f\colon u\rightarrow v)\in\baseCat_{/v}}$ in $\udl{\A}(v)\in\calg(\presentable^L_{\mathrm{st}})$ form a set of compact generators.  Note that  $\baseCat$--stability already implies that the representables are self--dual, and in particular, dualisable;
        \item for every $v\in\baseCat$, we have an equivalence $\mapsp_{\udl{\A}(v)}(\widetilde{E\proper}_{v},\widetilde{E\proper}_v) \simeq \sphere \in \spectra$.
    \end{enumerate}
\end{hypothesis}

\begin{thm}[Abstract spectra recognition]\label{thm:abstractSpectraRecognition}
    Suppose $\underline{\A}\in\calg(\presentableStable{\baseCat})$ satisfies \cref{hypothesis:sphereInvertibleCategories}. Then the symmetric monoidal functor $\unitMapFromSpectra{\A}\colon \myuline{\spectra}\rightarrow \underline{\A}$ from \cref{rmk:universal_map_from_spectra} is an equivalence.
\end{thm}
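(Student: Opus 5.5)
We argue by induction on the number of isomorphism classes of objects of $\baseCat$, using the isotropy separation recollement of \cref{prop:family_recollement}. Since $\baseCat$ is inductive orbital, so is every $\baseCat_{/v}$. Moreover, equivalences in $\presentableStable{\baseCat}$ are detected fibrewise by \cref{lem:colimits_reflection_from_nonparametrised}. Hence it suffices to show that, for every $v\in\baseCat$, the functor $\unitMapFromSpectra{\A}(v)\colon \mackey(\baseCat_{/v};\spectra)\rightarrow\udl{\A}(v)$ is an equivalence. Replacing $\baseCat$ by $\baseCat_{/v}$ (which preserves \cref{hypothesis:sphereInvertibleCategories}, since both conditions are stated levelwise over slices), we may assume that $\baseCat$ has a final object $t$ and only treat the value at $t$. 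The values at $v\neq t$ are handled by the inductive hypothesis, because $\baseCat_{/v}$ has fewer isomorphism classes than $\baseCat$: atomicity forbids $\baseCat_{/v}\simeq\baseCat$ when $v\not\simeq t$.

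The first step is to apply \cref{prop:family_recollement} with the family $\proper$ to both $\myuline{\spectra}$ and $\udl{\A}$. The map $\unitMapFromSpectra{\A}$ is a $\baseCat$--colimit--preserving symmetric monoidal functor, so it commutes with the tensoring by $E\proper_+$ and $\widetilde{E\proper}$. It therefore induces a map of stable recollements at $t$. On the open parts, \cref{prop:descibing_kernel_of_geometric_fixed_points} identifies $\langle\proper\rangle(t)$ as the localising subcategory generated by the images $f_!(-)$ for $f\colon u\rightarrow t$ with $u\neq t$. By induction, $\unitMapFromSpectra{\A}(u)$ is an equivalence for all $u\neq t$. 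Since $\unitMapFromSpectra{\A}$ commutes with $f_!$, the restriction $\langle\proper\rangle_{\spectra}(t)\rightarrow\langle\proper\rangle_{\A}(t)$ is essentially surjective on generators. It is also fully faithful on them, because $\map(f_!x,f'_!y)$ is computed via the double coset formula \cref{eqn:double_coset_decomposition_parametrised} in terms of mapping spectra at levels $u\neq t$, where we already have equivalences. Hence the open parts agree.

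On the closed parts, $\widetilde{E\proper}\wedge\myuline{\spectra}$ at $t$ is $\spectra$, generated by the unit $\widetilde{E\proper}$. On the $\A$ side, $\widetilde{E\proper}\wedge\udl{\A}(t)$ is the Verdier quotient of $\udl{\A}(t)$ by $\langle\proper\rangle(t)$. By Hypothesis (1), it is compactly generated by the image of the unit $\unit$, since the other representables $f_!\unit$ with $f$ non-invertible die. It is also symmetric monoidal, with unit the image of $\unit$, and the endomorphism spectrum of that unit is $\mapsp_{\udl{\A}(t)}(\widetilde{E\proper},\widetilde{E\proper})$ by the smashing adjunction. Hypothesis (2) says this is $\sphere$. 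By Schwede--Shipley, a stable presentable category compactly generated by a single object with endomorphisms $\sphere$ is $\spectra$, with the unit map the identification. So the functor on closed parts is an equivalence.

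Finally, a map of stable recollements which is an equivalence on both strata, and which is compatible with the gluing functor $p_!j_!$ or $p_*j_*$, is an equivalence. Compatibility is automatic here because the map of recollements is induced by a single functor $\udl{\sC}\rightarrow\udl{\D}$. I expect the main obstacle to be making the open-part comparison rigorous: one must check that $\unitMapFromSpectra{\A}$ intertwines $f_!$ and that the double coset decomposition yields full faithfulness on generators. The strategy here is to reduce everything to mapping spectra between representables, using $\baseCat$--semiadditivity ($f_!\simeq f_*$) on both sides.
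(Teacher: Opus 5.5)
There is a genuine gap in your final gluing step. A functor between stable recollements that commutes with $j_!,j^*,p^*,p_!$ and is an equivalence on both strata need not be an equivalence. It must also intertwine the gluing functor, and that is not automatic.

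The composites you name vanish in any recollement: $\map(p_!j_!A,C)\simeq\map(A,j^*p^*C)\simeq 0$, and dually $p_*j_*\simeq 0$. So compatibility with them carries no information. The actual gluing functor is $p_!j_*$ (equivalently $p_*j_!$ up to a shift). It involves the right adjoint $j_*$, which a left adjoint such as $\unitMapFromSpectra{\A}$ has no reason to preserve.

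A concrete counterexample to your criterion: take any stable recollement $\mathcal{X}$ with strata $\mathcal{X}'=\im(j_!)$ and $\mathcal{X}''=\im(p^*)$. The functor $\mathcal{X}'\times\mathcal{X}''\to\mathcal{X}$, $(A,C)\mapsto j_!A\oplus p^*C$, preserves colimits, is compatible with $j_!,j^*,p^*,p_!$, and is the identity on both strata. Yet it is not an equivalence as soon as $\map(p^*C,j_!A)\neq 0$ for some $A,C$. That is exactly the situation here: already for $\baseCat=\orbit_{C_p}$ the gluing is the Tate construction.

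Your open- and closed-part comparisons are correct, but this split functor satisfies them too. Neither of the following follows from them:
full faithfulness on maps $p^*C\to j_!A$, where $p^*C$ (e.g.\ $\widetilde{E\proper}\otimes\unit$) is typically not compact; or essential surjectivity, which requires lifting the extension class $p^*p_!Y\to\Sigma j_!j^*Y$.

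The paper controls the gluing by a different route. By Balmer--Dell'Ambrogio--Sanders, Hypothesis (1) makes the right adjoint $R$ colimit-preserving with a projection formula, so $R$ commutes with $\widetilde{E\proper}\otimes-$. Monadicity then reduces the claim to the unit $\unit\to R(\unit_{\A})$. Joint conservativity of geometric fixed points and the computation $\Phi^tR(\unit_{\A})\simeq\mapsp(\widetilde{E\proper},\widetilde{E\proper})\simeq\sphere$ finish the proof. That projection formula is precisely the compatibility your argument is missing.

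Within your own framework, one repair is to check full faithfulness of $\unitMapFromSpectra{\A}(t)$ directly on the compact generators $f_!\unit$. Every pair other than $(\unit,\unit)$ reduces to lower levels by adjunction and $f_!\simeq f_*$. For $\mapsp(\unit,\unit)$, apply $\mapsp(\unit,-)$ to the cofibre sequence $E\proper_+\otimes\unit\to\unit\to\widetilde{E\proper}\otimes\unit$. Compactness of $\unit$ lets you commute past $E\proper_+\otimes\unit\simeq\colim_{u\in\baseCat_{\proper}}u_!\unit$, which reduces that term to lower levels. The other term, $\mapsp(\unit,\widetilde{E\proper}\otimes\unit)\simeq\mapsp(\widetilde{E\proper},\widetilde{E\proper})$, is your closed-part computation. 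It is compactness of the unit that does the work the recollement formalism alone cannot.

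Separately, your induction measure is wrong: $\baseCat_{/v}$ can have more isomorphism classes than $\baseCat$. For example, $(\epiCategory_4)_{/[2]}$ has six, while $\epiCategory_4$ has four. Instead, induct over the objects of the original category ordered by non-invertible maps. This order is well-founded by finiteness and atomicity, and it suffices because slices of slices are again slices.
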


Since equivalences are checked by basechanging to $\baseCat_{/v}$ for all $v\in\baseCat$, we will assume for the rest of the subsection that $\baseCat$ has a final object $t\in \baseCat$.

\begin{obs}
    Writing $\A_{\baseCat}$ for $\udl{\A}(t)$ where $t\in \baseCat$ is the final object, we obtain a unique symmetric monoidal colimit--preserving functor $\inflation\colon \spectra\rightarrow \A_{\baseCat}$ by virtue of $\A_{\baseCat}$ being a presentably symmetric monoidal stable category. The lax symmetric monoidal right adjoint is  then formally given by $\mapsp_{\A_{\baseCat}}(\unit,-)$. 
\end{obs}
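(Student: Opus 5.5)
The claim is formal, and the plan is to get it from the universal property of $\spectra$ in $\calg(\presentable^L_{\mathrm{st}})$. First I check that $\A_{\baseCat}=\udl{\A}(t)$ is a presentably symmetric monoidal stable category. Since $\udl{\A}\in\calg(\presentableStable{\baseCat})$, it is fibrewise presentable and fibrewise stable. The symmetric monoidal structure of a $\baseCat$--category evaluates levelwise to a symmetric monoidal structure on each $\udl{\A}(v)$. Because the tensor product commutes with parametrised colimits in each variable, it in particular commutes with fibrewise colimits, and by the fibrewise criterion in \cref{recollect:parametrised_colimits} these are computed levelwise. Hence evaluation at $t$ is a symmetric monoidal functor $\calg(\presentableStable{\baseCat})\to\calg(\presentable^L_{\mathrm{st}})$, and $\A_{\baseCat}$ lies in the target.

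Next I invoke that $\spectra$ is the tensor unit of $\presentable^L_{\mathrm{st}}$ and hence initial in $\calg(\presentable^L_{\mathrm{st}})$. This gives a map $\inflation\colon\spectra\to\A_{\baseCat}$ that is symmetric monoidal and colimit preserving, and the space of such maps is contractible. So existence and uniqueness are immediate. Concretely, $\inflation$ is the unique colimit-preserving functor sending $\sphere$ to $\unit$; this uses only that $\spectra$ is freely generated under colimits by $\sphere$ as a stable presentable category.

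For the right adjoint, I would use the adjoint functor theorem: $\inflation$ preserves colimits between presentable categories, so it has a right adjoint $R$. Since $\inflation$ is symmetric monoidal, $R$ acquires a lax symmetric monoidal structure by the standard doctrinal adjunction argument (e.g.\ \cite[Cor. 7.3.2.7]{lurieHA}). To identify $R$, I would take $X\in\A_{\baseCat}$ and compute, for every $Y\in\spectra$,
\[
\map_{\spectra}(Y,RX)\simeq\map_{\A_{\baseCat}}(\inflation Y,X).
\]
Setting $Y=\Sigma^n\sphere$ and using $\inflation(\Sigma^n\sphere)\simeq\Sigma^n\unit$ shows that $\Omega^{\infty-n}RX\simeq\map(\Sigma^n\unit,X)$, naturally in $n$ and $X$. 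Therefore $RX\simeq\mapsp_{\A_{\baseCat}}(\unit,X)$, the mapping spectrum. Equivalently, $R$ is the spectral Yoneda functor corepresented by $\unit$ through the canonical $\spectra$-enrichment of the stable category $\A_{\baseCat}$.

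No step is a real obstacle. The only point needing care is the first one: one has to confirm that the symmetric monoidal structure on $\udl{\A}$ in $\calg(\presentableStable{\baseCat})$ really restricts at the final object $t$ to a presentably symmetric monoidal stable structure. This is precisely the levelwise description of $\cmonoid(\cat_{\baseCat})\simeq\func(\baseCat\op,\cmonoid(\cat))$ from the recollections, combined with the fibrewise description of colimits.
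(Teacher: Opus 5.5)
Your proposal is correct and matches the paper, which gives no separate argument and treats this as a formal consequence of $\spectra$ being initial in $\calg(\presentable^L_{\mathrm{st}})$, with the lax symmetric monoidal right adjoint identified as the mapping spectrum out of the unit. Two small wording points. First, evaluation at $t$ is not symmetric monoidal for the Lurie tensor products, since it sends the unit $\myuline{\spectra}$ to $\mackey(\baseCat;\spectra)$; it only induces a functor $\calg(\presentableStable{\baseCat})\to\calg(\presentable^L_{\mathrm{st}})$ via $\cmonoid(\cat_{\baseCat})\simeq\func(\baseCat\op,\cmonoid(\cat))$, which is all you actually use. Second, $\map(\Sigma^n\unit,X)$ is $\Omega^{\infty+n}RX$, not $\Omega^{\infty-n}RX$.
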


We write $\A_{\baseCat}[\widetilde{E\proper}]\in\presentable^L_{\mathrm{st}}$ for the category $(\widetilde{E\proper}\wedge\udl{\A})(t)$.

\begin{lem}\label{lem:geometricLayerEquivalence}
    Suppose $\udl{\A}$ satisfies \cref{hypothesis:sphereInvertibleCategories}. Then the composite $\spectra\xlongrightarrow{\inflation} \A_{\baseCat} \xlongrightarrow{\widetilde{E\proper}\otimes-} \A_{\baseCat}[\widetilde{E\proper}]$    is an equivalence with inverse given by the right adjoint $\mapsp_{\A_{\baseCat}}(\unit,\inclusion-)$.
\end{lem}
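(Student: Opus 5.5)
The plan is to show that $\A_{\baseCat}[\widetilde{E\proper}]$ is generated by a single object, the image $\widetilde{E\proper}\otimes\unit$ of the unit, and that this object has endomorphism spectrum $\sphere$. A presentable stable category with a single compact generator $G$ is equivalent to modules over $\mathrm{End}(G)$. Here that ring is $\sphere$, so the category is $\spectra$, and the composite symmetric monoidal functor is the canonical one.

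First, the composite $\Phi\coloneqq(\widetilde{E\proper}\otimes-)\circ\inflation$ is a symmetric monoidal left adjoint sending $\sphere$ to the unit $\widetilde{E\proper}\otimes\unit$ of $\A_{\baseCat}[\widetilde{E\proper}]$. Its right adjoint is $\mapsp_{\A_{\baseCat}}(\unit,\inclusion(-))$, since the right adjoint of a localisation is the inclusion. By \cref{rmk:smashing_localisation_as_verdier_quotients}, $\A_{\baseCat}[\widetilde{E\proper}]$ is the Verdier quotient of $\A_{\baseCat}$ by the localising subcategory generated by the $\udl{\A}(v)$ with $v\neq t$.

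Next, I determine the generators of the quotient. By \cref{hypothesis:sphereInvertibleCategories}(1), $\A_{\baseCat}$ is compactly generated by the objects $f_!\unit_{\udl{\A}(u)}$ for $f\colon u\to t$. Those with $u\neq t$ lie in the image of $\udl{\A}(u)$, so they die in the quotient. By inductive orbitality, the remaining morphisms $u\to t$ with $u\simeq t$ are equivalences, so up to equivalence the only surviving generator is $\unit$. Since the localisation is smashing, the right adjoint $\inclusion$ preserves colimits, so the left adjoint preserves compact objects. Hence $\widetilde{E\proper}\otimes\unit$ is a compact generator of $\A_{\baseCat}[\widetilde{E\proper}]$.

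Then I compute its endomorphisms:
\[\mapsp_{\A_{\baseCat}[\widetilde{E\proper}]}(\widetilde{E\proper}\otimes\unit,\widetilde{E\proper}\otimes\unit)\simeq\mapsp_{\A_{\baseCat}}(\unit,\widetilde{E\proper})\simeq\mapsp_{\A_{\baseCat}}(\widetilde{E\proper},\widetilde{E\proper})\simeq\sphere.\]
The first equivalence is the localisation adjunction. The second uses that $\widetilde{E\proper}$ is local and that $\unit\to\widetilde{E\proper}$ is the localisation map. The last is \cref{hypothesis:sphereInvertibleCategories}(2) at $v=t$.

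Finally, I show that $\Phi$ is an equivalence. Consider the unit map $\sphere\to\mapsp(\unit,\Phi\sphere)$. It is the map $\sphere\to\mathrm{End}(\widetilde{E\proper}\otimes\unit)$ sending $1$ to the identity. This is a map of ring spectra between spectra equivalent to $\sphere$; it is an equivalence because a ring map out of $\sphere$ that is nonzero on the unit and lands in a ring abstractly equivalent to $\sphere$ must be an equivalence. (It is cleanest to argue that the abstract equivalence in (2) is meant to be this canonical map.) So $\Phi$ is fully faithful on $\sphere$. Since both sides are compactly generated by $\sphere$ and by its image respectively, and $\Phi$ preserves colimits, it is fully faithful everywhere and essentially surjective.

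The main obstacle is this last identification: confirming that \cref{hypothesis:sphereInvertibleCategories}(2) yields the canonical unit map rather than merely an abstract equivalence. If only an abstract equivalence is available, I would argue that $\pi_0$ of the endomorphism ring is $\mathbb{Z}$ as a ring, with $1$ mapping to the identity. The unit map is then a ring map $\sphere\to R$ with $R\simeq\sphere$ as spectra. It is an isomorphism on $\pi_0$, and precomposing the abstract equivalence yields an endomorphism of $\sphere$ of degree $\pm1$, hence an equivalence.
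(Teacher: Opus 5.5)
Your proposal is correct and follows the paper's proof: the quotient $\A_{\baseCat}[\widetilde{E\proper}]$ is compactly generated by the image of the unit, because the proper representables die, and its endomorphism spectrum is $\sphere$ by \cref{hypothesis:sphereInvertibleCategories}(2). The only difference is the last step. The paper passes from the abstract equivalence in (2) to the conclusion via the symmetric monoidal Schwede--Shipley theorem and the uniqueness of the $\mathbb{E}_\infty$-structure on $\sphere$, whereas your $\pi_0$ argument shows directly that the canonical unit map $\sphere\to\mathrm{End}(\widetilde{E\proper}\otimes\unit)$ is an equivalence, which is equally valid and slightly more elementary.
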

\begin{proof}
    As in \cite[Thm. 6.11]{MNN17}, we first note that $\A_{\baseCat}[\widetilde{E\proper}]$ is  generated by the compact unit $\widetilde{E\proper}$ because $\A_{\baseCat}[\widetilde{E\proper}]$ is the Verdier quotient by the proper representables and $\udl{\A}$ was generated by representables by \cref{hypothesis:sphereInvertibleCategories} (1). By the symmetric monoidal version of Schwede--Shipley, we just need to show that the endomorphism $\mathbb{E}_{\infty}$--ring spectrum of $\widetilde{E\proper}$ is given by $\sphere$. Since there is a unique $\mathbb{E}_{\infty}$--ring structure on $\sphere$, it suffices to show that the endomorphism spectrum is equivalent to $\sphere$, and this is precisely supplied by \cref{hypothesis:sphereInvertibleCategories} (2).
\end{proof}

\begin{defn}[Geometric fixed points]\label{definition: geometric fixed points}
    We define the {\it geometric fixed points functor} $\Phi^t$ to be the following symmetric monoidal colimit preserving functor
    \[\Phi^t\colon \A_{\baseCat} \xlongrightarrow{\widetilde{E\proper}\otimes-} \A_{\baseCat}[\widetilde{E\proper}] \subseteq \A_{\baseCat} \xlongrightarrow{\mapsp(\unit,-)} \spectra \]
    More generally, for any $u\in \baseCat$, we may define similarly the functor $\Phi^u\colon \A_{\baseCat}\rightarrow \spectra$ as the symmetric monoidal colimit preserving composition $\A_{\baseCat}\xlongrightarrow{u^*} \A_{\baseCat_{/u}}\xlongrightarrow{\Phi^u} \spectra$.
\end{defn}

For the next point, recall that $E\proper\in\func(\baseCat\op,\spc)$ is equivalent to the colimit $\colim_{v\in\baseCat_{\proper}}v_!\ast$ where $v_!\colon \func(\baseCat_{/v}\op,\spc)\rightarrow \func(\baseCat\op,\spc)$. Indeed, observe that $\ast\simeq \colim_{\baseCat_{\proper}} \yoneda\in \func(\baseCat_{\proper}\op,\spc)$ where $\yoneda$ is the Yoneda embedding. Then, since $E\proper\simeq f_!\ast$ by \cref{cons:universalSpaces} where $f_!\colon \func(\baseCat_{\proper}\op,\spc)\hookrightarrow \func(\baseCat\op,\spc)$ is the left Kan extension, the equivalence follows.

\begin{prop}\label{prop:jointConservativityGeomFixPoints}
    Let $M\in\A_{\baseCat}$. Then $M\simeq 0$ if and only if $\Phi^uM\simeq 0\in\spectra$ for all $u\in\baseCat$. 
\end{prop}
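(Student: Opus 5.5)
We argue by induction on the number of isomorphism classes of objects in $\baseCat$; this number is finite by inductive orbitality, and the induction runs over $\baseCat_{/u}$ for $u\neq t$. The "only if" direction is trivial, so assume $\Phi^uM\simeq 0$ for all $u\in\baseCat$.

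We start from the isotropy separation cofibre sequence of \cref{prop:family_recollement} for the sieve $\baseCat_{\proper}$:
\[E\proper_+\otimes M\longrightarrow M\longrightarrow \widetilde{E\proper}\otimes M.\]
First we show that $\widetilde{E\proper}\otimes M\simeq 0$. By \cref{lem:geometricLayerEquivalence}, the functor $\mapsp_{\A_{\baseCat}}(\unit,\inclusion-)$ is an equivalence $\A_{\baseCat}[\widetilde{E\proper}]\simeq\spectra$. Its value on $\widetilde{E\proper}\otimes M$ is by definition $\Phi^tM$, which vanishes by hypothesis, so $\widetilde{E\proper}\otimes M\simeq 0$. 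Hence $M\simeq E\proper_+\otimes M$.

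It remains to show $E\proper_+\otimes M\simeq 0$. Write $E\proper\simeq\colim_{v\in\baseCat_{\proper}}v_!\ast$, as recalled just before the statement. Tensoring commutes with colimits in the space variable, so $E\proper_+\otimes M\simeq\colim_{v\in\baseCat_{\proper}}(v_!\ast)_+\otimes M$. By the projection formula for indexed coproducts, $(v_!\ast)_+\otimes M\simeq v_!v^*M$. It therefore suffices to show $v^*M\simeq 0$ in $\udl{\A}(v)=\A_{\baseCat_{/v}}$ for each $v\neq t$.

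For such $v$, the category $\baseCat_{/v}$ is again inductive orbital with final object $\id_v$, and it has strictly fewer isomorphism classes of objects than $\baseCat$, because $t$ does not lie over $v$: a map $t\to v$ would make $v$ final by atomicity. The basechange $v^*\udl{\A}$ still satisfies \cref{hypothesis:sphereInvertibleCategories}. For each $(u\to v)\in\baseCat_{/v}$, the functor $\Phi^u$ of $v^*M$ computed over $\baseCat_{/v}$ agrees with $\Phi^uM$, since both are defined by restricting to $\baseCat_{/u}$ and then taking top geometric fixed points. These all vanish, so the inductive hypothesis gives $v^*M\simeq 0$. The base case, where $\baseCat$ has only the final object up to isomorphism, is immediate: there $\widetilde{E\proper}=S^0$ and $M\simeq\widetilde{E\proper}\otimes M\simeq 0$.

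The main obstacle is the identification $(v_!\ast)_+\otimes M\simeq v_!v^*M$. This is a projection formula relating tensoring over $\spc_{\baseCat,*}$ with the indexed coproduct $v_!$. I would derive it from $Y_+\wedge\udl{\sC}\simeq\udl{\func}(\udl{Y},\udl{\sC})$ in \cref{cons:tensoring_over_anima} together with the description $j_!j^*$ of $E\proper_+\otimes-$ from \cref{prop:family_recollement}. An alternative route avoids the formula: since $E\proper_+\otimes M$ lies in $\langle\proper\rangle$ by \cref{prop:descibing_kernel_of_geometric_fixed_points}, it is enough to check that $\map(v_!X,E\proper_+\otimes M)\simeq\map(X,v^*(E\proper_+\otimes M))$ vanishes, and this follows once $v^*M\simeq 0$, since $v^*$ is symmetric monoidal and commutes with the tensoring.
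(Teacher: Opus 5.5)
Your argument is the paper's: the isotropy separation sequence $E\proper_+\otimes M\to M\to\widetilde{E\proper}\otimes M$. You kill the right term using \cref{lem:geometricLayerEquivalence} and $\Phi^tM\simeq 0$. You kill the left term by writing it as $\colim_{v\in\baseCat_{\proper}}v_!v^*M$ with $v^*M\simeq 0$ by induction. Your extra justifications are fine and fill in what the paper leaves implicit: the identification $(v_!\ast)_+\otimes M\simeq v_!v^*M$, the compatibility of $\Phi^u$ with base change to $\baseCat_{/v}$, and the inheritance of \cref{hypothesis:sphereInvertibleCategories}.

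The one step that fails as written is your induction measure. It is not true that $\baseCat_{/v}$ has strictly fewer isomorphism classes of objects than $\baseCat$. The fact that $t$ does not lie over $v$ only shows that the map from isomorphism classes of $\baseCat_{/v}$ to those of $\baseCat$ misses $[t]$; that map need not be injective. This already happens in the paper's own setting. For $\baseCat=\epiCategory_4$ and $v=[2]$, the slice $\epiCategory_{4,2}$ has six isomorphism classes, with fibre sizes $(1,1),(1,2),(2,1),(2,2),(1,3),(3,1)$, while $\epiCategory_4$ has four. For $\epiCategory_3$ and $v=[2]$ the two counts are equal.

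The fix is to induct over the objects of the original $\baseCat$ rather than over the size of the slice. Since $(\baseCat_{/v})_{/(u\to v)}\simeq\baseCat_{/u}$, your recursion only ever visits slices $\baseCat_{/u}$ with $u\in\baseCat$. So prove the statement for all $\baseCat_{/v}$ simultaneously, by induction along the finite poset of isomorphism classes of $\baseCat$ ordered by existence of a map. This relation is antisymmetric by atomicity and invertibility of endomorphisms, as in your argument ruling out maps $t\to v$. Equivalently, induct on the length of the longest chain of non-invertible maps ending at the final object; this strictly drops from $\baseCat$ to $\baseCat_{/v}$ when $v\not\cong t$. The paper's proof just says ``by induction'' and must be read this way. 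With this change your proof is complete.
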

\begin{proof}
    Since $\baseCat$ is inductive orbital, $\baseCat$ has finitely many objects and morphisms, so we will prove the statement by induction. By induction, suppose that $v^*M\simeq 0$ for all  $v\in\baseCat_{\proper}$. Then $E\proper_+\otimes M \simeq \colim_{u\in \baseCat_{\proper}}(v_!v^*M) \simeq \colim_{v\in \baseCat_{\proper}} 0 \simeq 0$.
    Now the assumption together with \cref{lem:geometricLayerEquivalence} gives us that $\widetilde{E\proper}\otimes M \simeq 0$ since $\Phi^tM\simeq 0$. Hence, by the cofibre sequence
    $E\proper_+\otimes M \rightarrow M \rightarrow \widetilde{E\proper}\otimes M$,
    we get  $M\simeq 0$ as required.
\end{proof}

\begin{proof}[Proof of \cref{thm:abstractSpectraRecognition}.]
    Since $\baseCat$ is inductive orbital, it has finitely many objects and morphisms, and so we will prove the theorem by induction on the number of objects in $\baseCat$, the base case being clear. Suppose therefore that  $\unitMapFromSpectra{u^*\A}\colon u^*\myuline{\spectra}\rightarrow u^*\underline{\A}$ is an equivalence for all  $u\in\baseCat_{\proper}$ . Our goal is to show that symmetric monoidal colimit--preserving functor of nonparametrised categories $\unitMapFromSpectra{\A}\colon {\spectra}_{\baseCat}\rightarrow {\A}_{\baseCat}$
    is an equivalence. Denote by $R$ the right adjoint of $\unitMapFromSpectra{\A}\colon \myuline{\spectra}\rightarrow \underline{\A}$. 
    
    By \cite[Thm. 1.3]{bds16}, because both categories are generated under colimits by dualisable objects by \cref{hypothesis:sphereInvertibleCategories} (1), this right adjoint $R$ itself admits a right adjoint, and $\unitMapFromSpectra{\A}\dashv R$ satisfies the projection formula. Furthermore, $R$ is conservative because its left adjoint $\unitMapFromSpectra{\A}$ sends generators to generators, again by \cref{hypothesis:sphereInvertibleCategories} (1). Thus,  by the monadicity result of  \cite[Prop. 5.29]{MNN17}, we  know that  $\A_{\baseCat}\simeq \module_{\spectra_{\baseCat}}(R(\unit_{\A_{\baseCat}}))$.    Hence we have reduced our problem merely to showing that the adjunction unit
    \[\unit_{\spectra_{\baseCat}}\longrightarrow R\unitMapFromSpectra{\A}(\unit_{\spectra_{\baseCat}})\simeq R(\unit_{\A_{\baseCat}})\]
    is an equivalence.

    By induction, this map becomes an equivalence upon applying $\Phi^u$ for all $u\in\baseCat_{\proper}$. Hence, by \cref{prop:jointConservativityGeomFixPoints}, it suffices to check that it is an equivalence upon applying $\Phi^t$ where $t\in\baseCat$ is the final object. Since $\Phi^t(\unit_{\spectra_{\baseCat}})\simeq \sphere$ by symmetric monoidality, and we are looking at a map of commutative algebras, it suffices to see that there is an equivalence of spectra $\Phi^tR(\unit_{\A_{\baseCat}})\simeq \sphere$. This follows from the following computation:
    \begin{equation*}
        \begin{split}
            \Phi^tR(\unit_{\A_{\baseCat}}) & = \mapsp_{\spectra_{\baseCat}}(\unit_{\spectra_{\baseCat}}, \widetilde{E\proper}\otimes R\unit_{\A_{\baseCat}})\\
            &\simeq \mapsp_{\spectra_{\baseCat}}(\unit_{\spectra_{\baseCat}}, R(\widetilde{E\proper} \otimes\unit_{\A_{\baseCat}}))\\
            & \simeq \mapsp_{\A_{\baseCat}}(\unit_{\A_{\baseCat}}, \widetilde{E\proper})\\
            &\simeq \mapsp_{\A_{\baseCat}}(\widetilde{E\proper}, \widetilde{E\proper}) \simeq \sphere
        \end{split}
    \end{equation*}
    where the second equivalence is by the $\unitMapFromSpectra{\A}\dashv R$ projection formula and the last equivalence is by \cref{hypothesis:sphereInvertibleCategories} (2).
\end{proof}

\section{Parametrised perspective on Goodwillie calculus}\label{section:parametrised_Goodwillie}
\subsection{Subdiagonal  functors as a parametrised category}\label{subsection:subdiagonal_parametrised}
The goal of this subsection is to prove \cref{alphThm:parametrised_excisive_category_is_symmetric_monoidal} that the subdiagonal functors assemble into a presentably symmetric monoidal $\epiCategory_{d}$--stable category. This enhances the ordinary category $\excisive_{d}=\elbowCat{d,1}$ with the requisite parametrised structure, see \cref{thm:basic_properties_of_parametrised_excisive_category}. The restriction maps will be the cross--effects and the (co)inductions will be induced by the diagonal maps, as in \cref{cor:omnibus_multivariable_cross-effects}. 

As a first step, we assemble the subdiagonal functors for arbitrary $\sC$ and $\D$ to an $\epiCategory_d$--category.

\begin{cons}\label{cons:parametrised_category_of_excisive_functors}
    Fix stable categories $\sC$ and $\D$ and suppose that $\sC$ is small. We first construct an object in $\func_*\in\func(\epiCategory_{d},\cat)$ given schematically by 
    \begin{center}
        \begin{tikzcd}
            \func_*^1  & \func_*^2  \lar["\Delta^*"]&  \cdots \lar["\Delta^*"]& \func_*^{d-1} \lar["\Delta^*"] & \func_*^d\lar["\Delta^*"]\\
            {[1]} & {[2]} & \cdots & {[d-1]} & {[d]}
        \end{tikzcd}
    \end{center}
    where $\func_*^r$ denotes the category $\func_*(\sC^{\times r},\D)$. To this end, consider the composition $\udl{\delta}_{\sC}\colon\epiCategory_{d}\op \subset \finite\op \xlongrightarrow{\func(-,\sC)} \cat$.    This  implements the data of diagonals on $\sC$ represented schematically as 
    \begin{center}
        \begin{tikzcd}
            \sC \rar["\Delta"] & \sC^{\times2}  \rar["\Delta"]&  \cdots \rar["\Delta"]& \sC^{\times(d-1)} \rar["\Delta"] & \sC^{\times d}\\
            {[1]} & {[2]} & \cdots & {[d-1]} & {[d]}
        \end{tikzcd}
    \end{center}
    The object $\func_*\in\func(\epiCategory_{d},\cat)$ is then defined to be the composite
    \[\func_*\colon \epiCategory_{d}\xlongrightarrow{\udl{\delta}_{\sC}\op} \cat\op \xlongrightarrow{\func_*(-,\D)} \cat.\]

    Next, \cref{thm:key_pigeonhole_principle} (2) guarantees that we may restrict to the  subcategories $\excisive_{d,r}^{\lrcorner}\subseteq \func_*^{r}$ to get an object $\widetilde{\underline{\excisive}}_{d}^{\lrcorner}\in\func(\epiCategory_{d},\cat)$ given schematically by 
    \begin{center}
        \begin{tikzcd}
            \elbowCat{d,1}  & \excisive_{{d,2}}^{\lrcorner}  \lar["\Delta^*"]&  \cdots \lar["\Delta^*"]& \excisive_{d,d-1}^{\lrcorner} \lar["\Delta^*"] & \elbowCat{d,d}\lar["\Delta^*"]\\
            {[1]} & {[2]} & \cdots & {[d-1]} & {[d]}.
        \end{tikzcd}
    \end{center}
    Finally,  passing to the cross--effect (bi)adjoints from \cref{cor:omnibus_multivariable_cross-effects}, we get the desired $\epiCategory_{d}$\textit{--category  of $d$--excisive functors} $\underline{\excisive}_{d}^{\lrcorner}\in\func(\epiCategory_{d}\op,\cat)$.
\end{cons}

Our next goal is to prove that this object is $\epiCategory_d$--presentable stable.

\begin{lem}\label{corollary: Exc admits indexed colimits}
Let $\sC$ be a small stable category and let $\D$ be a cocomplete stable category. Then $\elbowCatUdl{d}(\sC,\D)$ admits arbitrary $\epiCategory_{d}$--indexed coproducts.
\end{lem}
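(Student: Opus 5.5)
We use the criterion recalled in \cref{recollect:parametrised_colimits}: since $\epiCategory_d$ is orbital, $\elbowCatUdl{d}(\sC,\D)$ admits all indexed coproducts provided that (1) each restriction functor has a left adjoint, and (2) for every pullback in $\finite_{\epiCategory_d}$ the corresponding adjointed square commutes, giving the double coset formula. Both conditions can be read off from \cref{cons:parametrised_category_of_excisive_functors}, where the restriction along a surjection $f\colon [s]\twoheadrightarrow[r]$ is the cross--effect $\crossEffect_f\colon \elbowCat{d,r}(\sC,\D)\rightarrow \elbowCat{d,s}(\sC,\D)$.

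For (1), \cref{cor:omnibus_multivariable_cross-effects} shows that $\crossEffect_f$ has $\Delta_f^*$ as a left (and right) adjoint. Note that this corollary holds for $\D$ cocomplete stable, since the biadjunction of \cite[Lem. B.1]{heutsCategorifiedGoodwillie} only needs the Kan extensions along diagonals, which are finite sums here, together with the multireduction. We therefore set $f_!\coloneqq\Delta_f^*$. A general morphism in $\finite_{\epiCategory_d}$ with target an orbit $[r]$ is a finite coproduct of surjections into $[r]$. The induced left adjoint on such a coproduct is the sum of the $\Delta_{f_i}^*$, which exists because $\D$, and hence every $\elbowCat{d,r}$, is stable and so has finite sums.

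For (2), take a pullback square in $\finite_{\epiCategory_d}$ of surjections $f\colon A\twoheadrightarrow E$ and $g\colon B\twoheadrightarrow E$. By \cref{construction: beck-chevalley for reduced functors} its apex is $\coprod U$, where $U$ ranges over the $\epiCategory_d$--good subsets of $A\times_EB$. The adjointed square required by the criterion is the lax square \cref{eqn:beckChevalleyLaxSquareTRUNCATED}: its verticals are the restrictions $\crossEffect_g$ and $\oplus\crossEffect_{g_u}$, and its horizontals are the left adjoints $\Delta_f^*$ and $\oplus\Delta_{f_u}^*$. \cref{prop:doubcle_coset_truncated_to_d} says exactly that this square commutes. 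The general case, where the objects are arbitrary finite coproducts of orbits, follows because every functor involved is additive and sends coproducts in $\finite_{\epiCategory_d}$ to products of categories.

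The main obstacle is bookkeeping rather than new mathematics. We must check that the Beck--Chevalley transformation produced abstractly in \cref{eqn:double_coset_decomposition_parametrised}, from the counits and units of the adjunctions $\Delta^*_{(-)}\dashv\crossEffect_{(-)}$, agrees with the explicit transformation $\eta$ of \cref{equation: base change}. We plan to do this using \cref{remark: beck-chevalley is natural}: the adjunctions on the subdiagonal categories are restrictions of those on $\func_{\udl{\ast}}$, whose units come from $\Pi^*\dashv\Delta^*$ followed by multireduction. Hence the mate of the commuting square of restrictions is the map \cref{equation: base change}, truncated to $\epiCategory_d$--good summands. The summands with $|U|>d$ vanish by the pigeonhole argument in the proof of \cref{prop:doubcle_coset_truncated_to_d}.
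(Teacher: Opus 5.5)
Your proposal is correct and follows essentially the same route as the paper. Both arguments apply the orbital criterion of \cref{recollect:parametrised_colimits}, take the left adjoints from the biadjunction in \cref{cor:omnibus_multivariable_cross-effects}, and get the double coset formula from \cref{prop:doubcle_coset_truncated_to_d}. The paper does not re-derive the identification of the abstract mate with $\eta$ that you flag, because \cref{remark: beck-chevalley is natural} already supplies it inside the proof of \cref{proposition: double-coset formula}.
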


\begin{proof}
By \cref{recollect:parametrised_colimits}, it suffices to show that the (bi)adjoints of
$$\crossEffect_f \colon \excisive^{\lrcorner}_{d,r}(\sC,\D) \to \excisive^{\lrcorner}_{d,k}(\sC,\D), \;\; f\colon [k] \twoheadrightarrow [r] \in \epiCategory_{d}$$
from \cref{cor:omnibus_multivariable_cross-effects} enjoy the double-coset formula. But this is precisely supplied by \cref{prop:doubcle_coset_truncated_to_d}. 
\end{proof}

With this in place, we will prove the desired presentable stability statement.

\begin{thm}\label{thm:basic_properties_of_parametrised_excisive_category}
Let $\sC$ be a small stable category and let $\D$ be a presentable stable category. Then the $\epiCategory_{d}$--category $\elbowCatUdl{d}(\sC,\D)$ is a presentable $\epiCategory_{d}$--stable category.
\end{thm}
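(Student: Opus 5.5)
We have to check two things: that $\elbowCatUdl{d}(\sC,\D)$ is a presentable $\epiCategory_d$--category, and that it is $\epiCategory_d$--stable, i.e.\ fibrewise stable and $\epiCategory_d$--semiadditive. We use the characterisation of parametrised presentability recalled above: a $\baseCat$--category is presentable if and only if it is fibrewise presentable and admits all parametrised colimits. Moreover, all parametrised colimits are built from fibrewise colimits and indexed coproducts.

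\textbf{Step 1 (fibrewise properties).} At level $[r]$ the value is $\elbowCat{d,r}(\sC,\D)$. This is presentable by \cref{prop:reflection_from_all_excisive_into_elbow_excisive}, and stable by \cref{obs:elbow_categories_are_stable_subcats}. The restriction functors are the cross--effects $\crossEffect_f$. By \cref{cor:omnibus_multivariable_cross-effects} each $\crossEffect_f$ is biadjoint to $\Delta_f^*$, so it is both a left and a right adjoint and preserves all colimits and limits. Shah's criterion then shows that $\elbowCatUdl{d}(\sC,\D)$ admits all fibrewise colimits (and limits).

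\textbf{Step 2 (indexed coproducts).} Indexed coproducts are supplied by \cref{corollary: Exc admits indexed colimits}. There the left adjoints $f_!=\Delta_f^*$ satisfy the double coset formula by \cref{prop:doubcle_coset_truncated_to_d}. Combined with Step 1, the category admits all parametrised colimits and is fibrewise presentable, hence it is $\epiCategory_d$--presentable.

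\textbf{Step 3 (semiadditivity).} Dually, indexed products exist: the right adjoint $f_*$ of $\crossEffect_f$ is again $\Delta_f^*$, and the dual double coset condition holds by the same \cref{prop:doubcle_coset_truncated_to_d}. So both $f_!$ and $f_*$ are identified with $\Delta_f^*$. We must show that Nardin's norm map $f_!\Rightarrow f_*$ is an equivalence. This is the main obstacle, since the norm is built from the atomic double coset decomposition of $f^*f_!$ and its inclusion of the diagonal summand $\id$, and we must check that it agrees with our identification.

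The plan is as follows. The norm $f_!\to f_*$ is adjoint to a map $f^*f_!\to\id$. In our case this map is $\crossEffect_f\Delta_f^*\to\id$. By the double coset formula, $\crossEffect_f\Delta_f^*\simeq\bigoplus_U \Delta_{f_u}^*\crossEffect_{g_u}$, and the norm is defined as the projection onto the diagonal summand $U=A$, extended by zero on the other summands. It therefore suffices to check that, under the biadjunction, this projection is the counit of $\crossEffect_f\dashv\Delta_f^*$. For that we unwind the splitting of \cref{lemma: spilliting of the left adjoint}, whose summand indexed by the diagonal is exactly the projection to the cross--effect used in the biadjunction of \cite[Lem.~B.1]{heutsCategorifiedGoodwillie}. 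Since both $f_!$ and $f_*$ are $\Delta_f^*$, the norm is then an endomorphism of $\Delta_f^*$ that is identified with the identity. We expect this verification to be checkable first on the unrestricted categories $\func_{\udl{\ast}}(\sC^{\times r},\D)$, where the biadjunction is classical, and then restricted to $\elbowCat{d,r}$ using \cref{thm:key_pigeonhole_principle}. Together with fibrewise stability, this gives $\epiCategory_d$--stability.
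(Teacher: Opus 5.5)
Your proposal is correct and follows the paper's proof. Fibrewise presentability and stability come from \cref{prop:reflection_from_all_excisive_into_elbow_excisive}, parametrised colimits from the double coset formula via \cref{corollary: Exc admits indexed colimits}, presentability from the characterisation of presentable $\epiCategory_d$--categories, and semiadditivity from the biadjunction $\Delta_f^*\dashv\crossEffect_f\dashv\Delta_f^*$. The paper simply asserts that this biadjointness gives $\epiCategory_d$--semiadditivity, whereas your Step~3 reduces the invertibility of Nardin's norm, via a triangle identity, to identifying the diagonal projection $\crossEffect_f\Delta_f^*\to\id$ with the counit of $\crossEffect_f\dashv\Delta_f^*$, which is the right check to make; note also that you do not need a dual double coset formula there, since the norm only requires $f_*$ to exist, and that already follows from the biadjunction.
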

\begin{proof}
    By \cref{prop:reflection_from_all_excisive_into_elbow_excisive}, we know that the $\epiCategory_d$--category is fibrewise presentable and stable. By \cref{corollary: Exc admits indexed colimits},  $\elbowCatUdl{d}(\sC,\D)$ admits parametrised colimits. Therefore, by \cite[Thm. 6.1.2]{kaifPresentable}, we get that it is $\epiCategory_{d}$--presentable. In fact, from \cref{cor:omnibus_multivariable_cross-effects} (1), since the ``induction'' functors $(\Delta\times\id)^*$ are biadjoint, we even get that $\elbowCatUdl{d}$ is $\epiCategory_d$--semiadditive. Thus, by definition (cf.~\cref{recollect:parametrised_stability}), it is an $\epiCategory_d$--stable category.
\end{proof}

In the presence of multiplicative structures on $\sC$ and $\D$, we may endow $\elbowCatUdl{d}(\sC,\D)$ with a symmetric monoidal structure, i.e., it may be enhanced to an object in $\cmonoid(\widehat{\cat}_{\epiCategory_d})$ which is furthermore fibrewise presentably symmetric monoidal.

\begin{cons}\label{cons:functorial_symmetric_monoidal_cross_effects_on_multireduced}
    Let $\sC\in\calg(\cat\exact)$ and $\D\in\calg(\presentable^L_{\mathrm{st}})$.  We first construct  $\udl{\func}_{\udl{\ast}}(\sC^{\times\bullet},\D)^{\otimes}\in\cmonoid(\widehat{\cat}_{\epiCategory_d})$. First, recall Lurie's functorial Dwyer--Kan localisation  from \cite[Cons. 4.1.7.1]{lurieHA} in the left diagram
    \begin{center}
        \begin{tikzcd}
            \wcat \rar["L",shift left = 1] & \cat \lar[hook,shift left= 1, "\mathrm{equiv}"] && \cmonoid(\wcat) \rar["L",shift left = 1] & \cmonoid(\cat) \lar[hook,shift left= 1, "\mathrm{equiv}"]
        \end{tikzcd}
    \end{center}
    where $\mathrm{equiv}$ assigns to a category $\sC$ the pair $(\sC,W)$ where $W\subset \sC$ is the wide subcategory consisting of equivalences. Importantly, the functor $L\colon\wcat\rightarrow\cat$ is product--preserving. Hence, it induces the adjunction on  the right. There is also a finite--product--preserving functor $\mathrm{all}\colon \cat\rightarrow \wcat$ sending a category $\sC$ to the pair $(\sC,\sC)$.     Given these, now consider the composite 
    \begin{equation*}
        \begin{split}
            (\dayConvolution(\sC^{\times\bullet},\D)^{\otimes},\mathrm{all})\colon \epiCategory_d\op\longrightarrow\finite\op &\xlongrightarrow{(\sC^{\times\bullet})^{\otimes}} \cmonoid(\cat)\\
            &\xlongrightarrow{\dayConvolution(-,\D)^{\otimes}} \cmonoid(\widehat{\cat}) \xhookrightarrow{\mathrm{all}}\cmonoid(\mathrm{W}\widehat{\cat})
        \end{split}
    \end{equation*}
    where $\dayConvolution(-,\D)^{\otimes}$ is the Day convolution symmetric monoidal structure on the functor category $\func(-,\D)$ covariantly functorial along left Kan extensions.
    Next, recall from \cref{corollary: cross-effect is symmetric monoidal} that  the Bousfield localisation $(-)^{\mathrm{red}}\colon \func(\sC^{\times r},\D)\rightarrow \func_{\udl{\ast}}(\sC^{\times r},\D)$ canonically refines to a symmetric monoidal Bousfield localisation. In particular, if we write $W_r$ for the collection of morphisms inverted by this Bousfield localisation (so that $\func_{\udl{\ast}}(\sC^{\times r},\D)\simeq \func(\sC^{\times r},\D)[W_r^{-1}]$), we know that it is a $\otimes$--ideal. Moreover, for any surjection $f\colon [k]\twoheadrightarrow [r]$, since the solid square in 
    \begin{center}
        \begin{tikzcd}
        \func(\sC^{\times k},\D) \rar[shift right=1, "\Delta^*_f"'] \dar[shift right = 1, "(-)^{\mathrm{red}}"', dashed]& \func(\sC^{\times r},\D) \lar[shift right=1, "\Delta_{f!}"',dashed]\dar[shift right = 1, "(-)^{\mathrm{red}}"', dashed] \\
        \func_{\udl{\ast}}(\sC^{\times k},\D) \rar[shift right=1, "\Delta^*_f"'] \uar[hook,shift right = 1]& \func_{\udl{\ast}}(\sC^{\times r},\D)\uar[hook,shift right = 1]\lar[shift right=1, "\Delta_{f!}"',dashed]
        \end{tikzcd}
    \end{center}
commutes, so does the dashed square of left adjoints. Thus, we see that the functor $\Delta_{f!}$ from $\func(\sC^{\times r},\D)$ to $\func(\sC^{\times k},\D)$ sends $W_r$ to $W_k$. Observe also that by virtue of the dashed commuting square, the functor $\Delta_{f!}\colon \func_{\udl{\ast}}(\sC^{\times r},\D)\rightarrow \func_{\udl{\ast}}(\sC^{\times k},\D)$ is the cross--effect functor $\crossEffect_f$ by construction. 

All in all, since the $W_{r}$'s are tensor ideals and are preserved by $\Delta_{f!}$, we  may pointwise pass to the subfunctor \[(\dayConvolution(\sC^{\times \bullet},\D)^{\otimes}, W_{\bullet})\subset  (\dayConvolution(\sC^{\times \bullet},\D)^{\otimes},\mathrm{all})\in\func(\epiCategory_d\op,\cmonoid(\mathrm{W}\widehat{\cat})).\] 
Postcomposing with $L\colon \func(\epiCategory_d\op,\cmonoid(\mathrm{W}\widehat{\cat}))\rightarrow \func(\epiCategory_d\op,\cmonoid(\widehat{\cat}))$ yields the desired enhancement of $\func_{\udl{\ast}}(\sC^{\times \bullet},\D)$ together with the cross--effect functors to an object in $\func(\epiCategory_d\op,\cmonoid(\widehat{\cat}))$. 

Now, write $\operadsCat$ for the category of operads, so that we have a nonfull inclusion $\cmonoid(\widehat{\cat})\subset \operadsCat$. This says that it is a property for an operad to be a symmetric monoidal category and it is a property for a map of operads between symmetric monoidal categories, i.e., lax symmetric monoidal functors, to be a symmetric monoidal functor. So far, we have an object $\udl{\func}_{\udl{\ast}}(\sC^{\times\bullet},\D)\in\func(\epiCategory_d\op,\cmonoid(\widehat{\cat}))$ using the cross--effects as the transition maps. By \cref{prop:symmetric_monoidal_refinement_of_elbow_cross_effects}, we obtain the suboperad of $\epiCategory_d$--categories $\elbowCatUdl{d}(\sC,\D)\subseteq \udl{\func}_{\udl{\ast}}(\sC^{\times\bullet},\D) \in \func(\epiCategory_d\op,\operadsCat)$. By the same proposition, this suboperad has the property that it is fibrewise a symmetric monoidal category, and that the transition maps of operads are in fact symmetric monoidal functors. Hence, we get that $\elbowCatUdl{d}(\sC,\D)$ is in fact an object in $\func(\epiCategory_d\op,\cmonoid(\widehat{\cat}))\subset \func(\epiCategory_d\op,\operadsCat)$, as required. By construction, $\elbowCatUdl{d}(\sC,\D)$ is fibrewise presentably symmetric monoidal.
\end{cons}

With these preparations, the main result of this subsection is now rather easy.

\begin{thm}\label{thm:parametrised_excisive_category_is_symmetric_monoidal}
Let $\sC\in\calg(\cat\exact)$ and $\D\in \calg(\presentable^L_{\mathrm{st}})$. The $\epiCategory_{d}$--stable category $\elbowCatUdl{d}(\sC,\D)$ admits an $\epiCategory_{d}$--presentably symmetric monoidal structure which is fibrewise given by the Day convolution. 
\end{thm}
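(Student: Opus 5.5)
By \cref{cons:functorial_symmetric_monoidal_cross_effects_on_multireduced}, we already have $\elbowCatUdl{d}(\sC,\D)$ as an object of $\func(\epiCategory_d\op,\cmonoid(\widehat{\cat}))=\cmonoid(\widehat{\cat}_{\epiCategory_d})$. It is fibrewise given by the presentably symmetric monoidal structures of \cref{cor:symmetric_monoidal_structure_on_elbow_cats}, i.e.\ the Day convolution localised along $\excisiveApproximation^{\lrcorner}_{d,r}\excisiveApproximation_{\udl{m}}$. The transition functors $\crossEffect_f$ carry the symmetric monoidal structures of \cref{prop:symmetric_monoidal_refinement_of_elbow_cross_effects}. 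By \cref{thm:basic_properties_of_parametrised_excisive_category}, the underlying $\epiCategory_d$--category is $\epiCategory_d$--presentable stable. By the recollection on parametrised presentability, it therefore remains to check one thing: the tensor product commutes with parametrised colimits in each variable. By Shah's decomposition from \cref{recollect:parametrised_colimits}, parametrised colimits are generated by fibrewise colimits and indexed coproducts, so I will check these two classes separately.

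For fibrewise colimits, I will show that for each $r$ the Day convolution on $\elbowCat{d,r}(\sC,\D)$ commutes with colimits in each variable. This holds because the structure is presentably symmetric monoidal by \cref{cor:symmetric_monoidal_structure_on_elbow_cats}. The restrictions $\crossEffect_f$ must also preserve these colimits. They do, being left adjoints by the biadjunction of \cref{cor:omnibus_multivariable_cross-effects}.

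For indexed coproducts, the $\epiCategory_d$--indexed coproduct along $f\colon[k]\twoheadrightarrow[r]$ is the left adjoint of $\crossEffect_f$, namely $\Delta_f^*$. The required compatibility is the projection formula: the canonical map
\[\Delta_f^*(F\otimes\crossEffect_f G)\longrightarrow \Delta_f^*(F)\otimes G\]
must be an equivalence for $F\in\elbowCat{d,k}(\sC,\D)$ and $G\in\elbowCat{d,r}(\sC,\D)$. This is exactly \cref{corollary: projection_elbow_cats}. I will need to verify that the canonical map produced by the general parametrised theory, namely the mate of the symmetric monoidal structure on $\crossEffect_f$, agrees with the map appearing in that corollary. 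Since both arise as Beck--Chevalley mates of the same monoidality datum for $\crossEffect_f\simeq(\Delta_{f!}-)\reduce$ followed by localisation, this agreement should be formal.

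The main obstacle is precisely this identification. The symmetric monoidal structure on $\elbowCatUdl{d}$ was built via Dwyer--Kan localisation and suboperads, whereas the projection formula was proved by hand. I would handle it by working first in $\func_{\udl{\ast}}(\sC^{\times\bullet},\D)$. There the maps arise by localising the Day convolution left Kan extension functoriality, and the mate is computed by \cref{proposition: projection formula day} on representables. I would then pass to the smashing localisations, using that all inclusions preserve tensor products. Combining the fibrewise and indexed-coproduct checks shows that $\elbowCatUdl{d}(\sC,\D)$ lies in $\calg(\presentable^L_{\epiCategory_d})$, and hence in $\calg(\presentable^L_{\epiCategory_d-\mathrm{st}})$, with fibres given by the Day convolution.
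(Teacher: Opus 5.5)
Your proposal is correct and is essentially the paper's proof. Fibrewise presentable symmetric monoidality comes from \cref{cons:functorial_symmetric_monoidal_cross_effects_on_multireduced}, and compatibility of $\otimes$ with indexed coproducts is exactly the projection formula $\Delta_f^*(F\otimes\crossEffect_f G)\simeq \Delta_f^*(F)\otimes G$ of \cref{corollary: projection_elbow_cats}; your explicit fibrewise-colimit check and your identification of the parametrised mate with that map are extra rigour the paper leaves implicit, not a different route.
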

\begin{proof}
    We already know that  it is fibrewise presentably symmetric monoidal by \cref{cons:functorial_symmetric_monoidal_cross_effects_on_multireduced}. It thus suffices now to show that tensoring preserves indexed coproducts. In other words, for every surjection $f\colon [k]\twoheadrightarrow [r]$, $G\in\elbowCat{d,r}(\sC,\D)$, and $F\in\elbowCat{d,k}(\sC,\D)$, the canonical projection formula map $\Delta_f^*(F\otimes \crossEffect_fG)\longrightarrow (\Delta_f^*F)\otimes G$ in $\elbowCat{d,r}$ is an equivalence. But this is precisely \cref{corollary: projection_elbow_cats}.
\end{proof}

\subsection{The main theorem and a summarising dictionary}\label{subsection:main_theorem}\label{subsection:summarising_dictionary}
We now collect all the ingredients and prove the main result of the article. We first need to connect the notion of geometric fixed points functor as defined in \cref{definition: geometric fixed points} to that of the localisations studied in \cref{subsec:generators_and_localisations}, which we do in \cref{lem:geometric_fixed_points_of_elbow_cats_is_spectra}.

\begin{nota}\label{nota:proper_family_excisive_context}
    For $r\leq d$, we write $\epiCategory_{d,r}\coloneqq (\epiCategory_d)_{/[r]}$. In general, for $r\leq b\leq d$, we have inclusions $\iota_{d,b,r}\colon \epiCategory_{b,r}\subseteq \epiCategory_{d,r}$ and $\elbowCat{b,r}\subseteq \elbowCat{d,r}$. By \cref{prop:smashing_left_adjoint}, we get a smashing localisation associated to $\iota_{d,b,r}$
    \begin{center}
        \begin{tikzcd}
            \Phi^{d,b,r}(-)\coloneqq  \widetilde{E\proper}_{d,b,r}\otimes(-) \colon \presentable^L_{\epiCategory_{d,r}\mathrm{-st}} \rar[shift left = 1] & \presentable^L_{\epiCategory_{b,r}\mathrm{-st}}.\lar[hook,shift left = 1]
        \end{tikzcd}
    \end{center}
    where $\proper_{d,b,r}$ is the family in $\epiCategory_{d,r}$ of those objects $[k]\twoheadrightarrow[r]$ such that $k> b$.

    In the case of $b=r$, it will be convenient to adopt the notation $\proper_{d,r}\coloneqq\proper_{d,r,r}$ and $\Phi^{d,r}\coloneqq \Phi^{d,r,r}$. That is, $\proper_{d,r}$ is the family of objects in $\epiCategory_{d,r}$ away from the final object $([r]=[r])$. 
\end{nota}

\begin{nota}
    For $r\leq d$, we write  $\elbowCatUdl{d,r}\in\calg(\presentable^L_{\epiCategory_{d,r}})$ for the image of $\elbowCatUdl{d}$ under the restriction functor $\calg(\presentable^L_{\epiCategory_d})\rightarrow\calg(\presentable^L_{\epiCategory_{d,r}})$ induced by the forgetful map $\epiCategory_{d,r}\rightarrow\epiCategory_d$.
\end{nota}

\begin{lem}\label{lem:identification_of_geometric_fixed_points} \label{lem:geometric_fixed_points_of_elbow_cats_is_spectra}
    Let $r\leq b\leq d$. Evaluating the adjunction unit $\elbowCatUdl{d,r}\rightarrow \Phi^{d,b,r}\elbowCatUdl{d,r}$ at $(f\colon[k]\twoheadrightarrow[r])\in\epiCategory_{d,r}$ yields the map $\excisiveApproximation^{\lrcorner}_{b,k}\colon \elbowCat{d,k}\rightarrow \elbowCat{b,k}$. In particular,  evaluating $ \elbowCatUdl{d,r}\rightarrow \Phi^{{d,r}}\elbowCatUdl{d,r}$ at the final object $([r]=[r])\in\epiCategory_{d,r}$ yields the functor $\excisiveApproximation_{\udl{1}}\colon \elbowCat{d,r}\rightarrow \excisive_{\udl{1}}\simeq \spectra$.
\end{lem}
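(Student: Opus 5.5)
The plan is to use \cref{prop:descibing_kernel_of_geometric_fixed_points}: the unit $\elbowCatUdl{d,r}\rightarrow \Phi^{d,b,r}\elbowCatUdl{d,r}$ is the parametrised Verdier quotient of $\elbowCatUdl{d,r}$ by $\langle\proper_{d,b,r}\rangle$, and this quotient is computed fibrewise. So at an object $(f\colon[k]\twoheadrightarrow[r])$ I need two things. First, identify the fibre $\langle\proper_{d,b,r}\rangle(f)\subseteq\elbowCat{d,k}$. Second, show that this fibre equals the kernel of $\excisiveApproximation^{\lrcorner}_{b,k}\colon\elbowCat{d,k}\rightarrow\elbowCat{b,k}$. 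Since $\excisiveApproximation^{\lrcorner}_{b,k}$ is a Bousfield localisation with target $\elbowCat{b,k}$ (\cref{lem:inclusion_of_elbow_cats_of_smaller_degrees}), the Verdier quotient by its kernel is exactly $\elbowCat{b,k}$, and the quotient map is $\excisiveApproximation^{\lrcorner}_{b,k}$.

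For the first point, I would restrict along $\epiCategory_{d,r}/f\simeq\epiCategory_{d,k}$, which is harmless since the quotient commutes with basechange. By $\baseCat$--stability, the smallest localising $\baseCat$--subcategory containing the fibres over $\proper$ has value at $f$ generated by the inductions $g_!$ of objects of $\elbowCat{d,\ell}$, where $g\colon[\ell]\twoheadrightarrow[k]$ lies over $f$ with $\ell>b$. In the present category these inductions are $\Delta_g^*$, by \cref{cons:parametrised_category_of_excisive_functors} and \cref{cor:omnibus_multivariable_cross-effects}. Next, $\elbowCat{d,\ell}$ is generated by the objects $\Delta_h^*\unit_{d,m}$ for $h\colon[m]\twoheadrightarrow[\ell]$ (\cref{lem:generators_of_elbow_cats}), and $\Delta$'s compose. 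Hence the fibre is the localising subcategory of $\elbowCat{d,k}$ generated by $\{\Delta_g^*\unit_{d,m}\}$ with $g\colon[m]\twoheadrightarrow[k]$ and $m\geq b+1$. Here I use that $\ell\geq b+1$ forces $m\geq b+1$, and conversely every such $g$ occurs, taking $\ell=m$. This is precisely the generating set in \cref{prop:kernel_of_P_1} (for $r$ replaced by $k\leq b$), which says it generates $\ker\excisiveApproximation^{\lrcorner}_{b,k}$. If $k>b$, the object $f$ lies in the family, the quotient is zero, and this is consistent with the convention that $\elbowCat{b,k}=0$.

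One subtlety is that \cref{lem:generators_of_elbow_cats} and \cref{prop:kernel_of_P_1} are stated for $\sC=\spectra^{\omega}$, $\D=\spectra$. That is the case at hand, so no extra work is needed. The step I expect to be the main obstacle is checking that the parametrised description of $\langle\proper\rangle(f)$ really is the localising subcategory generated by inductions, that is, closed under restrictions as well. I would handle this with the double coset formula (\cref{prop:doubcle_coset_truncated_to_d}): a restriction of an induction from the family is a sum of inductions from objects $U$ that map to the family, and therefore lie in the family because it is a sieve. So the fibrewise localising subcategory generated by inductions is already a $\baseCat$--subcategory.

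For the final statement, take $b=r$ and $f=\id$. The first part gives $\excisiveApproximation^{\lrcorner}_{r,r}\colon\elbowCat{d,r}\rightarrow\elbowCat{r,r}$. Since $d-r+1$ becomes $1$ here, $\elbowCat{r,r}=\excisive_{\udl{1}}$: multilinear functors satisfy the sum condition automatically by \cref{lem:generators_of_truncated_subdivided_cubes}, and the localisation is $\excisiveApproximation_{\udl{1}}$. Finally, $\excisive_{\udl{1}}((\spectra^{\omega})^{\times r},\spectra)\simeq\spectra$ by evaluation at $(\sphere,\ldots,\sphere)$, since exact functors in each variable out of $\spectra^{\omega}$ are determined by their value on the sphere.
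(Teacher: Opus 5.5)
Your proposal is correct and follows the paper's proof. Both identify the fibre of $\langle\proper_{d,b,r}\rangle$ with the localising subcategory generated by $\{\Delta_g^*\unit_{d,m}\}_{m\geq b+1}$, then conclude via \cref{prop:kernel_of_P_1} and \cref{prop:descibing_kernel_of_geometric_fixed_points}. You also spell out what the paper calls ``easily seen'' (basechange to the slice over $f$, closure under restriction via the double coset formula, composing the $\Delta$'s, and the degenerate case $k>b$), and those details are sound.
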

\begin{proof}
    As in \cref{prop:descibing_kernel_of_geometric_fixed_points}, we write $\langle \proper_{d,b,r}\rangle\subseteq \elbowCatUdl{d,r}$  for the smallest presentable stable subcategory containing $\elbowCatUdl{d,r}(x)$ for all $x\in\epiCategory_{d,r}\backslash\epiCategory_{b,r}$. Then it is easily seen that $\langle \proper_{d,b,r}\rangle([r]=[r]) $ is the localising subcategory generated by the set as in \cref{prop:kernel_of_P_1}. Therefore, by \cref{prop:kernel_of_P_1,prop:descibing_kernel_of_geometric_fixed_points}, we obtain the desired conclusion. 
\end{proof}

We are at last ready to state and prove the main \cref{alphThm:excisive_functors_as_spectral_mackey}. For this, recall from \cref{thm:parametrised_excisive_category_is_symmetric_monoidal} and \cref{rmk:universal_map_from_spectra} that we have a unique symmetric monoidal left adjoint functor $\unitMapFromSpectra{}\colon \myuline{\spectra}\rightarrow\elbowCatUdl{d}$. 

\begin{thm}\label{thm:excisive_functors_as_spectral_mackey}
    Let $d$ be a positive integer. The symmetric monoidal functor $\unitMapFromSpectra{}\colon \myuline{\spectra}_d\rightarrow \elbowCatUdl{d}$ is an equivalence of objects in $\calg(\presentable^L_{\epiCategory_d\mathrm{-st}})$.
\end{thm}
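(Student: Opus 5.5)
The plan is to apply the abstract recognition principle \cref{thm:abstractSpectraRecognition} to $\udl{\A}=\elbowCatUdl{d}=\elbowCatUdl{d}(\spectra^{\omega},\spectra)$, with base category $\baseCat=\epiCategory_d$. This base is inductive orbital by \cref{example:atomic_orbital}. By \cref{thm:basic_properties_of_parametrised_excisive_category,thm:parametrised_excisive_category_is_symmetric_monoidal}, $\elbowCatUdl{d}$ is an object of $\calg(\presentable^L_{\epiCategory_d\mathrm{-st}})$, so the unit map $\unitMapFromSpectra{}$ exists and is symmetric monoidal. It then remains to verify the two conditions of \cref{hypothesis:sphereInvertibleCategories} at each $[r]\in\epiCategory_d$.

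Since the slice $(\epiCategory_d)_{/[r]}=\epiCategory_{d,r}$ has final object $\id_{[r]}$, it suffices to work with $\elbowCatUdl{d,r}$ evaluated at $\id_{[r]}$, i.e.\ with $\elbowCat{d,r}$.

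\textbf{Condition (1).} The parametrised induction along $f\colon[k]\twoheadrightarrow[r]$ is $f_!=\Delta_f^*$ (\cref{cons:parametrised_category_of_excisive_functors}). The representables are therefore exactly $\{\Delta_f^*\unit_{d,k}\}$, where $\unit_{d,k}$ is the unit of $\elbowCat{d,k}$. These generate $\elbowCat{d,r}$ by \cref{lem:generators_of_elbow_cats}. They are compact because they are dualisable (\cref{cor:generators_are_dualisable}) and the unit is compact: $\map(\unit_{d,k},F)\simeq\loops\crossEffect_{t_k}(\ldots)$ evaluated at spheres, as in the proof of \cref{lem:generators_of_elbow_cats}, and evaluation at spheres commutes with colimits because the inclusion into $\excisive_{d,r}\subseteq\func_{\udl{\ast}}$ preserves colimits by \cref{prop:reflection_from_all_excisive_into_elbow_excisive}.

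\textbf{Condition (2).} This is the main step. Since $\widetilde{E\proper}\otimes-$ is a smashing localisation,
\[
\mapsp(\widetilde{E\proper},\widetilde{E\proper})\simeq\mapsp(\unit_{d,r},\widetilde{E\proper}\otimes\unit_{d,r}).
\]
The right-hand side is the endomorphism spectrum of the unit in the localised category $\Phi^{d,r}\elbowCatUdl{d,r}(\id_{[r]})$. By \cref{lem:geometric_fixed_points_of_elbow_cats_is_spectra}, this localisation is $\excisiveApproximation_{\udl{1}}\colon\elbowCat{d,r}\to\excisive_{\udl{1}}((\spectra^{\omega})^{\times r},\spectra)$. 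Multireduced, multilinear functors out of $(\spectra^\omega)^{\times r}$ form a category equivalent to $\spectra$ via evaluation at $(\sphere,\ldots,\sphere)$; note that $\excisive_{\udl 1}=\elbowCat{r,r}$. The localised unit corresponds to $\sphere^{\otimes r}$, i.e.\ to the unit $\sphere$, so its endomorphism spectrum is $\sphere$. The key point to check is that the symmetric monoidal localisation $\excisiveApproximation_{\udl{1}}$ sends $\unit_{d,r}$ to the Day convolution unit of multilinear functors, namely $X_1\otimes\cdots\otimes X_r$, whose value at spheres is $\sphere$. This follows because $\excisiveApproximation_{\udl 1}$ is symmetric monoidal, so units go to units.

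With both conditions verified, \cref{thm:abstractSpectraRecognition} gives that $\unitMapFromSpectra{}$ is an equivalence in $\calg(\presentable^L_{\epiCategory_d\mathrm{-st}})$, since equivalences of parametrised categories are detected levelwise and symmetric monoidality is already built in.

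I expect the main obstacle to be making the identification in condition (2) precise. One must match the abstract $\widetilde{E\proper}_{[r]}$ for $\epiCategory_{d,r}$ with the concrete Verdier quotient by the generators in \cref{prop:kernel_of_P_1} for $b=r$. This is exactly what \cref{lem:geometric_fixed_points_of_elbow_cats_is_spectra} supplies, together with checking that the localisation is compatible with the monoidal unit.
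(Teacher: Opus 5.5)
Your proposal is correct and is the paper's argument: apply \cref{thm:abstractSpectraRecognition}, with \cref{hypothesis:sphereInvertibleCategories}~(1) coming from \cref{lem:generators_of_elbow_cats} (the representables are $f_!\unit_{d,k}=\Delta_f^*\unit_{d,k}$), and (2) coming from \cref{lem:geometric_fixed_points_of_elbow_cats_is_spectra}, which identifies the $\widetilde{E\proper}$--localisation at the final object of $\epiCategory_{d,r}$ with $\excisiveApproximation_{\udl{1}}\colon\elbowCat{d,r}\to\excisive_{\udl{1}}\simeq\spectra$. Your explicit compactness check (dualisable objects together with a compact unit) fills in a detail the paper leaves implicit; the only fix is notational: the correct formula is $\map(\unit_{d,k},F)\simeq\loops F(\sphere,\ldots,\sphere)$, that is, $\Delta_{t_k}^*F$ evaluated at $\sphere$, and $\crossEffect_{t_k}$ does not appear.
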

\begin{proof}
    By \cref{lem:generators_of_elbow_cats}, we get that $\elbowCatUdl{d}$ satisfies \cref{hypothesis:sphereInvertibleCategories} (1), and by \cref{lem:geometric_fixed_points_of_elbow_cats_is_spectra}, $\elbowCatUdl{d}$ satisfies \cref{hypothesis:sphereInvertibleCategories} (2). Therefore, by \cref{thm:abstractSpectraRecognition}, we obtain the desired conclusion.
\end{proof}

For example, a straightforward consequence of \cref{thm:excisive_functors_as_spectral_mackey} is the following:

\begin{cor}
    Let $d\geq 1$,  $\sC$ be a small stable category, and $\D$ a presentable stable category. Then $\elbowCatUdl{d}(\sC,\D)$ admits a unique module structure in $\presentable^L_{\epiCategory_d}$ over $\elbowCatUdl{d}$. Moreover, if $\sC$ is  exact symmetric monoidal  and $\D$ is  presentably symmetric monoidal, then  $\elbowCatUdl{d}(\sC,\D)$  attains a unique $\elbowCatUdl{d}$--commutative algebra structure in $\presentable^L_{\epiCategory_d}$. 
\end{cor}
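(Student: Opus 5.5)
The plan is to transport everything along the equivalence of \cref{thm:excisive_functors_as_spectral_mackey}. That theorem identifies $\elbowCatUdl{d}=\elbowCatUdl{d}(\spectra^{\omega},\spectra)$ with $\myuline{\spectra}_d$, which is an idempotent algebra in $\presentable^L_{\epiCategory_d}$ by \cref{recollect:parametrised_stability}. For an idempotent algebra $\udl{E}$ in a presentably symmetric monoidal category, being an $\udl{E}$--module is a property of an object, and any module structure is unique. Concretely, $\udl{M}$ is a module precisely when the unit map $\udl{M}\rightarrow\udl{E}\otimes\udl{M}$ is an equivalence. For $\udl{E}=\myuline{\spectra}$, the modules are exactly the $\epiCategory_d$--presentable stable categories, by \cite[Prop. 2.2.19]{kaifNoncommMotives}. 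Moreover, $\module_{\udl{E}}$ is a full, symmetric monoidal localisation of the ambient category, so commutative algebra structures also transfer uniquely.

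The first assertion then follows in two steps. First, \cref{thm:basic_properties_of_parametrised_excisive_category} shows that $\elbowCatUdl{d}(\sC,\D)$ lies in $\presentable^L_{\epiCategory_d\mathrm{-st}}\simeq\module_{\myuline{\spectra}}(\presentable^L_{\epiCategory_d})$. Second, since $\myuline{\spectra}\simeq\elbowCatUdl{d}$ as commutative algebras, $\elbowCatUdl{d}$ is itself an idempotent algebra with the same category of modules. Hence $\elbowCatUdl{d}(\sC,\D)$ carries a unique $\elbowCatUdl{d}$--module structure, namely the one transported along $\unitMapFromSpectra{}$. The space of module structures is contractible, since it is identified with the space of $\myuline{\spectra}$--module structures. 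This is the standard fact that for an idempotent algebra $E$, the forgetful functor $\module_E\rightarrow\mathcal{V}$ is fully faithful.

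For the multiplicative assertion, \cref{thm:parametrised_excisive_category_is_symmetric_monoidal} gives $\elbowCatUdl{d}(\sC,\D)\in\calg(\presentable^L_{\epiCategory_d})$. Its underlying object is $\epiCategory_d$--stable, so it lies in $\calg(\presentable^L_{\epiCategory_d\mathrm{-st}})$. This category is equivalent to $\calg(\module_{\myuline{\spectra}})$, which in turn is equivalent to $\calg(\presentable^L_{\epiCategory_d})_{\myuline{\spectra}/}$, again because $\myuline{\spectra}$ is idempotent. Thus there is a unique commutative algebra map $\myuline{\spectra}\rightarrow\elbowCatUdl{d}(\sC,\D)$: the mapping space out of the unit of $\calg(\module_{\myuline{\spectra}})$ is contractible. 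Precomposing with the inverse of the equivalence of \cref{thm:excisive_functors_as_spectral_mackey} yields the unique $\elbowCatUdl{d}$--algebra structure.

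The only point needing care, and the step I expect to be the main obstacle, is matching the several notions of \emph{stable} in play: the module category over the idempotent algebra, the category $\presentable^L_{\epiCategory_d\mathrm{-st}}$ equipped with the symmetric monoidal structure from \cref{cons:sym_mon_structure_on_presentable_stable}, and the objects actually produced by \cref{thm:basic_properties_of_parametrised_excisive_category} and \cref{thm:parametrised_excisive_category_is_symmetric_monoidal}. I would settle this by citing \cite[Prop. 2.2.19]{kaifNoncommMotives} together with the standard lemma (Lurie, HA~4.8.2.9) that modules over an idempotent algebra form a smashing localisation in which commutative algebras are exactly the commutative algebras under $E$.
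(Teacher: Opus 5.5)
Your proposal is correct and follows the paper's proof. The paper likewise uses \cref{thm:basic_properties_of_parametrised_excisive_category} to place $\elbowCatUdl{d}(\sC,\D)$ in $\presentable^L_{\epiCategory_d\mathrm{-st}}$, invokes \cite[Prop. 2.2.19]{kaifNoncommMotives} for the unique module structure over the idempotent algebra $\myuline{\spectra}_d$, and transports it along \cref{thm:excisive_functors_as_spectral_mackey}; the only difference is that you spell out the multiplicative part (via \cref{thm:parametrised_excisive_category_is_symmetric_monoidal} and $\calg(\module_{\myuline{\spectra}})\simeq\calg(\presentable^L_{\epiCategory_d})_{\myuline{\spectra}/}$), which the paper simply declares clear.
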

\begin{proof}
    By \cref{thm:basic_properties_of_parametrised_excisive_category}, $\elbowCatUdl{d}(\sC,\D)$ is $\epiCategory_d$--presentable--stable, and therefore, by \cite[Prop. 2.2.19]{kaifNoncommMotives}, it admits a unique module structure over the idempotent algebra $\myuline{\spectra}_d$, which by \cref{thm:excisive_functors_as_spectral_mackey}, is equivalent to $\elbowCatUdl{d}$. The statement about symmetric monoidal structures is now also clear.
\end{proof}

To end this subsection, we offer a picture that contextualises our work by comparing it to the equivariant situation. To this end, let us first recall the situation in the latter setting. 

Let $G$ be a finite group, $N\triangleleft G$ be a normal subgroup, and  $H, K\leq G$ be subgroups such that $N\leq H$, and let $\overline{H}\leq G/N$ denote the image of $H $ under $G\twoheadrightarrow G/N$.  There are then two orthogonal and complementary perspectives on how to present the datum of a genuine $G$--spectrum, namely via its \textit{genuine fixed points} and \textit{geometric fixed points}, respectively. The categories governing these data may be summarised by the following diagram of indexing categories
\[
\begin{tikzcd}[row sep = small]
    \cdot\ar[dd, dashed, "\text{genuine fixed points}"' description]&&&\cdot \ar[lll, dashed, "\text{geometric fixed points}"'] & \orbit_{G/N} \ar[rr,hook] && \orbit_G \\
     &&& &\orbit_{\overline{H}}\uar&  &\orbit_H \ar[u]& \orbit_K\ar[ul]\\
     \cdot&&& & \vdots \uar&& \vdots \uar\ar[ur]& \vdots \uar\ar[ul]
\end{tikzcd}
\]
where we have used the standard identification $\orbit_H\simeq (\orbit_G)_{/G/H}$ to obtain the upward--pointing arrows. In this way, we see that the horizontal direction governs geometric fixed point functorialities, and the vertical direction governs the restriction/induction/genuine fixed points functorialities.

Coming back to the setting of  calculus, the analogue is then the following: 

\[
\begin{tikzcd}[row sep =small]
    \cdot \ar[dddd,dashed, "\text{genuine}"description]&  &\ar[ll,dashed, "\text{geometric}"']\epiCategory_2 \rar[hook]&  \epiCategory_3\rar[hook] &  \epiCategory_4 \rar[hook]& \epiCategory_{5}\rar[hook] & \cdots\\
    &&\epiCategory_{2,2}\uar\rar[hook] & \epiCategory_{3,2} \uar\rar[hook]& \epiCategory_{4,2}\uar \rar[hook]& \epiCategory_{5,2}\uar \rar[hook]& \cdots\\
    && & \epiCategory_{3,3} \uar\rar[hook]& \epiCategory_{4,3}\uar \rar[hook]& \epiCategory_{5,3}\uar \rar[hook]& \cdots\\
    &&& & \epiCategory_{4,4}\uar \rar[hook]& \epiCategory_{5,4}\uar \rar[hook]& \cdots\\
    \cdot&&   & & & \epiCategory_{5,5}\uar \rar[hook]& \cdots
\end{tikzcd}
\]
where now the vertical direction involves the cross--effects and restrictions along the diagonal as in \cref{thm:key_pigeonhole_principle}, and  the horizontal direction involves the excisive approximations from \cref{lem:identification_of_geometric_fixed_points} (and hence also the derivatives since these may be computed by a combination of excisive approximations and cross--effects). It is worth emphasising here that unlike in the equivariant situation, we do \textit{not} have an equivalence $\epiCategory_{d,r}\coloneqq (\epiCategory_d)_{/[r]}\simeq \epiCategory_r$, and this accounts for some fundamental qualitative differences between the two settings.

Combining \cref{lem:identification_of_geometric_fixed_points}  with \cref{thm:excisive_functors_as_spectral_mackey} and \cref{example:stable_recollement_mackey_functors}, the stable recollement associated to the inclusion $\epiCategory_{b,r}\subseteq \epiCategory_{d,r}$ for $r\leq b\leq d$ looks like

\begin{equation}\label{eqn:multiexcisive_stable_recollement_with_mackey_fibre}
        \begin{tikzcd}
             \elbowCat{b,r}\ar[rr, hook] && \elbowCat{d,r}\ar[rr,  two heads]\ar[ll, two heads, bend left = 40]\ar[ll, "\excisiveApproximation^{\lrcorner}_{b,r}"' description, two heads, bend right = 40]&& \mackey(\epiCategory_{d\backslash b, r};\spectra). \ar[ll,  hook, bend right = 40]\ar[ll,  hook, bend left = 40]
        \end{tikzcd}
    \end{equation}
where $\excisiveApproximation^{\lrcorner}_{b,r}$ is the ``geometric fixed points'' functor. This specialises to the usual stable recollement 
\begin{equation*}
        \begin{tikzcd}
             \excisive_{d-1}\ar[rr,  hook] && \excisive_{d}\ar[rr, "\partial_d", two heads]\ar[ll, "\excisiveApproximation^{d-1}"' description, two heads, bend left = 40]\ar[ll, "\excisiveApproximation_{d-1}"' description, two heads, bend right = 40]&&\spectra^{B\Sigma_d}. \ar[ll, "((-)\otimes(\bullet)^{\otimes d})_{h\Sigma_d}"', hook, bend right = 40]\ar[ll, "((-)\otimes(\bullet)^{\otimes d})^{h\Sigma_d}", hook, bend left = 40]
        \end{tikzcd}
    \end{equation*}
associated to Kuhn's squares \cite{Kuhn04} by setting $b=d-1$, $r=1$ and $P^{d-1}$ is McCarthy's dual calculus approximation \cite{McCarthy99}.

We now summarise the foregoing discussion in a dictionary translating between genuine equivariant stable homotopy theory and stable Goodwillie calculus, extending the partial one of \cite{ABHS24}. 

\begin{figure}[ht]
    \begin{tabular}{|c|c|}
        \hline
         Equivariant homotopy theory& Goodwillie calculus \\ \hline
         \hline
         $G$ & $d$\\
         \hline
         $e\leq H\leq G$& $[d]\geq [r]\geq [1]$\\
         \hline
         $\orbit_G$ & $\epiCategory_d$\\
         \hline
         $\orbit_H\simeq (\orbit_G)_{/G/H}$ for $H\leq G$ & $(\epiCategory_d)_{/[r]}$ for $r\leq d$\\
         \hline
         $\spectra_H$ for $H\leq G$ & $\elbowCat{d,r}$ for $r\leq d$\\
         \hline
         $\res^G_H\colon \spectra_G\rightarrow \spectra_H$ for $H\leq G$ & \begin{tabular}{c}$\crossEffect_f\colon \excisive_d\rightarrow \elbowCat{d,r}$ along \\the unique surjection $f\colon [r]\twoheadrightarrow [1]$\end{tabular}\\
         \hline
         $\ind^G_H\colon \spectra_H\rightarrow \spectra_G$ for $H\leq G$ & \begin{tabular}{c}$\Delta^*_f\colon \elbowCat{d,r}\rightarrow \excisive_d$ along\\the unique surjection $f\colon [r]\twoheadrightarrow [1]$\end{tabular}\\
         \hline
         $\ind^G_H\dashv \res^G_H \dashv \ind^G_H$ & $\Delta_f^*\dashv \crossEffect_f\dashv \Delta_f^*$\\
         \hline
         $\widetilde{\Phi}^K\colon \spectra_H\rightarrow\spectra_{W_HK}$ for $K\leq H\leq G$ & $\excisiveApproximation^{\lrcorner}_{b,r}\colon \elbowCat{d,r}\rightarrow \elbowCat{d,r}$ for $r\leq b\leq d$ \\
         \hline
         $\Phi^G\colon \spectra_G\rightarrow \spectra$ & $P_1\colon \excisive_d\rightarrow \spectra$\\
         \hline
         $\sphere_G$ & $\unit_{d}=P_d\susps\loops(-)$\\ \hline
         $A(H)$ for $H\leq G$ & \begin{tabular}{c}$A(d,r)$ for $r\leq d$, and $A(d,1)=A(d)$, \\cf. \cref{cons:completion_at_ideals}\end{tabular}\\
         \hline
         $\forget\colon \spectra_G\rightarrow \spectra^{BG}$ & $\partial_d \colon \excisive_d\rightarrow \spectra^{B\Sigma_d}$\\
         \hline
         $(-)^G\colon \spectra_G\rightarrow \spectra$ & $\eval_{\sphere}\colon \excisive_d \rightarrow \spectra$\\
         \hline
         A family $\family$ of subgroups of $G$ & The interval $[d]\geq [r]$ for some $d\geq r$ \\
         \hline
    \end{tabular}
    \caption{A genuine-equivariant-to-Goodwillie-calculus dictionary.}\label{table:dictionary}
\end{figure}

\subsection{Application: Mackey model for all multiexcisive functors}\label{subsection:mackeYy_model_multiexcisive}

Because the equivalence in \cref{thm:excisive_functors_as_spectral_mackey} is that of parametrised categories, it is therefore compatible with the natural stratifications on both sides. In this short subsection, we show how to exploit this observation to deduce a Mackey model for all multiexcisive functors without any cutoff degrees of excisiveness. Of some note will be the disappearance of the subdiagonality conditions when one passes to infinity.

\begin{nota}
    We write $\epiCategory_{\infty,r}$ for $\epiCategory_{/[r]}$, so that $\epiCategory_{\infty,1}=\epiCategory$. Let $\D\in\presentable^L_{\mathrm{st}}$. We write $\mackey_{\mathrm{fs}}(\epiCategory_{\infty,r};\D)\subseteq \mackey(\epiCategory_{\infty,r};\D)$ for the full subcategory of finitely supported Mackey functors. By applying $\D\otimes-$ to the stable recollement from \cref{example:stable_recollement_mackey_functors}, we get inclusions $p^*\colon \mackey(\epiCategory_{d,r};\D)\hookrightarrow \mackey(\epiCategory_{d+1,r};\D)$  with essential image those $\epiCategory_{d+1,r}$--Mackey functors that vanish on any $([d+1]\twoheadrightarrow[r])\in\epiCategory_{d+1,r}$. Taking the sequential colimit along the  inclusions $\mackey(\epiCategory_{d,r};\D)\xhookrightarrow{p^*} \mackey(\epiCategory_{d+1,r};\D)\subseteq \mackey(\epiCategory_{\infty,r};\D)$, we obtain an identification $\mackey_{\mathrm{fs}}(\epiCategory_{\infty,r};\D)\simeq \colim_d\mackey(\epiCategory_{d,r};\D)$.

    We also write $\multiexcisive(\sC^{\times r},\D)\subseteq \func_{\udl{\ast}}(\sC^{\times r},\D)$ for the full subcategory identified as the sequential colimit of $\colim_d\elbowCat{d,r}(\sC,\D)$ where the inclusions $\elbowCat{d,r}(\sC,\D)\subseteq \elbowCat{d+1,r}(\sC,\D)$ are the colimit preserving right adjoints of $\excisiveApproximation^{\lrcorner}_{d,r}$. Because the subdiagonality condition involves larger and larger $d$, we see that $\multiexcisive(\sC^{\times r},\D)$ is just the full subcategory of functors which are reduced and finite excisive in each of the $r$ variables.
\end{nota}

\begin{cor}\label{cor:finitely_supported_equivalence}
    Let $\D\in\presentable^L_{\mathrm{st}}$. There is  an equivalence \[\multiexcisive((\spectra^{\omega})^{\times r},\D)\simeq \mackey_{\mathrm{fs}}(\epiCategory_{\infty,r};\D).\]
\end{cor}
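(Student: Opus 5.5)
We would prove this by writing both sides as sequential colimits and identifying the colimit diagrams levelwise using \cref{thm:excisive_functors_as_spectral_mackey}. By the preceding notation, $\multiexcisive((\spectra^{\omega})^{\times r},\D)\simeq\colim_d\elbowCat{d,r}(\spectra^{\omega},\D)$ along the inclusions $\elbowCat{d,r}\subseteq\elbowCat{d+1,r}$ of \cref{lem:inclusion_of_elbow_cats_of_smaller_degrees}. Likewise, $\mackey_{\mathrm{fs}}(\epiCategory_{\infty,r};\D)\simeq\colim_d\mackey(\epiCategory_{d,r};\D)$ along the inclusions $p^*$. It therefore suffices to produce compatible equivalences $\elbowCat{d,r}(\spectra^{\omega},\D)\simeq\mackey(\epiCategory_{d,r};\D)$ intertwining these inclusions.

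For the levelwise equivalence, first reduce to $\D=\spectra$. We would argue that $\elbowCatUdl{d}(\spectra^{\omega},\D)\simeq\elbowCatUdl{d}\otimes\D$ in $\presentable^L_{\epiCategory_d}$, where $\D$ is taken constant. The reasons are as follows.
\begin{itemize}
\item $\func((\spectra^{\omega})^{\times r},\D)\simeq\func((\spectra^{\omega})^{\times r},\spectra)\otimes\D$.
\item The subcategories $\func_{\udl{\ast}}$, $\excisive_{d,r}$ and $\elbowCat{d,r}$ are cut out by smashing localisations: the multireduction (\cref{corollary: multireduction is a smashing localization}), $\excisiveApproximation_{\udl{m}}$ (\cref{corollary: excisive approximation is smashing}) and $\excisiveApproximation^{\lrcorner}_{d,r}$ (\cref{cor:symmetric_monoidal_structure_on_elbow_cats}). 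Smashing localisations commute with $-\otimes\D$.
\item The transition functors $\crossEffect_f$ are compatible with this identification.
\end{itemize}
Then \cref{thm:excisive_functors_as_spectral_mackey} gives $\elbowCatUdl{d}(\spectra^{\omega},\D)\simeq\myuline{\spectra}\otimes\D$. Evaluating at $[r]$ gives $\mackey(\epiCategory_{d,r};\spectra)\otimes\D\simeq\mackey(\epiCategory_{d,r};\D)$, which holds since Mackey functor categories are dualisable, being compactly generated.

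For compatibility in $d$, restrict the equivalence of \cref{thm:excisive_functors_as_spectral_mackey} along $\epiCategory_{d,r}\to\epiCategory_{d+1,r}$. The equivalence $\myuline{\spectra}_{d+1}\simeq\elbowCatUdl{d+1}$ is one of $\epiCategory_{d+1}$-categories. It therefore commutes with the smashing localisations $\Phi^{d+1,d,r}=\widetilde{E\proper}_{d+1,d,r}\otimes-$ of \cref{prop:smashing_left_adjoint}, because these are defined by tensoring with a universal space. On the calculus side, \cref{lem:identification_of_geometric_fixed_points} identifies this localisation at level $[r]$ with $\excisiveApproximation^{\lrcorner}_{d,r}\colon\elbowCat{d+1,r}\to\elbowCat{d,r}$. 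On the Mackey side, by \cref{prop:descibing_kernel_of_geometric_fixed_points} and \cref{example:stable_recollement_mackey_functors}, it is the quotient $p_!$ in the recollement \cref{eqn:recollment_mackey}. Passing to right adjoints shows that the inclusions $\elbowCat{d,r}\subseteq\elbowCat{d+1,r}$ correspond to the $p^*$. Tensoring with $\D$ preserves all of this, and taking the colimit over $d$ in $\presentable^L$ (equivalently, the union of full subcategories) yields the equivalence.

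The main obstacle is making the levelwise identifications coherent in $d$ as a diagram, rather than square by square. We would handle this by noting that each square consists of fully faithful inclusions. Commutation of the squares is then a property, and a sequential colimit of fully faithful inclusions is simply the union of their essential images. Finally, one checks that the resulting functor sends $F$ to $(k_1,\ldots,k_r)\mapsto\crossEffect_{k_1,\ldots,k_r}F(\sphere,\ldots,\sphere)$, using that restriction in $\myuline{\spectra}$ corresponds to $\crossEffect_f$ and that the genuine fixed points at level $[r]$ are given by evaluating at spheres.
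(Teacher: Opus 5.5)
Your proposal is correct and follows the same route as the paper: you use \cref{thm:excisive_functors_as_spectral_mackey} together with \cref{lem:identification_of_geometric_fixed_points} to see that the inclusions $\elbowCat{d,r}\subseteq\elbowCat{d+1,r}$ correspond to the $p^*$, tensor with $\D$, and pass to the sequential colimit, spelling out the $-\otimes\D$ reduction and the colimit bookkeeping that the paper leaves implicit. One small correction: the colimit must be taken in $\largeCat$, as a union of full subcategories, not in $\presentable^L$. There it is computed as the limit along the right adjoints of the inclusions, which is strictly larger than the union, so your parenthetical ``equivalently'' is false; your argument, however, only ever uses the union.
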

\begin{proof}
    By \cref{thm:excisive_functors_as_spectral_mackey} and \cref{lem:identification_of_geometric_fixed_points}, we see that the fibrewise colimit preserving inclusions $\Phi^{d,b,r}\myuline{\spectra}_{d,r}\subseteq \myuline{\spectra}_{d,r}$ and $\Phi^{d,b,r}\elbowCatUdl{d,r}\subseteq \elbowCatUdl{d,r}$ are compatible. Thus we get  $\multiexcisive((\spectra^{\omega})^{\times r},\D) \simeq \colim_d\elbowCat{d,r}\otimes \D\simeq \colim_d\mackey(\epiCategory_{d,r};\D)\simeq \mackey_{\mathrm{fs}}(\epiCategory_{\infty,r};\D) $ as required.
\end{proof}

\begin{rmk}\label{rmk:multivariable_classification_not_currying}
    Alternatively, one could try to obtain the identifications for $r\geq 2$ from above by applying the identification for $r=1$ iteratively via the equivalence of $\multiexcisive((\spectra^{\omega})^{\times r},\D)$ to the category $ \multiexcisive((\spectra^{\omega})^{\times (r-1)},\excisive(\spectra^{\omega},\D))$. This will then yield the equivalence
    \[\multiexcisive((\spectra^{\omega})^{\times r},\D) \simeq \func^{\mathrm{multi}-\times}_{\mathrm{fs}}(\spanCat(\finite_{\epiCategory})^{\times r},\D)\] where the right hand side are finitely supported functors which preserve finite products in each of the $r$ variables. One might hope that this immediately implies \cref{cor:finitely_supported_equivalence} since $\epiCategory_{\infty,r}\simeq \epiCategory^{\times r}$. However, this is not as direct as one might first surmise because it is not true that $\finite_{\epiCategory_{\infty,r}}$ is equivalent to $(\finite_{\epiCategory})^{\times r}$.
\end{rmk}

\subsection{Application: a special case of the Segal conjecture in calculus}\label{subsection:segal}

One of the virtues of the symmetric monoidal equivalence $\mackey(\epiCategory_d;{\spectra})\simeq \excisive_{d}$ is that we can transport the natural $t$--structure on the left to one that can be fruitfully used to filter object on the right in a way which interacts in known ways with the symmetric monoidal structures on both sides. Furthermore, the full parametrised equivalence $\elbowCatUdl{d}\simeq \myuline{\spectra}_d$ from our main theorem also guarantees that the natural stratifications on boths sides coincide.  The purpose of this final subsection is to take advantage of  these points to begin an exploration of a Segal conjecture type statement in Goodwillie calculus.

Our first order of business is to formulate the statement of a ``Segal conjecture in calculus'', following the example of \cite{greenleesMayMU}. This rests on the following two constructions:

\begin{cons}[Goodwillie--Burnside augmentation ideal]\label{cons:completion_at_ideals}
    As already used in the main body of the article, there is a symmetric monoidal functor $\crossEffect_d\colon \excisive_d\rightarrow\spectra$ admitting a right adjoint $\Delta^*$ with a preferred  lax symmetric monoidal enhancement. This is analogous to the adjunction $\res^G_e\colon\spectra_G\rightleftharpoons \spectra \cocolon \coind^G_e$ from equivariant homotopy theory. We therefore obtain a map in $\calg(\spectra)$
    \[\mapsp_{\excisive_d}(\unit_d,\unit_d)\longrightarrow\mapsp_{\spectra}(\unit,\unit)\simeq \sphere\] taking $\pi_0$ of which induces a ring map $A(d)\rightarrow\mathbb{Z}$ where $A(d)$ is the Goodwillie-Burnside ring of \cite{ABHS24}. We call the kernel of this map $I(d)$ the \textit{Goodwillie-Burnside augmentation ideal}. 
    
    Since $A(d)$ is Noetherian, $I(d)$ is finitely generated, and so let us write $I(d)=(a_1,\ldots,a_m)$. Following \cite{greenleesMayMU}, we may define the $I(d)$-adic completion of an object in $\excisive_d$ as follows: let $K(a_1,\ldots,a_m) \coloneqq \fib(\unit_d\rightarrow\unit_d[a_1^{-1},\ldots,a_m^{-1}])$; for $F\in\excisive_d$, we then define its $I(d)$-adic completion as
    \[F\completion{I(d)} \coloneqq \hhom(K(a_1,\ldots,a_m),F)\] where $\hhom$ is the internal hom in $\excisive_d$. 
\end{cons}

\begin{cons}[Borelifications]
    For $F\in\excisive_d$, we now construct a map
    \begin{equation}\label{eqn:completion_map}
        F(-)\completion{I(d)}\longrightarrow (\partial_dF\otimes(-)^{\otimes d})^{h\Sigma_d}
        \end{equation}
    in $\excisive_d$.     As a notation suggestive of the equivariant analogue, we will write the $d$--th derivative functor $\partial_d\colon \excisive_d\rightarrow\spectra^{B\Sigma_d}$ also as $\beta^*$, so that $\forget\circ\beta^*\simeq \crossEffect_d$ where $\forget\colon \spectra^{B\Sigma_d}\rightarrow\spectra$ is the forgetful functor. This admits a fully faithful right adjoint $\beta_*$ which sends $X\in\spectra^{B\Sigma_d}$ to the functor $(X\otimes(-)^{\otimes d})^{h\Sigma_d}$. The adjunction unit thus gives us a map $F(-)\longrightarrow (\partial_dF\otimes(-)^{\otimes d})^{h\Sigma_d}$.

    On the other hand, for any $X\in\spectra^{B\Sigma_d}$, $\beta_*X\in\excisive_d$ is already $I(d)$--complete in the sense that the map $\beta_*X\rightarrow (\beta_*X)\completion{I(d)}$ is an equivalence. To wit, first note that $\beta^*(\unit_d[a_1^{-1},\ldots,a_m^{-1}])\simeq 0$ since $\forget\colon \spectra^{B\Sigma_d}\rightarrow \spectra$ is conservative and $\crossEffect_d(\unit_d[a_1^{-1},\ldots,a_m^{-1}])\simeq \sphere[0^{-1}]\simeq 0$, by definition of $I(d)$. Hence, $\beta^*K(a_1,\ldots,a_m)\rightarrow\beta^*\unit_d$ is an equivalence. Now, by symmetric monoidality of $\beta^*$, we see that $(\beta_*X)\completion{I(d)}\simeq \beta_*\hhom(\beta^*K(a_1,\ldots,a_m),X)\simeq \beta_*X$, as claimed.

    Therefore, the map $F(-)\rightarrow (\partial_dF\otimes(-)^{\otimes d})^{h\Sigma_d}$ factors through the $I(d)$--adic completion as in \cref{eqn:completion_map}. This completes the construction.
\end{cons}

We are now ready to state the theorem.

\begin{thm}\label{thm:segal_conjecture_calculus}
    Let $d=p$ be a prime number. Then the transformation
    \[P_d\susps\Omega^{\infty}(-)\completion{I(d),p}\longrightarrow \big(((-)^{\otimes d})^{h\Sigma_d}\big)\completion{p}\] of $d$--excisive functors $\spectra^{\omega}\rightarrow\spectra$ is an equivalence.
\end{thm}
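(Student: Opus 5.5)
The plan is to prove the equivalence by stratifying along derivatives, in the manner of the classical isotropy-separation proof of the Segal conjecture for $C_p$, reducing to Gunawardena's theorem (Lin's for $p=2$). Throughout I work in $\excisive_p\simeq\mackey(\epiCategory_p;\spectra)$, using \cref{thm:excisive_functors_as_spectral_mackey}. Under this equivalence $\unit_p=P_p\susps\loops$ corresponds to the Burnside sphere, and the recollement \cref{eqn:multiexcisive_stable_recollement_with_mackey_fibre} for $b=p-1$, $r=1$ is Kuhn's recollement
\[\excisive_{p-1}\hookrightarrow\excisive_p\xrightarrow{\partial_p}\spectra^{B\Sigma_p}.\]
The map in the statement is the $p$-completion of the unit $\unit_p\to\beta_*\beta^*\unit_p$, after $I(p)$-completing the source. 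Its fibre $i_*i^!\unit_p$ lies in $\excisive_{p-1}$. Since $\beta_*\beta^*\unit_p$ is already $I(p)$-complete (by the Borelification construction), it suffices to prove
\[\big(i_*i^!\unit_p\big)\completion{I(p),p}\simeq 0.\]

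\emph{Step 1 (completion commutes with the lower strata).} I would put on $\excisive_p$ the $t$-structure transported from the standard one on $\mackey(\epiCategory_p;\spectra)$. I expect all of the following to be bounded below:
\begin{itemize}
\item $\unit_p$, as the Burnside sphere;
\item $i^!\unit_p$;
\item the Koszul objects $K(a_1,\ldots,a_m)$, which are finite colimits of copies of $\unit_p$.
\end{itemize}
Using the cellular model of excisive approximations (each $P_k$ built from finitely many cells $\Delta_f^*\unit$, in the spirit of \cref{lem:generators_of_elbow_cats}), I would show that for bounded-below $F$ the functors $P_k$ and $\partial_k$, $k\le p$, commute with $\hhom(K(a_1,\ldots,a_m),-)$ up to a completion on the target. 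By the induction $\excisive_1\subseteq\cdots\subseteq\excisive_{p-1}$ together with \cref{prop:jointConservativityGeomFixPoints}, vanishing of $(i_*i^!\unit_p)\completion{I(p),p}$ then reduces to vanishing of each derivative $\partial_k\big((i^!\unit_p)\completion{I(p),p}\big)\in\spectra^{B\Sigma_k}$ for $1\le k\le p-1$. This step is where connectivity is essential, since $I(p)$-completion is a limit and does not a priori commute with the stratification.

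\emph{Step 2 (combinatorics of $I(p)$ on the strata $1<k<p$).} The ring $A(p)$ acts on the $k$-th stratum through a mark homomorphism $\phi_k\colon A(p)\to\mathbb Z$. This is $\pi_0$ of the geometric fixed points of the $k$-th stratum, computed via \cref{lem:identification_of_geometric_fixed_points}. For each such $k$ I would exhibit an element $x_k\in I(p)$ with $p\nmid\phi_k(x_k)$, in the same way one shows elements of $I(G)$ have unit marks away from $p$-subgroups. The natural candidates are $\Delta_f^*\unit_{p,p}$ minus its rank, for surjections $f\colon[p]\twoheadrightarrow[k]$, whose mark at level $k$ counts surjections. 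Then $x_k$ acts invertibly on the $p$-completion of the $k$-th stratum, so that stratum dies under $(-)\completion{I(p),p}$. Since the strata $k<p$ involve groups $\Sigma_k$ of order prime to $p$, no Tate contributions interfere $p$-locally.

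\emph{Step 3 (the linear stratum $k=1$).} This step, together with the connectivity argument of Step~1, is the main obstacle. Here the mark $\phi_1$ is the augmentation itself, so it cannot be handled as in Step 2. Instead I would identify the relevant piece directly: via $\Phi=P_1$, the statement becomes the claim that $\sphere^{\wedge}_p\to(\sphere^{B\Sigma_p})\completion{p}$, precomposed with the $p$-adic $I(p)$-completion of the geometric part, is an equivalence. Restricting along $C_p\le\Sigma_p$, which has index $(p-1)!$ prime to $p$, makes the $\Sigma_p$-homotopy fixed points a $p$-local retract of the $C_p$ ones, namely $\big(D(B\Sigma_p)_+\big)_p\simeq\big(D(BC_p)_+\big)_p^{h\mathbb F_p^\times}$. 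Comparing with the genuine $C_p$-case (the instance $d=2$ analogy, made for $C_p$ via $\epiCategory_2\simeq\orbit_{C_p}$ only when $p=2$; in general one matches marks by hand), Gunawardena's theorem then gives the $p$-adic equivalence. The hard part is verifying that the mark combinatorics of $A(p)$ on the top and bottom strata match those of $A(C_p)$ after taking $\mathbb F_p^\times$-invariants. This is precisely where the hypothesis that $d=p$ is prime should enter, and I would first test it by computing $A(p)$ for $p=3$.
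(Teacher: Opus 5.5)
Your architecture is the right one: stratify by the geometric fixed points $\Phi^{[k]}\simeq\partial_k$, commute them with the completion by a connectivity argument, kill the strata $1<k<p$ by ideal combinatorics, and feed the linear stratum to the Segal conjecture for $C_p$. Passing to the fibre $i_*i^!\unit_p$ is a harmless reformulation, provided you can control that fibre (see below). The genuine gap is Step 3, which is built around the wrong objects.

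First, the augmentation defining $I(p)$ is the top mark $\Phi^{[p]}$, not $\Phi^{[1]}$; it comes from $\crossEffect_p$, the analogue of $\res^G_e$. The generators $[\lambda_i]$ of $A(p)$ satisfy $\Phi^{[k]}([\lambda_i])=|\mathrm{Epi}(k,i)|$. Hence $\Phi^{[1]}(I(p))$ is generated by the numbers $|\mathrm{Epi}(p,i)|$ with $i\geq 2$, each divisible by $p$ because $C_p$ acts freely on $\mathrm{Epi}(p,i)$. So $(p,\Phi^{[1]}(I(p)))=p\mathbb{Z}$, and the linear stratum of the completed source is just $\sphere\completion{p}$.

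Second, and decisively, $P_1$ of the target is not $(\sphere^{B\Sigma_p})\completion{p}$. That object is the value at $\sphere$, the analogue of genuine rather than geometric fixed points. Moreover, $\sphere\completion{p}\to(\sphere^{B\Sigma_p})\completion{p}$ is not an equivalence: it is split by restriction to a point, with nonzero complement $\mapsp(\susps B\Sigma_p,\sphere\completion{p})$, detected for instance by the Kahn--Priddy transfer. So Step 3 as formulated aims at a false statement.

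What you need instead is the following chain:
1. The derivative formula \cref{eqn:arone_ching_derivative_formula} gives $P_1\big(((-)^{\otimes p})^{h\Sigma_p}\big)\completion{p}\simeq(\sphere^{t\family_{\mathrm{nt}}(\Sigma_p)})\completion{p}$.
2. Restriction along $C_p\leq\Sigma_p$ gives a $p$-adic equivalence $\sphere^{t\family_{\mathrm{nt}}(\Sigma_p)}\to\sphere^{tC_p}$. It is surjective on homotopy because $\sphere\to\sphere^{tC_p}$ is a $p$-adic equivalence, and injective by a transfer argument (\cref{lem:p-completion_of_tates}).
3. Lin's and Gunawardena's theorem in its Tate form, $\sphere\completion{p}\simeq(\sphere^{tC_p})\completion{p}$, then settles the linear stratum.

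No matching of $\mathbb{F}_p^\times$-invariant marks with $A(C_p)$ is involved. Primality enters only through $p\mid|\mathrm{Epi}(p,i)|$ for $i\geq 2$, through $p\nmid k!$ for $k<p$, and through the Tate comparison.

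Step 1 also needs repair. You apply the interchange of $P_1$ (or $\partial_k$) with completion to $i^!\unit_p$. Its boundedness below is equivalent to that of $((-)^{\otimes p})^{h\Sigma_p}$ in the transported $t$-structure, whose value at $\sphere$ is $\sphere^{B\Sigma_p}$. Knowing that this is bounded below is already Segal-conjecture information, so it cannot simply be expected. It is cleaner not to pass to the fibre. Compare source and target on each $\Phi^{[k]}$, using \cref{prop:geometric_fixed_points_and_completions} only for the connective source $\unit_p$, and compute the target via \cref{eqn:arone_ching_derivative_formula}. For $1<k<p$ the target strata vanish $p$-adically by a transfer argument, matching $(p,\Phi^{[k]}(I(p)))=\mathbb{Z}$ on the source side.

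The connectivity argument itself is also misdescribed. $K(a_1,\ldots,a_m)$ is not a finite colimit of copies of $\unit_p$, since it involves the telescopes $\unit_p[a_i^{-1}]$. Nor is $P_1$ finitely built: $P_1\unit_p\simeq\colim_n(\Omega T)^{\otimes n}$ with $T=\Sigma^*\unit_p$ is an infinite telescope, and if it were finite no connectivity would be needed. The real input is that $\Omega^nT^{\otimes n}\to\Omega^{n+1}T^{\otimes (n+1)}$ has $n$-connective cofibre. You check this on each $\Phi^{[k]}$, where the parametrised equivalence identifies the map with $\Omega^n(\Sigma^n\sphere)^{\otimes k}\to\Omega^{n+1}(\Sigma^{n+1}\sphere)^{\otimes k}$.

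Finally, in Step 2 your candidates $\Delta_f^*\unit_{p,p}$ live in $\elbowCat{p,k}$, not in $\excisive_p$, so they give no element of $A(p)$. The correct elements are $[\lambda_k]-|\mathrm{Epi}(p,k)|\in I(p)$. Their $[k]$-mark is $k!-|\mathrm{Epi}(p,k)|$, which is prime to $p$.
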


We will prove this by reducing it to the usual Segal conjecture in equivariant homotopy theory for the group $C_p$ via a stratification argument and exploiting the idiosyncrasies of the augmentation ideal $I(d)$. The key new input here is an analysis of the interaction between the stratification functors and the $I(d)$--adic completion. This is captured by \cref{prop:geometric_fixed_points_and_completions}, which we work towards now.

We will exploit the fact that $P_1\colon \excisive_d\rightarrow \excisive_1$ is a smashing localisation with a nice description of the idempotent algebra, namely: 
\begin{cons}\label{cons:cellular_formula_P_1}
    Goodwillie gives us a functorial formula for $P_1$, namely $P_1F\simeq \colim_n\Omega^nF\Sigma^n$. Now,  $\Sigma^*\colon \excisive_d\rightarrow \excisive_d$ is an automorphism which in fact is a part of an automorphism of the presentable $\epiCategory_d$--category $\elbowCatUdl{d}\simeq \myuline{\spectra}_d$. By the equivalence $\udl{\func}^L(\myuline{\spectra}_d,\myuline{\spectra}_d)\simeq \myuline{\spectra}_d$, we thus see that there exists an invertible object $T\in \excisive_d$ such that $\Sigma^*(-)\simeq T\otimes-\colon \excisive_d\rightarrow\excisive_d$. By inputting $F$ to be the tensor unit $\unit_d$ in Goodwillie's formula, we thus see that $T\simeq \Sigma^*\unit_d$ and we obtain a map $\unit_d\rightarrow \Omega T$ such that the sequential diagram in the formula is given by $F\simeq \unit_d\otimes F \rightarrow \Omega T\otimes F \rightarrow (\Omega T)^{\otimes 2}\otimes F\rightarrow (\Omega T)^{\otimes 3}\otimes F \rightarrow \cdots$. Importantly, we have also that $T(-)\simeq P_d\susps\loops (\Sigma-)$  and so $T^m\coloneqq T^{\otimes m}\simeq P_d\susps\loops (\Sigma^m-)$. All in all, writing $E_n\coloneqq (\Omega T)^{\otimes n}$ and $E\coloneqq \colim_nE_n$, we learn that $P_1F\simeq E\otimes F$. 
\end{cons}

The strategy is to carry out a connectivity argument to show that $P_1(-)\simeq E\otimes -$ commutes with nice inverse limits. The key input from calculus will enter by way of \cref{prop:connectivity_of_suspended_unit} afforded by our full parametrised identification $\myuline{\spectra}_d\simeq \elbowCatUdl{d}$, which, in particular, guarantees that the natural stratifications on both sides agree. To this end, let us first record the following known fact about $t$--structures on spectral Mackey functors.

\begin{lem}\label{lem:geom_fix_pt_detect_connectivity}
    Let $\baseCat$ be an inductive orbital category. Then an object $X\in\mackey(\baseCat;\spectra)$ is connective if and only if $\Phi^uX\in\spectra$ is connective for all $u\in\baseCat$. 
\end{lem}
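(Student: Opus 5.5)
The plan is an induction on the number of objects of $\baseCat$ using the isotropy separation recollement, paralleling the equivariant argument. The connective part of the standard $t$--structure on $\mackey(\baseCat;\spectra)$ is the one generated under colimits and extensions by the representables $\susps\yoneda(v)$, $v\in\baseCat$. Equivalently, $X$ is connective if and only if $X(v)\in\spectra$ is connective for every $v$, i.e. all the categorical fixed points are connective. We may base change to $\baseCat_{/u}$ for each $u$, since $u^*$ is $t$--exact and both conditions are local. This reduces us to the case where $\baseCat$ has a final object $t$, and the claim to be proved is about $t$ and the proper part $\baseCat_{\proper}$.

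For the direction ``connective $\Rightarrow$ $\Phi^u$ connective'', we use that $\Phi^u$ is colimit preserving, so it suffices to check the generators. The composite $\Phi^u\circ u^*$ is symmetric monoidal and sends $\susps\yoneda(v)$ to a suspension spectrum: geometric fixed points of a representable is $\susps_+$ of a finite set, namely the $u$--fixed points of $v$. This is connective. Since connective objects are generated under colimits by these, $\Phi^u$ of any connective object is connective.

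For the converse, we induct on $|\baseCat|$. Assume $\Phi^uX$ is connective for all $u$. By induction applied to $\baseCat_{/v}$ for the proper objects $v$ (which have fewer objects), $v^*X$ is connective for all $v\in\baseCat_{\proper}$. We then use the cofibre sequence
\[E\proper_+\otimes X\rightarrow X\rightarrow \widetilde{E\proper}\otimes X\]
from \cref{prop:family_recollement}. The first term is $j_!j^*X$, a colimit of inductions $v_!v^*X$ with $v$ proper. Here $v_!$ is right $t$--exact, because it sends representables to representables. So the first term is connective.

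The third term lies in the image of $p^*$, i.e. in Mackey functors supported at $t$. By \cref{example:stable_recollement_mackey_functors} these are identified with $\spectra$ via $\Phi^t$, in the sense of \cref{lem:geometricLayerEquivalence} applied to $\myuline{\spectra}$. Its value at $t$ is therefore $\Phi^tX$, which is connective by assumption. We then need that $p^*\colon\spectra\simeq\mackey(\baseCat_{\widetilde{\family}})\to\mackey(\baseCat)$ preserves connectivity. This holds because the image is $\widetilde{E\proper}\otimes\susps\yoneda(t)$-type objects whose only nonzero value is at $t$.

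Connective objects are closed under extensions, so $X$ is connective. The main obstacle is the careful verification of the $t$--exactness properties: that $v_!$ is right $t$--exact, and that the fixed-point description of connectivity agrees with the generated one. I would settle the latter first using that the representables are compact and $\map(\susps\yoneda(v),X)=X(v)$, so the $t$--structure generated by the representables has connective part given by levelwise connectivity.
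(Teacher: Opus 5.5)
Your proposal is correct and is essentially the paper's proof. The forward direction goes through the colimit-generating representables, whose geometric fixed points are suspension spectra; the converse is an induction using the isotropy separation sequence $E\proper_+\otimes X\to X\to\widetilde{E\proper}\otimes X$, with the left term built from proper inductions $v_!v^*X$ and the right term concentrated at $t$ with value $\Phi^tX$ (the paper simply evaluates this sequence at $t$).

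One small correction concerns your induction measure. The slices $\baseCat_{/v}$ need not have fewer isomorphism classes of objects than $\baseCat$; already for $\baseCat=\orbit_G$, the slice $\orbit_H$ can have more. You should instead induct on the maximal length of a chain of non-invertible maps ending at the final object. This length is finite by atomicity and strictly drops when passing to $\baseCat_{/v}$ for $v$ proper.
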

\begin{proof}
    For the forward implication, recall that the connective part $\tau_{\geq 0}\mackey(\baseCat;\spectra)$ is generated under colimits by the orbits. But these orbits are suspension spectra, and so since the geometric fixed points commute with suspension spectra and preserve all colimits, we thus see that they send  $\tau_{\geq 0}\mackey(\baseCat;\spectra)$ to the subcategory of $\spectra$ generated under colimits by the sphere, i.e., connective spectra. For the converse implication, since connectivity of $X$ is a pointwise statement, we just need to check that $X(u)\in\spectra$ is connective for all $u\in\baseCat$. Thus, by induction along proper families as in \cref{nota:EP_twiddle}, it suffices to assume that $\baseCat$ has a final object $t$ and that we already know that $X(u)$ is connective for all $u\neq t\in\baseCat$. To see that $X(t)$ is connective, recall that we have the cofibre sequence in $\spectra$
    \[(E\proper_+\otimes X)(t) \longrightarrow X(t) \longrightarrow (\widetilde{E\proper}\otimes X)(t) = \Phi^tX.\]  By assumption, the right hand term is connective. By the paragraph before \cref{prop:jointConservativityGeomFixPoints}, the left hand term is a colimit whose terms are $X(u)$ for $u\neq t$, and so is itself connective. Then so is $X(t)$, as required.
\end{proof}

\begin{prop}\label{prop:connectivity_of_suspended_unit}
    Let $d\geq 1$. Then the cofibre of  $\Omega^mT^m\rightarrow \Omega^{m+1}T^{m+1}$ is  $m$-connective.
\end{prop}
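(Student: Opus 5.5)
We work throughout in $\excisive_d\simeq\mackey(\epiCategory_d;\spectra)$, transporting the standard $t$--structure along \cref{thm:excisive_functors_as_spectral_mackey}. By \cref{lem:geom_fix_pt_detect_connectivity}, connectivity can be tested on geometric fixed points. So it suffices to show that for every $u\in\epiCategory_d$ the spectrum $\Phi^u$ of the cofibre is $m$--connective. Since $\Phi^u$ is exact and symmetric monoidal, the cofibre of $\Omega^mT^m\to\Omega^{m+1}T^{m+1}$ is $\Omega^mT^m\otimes C$, where $C\coloneqq\cofib(\unit_d\to\Omega T)$. Hence $\Phi^u$ of it is $\Phi^u(\Omega T)^{\otimes m}\otimes\Phi^u C$. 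We reduce to two computations: identify the invertible spectrum $\Phi^u(\Omega T)$, a sphere $S^{n_u}$, and bound the connectivity of $\Phi^uC=\cofib(\sphere\to S^{n_u})$.

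For the first computation, recall from \cref{cons:cellular_formula_P_1} that $T(-)\simeq P_d\susps\loops(\Sigma-)$, so $T=\Sigma^*\unit_d$. By \cref{lem:identification_of_geometric_fixed_points}, the geometric fixed points for $u=([r]\twoheadrightarrow[1])$ are computed as follows: first apply the restriction $\crossEffect_r$ to $\elbowCat{d,r}$, then apply $\excisiveApproximation_{\udl{1}}$, then evaluate at $(\sphere,\ldots,\sphere)$. Precomposition $\Sigma^*$ with suspension commutes with cross--effects, giving $\Sigma^*$ in each of the $r$ variables. On multilinear functors it therefore acts by $\Sigma^r$. Thus
\[
\Phi^{[r]}T\simeq S^r\otimes\Phi^{[r]}\unit_d\simeq S^r,
\qquad
\Phi^{[r]}(\Omega T)\simeq S^{r-1}.
\]
The subtlety is that $\Phi^u$ in the sense of \cref{definition: geometric fixed points} for $u=([r]\to[1])$ is computed in $\epiCategory_{d,r}$ at its final object. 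This is exactly $P_{\udl{1}}\circ\crossEffect_r$ by \cref{lem:identification_of_geometric_fixed_points}, so the identification holds. We will double-check it against $d=2$, where $T$ should correspond to $S^{\rho}$ (regular representation sphere) in $C_2$--spectra, with $\Phi^{e}=S^1$ and $\Phi^{C_2}=S^0$ after reindexing. This suggests the correct indexing is $\Phi^{[r]}T\simeq S^{r}$ with $r$ the number of variables.

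Granting this, $\Phi^{[r]}C=\cofib(\sphere\to S^{r-1})$. For $r=1$ the map is $P_1$ of $\unit_d\to\Omega T$, which becomes an equivalence (this is why $E\otimes F\simeq P_1F$), so the cofibre vanishes. For $r\ge2$ the cofibre is $S^{r-1}\oplus S^1$ or an extension thereof, hence $1$--connective in any case. Meanwhile $\Phi^{[r]}(\Omega T)^{\otimes m}=S^{m(r-1)}$ is $m$--connective for $r\geq2$. Thus $\Phi^{[r]}$ of the cofibre is at least $(m+1)$--connective for $r\ge2$ and zero for $r=1$, which gives $m$--connectivity.

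The main obstacle is making the identification $\Phi^{[r]}(\Omega T)\simeq S^{r-1}$ rigorous, that is, tracking how $\Sigma^*$ interacts with the cross--effect and the multilinearisation. This needs care with the shift convention: $\Omega$ is applied in $\excisive_d$, so it is a global $\Sigma^{-1}$ on each $\Phi^u$. The plan is to verify, using the biadjunction $\Delta^*\dashv\crossEffect\dashv\Delta^*$ and multilinearity, that for a multilinear $G$ one has $G(\Sigma X_1,\ldots,\Sigma X_r)\simeq\Sigma^rG(X_1,\ldots,X_r)$. We then combine this with \cref{lem:geom_fix_pt_detect_connectivity} applied to each slice category $\epiCategory_{d,r}$, which is legitimate since connectivity is pointwise and the slices are again inductive orbital.
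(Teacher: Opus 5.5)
Your proposal is correct and is essentially the paper's proof: you detect connectivity on the geometric fixed points $\Phi^{[k]}=P_{\udl{1}}\crossEffect_k(-)(\sphere,\ldots,\sphere)$, use that $\Sigma^*$ commutes with these to get $\Phi^{[k]}T\simeq S^k$, and rely on the map being an equivalence after $\Phi^{[1]}$. Writing the cofibre as $(\Omega T)^{\otimes m}\otimes\cofib(\unit_d\to\Omega T)$ is only a repackaging of the paper's direct computation of $\cofib(S^{m(k-1)}\to S^{(m+1)(k-1)})$; your $d=2$ sanity check actually lists the values for $S^{\sigma}=\Omega S^{\rho}$ rather than for $S^{\rho}$ (which has $\Phi^e=S^2$, $\Phi^{C_2}=S^1$), but this plays no role in the argument.
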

\begin{proof}
    Writing $C$ for the cofibre, it suffices by \cref{lem:geom_fix_pt_detect_connectivity} to show that $\Phi^{[k]}C$ is $m$--connective for all $1\leq k\leq d$. We then get the following equivalences in spectra:
    \begin{equation*}
        \begin{split}
            \Phi^{[k]}C &\simeq \cofib(\Omega^m\Sigma^{m*}\Phi^{[k]}\unit_d\rightarrow \Omega^{(m+1)}\Sigma^{(m+1)*}\Phi^{[k]}\unit_d)\\
            &\simeq \cofib\big(\Omega^m(\Sigma^m\sphere)^{\otimes k}\rightarrow \Omega^{m+1}(\Sigma^{m+1}\sphere)^{\otimes k}\big)
        \end{split}
    \end{equation*}
    where the first equivalence is since $\Sigma^{m*}$ is a parametrised automorphism of $\elbowCatUdl{d}$ and so commutes with $\Phi^{[k]}$, and the second equivalence is since $\Phi^{[k]}\unit_d\simeq \crossEffect_kP_{k}\susps\loops(-)\simeq (-)^{\otimes k}$ by \cref{lem:geometric_fixed_points_of_elbow_cats_is_spectra}. We thus see that $\Phi^{[k]}C$ is a cofibre of $(k-1)m$--connective terms when $k>1$ and is a cofibre of an equivalence when $k=1$. Thus, it is itself $m$--connective, as was to be shown.
\end{proof}

Using this, we roughly follow \cite[Lem. 4.4]{greenleesStableMapsFreeGSpaces} from the equivariant setting in showing that geometric fixed points ``commute'' with completions at the augmentation ideal. For this, we start with the following key classical connectivity argument (cf.~\cite[Thm. 15.2]{AdamsBlueBook} for a variant).

\begin{lem}\label{lem:connectivity_lemma}
    Suppose we have an inverse system $\{Y_n\}_n\in \excisive_d$ of uniformly bounded below objects. Then the map $P_1\lim_nY_n\rightarrow \lim_nP_1 Y_n$ is an equivalence.
\end{lem}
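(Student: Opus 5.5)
The plan is to use the cellular formula of \cref{cons:cellular_formula_P_1}, $P_1Y\simeq E\otimes Y$ with $E=\colim_m E_m$ and $E_m=(\Omega T)^{\otimes m}\simeq \Omega^mT^m$. The only issue is commuting a sequential colimit with an inverse limit, and the connectivity estimate of \cref{prop:connectivity_of_suspended_unit} controls the error. Throughout, connectivity in $\excisive_d$ means connectivity for the $t$--structure transported from $\mackey(\epiCategory_d;\spectra)$ along \cref{thm:excisive_functors_as_spectral_mackey}.

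\textbf{Step 1: finite stages commute with limits.} For each fixed $m$, the object $E_m$ is invertible, since $T$ is invertible. Hence $E_m\otimes-$ is an equivalence of categories, in fact $\Omega^m\Sigma^{m*}$, and so it commutes with all limits. This gives
\[E_m\otimes\lim_nY_n\simeq\lim_n E_m\otimes Y_n.\]

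\textbf{Step 2: control the tails.} Let $C_m=\cofib(E_m\to E_{m+1})$. By \cref{prop:connectivity_of_suspended_unit}, $C_m$ is $m$--connective. I then need to know that tensoring preserves connectivity, i.e.\ that $X\otimes Y$ is $(a+b)$--connective when $X$ is $a$--connective and $Y$ is $b$--connective. For the Day convolution on Mackey functors this is standard: the connective part is generated under colimits by orbits, and the tensor product of orbits is a sum of orbits by the double coset formula. Alternatively, one can check it via \cref{lem:geom_fix_pt_detect_connectivity}, since the $\Phi^u$ are symmetric monoidal.

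So if all $Y_n$ are $(-c)$--connective, then $\cofib(E_m\otimes Y_n\to E\otimes Y_n)$ is $(m-c)$--connective, uniformly in $n$. The same bound holds with $\lim_nY_n$ in place of $Y_n$, up to a shift by one. The reason is that a sequential limit of $(-c)$--connective objects is $(-c-1)$--connective, because the $t$--structure on Mackey functors is computed pointwise and $\lim^1$ costs one degree.

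\textbf{Step 3: compare on homotopy objects.} I would compare the two sides after applying $\pi_k$. This can be detected pointwise on Mackey functors, i.e.\ after evaluating at each $[r]$, or via $\Phi^{[k]}$ by \cref{lem:geom_fix_pt_detect_connectivity}. Fix a degree $k$ and choose $m\gg k+c$.

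On the source, Step 2 gives $\pi_k(E_m\otimes\lim Y_n)\cong\pi_k(E\otimes\lim Y_n)$.

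On the target, the cofibre of $\lim_n(E_m\otimes Y_n)\to\lim_n(E\otimes Y_n)$ is the limit of cofibres that are $(m-c)$--connective. It is therefore $(m-c-1)$--connective, so this map is also an isomorphism on $\pi_k$.

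Combining these with Step 1, the map $P_1\lim_nY_n\to\lim_nP_1Y_n$ is an isomorphism on all $\pi_k$. Since the $t$--structure on $\mackey(\epiCategory_d;\spectra)$ is nondegenerate (it is pointwise on spectra), the map is an equivalence.

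\textbf{Main obstacle.} I expect the hardest part to be the connectivity bookkeeping in Step 2, specifically the multiplicativity of connectivity under Day convolution in $\excisive_d$. I will justify it by transporting to Mackey functors, where the connective part is generated by orbit representables closed under tensor products, and reading off the required bounds.
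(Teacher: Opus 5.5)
Your proposal is correct and follows the paper's proof. Both write $P_1\simeq E\otimes-$, use invertibility of the $E_m$ to commute the finite stages with limits, and kill the tails using \cref{prop:connectivity_of_suspended_unit} together with additivity of connectivity under Day convolution and the pointwise $t$--structure. The differences are cosmetic: the paper first reduces inverse limits to infinite products and concludes from $\colim_k\prod_n C_k\otimes Y_n\simeq 0$, whereas you treat sequential limits directly (losing one degree to $\lim^1$) and compare degreewise on homotopy.
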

\begin{proof}
    As is standard, since inverse limits may be written as the fibre of a map between infinite products in stable categories, it suffices to deal with the case of infinite products. Using the notations from \cref{cons:cellular_formula_P_1},  let ${C}_k\coloneqq \cofib({E}_k\rightarrow {E})$. We claim that  $C_k$ is $k$--connective. Given this, we consider the cofibre sequence
    \[E_k\otimes \prod_nY_n \simeq \prod_nE_k\otimes Y_n \longrightarrow \prod_nE\otimes Y_n \longrightarrow \prod_nC_k\otimes Y_n\] where we have used that the $E_k$'s are invertible for the first equivalence. Since the $Y_n$'s were uniformly bounded below and the connectivities of the $C_k$'s were increasing strictly, we see that the connectivity of $\prod_nC_k\otimes Y_n$ increases strictly as $k$ increases. This is because the tensor product of a $k$--connective and $m$--connective object under the Day convolution in $\mackey(\epiCategory_d;\spectra)$ is $(m+k)$--connective, and the infinite product $\prod_n$ and the $t$--structure on $\mackey(\epiCategory_d;\spectra)$ is given pointwise. Hence, $\colim_k\prod_nC_k\otimes Y_n\simeq 0$. This thus yields that $E\otimes \prod_nY_n\simeq \colim_kE_k\otimes \prod_nY_n\rightarrow \prod_nE\otimes Y\simeq \colim_k\prod_nE\otimes Y$ is an equivalence, as desired.

    Now, to prove the claim, recall from the preceding construction that $E_m\simeq \Omega^m(P_d\susps\loops(\Sigma-))^{\otimes m}$. By \cref{prop:connectivity_of_suspended_unit}, the maps $E_m\rightarrow E_{m+1}$ are $m$-connective, and therefore the map $E_m\rightarrow E$ is $m$--connective, as claimed.
\end{proof}

\begin{prop}\label{prop:geometric_fixed_points_and_completions}
    Let $I\subseteq A(d)$ be a finitely generated ideal and $F\in\excisive_d$ be bounded below (i.e., $(\crossEffect_kF)(\sphere,\ldots,\sphere)$ is bounded below for all $k$). Then $P_1(F\completion{I})\simeq (P_1X)\completion{P_1I}\in\spectra$. 
\end{prop}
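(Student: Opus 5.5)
The plan is to write the $I$--adic completion as an inverse limit of uniformly bounded below objects, so that \cref{lem:connectivity_lemma} lets $P_1$ pass through it. Then we identify the resulting limit with the $P_1I$--adic completion in $\spectra$, using that $P_1$ is a symmetric monoidal smashing localisation.

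Write $I=(a_1,\ldots,a_m)$ and $K=K(a_1,\ldots,a_m)$ as in \cref{cons:completion_at_ideals}. Since $K\simeq\bigotimes_i\fib(\unit_d\to\unit_d[a_i^{-1}])$ and $\unit_d[a_i^{-1}]\simeq\colim(\unit_d\xrightarrow{a_i}\unit_d\xrightarrow{a_i}\cdots)$, we have $K\simeq\colim_n K_n$. Here $K_n\coloneqq\bigotimes_i\Sigma^{-1}\unit_d/a_i^n$ is a finite (dualisable) object built from $\unit_d$ in finitely many cells. Hence
\[F\completion{I}\simeq\hhom(\colim_nK_n,F)\simeq\lim_n \hhom(K_n,F)\simeq \lim_n DK_n\otimes F,\]
and $DK_n\simeq\Sigma^{m}\bigotimes_i\Sigma^{-1}\unit_d/a_i^n$ up to a fixed shift. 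We first check that the system $Y_n\coloneqq DK_n\otimes F$ is uniformly bounded below in the $t$--structure on $\mackey(\epiCategory_d;\spectra)\simeq\excisive_d$ transported by \cref{thm:excisive_functors_as_spectral_mackey}. Indeed, $\unit_d/a_i^n$ is a cofibre of a self--map of the connective unit, so it is connective. Day convolution of connective objects is connective, and $F$ is bounded below by hypothesis. Here we use that boundedness of the cross--effects at spheres is exactly pointwise boundedness of the Mackey functor, by the Mackey identification. So each $Y_n$ is $(-m-c)$--connective for a fixed $c$.

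Applying \cref{lem:connectivity_lemma} gives $P_1(F\completion{I})\simeq\lim_nP_1(DK_n\otimes F)$. Since $P_1\colon\excisive_d\to\spectra$ is symmetric monoidal (a smashing localisation, cf.~\cref{cons:cellular_formula_P_1}), it preserves duals and tensor products. Hence
\[P_1(DK_n\otimes F)\simeq D(P_1K_n)\otimes P_1F,\qquad P_1K_n\simeq\bigotimes_i\Sigma^{-1}\sphere/(P_1a_i)^n,\]
where $P_1a_i\in\pi_0\sphere=\mathbb{Z}$ is the image of $a_i$ under $A(d)\to\pi_0\mapsp(\unit,\unit)$ induced by $P_1$. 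The ideal $P_1I$ is generated by the $P_1a_i$. Running the same limit description of completion in $\spectra$ backwards, $\lim_n D(P_1K_n)\otimes P_1F\simeq\hhom(\colim_nP_1K_n,P_1F)\simeq\hhom(K(P_1a_1,\ldots,P_1a_m),P_1F)=(P_1F)\completion{P_1I}$, which is the claim (with $X=F$ in the statement).

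The main obstacle is the connectivity bookkeeping in the first step. We must ensure that the inverse system really is uniformly bounded below, i.e.\ that $\unit_d/a_i^n$ and its dual stay within a fixed connectivity range independent of $n$. We must also ensure that the Greenlees--May pro--description $K\simeq\colim K_n$ is compatible with the internal hom in $\excisive_d$. For the former I would use that the dual of $\unit_d/a^n$ is $\Sigma^{-1}\unit_d/a^n$, since $\unit_d$ is self--dual. For the latter I would use that each $K_n$ is compact, so $\hhom(K_n,-)\simeq DK_n\otimes-$.
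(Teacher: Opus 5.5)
Your argument is correct and is essentially the paper's: write the completion as an inverse limit of uniformly bounded below objects, pass $P_1$ through it via \cref{lem:connectivity_lemma}, and use that $P_1$ is exact and symmetric monoidal to land on $(P_1F)\completion{P_1I}$. The paper instead first reduces to a principal ideal $(\alpha)$ and uses $F\completion{\alpha}\simeq\lim_nF/\alpha^n$. Your single Koszul tower $K_n$ treats all generators at once, which avoids checking that intermediate completions stay bounded below; here $K\simeq\bigotimes_i\fib(\unit_d\to\unit_d[a_i^{-1}])$ is the Greenlees--May convention that the paper's reduction tacitly uses. You also spell out the uniform bounded-belowness that the paper only asserts.
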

\begin{proof}
    Since the $I$--completion may be computed by taking successive completion with respect to the generators of $I$, it suffices to deal with the case when $I$ is a principal ideal. So let $\alpha\in A(d)$ and $F\in\excisive_d$ be bounded below. Then we need to show that $P_1(F\completion{\alpha})\simeq (P_1F)\completion{P_1\alpha}\in\spectra$. To see this, recall that   $F\completion{\alpha}\coloneqq \lim_nF/\alpha^n$ is an inverse limit of uniformly bounded below objects. Hence, we get equivalences 
    \[P_1(F\completion{\alpha})\simeq P_1\lim_nF/\alpha^n\simeq \lim_nP_1 (F/\alpha^n)\simeq \lim_nP_1F/(P_1\alpha)^n\simeq (P_1F)\completion{P_1\alpha}\]
    where the second equivalence is by \cref{lem:connectivity_lemma} and the third equivalence is since $P_1\colon\excisive_d\rightarrow\spectra$ is exact and symmetric monoidal. This completes the proof.
\end{proof}

Next, we provide the two computational inputs needed in the proof of  \cref{thm:segal_conjecture_calculus}. To state the first of these, recall the Tate construction $(-)^{t\family_{\mathrm{nt}}(\Sigma_p)}$ with respect to the family of nontransitive subgroups from \cite[Ex. 10.4, Defs. 10.16 and 10.21]{ABHS24}.

\begin{lem}\label{lem:p-completion_of_tates}
    There is a canonical equivalence ${\sphere^{t\family_{\mathrm{nt}}(\Sigma_p)}}\completion{p} \xrightarrow{\simeq} {\sphere^{tC_p}}\completion{p}$ of commutative algebras of spectra.
\end{lem}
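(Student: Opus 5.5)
The plan is to identify the family $\family_{\mathrm{nt}}(\Sigma_p)$ group-theoretically and then reduce from $\Sigma_p$ to a Sylow $p$--subgroup $C_p$ by a transfer argument, which is harmless after $p$--completion. First, a subgroup $H\leq\Sigma_p$ is transitive on $[p]$ if and only if $p$ divides $|H|$. Indeed, a transitive action has an orbit of size $p$, so $p\mid|H|$. Conversely, if $p\mid |H|$, then $H$ contains an element of order $p$, which in $\Sigma_p$ is a $p$--cycle and hence acts transitively. Thus $\family_{\mathrm{nt}}(\Sigma_p)$ is exactly the family of $p'$--subgroups of $\Sigma_p$. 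Intersecting this family with $C_p=\langle(1\,2\cdots p)\rangle$ gives the trivial family $\{e\}$. Using the definitions in \cite[Defs. 10.16 and 10.21]{ABHS24}, I would write
\[\sphere^{t\family_{\mathrm{nt}}(\Sigma_p)}\simeq \big(\widetilde{E\family_{\mathrm{nt}}}\otimes F(E\Sigma_{p,+},\sphere)\big)^{\Sigma_p}.\]
Restricting along $C_p\leq\Sigma_p$ turns $\widetilde{E\family_{\mathrm{nt}}}$ into $\widetilde{E\{e\}}=\widetilde{EC_p}$. This restriction produces a canonical map of commutative algebras
\[\sphere^{t\family_{\mathrm{nt}}(\Sigma_p)}\longrightarrow \big(\widetilde{EC_p}\otimes F(EC_{p,+},\sphere)\big)^{C_p}=\sphere^{tC_p},\]
and this map is the candidate for the equivalence.

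Second, I would show that this map is split injective after $p$--localisation. The index $[\Sigma_p:C_p]=(p-1)!$ is prime to $p$. The composite of restriction followed by the transfer on genuine fixed points is multiplication by an element of the Burnside ring $A(\Sigma_p)$. On the geometric fixed points for $p'$--groups this element is invisible, because those geometric fixed points are killed by $\widetilde{E\family_{\mathrm{nt}}}$. On the remaining subgroups $H\ni(1\,2\cdots p)$, its mark is the number of $H$--fixed points of $\Sigma_p/C_p$, which is the number of $C_p$--conjugates containing the $p$--cycle. By Sylow's theorem this is $|N_{\Sigma_p}(C_p)/C_p|\cdot(\text{stuff})$, and it is coprime to $p$. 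Hence after $p$--completion the composite is a unit, and $\sphere^{t\family_{\mathrm{nt}}}\completion{p}$ is a retract of $\sphere^{tC_p}\completion{p}$.

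More precisely, I expect to identify the source with the homotopy fixed points of the Weyl group action:
\[\sphere^{t\family_{\mathrm{nt}}(\Sigma_p)}\completion{p}\simeq \big(\sphere^{tC_p}\completion{p}\big)^{hW},\qquad W=N_{\Sigma_p}(C_p)/C_p\cong C_{p-1}.\]
The reason is that the only conjugacy class of subgroups on which the relevant object has nonzero geometric fixed points, up to $p'$--extensions, is that of $C_p$. This identification is the main obstacle: making the isotropy-separation and transfer bookkeeping precise at the level of commutative algebras. What I would try first is the standard statement that, for a $p$--local $G$--spectrum concentrated away from the family of $p'$--groups, genuine $G$--fixed points are computed as $hW$ of the fixed points of a Sylow normaliser. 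I would then use that $N_{\Sigma_p}(C_p)=C_p\rtimes C_{p-1}$, which makes the retraction explicit via $C_{p-1}$.

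Finally, since $|C_{p-1}|$ is invertible in $p$--complete spectra, the homotopy fixed points are a retract given by averaging. So it suffices to show that $C_{p-1}$ acts trivially on $\sphere^{tC_p}\completion{p}$. By the Segal conjecture for $C_p$ (Lin and Gunawardena), the unit $\sphere\completion{p}\to \sphere^{tC_p}\completion{p}$ is an equivalence. The Weyl action is through commutative algebra automorphisms under $\sphere$, and the space of such automorphisms of $\sphere\completion{p}$ is contractible. Hence the action is trivial, the averaging idempotent is the identity, and the restriction map above is the desired equivalence of commutative algebras.
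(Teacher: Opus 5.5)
Your candidate map is the right one: restriction along $C_p\leq\Sigma_p$ is the map $\sphere^{t\family_{\mathrm{nt}}(\Sigma_p)}\to\sphere^{tC_p}$ induced by the unit $\sphere\to\coind^{\Sigma_p}_{C_p}\sphere$ that the paper uses, and your identification of $\family_{\mathrm{nt}}(\Sigma_p)$ with the $p'$--subgroups is correct. The argument that this map is a $p$--adic equivalence, however, has two gaps.

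\textbf{The marks computation in your second step is false.} The set $(\Sigma_p/C_p)^H$ is nonempty only if $H$ is subconjugate to $C_p$. So the mark of $[\Sigma_p/C_p]$ is $p-1$ only at the conjugates of $C_p$, and it is $0$ at every $H\notin\family_{\mathrm{nt}}$ with $|H|>p$, for example $H=\Sigma_p$, $A_p$ or $C_p\rtimes C_k$ when $p\geq 3$. Marks alone therefore cannot make $\mathrm{tr}\circ\mathrm{res}$ invertible. Indeed, on the genuine object $\widetilde{E\family_{\mathrm{nt}}}\otimes\sphere_{\Sigma_p}$ it acts by $0$ on the nonzero summand $\Phi^{\Sigma_p}$, already rationally. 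What rescues the conclusion is Borel completeness. The object $\sphere^{t\family_{\mathrm{nt}}(\Sigma_p)}$ is an algebra over $F(E\Sigma_{p+},\sphere)^{\Sigma_p}=\sphere^{B\Sigma_p}$, so $[\Sigma_p/C_p]$ acts through $\pi_0(\sphere^{B\Sigma_p})\completion{p}$. There it equals $(p-1)!$ plus a topologically nilpotent element of the augmentation ideal, hence it is a unit. Equivalently, by the Sylow transfer, the $p$--local Borel sphere has vanishing geometric fixed points at every subgroup that is not a $p$--group. This is the transfer argument (Glasman--Lawson) that the paper cites for injectivity.

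\textbf{Your third step is unproven, and you do not need it.} You yourself call it the main obstacle, and the ``standard statement'' you propose to invoke is false as phrased. Being $p$--local and $\widetilde{E\family_{p'}}$--local does not concentrate a genuine $\Sigma_p$--spectrum at the conjugacy class $(C_p)$, as the same rational $\Phi^{\Sigma_p}$ example shows. To get $X^{\Sigma_p}\simeq(\Phi^{C_p}X)_{hW}$ you would again need the Borel input above.

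Neither this step nor the Weyl-action analysis is needed. The restriction map is a map of commutative algebras, so the unit map factors as $\sphere\to\sphere^{t\family_{\mathrm{nt}}(\Sigma_p)}\to\sphere^{tC_p}$. After $p$--completion the composite is an equivalence by the Segal conjecture for $C_p$, so the second map is surjective on $\pi_*$. Together with the split injectivity, once correctly justified, it is an isomorphism on $\pi_*$. This is exactly the paper's proof. Your route through $(\sphere^{tC_p}\completion{p})^{hW}$ only becomes valid after you supply the concentration at $(C_p)$ explicitly.
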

\begin{proof}
    Consider the unit morphism
        $$\eta\colon\sphere \longrightarrow \mathrm{Coind}_{C_p}^{\Sigma_p}(\sphere) \in \calg(\spectra_{\Sigma_p}).$$
    By~\cite[Lem. ~10.11]{ABHS24}, $\eta$ induces the morphism
    $$\eta^{t\family_{\mathrm{nt}}(\Sigma_p)}\colon {\sphere^{t\family_{\mathrm{nt}}(\Sigma_p)}}\longrightarrow {\sphere^{tC_p}}.$$
    of commutative algebras, see also~\cite[Lem. ~10.18]{ABHS24}. We will show that $\eta^{t\family_{\mathrm{nt}}(\Sigma_p)}$ is an equivalence after $p$-completion.

    Indeed, by the Segal conjecture, the unit morphism $\sphere \longrightarrow \sphere^{tC_p}$ is an equivalence after $p$-completion. Therefore, the homomorphism 
    $$\pi_*\Big({\eta^{t\family_{\mathrm{nt}}(\Sigma_p)}}\completion{p}\Big) \colon \pi_*\Big({\sphere^{t\family_{\mathrm{nt}}(\Sigma_p)}}\completion{p}\Big) \longrightarrow \pi_*\Big({\sphere^{tC_p}}\completion{p}\Big)$$
    is surjective. Finally, $\pi_*\Big({\eta^{t\family_{\mathrm{nt}}(\Sigma_p)}}\completion{p}\Big)$ is also injective by the transfer agrument from~\cite[Prop.~A.8]{glasmanlawson}.
\end{proof}

For the next lemma, let us first establish some useful notations.

\begin{nota}
    The symmetric monoidal functors $\crossEffect_k\colon \excisive_d\rightarrow \elbowCat{d,k}$ and $P_{\udl{1}}\colon \elbowCat{d,k}\rightarrow\spectra$ induce maps in $\calg(\spectra)$
    \[\mapsp_{\excisive_d}(\unit_d,\unit_d)\xlongrightarrow{\crossEffect_k} \mapsp_{\elbowCat{d,k}}(\unit_{d,k},\unit_{d,k})\xlongrightarrow{P_{\udl{1}}} \mapsp_{\spectra}(\sphere,\sphere).\] Upon applying $\pi_0$, we get a map of commutative rings which we denote as
    \[\Phi^{[k]}\colon A(d)\longrightarrow \mathbb{Z}\] following the notation for geometric fixed points in equivariant homotopy theory. When $k=d$, the map $\Phi^{[d]}$ is nothing but the map constructed in \cref{cons:completion_at_ideals} which is used to define the augmentation ideal.
\end{nota}

\begin{lem}\label{lem:description_of_geom_fix_points_ideal}
   Let $d=p$ be a prime number. Then
    \begin{equation*}
        \Big(p,\: \mathrm{Im}\big(\Phi^{[k]}\colon I(d)\longrightarrow \mathbb{Z}\big)\Big) = \begin{cases}
             p\mathbb{Z} & \text{ if } k = 1;\\
            \mathbb{Z} & \text{ if } 1 < k < d.
        \end{cases}
    \end{equation*}
    Moreover, $\Phi^{[d]}(I(d))=0$.
\end{lem}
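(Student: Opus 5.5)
The plan is to reduce the lemma to an explicit computation with the table of marks of the Goodwillie--Burnside ring, using the main \cref{thm:excisive_functors_as_spectral_mackey} to identify $A(d)$.

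\textbf{Step 1: a basis for $A(d)$.} By \cref{thm:excisive_functors_as_spectral_mackey}, $A(d)=\pi_0\mapsp_{\excisive_d}(\unit_d,\unit_d)$ is $\pi_0$ of the endomorphisms of the unit in $\mackey(\epiCategory_d;\spectra)$. This is the Burnside ring of $\epiCategory_d$, i.e.\ the Grothendieck ring of $\finite_{\epiCategory_d}$. I take as known (as in \cite{ABHS24}) that it is free abelian on the classes $t_k\coloneqq[\Delta^*_{t_k}\unit_{d,k}]$ for $1\le k\le d$, where $t_k\colon[k]\twoheadrightarrow[1]$ is the unique map. These are the images of the units under induction; compare \cref{lem:generators_of_elbow_cats}. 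Note that $t_1=1$.

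\textbf{Step 2: computing the marks.} The quantity to compute is $\Phi^{[j]}(t_k)=\pi_0 P_{\udl{1}}\crossEffect_j\Delta^*_{t_k}\unit_{d,k}$. By the double coset formula \cref{prop:doubcle_coset_truncated_to_d},
\[
\crossEffect_j\Delta^*_{t_k}\simeq\bigoplus_U\Delta^*_{f_U}\crossEffect_{g_U},
\]
where $U$ runs over the good subsets $U\subseteq[j]\times[k]$, i.e.\ those surjecting onto both factors, with $f_U\colon U\twoheadrightarrow[j]$ and $g_U\colon U\twoheadrightarrow[k]$ the projections.

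Apply $P_{\udl{1}}$, which is symmetric monoidal and equals geometric fixed points at the top level by \cref{lem:geometric_fixed_points_of_elbow_cats_is_spectra}. A summand with $|U|>j$ is induced from a proper object of $\epiCategory_{d,j}$, so $P_{\udl{1}}$ kills it by \cref{prop:kernel_of_P_1}. A summand with $|U|=j$ is the graph of a surjection $[j]\twoheadrightarrow[k]$, and each such summand contributes $\sphere$.

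Hence $\Phi^{[j]}(t_k)=|\epiCategory([j],[k])|$, the number of surjections $[j]\twoheadrightarrow[k]$. In particular the augmentation is
\[
\Phi^{[p]}\Big(\sum_k a_kt_k\Big)=\sum_k a_k\,|\epiCategory([p],[k])|,
\]
and $I(d)$ is the kernel of this map.

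\textbf{Step 3: case $k=1$.} Let $x=\sum a_kt_k\in I(p)$. For $2\le k\le p$, the cyclic group $C_p$ acts freely on $\epiCategory([p],[k])$ by precomposition with a $p$-cycle. Indeed, a surjection fixed by a $p$-cycle would be constant, since $p$ is prime. Hence $p$ divides $|\epiCategory([p],[k])|$ for $k\ge2$.

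The equation $\Phi^{[p]}(x)=0$ therefore gives $a_1\in p\mathbb{Z}$. But $\Phi^{[1]}(x)=a_1$, because $|\epiCategory([1],[k])|=\delta_{k1}$. So the image of $\Phi^{[1]}$ on $I(p)$ lies in $p\mathbb{Z}$, and adjoining $p$ gives exactly $p\mathbb{Z}$.

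\textbf{Step 4: case $1<k<p$.} Consider
\[
x\coloneqq |\epiCategory([p],[k])|\cdot t_1-t_k .
\]
This lies in $I(p)$ because $\Phi^{[p]}(x)=|\epiCategory([p],[k])|-|\epiCategory([p],[k])|=0$. On the other hand,
\[
\Phi^{[k]}(x)=|\epiCategory([p],[k])|-k!\equiv -k!\pmod p,
\]
which is a unit mod $p$ since $k<p$. Together with $p$, this generates $\mathbb{Z}$.

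\textbf{Step 5: the last assertion.} The statement $\Phi^{[d]}(I(d))=0$ holds by definition of $I(d)$, as noted in the Notation preceding the lemma.

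\textbf{Main obstacle.} I expect the hard part to be Step 2, namely justifying cleanly that $P_{\udl{1}}$ kills exactly the summands with $|U|>j$ and sends each graph summand to $\sphere$. The basis statement of Step 1 also needs care; if citing \cite{ABHS24} is unsatisfactory, it can be derived from the orbit decomposition of $\finite_{\epiCategory_d}$.
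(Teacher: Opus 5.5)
Your proof is correct and is essentially the paper's: the paper takes the generators $[\lambda_i]$ of $A(d)$ and the marks $\Phi^{[k]}([\lambda_i])=|\surjectiveSet(k,i)|$ from \cite{ABHS24}, notes that $I(d)$ is generated by the elements $[\lambda_i]-|\surjectiveSet(d,i)|$, and then uses the same free $C_p$-action on $\surjectiveSet(p,i)$ for $i\geq 2$ and the same fact that $k!$ is prime to $p$. The one difference is that your Step~2 derives the mark formula from \cref{prop:doubcle_coset_truncated_to_d} and \cref{prop:kernel_of_P_1} rather than citing it; this works once you read $t_k$ as the dimension of the dualisable object $\Delta^*_{t_k}\unit_{d,k}$, because $\Phi^{[j]}$ is induced by the symmetric monoidal functor $P_{\udl{1}}\crossEffect_j$ and therefore sends that dimension to $\dim(\sphere^{\oplus N})=N$.
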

\begin{proof}
    Recall that the Goodwillie-Burnside ring $A(d)=\pi_0\unit_d$ is generated by the elements $[\lambda_i]$, $1\leq i \leq d$, see \cite[Def.~9.19 and Thm.~9.25]{ABHS24}. Moreover, by \cite[Lem.~9.22]{ABHS24},
    the homomorphism $\Phi^{[k]}\colon A(d) \longrightarrow \mathbb{Z}$ is given by
        $$\Phi^{[k]}([\lambda_i]) = |\surjectiveSet(k,i)|$$
    and the augmentation ideal $I(d)=\mathrm{Ker}(\Phi^{[d]})\subset A(d)$ is generated by the elements 
    $$ [\lambda_i] - |\surjectiveSet(d,i)| \in I(d).$$
    Therefore, we have
        $$\Big(p,\Phi^{[k]} \big(I(d)\big)\Big) = \Big(p, |\surjectiveSet(k,1)|-|\surjectiveSet(d,1)|,\ldots, |\surjectiveSet(k,d)|-|\surjectiveSet(d,d)|\Big) \subset \mathbb{Z}. $$

    The basic key observation is that $C_p$ acts freely on the set $\surjectiveSet(p,i)$ for $i \geq 2$ and so $|\surjectiveSet(p,i)|$ is a multiple of $p$ in these cases. Hence, if $k=1$ or $k=d$, then the assertion follows. Now let $1< k< d$. Again by the key observation, we see that
        $$\Big(p,\Phi^{[k]} \big(I(d)\big)\Big) = \Big(p, |\surjectiveSet(k,2)|,\ldots, |\surjectiveSet(k,d)|\Big) \subset \mathbb{Z}. $$
    Since $|\surjectiveSet(k,k)|=k!$ is coprime to $p$, we obtain that $\Big(p,\Phi^{[k]} \big(I(d)\big)\Big) = \mathbb{Z}$.
\end{proof}

We are finally ready to give the proof of the Segal conjecture in the case $d=p$.

\begin{proof}[Proof of \cref{thm:segal_conjecture_calculus}.]
    That the map of interest is an equivalence may be checked by passing to the symmetric monoidal $[k]$--geometric fixed point functors $\excisive_d(\spectra^{\omega},\spectra\completion{p})\rightarrow \spectra\completion{p}$ for all $1\leq k\leq d$. These are concretely given by $\Phi^{[k]}(-)\simeq \big(P_{\udl{1}}\crossEffect_k(-)(\sphere,\ldots,\sphere)\big)\completion{p}\simeq \big(\crossEffect_kP_k(-)(\sphere,\ldots,\sphere)\big)\completion{p}\simeq \big(\partial_k(-)\big)\completion{p}$. To this end, recall Arone--Ching's formula from \cite[Prop. 10.26]{ABHS24} which gives    \begin{equation}\label{eqn:arone_ching_derivative_formula}
        \Big(\partial_k\big((((-)^{\otimes d})^{h\Sigma_d})\completion{p}\big)\Big)\completion{p}\simeq \Big(\partial_k\big(((-)^{\otimes d})^{h\Sigma_d}\big)\Big)\completion{p}\simeq \prod_{\lambda \dashv d\colon |\lambda|=k}(\sphere^{t\family_{\mathrm{nt}}(\lambda)})\completion{p}.
    \end{equation}

    Almost by construction of the Borelification functor $\beta_*\beta^*$, the map of interest is an equivalence on $\Phi^{[d]}$ even before $p$--completion.  On the other hand, by \cref{prop:geometric_fixed_points_and_completions} and \cref{eqn:arone_ching_derivative_formula}, applying $\Phi^{[1]}$ and invoking \cref{lem:description_of_geom_fix_points_ideal,lem:p-completion_of_tates} gives 
    \[\sphere\completion{p}\longrightarrow {\sphere^{t\family_{nt}(\Sigma_p)}}\completion{p}\simeq {\sphere^{tC_p}}\completion{p},\] which is an equivalence by the usual Segal conjecture in equivariant homotopy theory.
    On the other hand, for $1<k<d$, \cref{eqn:arone_ching_derivative_formula} and~\cite[Prop. ~A.8]{glasmanlawson} gives that $\partial_k\big(((-)^{\otimes d})^{h\Sigma_d}\big)\completion{p}\simeq 0$; but then, also, $\big(p,\:\Phi^{[k]}(I(d))\big)=\mathbb{Z}\subseteq \mathbb{Z}$ by \cref{lem:description_of_geom_fix_points_ideal} and so $\Phi^{[k]}((\unit_d)\completion{I(d),p})\simeq 0$. Hence, the map of interest becomes an equivalence upon applying $\Phi^{[k]}$ for $1<k<d$ also. This completes the proof. 
\end{proof}

\begin{rmk}
    From the computations in the preceding proof, one can easily check  that the theorem would already fail when $d=p=3$  had we  not enforced the completion at the augmentation ideal. The case for $d=p=2$ still works, of course, since this is the case of genuine $C_2$--spectra, as proved by Lin \cite{lin}.
\end{rmk}

\printbibliography

@article{aroneChingChainRule,
  title={{Operads and chain rules for the calculus of functors}},
  author={Arone, G. and Ching, M.},
  series =  {Ast\'erisque },
    VOLUME = {338},
      YEAR = {2011},
     PAGES = {vi+158}
}

@article{aroneChingCrossEffects,
  title={{Cross-effects and the classification of Taylor towers}},
  author={Arone, G. and Ching, M.},
  series =  {Geom. Topol.},
    VOLUME = {20},
      YEAR = {2016},
     PAGES = {1445–1537}
}

@article{aroneChingClassification,
  title={{A classification of Taylor towers of functors of spaces and spectra}},
  author={Arone, G. and Ching, M.},
  series =  {Adv. Math.},
    VOLUME = {272},
      YEAR = {2015},
     PAGES = {471–552}
}

@article{ChingDayConvolution,
  title={{Infinity-operads and Day convolution in Goodwillie calculus}},
  author={Ching, M.},
  series =  {J. Lond. Math. Soc. (2)},
    VOLUME = {104},
    number = {3},
      YEAR = {2021},
     PAGES = {1204–1249}
}

@article{bauesDreckmannFranjouPirashvili,
  title={{Foncteurs polynomiaux et foncteurs de Mackey non lin\'eaires}},
  author={Baues, H.-J. and Dreckmann, W. and Franjou, V. and Pirashvili, T.},
  series =  {Bull. Soc. Math. France},
    VOLUME = {129},
    number = {2},
      YEAR = {2001},
     PAGES = {237–257}
}

@article{lin,
  title={{On Conjectures of Mahowald, Segal, and Sullivan}},
  author={Lin, W.-H.},
  journal = {Math. Proc. Camb. Phil. Soc.},
  year = {1980},
volume = {87},
number = {3},
    pages = {449--458}
}

@article{adamsGunawardenaMiller,
  title={{The Segal Conjecture for Elementary Abelian $p$-Groups - I}},
  author={Adams, J.F. and Gunawardena, J.H. and Miller, H.},
  journal = {Topology},
    volume = {24},
    number = {4},
  year = {1985},
    pages={435--460}
}

@article{carlssonSegalConjecture,
  title={{Equivariant Stable Homotopy and Segal's Burnside Ring Conjecture}},
  author={Carlsson, G.},
  journal = {Ann. of Math.},
  year = {1984},
    volume = {120},
    number ={2},
    pages ={189--224}
}

@book {AdamsBlueBook,
    AUTHOR = {Adams, J. F.},
     TITLE = {Stable homotopy and generalised homology},
    SERIES = {Chicago Lectures in Mathematics},
 PUBLISHER = {University of Chicago Press, Chicago, Ill.-London},
      YEAR = {1974},
     PAGES = {x+373},
   MRCLASS = {55B20 (55E10)},
  MRNUMBER = {402720},
MRREVIEWER = {P.\ S.\ Landweber},
}

@article{greenleesMayMU,
  title={{Localization and Completion Theorems for MU--module Spectra}},
  author={Greenlees, J.P.C. and May, J.P.},
  volume={146},
   journal={Ann. of Math.},
   year={1997},
   pages={509--544}
}

@article{nardinExposeIV,
  title={{Parametrized higher category theory and higher algebra: Expos\'{e} IV – Stability with respect to an
orbital $\infty$--category}},
  author={Nardin, D.},
  year ={2016},
  journal = {\href{https://arxiv.org/abs/1608.07704}{\underline{arXiv:1608.07704}}}
}

@article{glasmanStratified,
  title={{Stratified categories, geometric fixed points and a generalized Arone-Ching theorem}},
  author={Glasman, S.},
  year ={2017},
  journal = {\href{https://arxiv.org/abs/1507.01976}{\underline{arXiv:1507.01976}}}
}

@article{glasmanlawson,
  title={{Stable power operations}},
  author={Glasman, S. and Lawson, T.},
  year ={2020},
  journal = {\href{https://arxiv.org/abs/2002.02035}{\underline{arXiv:2002.02035}}}
}

@article {MNN17,
    AUTHOR = {Mathew, Akhil and Naumann, Niko and Noel, Justin},
     TITLE = {Nilpotence and descent in equivariant stable homotopy theory},
   JOURNAL = {Adv. Math.},
  FJOURNAL = {Advances in Mathematics},
    VOLUME = {305},
      YEAR = {2017},
     PAGES = {994--1084},
}

@article{heutsCategorifiedGoodwillie,
  title={{Goodwillie approximations to higher categories}},
  author={Heuts, G.},
  JOURNAL = {Mem. Amer. Math. Soc.},
    VOLUME = {272},
    number ={1333},
      YEAR = {2021},
     PAGES = {1--108}
}

@article{glasmanGoodwillie,
  title={{Goodwillie calculus and Mackey functors
}},
  author={Glasman, S.},
      YEAR = {2018},
      series={\href{https://arxiv.org/abs/1610.03127}{arXiv:1610.03127}}
}

@article{CMMN2,
  title={{Descent and Vanishing in Chromatic Algebraic K-Theory via Group Actions}},
  author={Clausen, D. and Mathew, A. and Naumann, N. and Noel, J.},
  JOURNAL = {Ann. Sci. Éc. Norm. Supér.},
    VOLUME = {57},
      YEAR = {2024},
        number = {4},
     PAGES = {1135–-1190}
}

@article{greenleesStableMapsFreeGSpaces,
  title={{Stable maps into free G-spaces}},
  author={Greenlees, J.P.C.},
  series =  {Trans. Amer. Math. Soc.},
    VOLUME = {310},
    number = {1},
      YEAR = {1988},
     PAGES = {199--215}
}

@article{bds16,
  title={{Grothendieck-Neeman duality and the
Wirthm¨uller isomorphism}},
  author={Balmer, P. and Dell'Ambrogio, I. and Sanders, B.},
  series =  {Compos. Math.},
    VOLUME = {152 (8)},
      YEAR = {2016},
     PAGES = {1740--1776}
}

@article{nardinShah,
  title={{Parametrized and Equivariant Higher Algebra}},
  author={Nardin, D. and Shah, J.},
  year = {2022},
  series =  {\href{https://arxiv.org/abs/2203.00072}{arXiv:2203.00072}}
}

@article{lurieHA,
  title={{Higher Algebra}},
  author={Lurie, J.},
  series =  {\url{https://www.math.ias.edu/~lurie/}},
  year = {2017}
}

@article{hopkinslurie,
  title={{Ambidexterity in $K(n)$-Local Stable Homotopy Theory}},
  author={Hopkins, M. and Lurie, J.},
  series =  {\url{https://www.math.ias.edu/~lurie/}},
  year = {2013}
}

@article{expose1Elements,
  title={{Parametrized Higher Category Theory and Higher Algebra: Exposé I -- Elements of Parametrized Higher Category Theory}},
  author={Barwick, C. and Dotto, E. and Glasman, S. and Nardin, D. and Shah, J.},
  year ={2016},
  series =  {\href{https://arxiv.org/abs/1608.03657}{arXiv:1608.03657}}
}

@article{parametrisedIntroduction,
      title={{Parametrized Higher Category Theory and Higher Algebra: A General Introduction}},
        author={Barwick, C. and Dotto, E. and Glasman, S. and Nardin, D. and Shah, J.},
      year={2016},
      series={\href{https://arxiv.org/abs/1608.03654}{arXiv:1608.03654}}
}

@article{wilsonSlices,
  title={{On categories of slices}},
  author={Wilson, D.},
  year = {2017},
    series={\href{https://arxiv.org/abs/1711.03472}{arXiv:1711.03472}}
}

@article{nardinThesis,
  title={{Stability and Distributivity
over Orbital $\infty$-categories}},
  author={Nardin, D.},
  series =  {PhD Thesis, MIT},
  year = {2017}
}

@article{shahThesis,
  title={{Parametrized Higher Category Theory}},
  author={Shah, J.},
  journal = {Alg. Geom. Top.},
  year = {2023}, 
    volume = {23},
    number = {2},
    pages = {509--644}
}

@article{kaifPresentable,
  title={{Parametrised Presentability over orbital categories}},
  author={Hilman, K.},
  year = {2024},
  JOURNAL = {Appl. Cat. Str.},
    VOLUME = {32}
}

@article{parametrised_Poincare_duality,
  title={{Parametrised Poincar\'e duality and equivariant fixed points methods}},
  author={Hilman, K. and Kirstein, D. and Kremer, C.},
  year = {2024},
  series =  {\href{https://arxiv.org/abs/2405.17641}{arXiv:2405.17641}}
}

@article{kaifNoncommMotives,
  title={{Parametrised Noncommutative Motives and Cubical Descent in Equivariant Algebraic K-theory}},
  author={Hilman, K.},
  journal = {\href{https://arxiv.org/submit/4151077/pdf}{arXiv}},
  year = {2024}
}

@article{CLLSpans,
  title={{Parametrized (higher) semiadditivity and the universality of spans}},
  author={Cnossen, B. and Haugseng, R. and Lenz, T. and Linskens, S.},
  year ={2025},
  journal = {\href{https://arxiv.org/abs/2403.07676}{arXiv:2403.07676}}
}

@article {nineAuthorsI,
    AUTHOR = {Calm\`es, Baptiste and Dotto, Emanuele and Harpaz, Yonatan and
              Hebestreit, Fabian and Land, Markus and Moi, Kristian and
              Nardin, Denis and Nikolaus, Thomas and Steimle, Wolfgang},
     TITLE = {Hermitian {K}-theory for stable {$\infty$}-categories {I}:
              {F}oundations},
   JOURNAL = {Selecta Math. (N.S.)},
  FJOURNAL = {Selecta Mathematica. New Series},
    VOLUME = {29},
      YEAR = {2023},
    NUMBER = {1},
     PAGES = {Paper No. 10, 269},
      ISSN = {1022-1824,1420-9020},
   MRCLASS = {19G38 (18N99 55U35)},
  MRNUMBER = {4514986},
MRREVIEWER = {Mohamed\ Elhamdadi},
       DOI = {10.1007/s00029-022-00758-2},
       URL = {https://doi.org/10.1007/s00029-022-00758-2},
}

@article {nineAuthorsII,
    AUTHOR = {Calm\`es, B. and Dotto, E. and Harpaz, Y.
              and Hebestreit, F. and Land, M. and Moi, K.
              and Nardin, D. and Nikolaus, T. 
              and Steimle, W.},
    TITLE = {Hermitian {K}-theory for stable {$\infty$}-categories {II}:
              {Cobordism categories and additivity}},
    JOURNAL = {Acta Math.},
    VOLUME = {235},
      YEAR = {2025},
    NUMBER = {2},
     PAGES = {149 - 400}
}

@article{ABHS24,
      title={The spectrum of excisive functors}, 
      author={Arone, G. and  Barthel, T. and  Heard, D. and  Sanders, B.},
      year={2025},
      journal = {Invent. Math.},
    volume = {241},
    pages = {363–-464}
}

@incollection {McCarthy99,
    AUTHOR = {McCarthy, Randy},
     TITLE = {Dual calculus for functors to spectra},
 BOOKTITLE = {Homotopy methods in algebraic topology ({B}oulder, {CO},
              1999)},
    SERIES = {Contemp. Math.},
    VOLUME = {271},
     PAGES = {183--215},
 PUBLISHER = {Amer. Math. Soc., Providence, RI},
      YEAR = {2001},
      ISBN = {0-8218-2621-2},
   MRCLASS = {18F25 (19D55 55P42)},
  MRNUMBER = {1831354},
       DOI = {10.1090/conm/271/04357},
       URL = {https://doi.org/10.1090/conm/271/04357},
}

@article {Kuhn04,
    AUTHOR = {Kuhn, Nicholas J.},
     TITLE = {Tate cohomology and periodic localization of polynomial
              functors},
   JOURNAL = {Invent. Math.},
  FJOURNAL = {Inventiones Mathematicae},
    VOLUME = {157},
      YEAR = {2004},
    NUMBER = {2},
     PAGES = {345--370},
      ISSN = {0020-9910,1432-1297},
   MRCLASS = {55P60 (55N22 55P65 55P91 55P92)},
  MRNUMBER = {2076926},
MRREVIEWER = {J.\ P. C. Greenlees},
       DOI = {10.1007/s00222-003-0354-z},
       URL = {https://doi.org/10.1007/s00222-003-0354-z},
}

@article{barwick1,
  title={{Spectral Mackey Functors and Equivariant Algebraic K-theory (I)}},
  author={Barwick, C.},
  JOURNAL = {Adv. Math.},
  FJOURNAL = {Advances in Mathematics},
    VOLUME = {304},
      YEAR = {2017},
     PAGES = {646--727}
}

@article{guillouMay,
  title={{Models of G-spectra as presheaves of spectra}},
  author={Guillou, B. and May, J.P.},
  year = {2024},
  journal = {Alg. Geom. Topol.},
  VOLUME = {24},
     PAGES = {1225–1275}
}

@article {GoodwillieCalculus1,
    AUTHOR = {Goodwillie, Thomas G.},
     TITLE = {Calculus. {I}. {T}he first derivative of pseudoisotopy theory},
   JOURNAL = {$K$-Theory},
  FJOURNAL = {$K$-Theory. An Interdisciplinary Journal for the Development,
              Application, and Influence of $K$-Theory in the Mathematical
              Sciences},
    VOLUME = {4},
      YEAR = {1990},
    NUMBER = {1},
     PAGES = {1--27},
      ISSN = {0920-3036,1573-0514},
   MRCLASS = {57Q10 (19M05 55P99)},
  MRNUMBER = {1076523},
       DOI = {10.1007/BF00534191},
       URL = {https://doi.org/10.1007/BF00534191},
}

@article {GoodwillieCalculus2,
    AUTHOR = {Goodwillie, Thomas G.},
     TITLE = {Calculus. {II}. {A}nalytic functors},
   JOURNAL = {$K$-Theory},
  FJOURNAL = {$K$-Theory. An Interdisciplinary Journal for the Development,
              Application, and Influence of $K$-Theory in the Mathematical
              Sciences},
    VOLUME = {5},
      YEAR = {1992},
    NUMBER = {4},
     PAGES = {295--332}
}

@article {GoodwillieCalculus3,
    AUTHOR = {Goodwillie, Thomas G.},
     TITLE = {Calculus. {III}. {T}aylor series},
   JOURNAL = {Geom. Topol.},
  FJOURNAL = {Geometry and Topology},
    VOLUME = {7},
      YEAR = {2003},
     PAGES = {645--711},
      ISSN = {1465-3060,1364-0380},
   MRCLASS = {55P65 (19D10 55P42 55U35)},
  MRNUMBER = {2026544},
MRREVIEWER = {Daniel\ C.\ Isaksen},
       DOI = {10.2140/gt.2003.7.645},
       URL = {https://doi.org/10.2140/gt.2003.7.645},
}
\end{document}